\newcommand{\R}{\mathbb{R}}
\newcommand{\Z}{\mathbb{Z}}
\newcommand{\C}{\mathbb{C}}
\newcommand{\N}{\mathbb{N}}
\newcommand{\Hom}{\text{Hom}}
\newcommand{\hocolim}{\text{hocolim}}
\newcommand{\id}{\text{id}}
\renewcommand{\l}{\ell}
\renewcommand{\sl}{\mathfrak{sl}}
\newcommand{\B}{\mathscr{B}}
\newcommand{\Cs}{\mathcal{C}}
\renewcommand{\d}{\partial}
\newcommand{\q}{\mathfrak{q}}
\newcommand{\Zq}{\Z[\q,\q^{-1}]}
\newcommand{\Tot}{\text{Tot}}
\newcommand{\BN}{\mathcal{BN}}
\newcommand{\BBN}{\mathcal{BBN}}
\newcommand{\A}{\mathbb{A}}
\newcommand{\ot}{\leftarrow}
\newcommand{\2}{\underline{2}}
\newcommand{\Top}{\text{Top}}
\newcommand{\F}{\mathcal{F}}
\newcommand{\til}[1]{\widetilde{#1}}
\renewcommand{\b}[1]{\overline{#1}}
\newcommand{\h}[1]{\widehat{#1}}
\renewcommand{\c}{\circ}
\newcommand{\Ab}{\mathcal{A}}
\newcommand{\Mod}{\text{Mod}}
\renewcommand{\k}{\Bbbk}
\newcommand{\qdeg}{\text{\emph{q}deg}}
\newcommand{\adeg}{\text{adeg}}
\newcommand{\edge}{e}
\newcommand{\X}{\mathcal{X}} 
\newcommand{\lr}[1]{\vert {#1} \vert}
\newcommand{\LR}[1]{\lVert{#1}\rVert}
\newcommand{\Cscr}{\mathscr{C}}
\newcommand{\U}{U_{\q}(\sl_2)}
\newcommand{\I}{\mathbb{I}}
\newcommand{\Is}{\mathcal{I}}
\newcommand{\Js}{\mathcal{J}}
\newcommand{\lar}[1]{\xleftarrow{#1}}
\newcommand{\rar}[1]{\xrightarrow{#1}}
\newtheorem{theorem}{Theorem}[section]
\newtheorem{lemma}[theorem]{Lemma}
\newtheorem{proposition}[theorem]{Proposition}
\newtheorem{corollary}[theorem]{Corollary}
\newtheorem{conjecture}[theorem]{Conjecture}
\theoremstyle{remark}
\newtheorem{example}[theorem]{Example}
\theoremstyle{remark}
\newtheorem{remark}[theorem]{Remark}
\theoremstyle{definition}
\newtheorem{definition}{Definition}[section]
\numberwithin{equation}{section}
\begin{document}

\title{Stable Homotopy refinement of Quantum Annular homology}

\author[R. Akhmechet]{Rostislav Akhmechet}
\address{Department of Mathematics, University of Virginia, Charlottesville VA 22904-4137}
\thanks{RA was supported by NSF RTG Grant DMS-1839968}
\email{\href{mailto:ra5aq@virginia.edu}{ra5aq@virginia.edu}}

\author[V. Krushkal]{Vyacheslav Krushkal}
\address{Department of Mathematics, University of Virginia, Charlottesville VA 22904-4137}
\thanks{VK was supported by NSF Grant DMS-1612159.}
\email{\href{mailto:krushkal@virginia.edu}{krushkal@virginia.edu}}

\author[M. Willis]{Michael Willis}
\address{Department of Mathematics, University of California, Los Angeles, CA 90095}
\thanks{MW was supported by NSF FRG Grant DMS-1563615.}
\email{\href{mailto:msw188@ucla.edu}{msw188@ucla.edu}}

\maketitle

\begin{abstract}
We construct a stable homotopy refinement of quantum annular homology, a link homology theory introduced by Beliakova, Putyra and Wehrli. For each $r\geq 2$ we associate to an annular link $L$ a naive $\Z/r\Z$-equivariant spectrum whose cohomology is isomorphic to the quantum annular homology of $L$ as modules over $\Z[\Z/r\Z]$.
The construction relies on an equivariant version of the Burnside category approach of Lawson, Lipshitz and Sarkar. The quotient under the cyclic group action is shown to recover the stable homotopy refinement of annular Khovanov homology. We study spectrum level lifts of structural properties of quantum annular homology.
\end{abstract}
\tableofcontents

\section{Introduction}  
The construction of Khovanov homology in \cite{Khovanov} was the first in a family of link homology theories categorifying quantum invariants of links in $\R^3$. 
To a planar diagram $D$ of an oriented link $L$ in $\R^3$ it associates a chain complex $CKh(D)$ of graded modules. The homotopy class of $CKh(D)$ is an invariant of the link, and the graded Euler characteristic of $CKh(D)$ is the Jones polynomial of $L$. Applications of Khovanov homology have been widely studied, and its functoriality properties with respect to surface cobordisms in $4$-space are a particularly important aspect of the theory.

Using the framework of the Cohen-Jones-Segal construction \cite{CJS}, Lipshitz and Sarkar constructed in \cite{LS} a {\em stable homotopy refinement} of Khovanov homology. (An alternative construction was proposed by Hu-Kriz-Kriz in \cite{HKK}.) This theory assigns to a link $L$ in $\R^3$ a suspension spectrum $\X_{Kh}(L)$ whose cohomology is the Khovanov homology of $L$. The stable homotopy type carries additional information about the link, not seen at the level of Khovanov homology; specifically it induces an action of the Steenrod algebra. Another construction of $\X_{Kh}(L)$, using the Burnside category, was given by Lawson, Lipshitz and Sarkar in \cite{LLS}. Its analogue for the odd Khovanov homology was introduced in \cite{SSS}, and
extensions to tangles were developed in \cite{LLS17,LLS3}. 
Some approaches
have been proposed \cite{HKS, JLS} for defining a stable homotopy refinement of Khovanov-Rozansky $\sl_N$ homology for $N\geq 3$, but a general theory over $\Z$ for all links in $S^3$ is not presently known.

\subsection{Annular homology theories} \label{sec:Annular homology theories}
This paper concerns {\em annular links}, that is links in the thickened annulus $\A\times I$, where $\A=S^1\times [0,1]$. Given a link $L$ in $\A\times I$, consider its projection $D$ onto the first factor $\A$. Following constructions by Asaeda-Przytycki-Sikora \cite{APS}, Bar-Natan \cite{BN2} and Roberts \cite{Roberts}, the triply graded annular Khovanov homology $Kh_{\A}(L)$ (sometimes called {\em sutured annular Khovanov homology}) may be obtained from the usual Khovanov chain complex \cite{Khovanov} of $D$, viewed as a diagram in ${\mathbb R}^2$ by including $\A\subset {\mathbb R}^2$, and then taking the annular degree zero part of the differential. Alternatively, $Kh_{\A}(L)$ may be obtained by applying a certain TQFT to the Bar-Natan category ${\mathcal{BN}}(\A)$ of the annulus. It was shown by Grigsby-Licata-Wehrli \cite{GLW} that this homology carries an action of $\sl_2$. 

The {\em quantum annular homology} $Kh_{\A_\q}(L)$, introduced by Beliakova-Putyra-Wehrli in \cite{BPW}, is a far-reaching extension. 
Consider a ring $\k$ and a fixed unit $\q\in\k$.  Following the notation of \cite{BPW}, we note that there are two $q$'s in the theory.  One corresponds to the usual $q$-grading and the second one is the unit $\q\in\k$; we distinguish them by using different fonts. In a sense $\q$ may be thought of as a deformation parameter, explaining the term ``quantum homology''.

A rough outline of the construction of $Kh_{\A_\q}(L)$ is as follows (see Section \ref{sec:overview of quantum annular homology} for a more detailed description.)  Given an annular link diagram $D\subset \A=S^1\times I$, we cut it along a seam $\{*\}\times I$ to obtain an $(n,n)$-tangle $D^{\rm cut}$. A construction of Chen and Khovanov \cite{CK} then yields graded {\em platform algebras} $A^n$ and a functor 
$
\F^{}_{CK} : \BN(n,m) \to \text{gBimod}(A^n, A^m),
$
where $\BN(n,m)$ is the Bar-Natan category of the rectangle with marked points, and $\text{gBimod}(A^n, A^m)$ is the category of graded $(A^n, A^m)$-bimodules.  In \cite{BPW}, the authors introduce \textit{quantum Hochschild homology}, denoted $qHH$,  a deformation of the usual Hochschild homology of bimodules.  The link homology theory is then defined using the quantum annular TQFT $\F_{\A_{\q}}$:
\begin{equation} \label{qHH eq}
\F_{\A_{\q}}(D) : = qHH(A^n, \F^{}_{CK}(D^{\rm cut})).
\end{equation}
The functorial extension to surfaces in $\A\times I\times I$ relies on the  theory  of  (twisted)  horizontal traces of bicategories.
Quantum annular homology has a number of interesting properties \cite{BPW}:

\begin{enumerate}[label=(\roman*)]
\item \label{item:homology depends on q}
The homology $Kh_{\A_\q}(L)$ in general depends on the choice of $\q$; for example different roots of unity $\q\in \C$ may give non-isomorphic theories.

\item \label{item:U_action}
$Kh_{\A_\q}(L)$ carries an action of $\U$.

\item \label{item:cob maps}
Let $\Sigma$ be a closed surface in $S^1\times {\mathbb R}^3$. Denoting by $L$ the link ${\Sigma}\cap ( * \times {\mathbb R}^3)$, $\Sigma^{\rm cut}$ gives a cobordism from $L$ to itself in $I\times {\mathbb R}^3$. 
The evaluation of $Kh_{\A_\q}(\Sigma)$
equals the graded Lefschetz trace of ${\Sigma}_* : Kh(L)\longrightarrow Kh(L)$, the endomorphism of the Khovanov homology of $L$ induced by $\Sigma^{\rm cut}$. 
In particular, $Kh_{\A_\q}(S^1 \times L)$
coincides with the Jones polynomial of $L$.

\end{enumerate}

\subsection{Spectra for annular links} \label{sec:Spectra for annular links}
A stable homotopy refinement ${\mathcal X}_{\A}(L)$ of the annular Khovanov homology may be defined along the lines of \cite{LS, LLS}. A different approach was used by Lawson-Lipshitz-Sarkar in \cite{LLS3}: they constructed a stable homotopy refinement of Chen-Khovanov algebras, giving rise to an alternative construction of ${\mathcal X}_{\A}(L)$ as the topological Hochschild homology of the resulting ring spectrum.

The main result of this paper is a
construction of a stable homotopy refinement of quantum annular homology.
We work over the Laurent polynomial ring $\k:=\Z[\q,\q^{-1}]$, and tensor the resulting theory with $\k_r:=\Z[\q,\q^{-1}]/(\q^r-1)$, where $r\geq 2$.
\begin{theorem} \label{equivariant thm} {\textsl
Let $L$ be an oriented link in the thickened annulus $\A\times I$. Then for each $ r\geq 2$, there
exists a naive ${\mathbb Z}/r{\mathbb Z}$-equivariant spectrum ${\mathcal X}^r_{\A_\q}(L)$ whose cohomology
is isomorphic to the quantum annular homology $Kh_{\A_\q}(L)$, as modules over $\Z[{\mathbb Z}/r{\mathbb Z}]$.
}
\end{theorem}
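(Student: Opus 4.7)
The plan is to build a $\Z/r\Z$-equivariant analogue of the Lawson--Lipshitz--Sarkar Burnside framework from \cite{LLS}. I would first define an equivariant Burnside category $\B_{\Z/r\Z}$ whose objects are finite $\Z/r\Z$-sets and whose morphisms are equivalence classes of finite $\Z/r\Z$-equivariant correspondences $X \leftarrow A \to Y$, composed by equivariant fiber product. Next I would upgrade the LLS realization to this setting, producing from a strictly unitary lax $2$-functor $F : \2^N \to \B_{\Z/r\Z}$ (equipped with the usual sign assignment on edges and a ladybug-style choice on squares) a naive $\Z/r\Z$-equivariant CW spectrum $\lr{F}$ whose reduced cellular cochain complex, after applying the free abelian group functor on $\Z/r\Z$-sets, recovers the chain complex one reads off from $F$ with its canonical $\Z[\Z/r\Z]$-module structure. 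This step largely parallels \cite{LLS}, but with $G$-sets and $G$-equivariant CW structure replacing plain sets and spectra.

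Second, I would model the quantum annular complex at $\q^r = 1$ by such a Burnside functor. Following the description recalled in \eqref{qHH eq}, start from the cut tangle $D^{\rm cut}$, apply the Chen--Khovanov functor $\F^{}_{CK}$, and close up using the quantum Hochschild differential of \cite{BPW}. At each vertex of the cube $\2^N$ of resolutions of $D^{\rm cut}$, the usual labelings of the closed-up resolution circles by $v_\pm$ form a finite set on which $\Z/r\Z$ acts via the $\q$-action dictated by annular degree; this action is well-defined precisely because we tensor with $\k_r$. The edge data is the saddle correspondence, which is $\Z/r\Z$-equivariant provided the seam-crossing contribution to the quantum Hochschild differential is realized by the action of the cyclic generator on the relevant component. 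The result is the desired functor $F_L : \2^N \to \B_{\Z/r\Z}$.

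Setting $\X^r_{\A_\q}(L) := \lr{F_L}$, I would identify its cohomology with $Kh_{\A_\q}(L) \otimes_{\k} \k_r$ as a $\Z[\Z/r\Z]$-module complex. Invariance under Reidemeister moves of $D$ and under the choice of seam would then be obtained by adapting the LLS cancellation and handle-slide arguments to $\B_{\Z/r\Z}$, guided by the chain-level invariance established in \cite{BPW}. The fact that the quotient by the cyclic action recovers the annular Khovanov spectrum $\mathcal{X}_{\A}(L)$ of \cite{LLS3} (the $\q = 1$ specialization) serves as a useful consistency check built into the construction.

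The main obstacle will be the second step: arranging the quantum-twisted saddle correspondences into strictly coherent equivariant data so that the square, ladybug and associativity relations hold as identities of $\Z/r\Z$-equivariant correspondences, not merely up to homotopy. Near the seam, the $\q$-twist in the quantum Hochschild differential must be realized as a genuine $\Z/r\Z$-action on labeled resolutions rather than as a formal scalar; this is what forces the reduction to $\k_r$ and dictates how the generator of the cyclic group permutes generators. Checking this coherence globally on the cube, and lifting the BPW functoriality on surfaces in $\A \times I \times I$ equivariantly, is where the bulk of the work lies.
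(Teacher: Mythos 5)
Your high-level strategy matches the paper's: build an equivariant Burnside category $\B_{\Z/r\Z}$, encode the cube of resolutions as a lax $2$-functor $\2^n \to \B_{\Z/r\Z}$, realize it equivariantly via box maps and homotopy colimits, and prove well-definedness. But there are concrete gaps in the second step which is, as you yourself note, where the real work lies.

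First, the $G$-set assigned to a vertex $u$ of the cube cannot be ``the usual labelings of the resolution circles by $v_\pm$'' with a $\Z/r\Z$-action ``dictated by annular degree.'' Such an action would fix any generator of annular degree zero, so it is not free, while $\B_{\Z/r\Z}$ must consist of \emph{free} finite $G$-sets for the realization to produce a free $G$-CW complex. What is needed is the enlarged set $F_\q(u) = G \times \Gamma(u)$ (where $\Gamma(u)$ is a chosen set of generators), with $G$ acting freely on the first factor; the free abelian group on this set then recovers the free $\k_r$-module $\F_{\A_\q}^r(D_u)$ as a $\Z[G]$-module. Your formulation collapses the distinction between the $G$-set and the $\k_r$-module it generates.

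Second, you do not confront the fact that in quantum annular homology a configuration has no preferred basis: generators are well-defined only up to (non-uniform) powers of $\q$, depending on a chosen isotopy to a standard configuration. This means $\Gamma(u)$ itself involves a choice at every vertex, and one must prove the resulting Burnside functor is independent of these choices up to natural isomorphism. Likewise the saddle maps are not given by a simple combinatorial formula but must be computed from Theorem~\ref{thm:BPW} and the trace relations; you need a lemma (the analogue of Lemma~\ref{lem:saddles}) that each matrix entry of a saddle map is either $0$ or a single power of $\q$, as otherwise the correspondence description breaks down.

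Third, and most importantly, your treatment of the ladybug matching is too casual. For $r>2$ the key simplification is that the annular ladybug configuration in which one surgery arc splits a trivial circle into two trivial circles and the other splits it into two essential circles produces intermediate generators with coefficients $1$ and $\q^2$ (Corollary~\ref{cor:1+q^2 splitting}), so the $2$-morphism on that square is \emph{forced} and no matching choice is needed. This is what makes the hexagon relation tractable. The case $r=2$, where $\q^2=1$ and this forcing disappears, requires a separate argument. Your proposal neither identifies this phenomenon nor distinguishes $r=2$ from $r>2$, so as written you would be stuck trying to impose a generic ladybug matching and verify coherence without the leverage that the quantum deformation actually provides.

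Finally, your remark that functoriality for surfaces in $\A\times I\times I$ must be lifted equivariantly is a separate result (Theorem~\ref{thm:cobordism maps on spectra}), not part of the construction of $\X^r_{\A_\q}(L)$ itself, and should not be folded into the proof of Theorem~\ref{equivariant thm}.
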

A key point in the construction ${\mathcal X}^r_{\A_\q}(L)$ is the interpretation of $\q$ as a generator of the cyclic group ${\mathbb Z}/r{\mathbb Z}$. The proof then proceeds by building in Section \ref{Quantum Annular Burnside section} the \emph{quantum annular Burnside functor}: a  strictly unitary lax $2$-functor to a suitably defined equivariant Burnside category, and then constructing in Section \ref{sec:from Burnside functors to stable homotopy types} an equivariant version of spatial refinement, building on the approaches of \cite{LLS, SSS}. 
The definition of $\X^r_{\A_\q}(D)$ is given for link diagrams $D$ in Definition \ref{def:quantum annular Khovanov spectrum} in Section \ref{sec:defining the homotopy type}; the proof of invariance with respect to all choices involved (including choice of diagram) is presented there via Theorems \ref{thm:X(D) well-defined} and \ref{thm:invariance of htpy type}.

Part of our construction involves a concrete description of generators and of the differential, starting from the quantum Hochschild homology definition \cite{BPW} of the quantum annular TQFT $\F_{\A_{\q}}(D)$ in (\ref{qHH eq}); this may be of independent interest to the reader interested in computational aspects of the theory. In fact, there is an important distinction between (annular) Khovanov homology and the quantum annular homology $Kh_{\A_\q}(L)$. In the setting of Khovanov homology, each resolution of the link diagram has a preferred collection of generators, and this is a crucial feature used in constructions of stable homotopy refinements in \cite{LS, LLS}. On the other hand, in the context of quantum annular homology, generators are well-defined only up to a multiple of a power of $\q$.
The proof of Theorem \ref{equivariant thm} involves a careful analysis of this indeterminacy and its relation to the group action on the spectrum.
Moreover, the differential does not admit an immediate calculation in terms of the combinatorics of a given curve configuration in the annulus; rather one has to work with the definition in terms of the quantum annular TQFT $\F_{\A_{\q}}$. A detailed analysis of the saddle maps defining the differential is given in Section \ref{sec:saddle maps}.
For $r>2$, the powers of $\q$ appearing in the differential affect the construction of the Burnside functor, similar to how the signs appearing in odd Khovanov homology affect the analysis in \cite{SSS}.

The proof of the following result is presented in Section \ref{sec:taking the quotient}.
\begin{theorem} \label{quotient theorem}
{\textsl
The
quotient of ${\mathcal X}^r_{\A_\q}(L)$ under the action of ${\mathbb Z}/r{\mathbb Z}$ recovers ${\mathcal X}_{\A}(L)$, the stable homotopy refinement of the classical annular Khovanov homology of $L$.
}
\end{theorem}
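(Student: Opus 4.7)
The plan is to reduce the claim to a comparison of Burnside functors, and then show that the spatial refinement construction from Section \ref{sec:from Burnside functors to stable homotopy types} commutes with the geometric quotient by the $\Z/r\Z$-action. The equivariant Burnside category used in Section \ref{Quantum Annular Burnside section} admits a natural orbit $2$-functor to the ordinary Burnside category, sending a free $\Z/r\Z$-set $X$ to $X/(\Z/r\Z)$ and an equivariant correspondence to its orbit correspondence. Postcomposing the quantum annular Burnside functor associated to a diagram $D$ of $L$ with this orbit functor yields a strictly unitary lax $2$-functor from the cube $\underline{2}^n$ to the ordinary Burnside category, which I claim is naturally isomorphic to the annular Khovanov Burnside functor for $D$ underlying the LLS-style construction of $\X_\A(L)$.

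To establish this identification, I would check vertex and edge data separately. At each resolution, the free $\Z/r\Z$-set assigned by the quantum annular Burnside functor has orbits in bijection with the standard annular Khovanov generators (labelings of circles by TQFT generators), matching the classical assignment on vertices. On each edge of the cube, the saddle map from the quantum annular TQFT analyzed in Section \ref{sec:saddle maps} is represented by an equivariant correspondence whose orbit correspondence recovers the classical Khovanov saddle correspondence; the powers of $\q$ that contribute nontrivially at the equivariant level act trivially on orbits and so disappear after the quotient. The commuting-square $2$-morphism data match by an analogous check.

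The remaining step is to show that the equivariant spatial refinement is built so that its geometric $\Z/r\Z$-quotient agrees with the non-equivariant spatial refinement of the quotient Burnside functor. Because the equivariant refinement is assembled from free $\Z/r\Z$-cells indexed by elements of the relevant $\Z/r\Z$-sets, passing to orbits yields a CW spectrum whose cells and attaching maps realize precisely the LLS spatial refinement of the quotient Burnside functor, so that $\X^r_{\A_\q}(L)/(\Z/r\Z)$ is weakly equivalent to $\X_\A(L)$. The main obstacle is verifying this cell-level compatibility, in particular ensuring that the additional powers of $\q$ appearing for $r > 2$ — which affect the Burnside functor analogously to the signs in odd Khovanov homology \cite{SSS} — correspond to automorphisms of free $\Z/r\Z$-cells that become trivial after passing to orbits, and that the natural isomorphism of Burnside functors matches the specific sign and generator choices used to define $\X_\A(L)$.
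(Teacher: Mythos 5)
Your overall plan matches the paper's proof closely: pass through the quotient functor $(-)/G \colon \B_G \to \B$, identify $F_\q/G$ with the classical annular Burnside functor $F_1$, and then show that quotienting the equivariant spatial refinement yields a spatial refinement of $F_\q/G$, finishing with the fact that homotopy colimits commute with $(-)/G$. The vertex/edge comparison, and the observation that powers of $\q$ become invisible on orbits, are all present in the paper (Propositions \ref{prop:quotient of QABF} and \ref{prop:quotient of homotopy coherent diagram}).

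However, there is a genuine gap at the step you yourself flag as ``the main obstacle'': verifying that the natural isomorphism $F_\q/G \cong F_1$ is compatible with the $2$-morphism data, i.e.\ that the quotient of the quantum $2$-morphisms coincides with the particular ladybug matching used to define $F_1$ (and hence $\X_\A(L)$). You state this needs to be checked but do not resolve it, and it cannot be dismissed as trivial: for the ladybug configuration in which a trivial circle splits into two essential circles (Figure \ref{fig:ladybug configuration ex3}), the quantum Burnside functor's $2$-morphism is not chosen freely — it is \emph{forced} by the distinct powers of $\q$ appearing in the two branches (Corollary \ref{cor:1+q^2 splitting}), and one must prove that the forced matching reduces, after quotienting, to the left-pair ladybug matching used for $F_1$. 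This is exactly the content of Proposition \ref{prop:LBM2} and Corollary \ref{cor:1+q^2 splitting makes left choice}, which track the $\q^2$-discrepancy arising from sliding a dot across the seam and show that the resulting pairing of intermediate generators agrees, after setting $\q=1$, with the classical left-pair choice. Without this, your proposed natural isomorphism $F_\q/G\cong F_1$ could fail on the $2$-morphism level (the hexagon condition in Lemma \ref{lem:hexagon relation}), so the identification of the realization with $\X_\A(L)$ would not go through. The rest of your argument — quotienting box maps and homotopy colimits — is correct and mirrors the paper's Lemma \ref{lem:E_G vs E(X/G)} and Proposition \ref{prop:quotient of homotopy coherent diagram}.
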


It is important to note that no group action is assumed to be present on the link $L\subset \A\times I$, so the context for our work is different from that in \cite{BPS, Musyt, SZ}.  Therefore ${\mathcal X}^r_{\A_\q}(L)$ may be thought of as an ``equivariant refinement'' of ${\mathcal X}_{\A}(L)$, a structure  that is not apparent in other constructions of the annular spectrum ${\mathcal X}_{\A}(L)$.

\subsection{Properties and questions}
It is an interesting question to what extent properties of the quantum annular homology theory $Kh_{\A_\q}(L)$ can be lifted to the level of spectra.
In Theorem \ref{thm:cobordism maps on spectra} we prove that a generically embedded cobordism $W\subset \A\times I\times [0,1]$ between two annular links $L_0$ and $L_1$ gives rise to a map
\[\varphi^r_W:\X^r_{\A_\q}(L_1)\rightarrow \X^r_{\A_\q}(L_0)\]
which induces the map  on quantum annular Khovanov homology over the ring $\k_r$, defined in \cite{BPW}. As usual, the construction proceeds by decomposing $W$ into elementary cobordisms, whose annular projections correspond to Reidemeister moves and Morse surgeries. However in the case of quantum annular homology, additional complexity arises from isotopies of the link diagram across the seam of the annulus. The map on quantum homology induced by cobordisms in $4$ dimensions in \cite{BPW} relies on the theory of horizontal traces and quantum Hochschild homology. To define maps on spectra, we need to introduce chain maps by specifying their values on chosen generators. A detailed discussion of these chain maps, as well as verification that they match the maps defined by the TQFT $\F_{\A_{\q}}$, are given in the proof of Theorem \ref{thm:cobordism maps on spectra} and in the Appendix.

We are now in a position to formulate a spectrum-level analogue of property \ref{item:cob maps} in Section \ref{sec:Spectra for annular links}.

\begin{theorem}
Let $L$ be a link in the 3-ball $B^3$, and  consider the surface $\h{W}=S^1\times L$ in $\A\times D^2\cong S^1\times B^3$. Let  $W$ denote a copy of $\h{W}$ perturbed to be generic, viewed as a cobordism from $\varnothing$ to itself.  
Then the map
\[\varphi_W^r:\X_{\A_\q}(\varnothing) \longrightarrow \X_{\A_\q}(\varnothing)\]
induces the map on quantum annular homology
$(\varphi_W^r)^*: \k_r\longrightarrow \k_r$
which is given by multiplication by the Jones polynomial of $L$, considered as an element of $\k_r$, up to a sign and an overall power of $\q$.
\end{theorem}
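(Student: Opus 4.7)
The plan is to deduce the theorem from Theorem \ref{thm:cobordism maps on spectra} together with property \ref{item:cob maps} of quantum annular homology recalled in Section \ref{sec:Annular homology theories}. By Theorem \ref{thm:cobordism maps on spectra}, the map $(\varphi_W^r)^*$ induced on $Kh_{\A_\q}$ agrees with the Beliakova--Putyra--Wehrli cobordism map associated to $W$. Since $\X^r_{\A_\q}(\varnothing)$ has cohomology $\k_r$ concentrated in a single bidegree, $(\varphi_W^r)^*$ is multiplication by a single element $\alpha\in \k_r$, and the entire argument reduces to computing $\alpha$ at the level of the TQFT $\F_{\A_\q}$.

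For the unperturbed surface $\h{W}=S^1\times L$, the cut $\h{W}^{\rm cut}=I\times L$ is the identity cobordism from $L$ to itself. Property \ref{item:cob maps} then identifies $Kh_{\A_\q}(\h{W})$ with the graded Lefschetz trace of $\id_{Kh(L)}$, which is the graded Euler characteristic of $Kh(L)$, i.e.\ the Jones polynomial $V_L(\q)\in \k_r$.

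It remains to compare $\h{W}$ with its generic perturbation $W$, which is required in order for the Burnside functor of Section \ref{Quantum Annular Burnside section} to apply. The two surfaces are isotopic in $\A\times I\times [0,1]$, and can be connected by a finite sequence of elementary moves of the type analyzed in the proof of Theorem \ref{thm:cobordism maps on spectra}: Reidemeister moves, Morse surgeries, and isotopies across the seam $\{*\}\times I$. Only the seam-crossing isotopies change the induced scalar, and each such move rescales the chain-level cobordism map by $\pm\q^{\pm 1}$. Composing these contributions yields an overall factor $\pm\q^k$ relating $\alpha$ to $V_L(\q)$.

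The main obstacle is precisely this last step. As emphasized earlier in the paper, the Burnside functor requires a choice of generator on each resolution, and these are only well-defined up to powers of $\q$; the chain map therefore matches $\F_{\A_\q}(W)$ only up to such a unit. Showing that the ambiguity reduces to a \emph{single} overall factor $\pm\q^k$, rather than varying between generators and thereby spoiling proportionality to $V_L(\q)$, requires the careful bookkeeping of signs and $\q$-weights developed in the saddle-map analysis of Section \ref{sec:saddle maps} and in the Appendix.
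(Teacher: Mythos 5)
Your overall structure matches the paper's proof: reduce to $W_*$ via Theorem~\ref{thm:cobordism maps on spectra}, then compare $W_*$ to $\h{W}_*$, and finally identify $\h{W}_*$ with multiplication by the Jones polynomial via the Lefschetz-trace statement (property~\ref{item:cob maps}, i.e.\ \cite[Theorem D]{BPW}). The first and third steps are correctly handled.

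The gap is in the middle step. You propose to compare $W_*$ with $\h{W}_*$ by decomposing the isotopy between them into elementary moves (Reidemeister, Morse, seam-crossing) and tracking $\pm\q^{\pm1}$ factors. But $\h{W}=S^1\times L$ is precisely \emph{not} generic, so it does not come with a decomposition into elementary string interactions, and Theorem~\ref{thm:cobordism maps on spectra} does not apply to it. The map $\h{W}_*$ is defined through the BPW theory of twisted horizontal traces, not by composing elementary cobordism maps, so the decomposition argument does not directly reach it. The paper closes this gap simply by citing \cite[Proposition 6.8]{BPW}, which asserts that isotopic annular cobordisms (generic or not) induce maps on quantum annular homology that agree up to a sign and a power of $\q$; this is the result your last paragraph is gesturing at but never actually invokes. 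Also note a minor misattribution: the chain-level ambiguity you flag (choice of generators in the Burnside functor) is already eliminated by Theorem~\ref{thm:cobordism maps on spectra}, which gives $(\varphi_W^r)^*=W_*$ on the nose; the $\pm\q^k$ ambiguity in the theorem's conclusion comes entirely from the comparison between $W$ and $\h{W}$, not from the Burnside machinery.
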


Here the spectrum $\X_{\A_\q}(\varnothing)$ associated to the empty set is the wedge sum of $r$ copies of the sphere spectrum, with cohomology isomorphic to $\k_r$. 
For brevity the theorem is stated for product surfaces $S^1\times L$; the graded Lefschetz trace statement for more general closed surfaces   holds as well.
See Corollary \ref{cor:cobordism map for sweep recovers Jones} and remarks following it for further details.

An important feature of stable homotopy refinement, not available on the level of link homology, is the action of Steenrod algebra. We do not address this aspect of the theory in the present paper; we plan to analyze the equivariant aspect of Steenrod operations on the spectra ${\mathcal X}^r_{\A_\q}(L)$ in a future work.

Recall property \ref{item:U_action} in  Section \ref{sec:Spectra for annular links}, stating that $Kh_{\A_\q}$ carries an action of $\U$; see \cite[Theorem B]{BPW} and Section \ref{sec:the K map} below  for a more detailed discussion.

\begin{conjecture} \label{Uq conjecture} {\textsl 
The action of $\U$ on quantum annular homology $Kh_{\A_\q}(L)$  can be lifted to an action on ${\mathcal X}^r_{\A_\q}(L)$.}
\end{conjecture}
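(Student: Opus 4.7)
The plan is to realize the generators of $\U$ as $\Z/r\Z$-equivariant stable maps on $\X^r_{\A_\q}(L)$ and then verify the defining relations up to coherent homotopy. Equivariance of each proposed map should be automatic, since the maps arise from Burnside-functor-level operations that respect the annular weight grading underlying the $\Z/r\Z$-action.

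First, I would lift the generator $K^{\pm 1}$. On $Kh_{\A_\q}(L)$, $K$ acts by a power of $\q$ determined by the annular weight grading, and by construction of $\X^r_{\A_\q}(L)$ the interpretation of $\q$ as a generator of $\Z/r\Z$ means that this action is already induced by the equivariant structure of the spectrum. Hence $K^{\pm 1}$ lifts tautologically, via the self-equivalence of $\X^r_{\A_\q}(L)$ given by the generator of $\Z/r\Z$.

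Next, I would lift $E$ and $F$ as stable maps built from cobordism-type operations. The strategy is to introduce a small essential circle $C$ near the seam, perform an appropriate saddle cobordism between $L\sqcup C$ and $L$, and combine this with inclusion/projection onto the desired summand of the resolution cube attached to $C$. Encoding these operations as morphisms of quantum annular Burnside functors (extending the constructions of Section \ref{Quantum Annular Burnside section}) and then applying the cobordism map machinery of Theorem \ref{thm:cobordism maps on spectra} should produce candidate equivariant maps $E_{\X}, F_{\X}:\X^r_{\A_\q}(L)\to\X^r_{\A_\q}(L)$ inducing $E$ and $F$ on cohomology.

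The principal obstacle is verifying the quantum $\sl_2$ relations
\[KE = \q^2 EK, \qquad KF = \q^{-2} FK, \qquad [E,F] = \frac{K - K^{-1}}{\q - \q^{-1}}\]
at the spectrum level rather than merely on cohomology. The commutation relations with $K$ should follow from the observation that $E_{\X}$ and $F_{\X}$ are weight-shifting Burnside operations, and hence intertwine the $\Z/r\Z$-action up to a twist by $\pm 2$. The commutator relation is the genuinely difficult part: one must produce a preferred homotopy realizing $[E_{\X}, F_{\X}] \simeq (K - K^{-1})/(\q - \q^{-1})$, which requires constructing the two sides of the identity from a common Burnside-level diagram so that a canonical comparison homotopy is available. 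A natural approach is to build this from the configuration of two essential circles and exhibit a Burnside-level skein-type identity refining the algebraic one. A more ambitious route would be to lift a quantum categorification of $\U$, in the spirit of Khovanov-Lauda-Rouquier, to the quantum annular Burnside setting, simultaneously resolving all coherence issues for the $\U$ relations. This coherence problem, rather than the individual existence of the maps $E_{\X}$ and $F_{\X}$, is where I expect the main difficulty to lie.
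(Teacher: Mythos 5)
This statement is a conjecture, and the paper does not prove it; what you have written is likewise a research plan rather than a proof, so there is no argument to compare against. That said, a few points about how your sketch aligns with what the paper actually establishes in Section \ref{sec:the K map} are worth making.

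Your description of the $K^{\pm1}$ lift is imprecise. The group element $\q \in G=\Z/r\Z$ acts on $\X^r_{\A_\q}(L)$ by uniformly multiplying by $\q$, i.e.\ $\tau_\q(\q^k x) = \q^{k+1}x$ on Burnside-level generators, whereas $K$ acts by $Kx = \q^{\adeg(x)}x$, a power of $\q$ that varies with the annular degree. So $K$ is not ``tautologically'' the generator of the $\Z/r\Z$-action. What the paper does (Proposition \ref{prop:K map}) is build a natural isomorphism $\mathcal{K}^{\pm 1}:F_\q\to F_\q$ from the equivariant bijections $\psi_u^\pm(\q^k x)=\q^{k\mp\adeg(x)}x$, and then verify that condition \ref{nat iso condition2} holds because the differential preserves $\adeg$; the statement you want is that on each weight summand $\X^{r;j,k}_{\A_\q}(L)$ the map $K$ agrees with $\tau_\q^k$, which requires that the weight splitting be compatible with the whole Burnside-functor structure, not just the cohomology.

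For $E$ and $F$ the situation is worse than you suggest. The paper defines signed correspondences $J_u$ directly from the $\U$-action formulas and then exhibits an explicit obstruction: for the saddle of Example \ref{ex4}, the composition $A\times_{F_\q(u)}E_u$ is empty while $E_v\times_{F_\q(v)}A$ is a two-element set with oppositely signed elements, so there is no $2$-morphism making the square commute. This is not merely a coherence problem deferred to the relations; the naive candidate maps $E_\X,F_\X$ do not even exist as natural transformations of Burnside functors. Your proposed alternative of realizing $E$ and $F$ via auxiliary essential circles and saddle cobordisms is a genuinely different route, but you do not explain why cobordism-induced maps would avoid the same failure: the $E$ and $F$ maps on homology necessarily involve signed cancellations (they lower/raise weight and are not dot maps or merge/split maps), and the entire issue is that such cancellations are invisible to set-level correspondences. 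Until that specific sign obstruction is addressed (e.g.\ via a suitably refined equivariant Burnside category, as the paper speculates in its final remark), the existence of $E_\X$ and $F_\X$ is not established, let alone the $\U$ relations among them.
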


In Section \ref{sec:the K map} we show that the invertible generator $K$ of $\U$ admits a lift to an equivariant automorphism of $\X^r_{\A_\q}(L)$, but lifting the other generators and the relations between them is outside the scope of this paper.  See the discussion at the end of Section \ref{sec:the K map} for more comments on this matter.

We conclude the introduction with another question.
As discussed above, recently Lawson-Lipshitz-Sarkar  gave a reformulation \cite{LLS3} of the annular Khovanov spectrum ${\mathcal X}_{\A}(L)$ as the topological Hochschild homology of their stable homotopy refinement of Chen-Khovanov algebras.
Our construction of the spectra ${\mathcal X}^r_{\A_\q}(L)$ is based on the definition of the quantum annular homology $Kh_{\A_\q}(L)$ in \cite{BPW} using quantum Hochschild homology of bimodules. It is an interesting question whether there is a formulation of ${\mathcal X}^r_{\A_\q}(L)$ using some twisted or equivariant version of topological Hochschild homology of the ring spectra associated in \cite{LLS3} to Chen-Khovanov algebras.

\textbf{Acknowledgements.}
We would like to thank Nick Kuhn, Krzysztof Putyra, Sucharit Sarkar and Matt Stoffregen for helpful conversations.

\section{The Quantum Annular TQFT}\label{sec:the quantum annular TQFT}

\subsection{Classical Annular Khovanov Homology}\label{sec:classical annular Khovanov homology}

This section reviews the construction of sutured annular Khovanov homology  \cite{APS, BN2, Roberts}. We will refer to it as {\em classical annular homology}, to distinguish it from the quantum version discussed in Section \ref{sec:overview of quantum annular homology}.  Let $I:= [0,1]$ denote the unit interval,  and we fix the notation  $\A$ for the annulus $S^1\times I$.
 An \textit{annular link} is a link in the thickened annulus $\A\times I$, and its diagram is a projection onto the first factor of $\A \times I$. Link diagrams are disjoint from the boundary of $\A$. Identifying $S^1\times (0,1)$ with $\R^2$ minus a point, we represent the annulus by simply indicating the deleted point using the symbol $\times$. Figure $\ref{fig:annular link}$ illustrates an example of a link diagram.

\begin{figure}[H]
\centering
\includegraphics{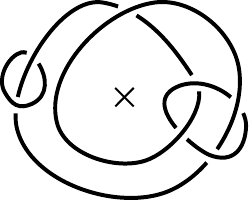}
\caption{An annular link diagram}\label{fig:annular link}
\end{figure}

Let $\BN(\A)$ denote the Bar-Natan category of the annulus \cite{BN2}. Its objects are formal $\Z$-linear combinations of formally graded collections of simple closed curves in $\A$.  Morphisms are matrices whose entries are formal $\Z$-linear combinations of dotted cobordisms embedded in $\A\times I$, modulo isotopy relative to the boundary, subject to the Bar-Natan relations, Figure \ref{fig:BN relations}.
\begin{figure}[H]
\centering
\includegraphics[scale=.8]{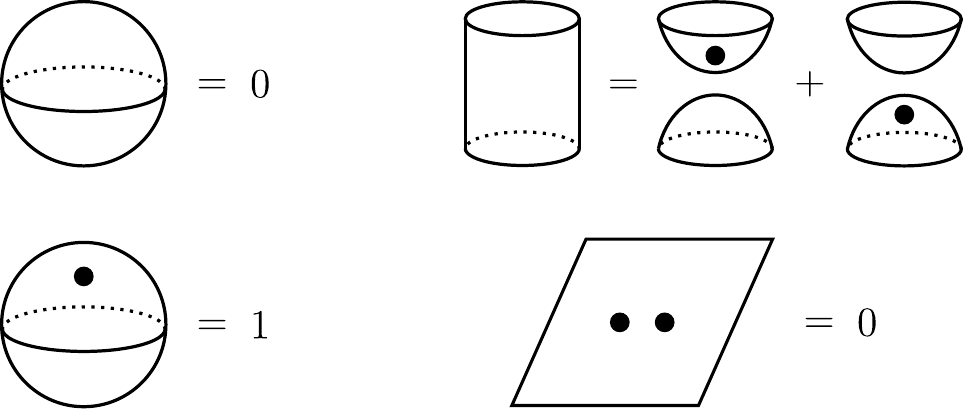}
\caption{Bar-Natan Relations}\label{fig:BN relations}
\end{figure}

Let $D$ be a diagram for an oriented annular link $L$.  We briefly review the construction of the chain complex $[[D]]$; a complete treatment can be found in \cite{BN2}.  To begin, one first forms the \textit{cube of resolutions} as follows.  Label the crossings of the diagram by $1,\ldots, n$. Every crossing may be resolved in two ways, called the \textit{0-smoothing} and \textit{1-smoothing}, as in \eqref{fig:0 and 1 smoothings}. For each $u = (u_1,\ldots, u_n)\in \{0,1\}^n$, perform the $u_i$-smoothing at the $i$-th crossing. The resulting diagram is a collection of disjoint simple closed curves in $\A$, which we denote $D_u$. Thinking of elements of $\{0,1\}^n$ as vertices of an $n$-dimensional cube, decorate the vertex $u$ by the smoothing $D_u$. 
\begin{equation}\label{fig:0 and 1 smoothings}
\vcenter{\hbox{
\includegraphics{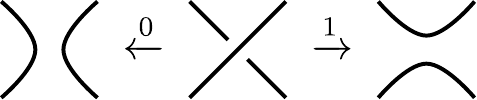}
}}
\end{equation}

Let $v=(v_1,\ldots, v_n)$ and $u=(u_1,\ldots, u_n)$ be vertices which differ only in the $i$-th entry, where $v_i = 0$ and $u_i=1$. Then the diagrams $D_v$ and $D_u$ are the same outside of a small disk around the $i$-th crossing. There is a cobordism from $D_v$ to $D_u$, which is the obvious saddle near the $i$-th crossing and the identity (product cobordism) elsewhere. We will call this the \textit{saddle cobordism} from $D_v$ to $D_u$, and denote it by $d_{v,u}$. Decorate each edge of the $n$-dimensional cube by these saddle cobordisms. We now have a commutative cube in the category $\BN(\A)$. There is a way to assign $s_{u,v} \in \{0,1\}$ to each edge so that multiplying the edge map $d_{v,u}$ by $(-1)^{s_{v,u}}$ results in an anti-commutative cube (see \cite[Section 2.7]{BN2}, also \cite[Definition 4.5]{LS}).

For $u=(u_1,\ldots, u_n) \in \{0,1\}^n$, let $\vert u \vert = \sum_i u_i$. Now, form the chain complex $[[D]]$ by setting 
\[
[[D]]^i = \bigoplus_{\vert u \vert = i+ n_-} D_{u}\{i + n_+ -n_-\} 
\] 
where $n_-$, $n_+$ are the number of negative and positive crossings in $D$, and the brackets $\{-\}$ denotes the formal grading shift in $\BN(\A)$. The differential is given on each summand by the edge map $(-1)^{s_{v,u}}d_{v,u}$. Anti-commutativity of the cube ensures that $[[D]]$ is a complex. 

\begin{theorem}\emph{(\cite[Theorem 1]{BN2})}
If diagrams $D$ and $D'$ are related by a Reidemeister move, then $[[D]]$ and $[[D']]$ are chain homotopy equivalent.
\end{theorem}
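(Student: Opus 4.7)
The plan is to verify invariance separately for each of the three Reidemeister moves, exploiting the locality of the construction. Any Reidemeister move is supported in a small disk $B\subset\A$, outside of which the two diagrams $D$ and $D'$ agree. Consequently, the cubes of resolutions decompose as a tensor product (of cubical chain complexes) of a \emph{local} part, which differs between the two sides, and a \emph{global} part, which is identical for both diagrams and contributes the same saddles and grading shifts $n_\pm$ to each complex. Because the Bar-Natan relations of Figure \ref{fig:BN relations} are purely local, any chain homotopy equivalence constructed in $\BN(B)\cong \BN(\R^2)$ extends by the identity on the global part to a homotopy equivalence in $\BN(\A)$. Thus it suffices to verify each move locally.

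For the first Reidemeister move, the local cube has two vertices; the ``bad'' resolution contains a small disjoint circle. Using the Bar-Natan relations (neck-cutting together with the sphere and dot relations), this circle ``deloops'' into a direct sum of two grading-shifted copies of the empty picture, generated by the undotted and dotted cap cobordisms. One of these summands is mapped isomorphically to the other resolution by the saddle, and Gaussian elimination (the cancellation lemma for acyclic subcomplexes) removes the pair, leaving a complex isomorphic to $[[D']]$. A grading check using the $\{-\}$ shifts and the $n_\pm$ correction confirms the isomorphism on the nose. The negative kink is handled by the mirror argument. For R2, the four-vertex local cube contains two resolutions with an extra small circle; delooping both and applying Gaussian elimination twice cancels the acyclic summands and leaves $[[D']]$. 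For R3, the cleanest strategy is to apply the R2 simplification to both sides to reduce them to a common canonical form, using the already-established R2 invariance; alternatively one performs the direct eight-vertex cube chase.

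The main technical issue is bookkeeping rather than conceptual: one must check that the signs $(-1)^{s_{v,u}}$ on the saddles are compatible with each application of Gaussian elimination, and that the local chain homotopies commute with the remaining global edge maps. For R3 the combinatorics of the eight local resolutions is tedious but mechanical. No new obstacle arises in the annular setting, since every cobordism used is planar and supported in $B\times I$, and none of the small local circles encircles the puncture of $\A$; hence all arguments valid in $\BN(\R^2)$ transfer verbatim to $\BN(\A)$, as carried out originally in \cite{BN2}.
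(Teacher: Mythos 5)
Your proposal correctly sketches the proof strategy from \cite{BN2}, which is exactly what the paper is citing here: the paper does not reprove this statement but refers to Bar-Natan's Theorem 1. The three ingredients you identify — delooping small circles via the Bar-Natan relations, cancelling acyclic pieces by Gaussian elimination, and the locality principle that reduces each Reidemeister move to a computation supported in a small disk disjoint from the puncture (with R3 handled by first applying the established R2 equivalence to bring both sides to a common form) — are precisely Bar-Natan's argument.

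One small imprecision worth flagging: the global/local decomposition of $[[D]]$ is not literally a tensor product of chain complexes, since the local and global parts live in different categories (the Bar-Natan category of a disk with marked boundary points, and of its complement). The correct structure is the planar algebra (or canopoly) composition of \cite{BN2}, under which gluing a fixed ``outside'' tangle complex onto a local homotopy equivalence yields a global homotopy equivalence. With this substitution your argument is exactly Bar-Natan's. Your closing remark that the argument transfers to $\BN(\A)$ because the local circles do not encircle the puncture is the correct and essential observation for the annular setting used in this paper.
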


To obtain \textit{classical} \textit{annular Khovanov homology}, one applies the annular TQFT
\[
\F_\A : \BN(\A) \to \Mod(\Z)
\]
defined as follows. Let $V_\Z$ and $W_\Z$ be free rank two $\Z$-modules with bases $v_-, v_+$ and $w_-, w_+$ respectively. Equip each with two gradings, the quantum grading $\qdeg$ and the annular grading $\adeg$, defined on generators by
\begin{align}
& \qdeg(v_\pm) = 0 && \qdeg(w_\pm) = \pm{1}  \label{qdeg:simul1} \\
& \adeg(v_\pm) = \pm{1} && \adeg(w_\pm) = 0 \label{adeg:simul2}
\end{align}
We follow the grading convention of \cite{BPW}; note that the quantum grading on $V$ is different than the  quantum grading appearing elsewhere in the literature; see Remark \ref{rem:getting annular from regular}. 

There are two types of simple closed curves in $\A$; \textit{essential} curves and \textit{trivial} curves which bound disks in $\A$. The functor $\F_\A$ assigns $V_\Z$ to each essential circle and $W_\Z$ to each trivial circle. Then for $\Cs\subset \A$ a collection of disjoint simple closed curves with $e$ essential and $t$ trivial circles, the free abelain group $\F_\A(\Cs) =$ $\otimes^e V_\Z \otimes^t W_\Z$ (where the tensor product is taken over $\Z$) has a standard basis consisting of a label of $v_-$ or $v_+$ on each essential circle, and $w_-$ or $w_+$ on each trivial one. Following the conventions in \cite{BPW}, a generator of $\Cs$ will be represented as a choice of counterclockwise or clockwise orientations on each essential circle, corresponding to $v_+$ and $v_-$ respectively, and either a dot or no dot on each trivial circle, corresponding to $w_-$ and $w_+$.  We will often switch between the diagrammatic and algebraic representations of generators, Figure \ref{fig:diagrammatic representation of generators}.

\begin{figure}
\centering
\includegraphics{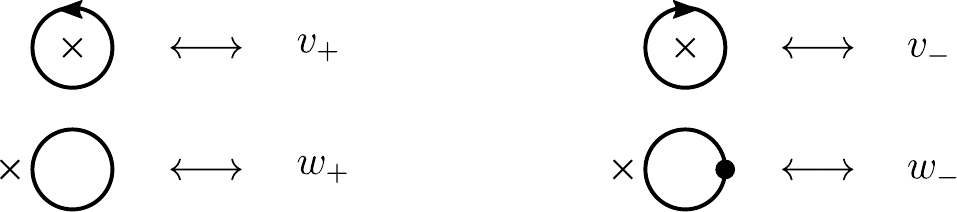}
\caption{Diagrammatic representation of generators.}\label{fig:diagrammatic representation of generators}
\end{figure}

To define $\F_\A$ on a cobordism, it is enough to consider cups, caps, and saddles. To a cup, $\F_\A$ assigns the unit $\varepsilon: \Z \to W_\Z$ defined by $\varepsilon(1) = w_+$. To a cap, $\F_\A$ assigns the counit $\eta: W_\Z \to \Z$ defined by 
\begin{align*}
& \eta(w_-) = 1 && \eta(w_+) = 0
\end{align*}
A saddle is assigned one of the six maps shown in Figure \ref{fig:annular formulas}, depending on whether it is a merge or a split and the types of curves involved. 

\begin{definition}\label{def:annular chain complex}
If $D$ is a diagram for an annular link $L$, define the \emph{annular Khovanov complex of $D$} to be 
\[CKh_{\A}(D):= \F_\A([[D]]);\]
it is an invariant of $L$ up to chain homotopy equivalence.
\end{definition}

\begin{figure}[H]
 \begin{center}
 \includegraphics[width=13cm]{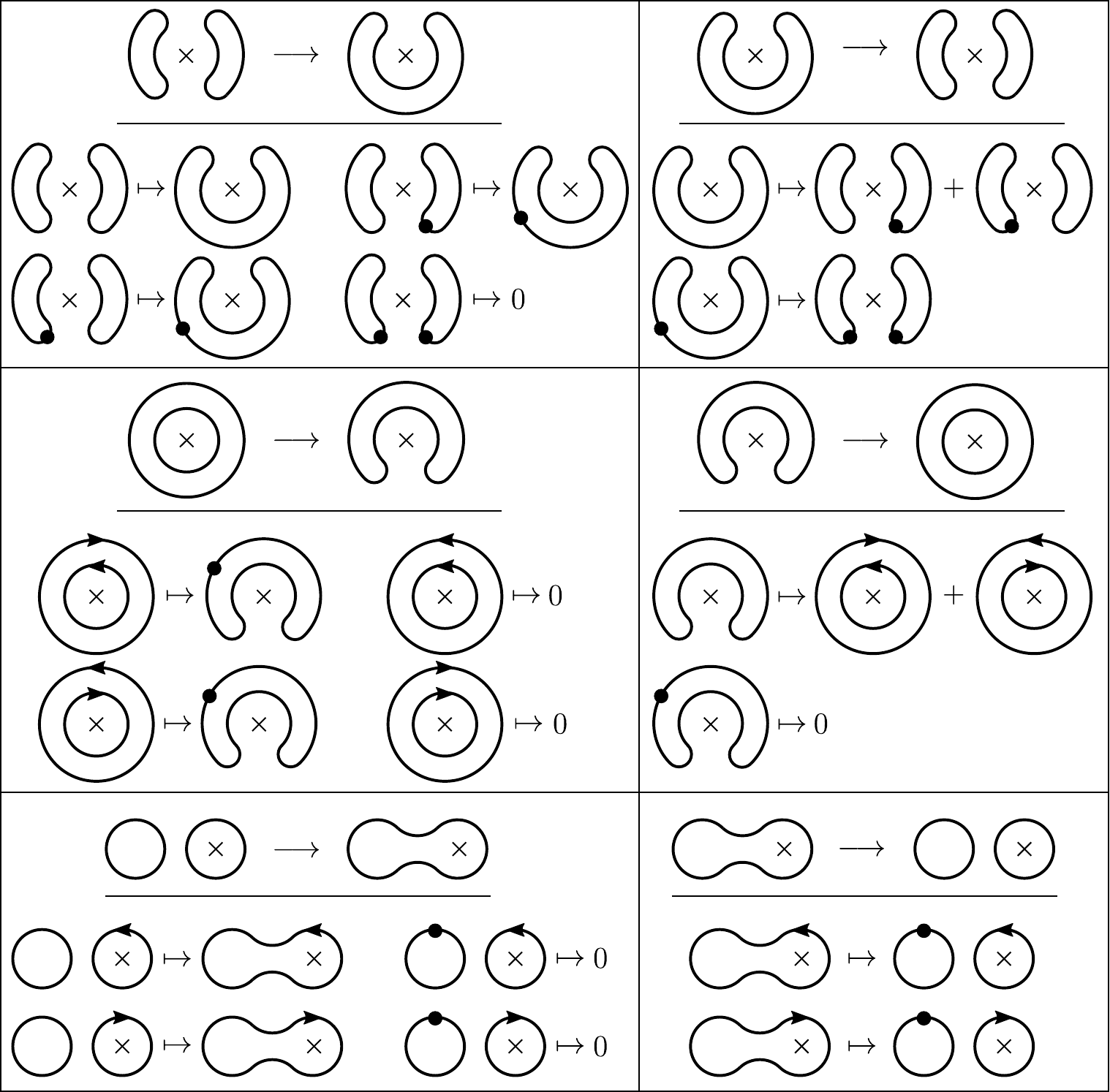}
 \caption{Surgery formulas in classical annular Khovanov homology. The table lists  topological types of surgeries and corresponding maps on generators.}\label{fig:annular formulas} 
 \end{center}
\end{figure}

\begin{remark}
Some of these formulas have an interpretation in terms of relations on cobordisms, as follows.  Let $\BBN(\A)$ \cite{BPW} denote the quotient of $\BN(\A)$ by Boerner's relation, which says that any cobordism carrying a dot and an essential curve is set to $0$ (see Figure \ref{fig:Boerner's relation}).
\begin{figure}
\centering
\includegraphics{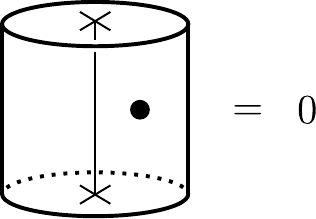}
\caption{Boerner's relation.}\label{fig:Boerner's relation}
\end{figure}

Algebraically, a dot on a cobordism corresponds to multiplication with $w_-$.  Then for a trivial circle $C$, the standard generator $w_+$ (resp. $w_-$) of $\F_\A(C) = W_\Z$ is the image of $1\in \Z$ under the undotted (resp. dotted) cup cobordism from $\varnothing$ to $C$.  The surgery formulas of Figure \ref{fig:annular formulas} then imply that $\F_\A$ factors through $\BBN(\A)$.  Algebraically, Boerner's relation can be seen as enforcing the equations $w_-\cdot v_- = w_- \cdot v_+ = 0$.
\end{remark}

\begin{remark}\label{rem:getting annular from regular}
To relate this to other constructions and grading conventions present in the literature (cf. \cite[Section 3]{Roberts}, \cite[Section 3.1]{GLW}), the annular chain complex may also be formed as follows. We may disregard the annular structure and view the resolutions $D_u$ in the cube $[[D]]$ as lying in the plane.  Then we may apply the usual Khovanov TQFT $\F_{Kh}$ to the cube, obtaining the Khovanov chain complex $CKh(D)$.  Every circle $C$ is assigned a free $\Z$-module $U$ generated by $u_+$ and $u_-$. The module $U$ carries an internal grading $\deg_U$, with $\deg_U(u_\pm) = \pm{1}$. Curves in the annulus carry an additional grading, $\adeg$, with $\adeg(u_{\pm})$ equal to $\pm{1}$ if $C$ is essential, and $0$ if $C$ is trivial. The Khovanov differential 
$d^{Kh}$ splits as $d^{Kh} = d^\A + d'$, where $d^\A$ preserves the annular grading and is precisely the map in Figure \ref{fig:annular formulas}, while $d'$ lowers the annular grading.  Thus the annular grading induces a filtration on the Khovanov complex $CKh(D)$. Taking the annual degree zero part of the differential and defining $\qdeg$ to be the difference between the degree $\deg_U$ and $\adeg$ yields precisely the classical annular chain complex  $CKh_\A(D)$. The gradings $\deg_U$, $\qdeg$, and $\adeg$ are denoted $j$, $j'$, and $k$ in \cite{GLW}, respectively.
\end{remark}

\subsection{Overview of Quantum Annular Homology}\label{sec:overview of quantum annular homology}
This section outlines the construction of the Beliakova-Putyra-Wehrli quantum annular link homology \cite{BPW}. The theory is built over a commutative ring $\k$ and a unit $\q\in \k$. We set $\k:=\Zq$, and the distinguished unit $\q\in \k$ is the same $\q$ appearing in $\Zq$. The main object is the quantum annular TQFT 
\[
\F_{\A_{\q}} : \BN_{\q}(\A) \to \Mod(\k)
\]
where $\Mod(\k)$ is the category of graded $\k$-modules and $\BN_{\q}(\A)$ is a certain deformation of the Bar-Natan category of the annulus. We will give a brief overview of the functor $\F_{\A_\q}$ and state a main theorem \cite[Theorem 6.3]{BPW}. 

\begin{remark} As mentioned above, we work over the Laurent polynomial ring $\k$ throughout this section. We will tensor the resulting theory with $\k_r:= \k/(\q^r-1)$ to construct the quantum annular Burnside functor in Section  
\ref{Quantum Annular Burnside section}.
\end{remark}

Let $\BN(n,m)$ denote the Bar-Natan category of the rectangle with $n$ points on the bottom and $m$ on top. Its objects are formal direct sums of formally graded  planar tangles in $I^2$ with $n$ endpoints on $I \times \{0\}$ and $m$ endpoints on $I \times \{1\}$. Such a tangle will be called a planar $(n,m)$-tangle. Morphisms in $\BN(n,m)$ are matrices whose entries are formal $\k$-linear combinations of embedded dotted cobordisms in $I^3$ between planar $(n,m)$-tangles, subject to the Bar-Natan relations (see Figure \ref{fig:BN relations}). 

\begin{figure}[H]
\centering
\includegraphics{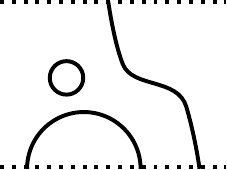}
\caption{A planar (3,1)-tangle}
\end{figure}

A \textit{seam} of $\A=S^1\times I$, denoted $\mu$, is an interval $\{*\}\times I$. In our representation of the interior of the annulus as $\R^2\smallsetminus\times$, we will fix the seam as the positive $x$-axis, ending on the left in $\times$.  See Figure \ref{fig:cutting a configuration} for an example.

 The quantum Bar-Natan category of the annulus, denoted $\BN_{\q}(\A)$, is a deformation of $\BN(\A$). 
 The objects of $\BN_{\q}(\A)$ are nearly the same as those of $\BN(\A)$, with the slight modification that curves in $\A$ must be transverse to $\mu$.
 Morphisms in $\BN_{\q}(\A)$ are also similar to those in $\BN(\A)$. In $\BN_{\q}(\A)$, isotopic cobordisms are identified if the isotopy fixes the \textit{membrane} $\mu\times I \subset \A\times I$. Otherwise, the cobordisms are scaled according to the degree of the part of the cobordism that passes through the membrane during the isotopy, accounting also for the coorientation of the membrane induced by the standard orientation of the core circle of $\A$. These will be referred to as \textit{trace moves}. The relations are depicted below in Figure $\ref{fig:trace moves}$; for details see \cite[Section 6.2]{BPW}.
\begin{figure}[H]
\centering
\includegraphics[height=4.5cm]{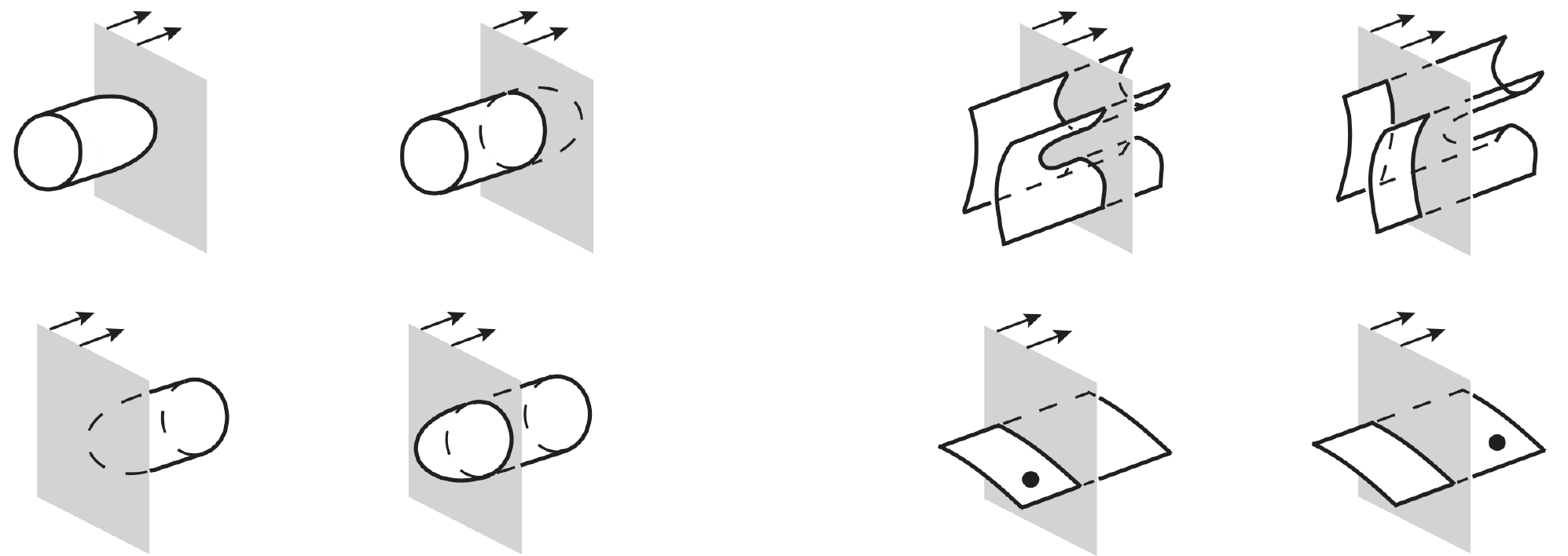}
    \put(-304,92){$=\,\q^{-1}$}
    \put(-299,30){$=\,\q$}
    \put(-83,92){$=\,\q$}
    \put(-86,30){$=\,\q^2$}
\caption{Relations in $\BN_{\q}(\A)$}\label{fig:trace moves}
\end{figure}

\noindent
The Bar-Natan relations (Figure \ref{fig:BN relations}) are imposed, where the local pictures are understood to be disjoint from the membrane. 

By general position if two annular cobordisms are isotopic, then they are a related by a sequence of trace moves and isotopies fixing the membrane. Therefore, if two cobordisms $S, S'\subset \A\times I$ are isotopic, then $S = \q^k S'$ as morphisms in $\BN_{\q}(\A)$, for some $k\in \Z$. (See also \cite[Proposition 6.2]{BPW}.)

 A \textit{configuration} $\Cs$ is a collection of disjoint simple closed curves in $\A$ which are transverse to $\mu$. Note that an object of $\BN_{\q}(\A)$ is a formal direct sum of formally graded configurations. Given a configuration $\Cs$ which intersects $\mu$ in $n$ points, we can cut along $\mu$ to obtain a planar $(n,n)$-tangle $\Cs^{\rm cut}$. See Figure \ref{fig:cutting a configuration} for an example. 
 \begin{figure}[H]
\centering
\includegraphics{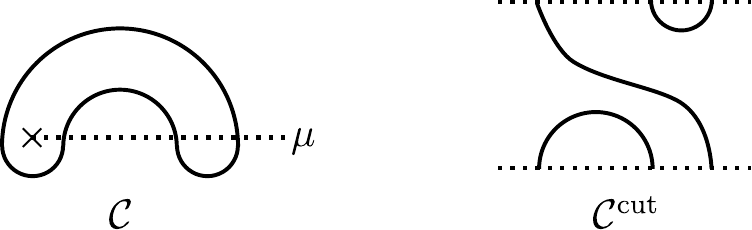}
\caption{Cutting open a configuration $\Cs$ along $\mu$ to obtain a (3,3)-tangle $\Cs^{\rm cut}$}\label{fig:cutting a configuration}
\end{figure}
A construction of Chen-Khovanov \cite{CK} yields graded $\k$-algebras $A^k$ for each $k\geq 0$, and a functor 
\[
\F_{CK} : \BN(n,m) \to \text{gBimod}(A^n, A^m)
\]
where $\text{gBimod}(A^n, A^m)$ is the category of graded $(A^n, A^m)$-bimodules. Let $\I^n$ denote the planar tangle consisting of $n$ vertical strands. Then, by definition of $\F_{CK}$, we have $\F_{CK}(\I^n) = A^n$. 

The \textit{quantum Hochschild homology}, denoted $qHH$ and defined in \cite[Section 3.8.5]{BPW}, is a deformation of the usual Hochschild homology of bimodules. It takes as input a graded $\k$-algebra $B$ and a graded $(B,B)$-bimodule $M$. The output $qHH(B,M) = \bigoplus_{i\geq 0} qHH_i(B,M)$ is a $\k$-module. Due to \cite[Proposition 6.6]{BPW} (stating that $qHH_i(\F_{CK}(\Cs^{\rm cut})) = 0$ for $i>0$),
we will mostly be interested in $qHH_0$. It follows immediately from the definition of $qHH$ that 
\begin{equation}\label{eq:qHH_0}
qHH_0(B,M) = M/ \text{span}_{\k} \{bm - \q^{\vert b \vert } mb \mid b\in B, m\in M\} 
\end{equation}
where $\vert b \vert$ denotes the degree of $b$.

 We are now ready to define $\F_{\A_{\q}}$ on objects. Let $\Cs$ be a configuration which intersects $\mu$ in $n$ points. Using the Chen-Khovanov functor, form the $(A^n, A^n$)-bimodule $\F_{CK}(\Cs^{\rm cut})$.
 The quantum annular TQFT $\F_{\A_{\q}}$ is then defined on objects by
\[
\F_{\A_{\q}}(\Cs) : = qHH(A^n, \F_{CK}(\Cs^{\rm cut})).
\]

By \cite[Proposition 6.6]{BPW}, we have $qHH_i(\F_{CK}(\Cs^{\rm cut})) = 0$
for $i>0$. Suppose $\Cs$ consists of $n$ essential curves each intersecting the seam once. Then $\Cs^{\rm cut} = \I^n$, so 
\[
\F_{\A_{\q}}(\Cs) = qHH_0(A^n, A^n).
\]
Let $A^n_0 \subset A^n$ denote the subalgebra consisting of elements of degree $0$.  By \cite[Proposition 6.6]{BPW}, the inclusion $A^n_0 \hookrightarrow A^n$ induces an isomorphism $qHH_0(A^n, A^n_0) \cong qHH_0(A^n, A^n)$. Moreover, $A^n_0$ is freely generated over $\k$ by $2^n$ elements $x_1,\ldots, x_{2^n}$, which are the \textit{primitive idempotents} of \cite[Section 5.5]{BPW}. They are in bijection with the \textit{cup diagrams} and satisfy $x_ix_j =\delta_{ij} x_i$. It follows from \eqref{eq:qHH_0} that 
\[
qHH_0(A^n, A^n_0) \cong \k^{2^n}.
\]
Every configuration $\Cs$ is isomorphic in $\BN_{\q}(\A)$ to a configuration $\Cs^\c$ in which every curve intersects the seam at most once. 
 If $\Cs$ has $e$ essential and $t$ trivial circles, then by delooping, one obtains
\[
\F_{\A_{\q}}(\Cs) \cong \F_{\A_{\q}}(\Cs^\c) \cong \k^{2^{e+t}}.
\]

We have so far only explained what $\F_{\A_{\q}}$ does on objects. The full construction of $\F_{\A_{\q}}$ in \cite{BPW} follows from a more general theory of (twisted) horizontal traces of bicategories, which we will not describe. The definition of $\F_{\A_{\q}}$ on morphisms follows from this general theory. The rest of this subsection describes the set-up for \cite[Theorem 6.3]{BPW}, which is stated as our Theorem \ref{thm:BPW}, and which is the main computational tool. 

Let $\BBN_{\q}(\A)$ denote the quotient of $\BN_{\q}(\A)$ by Boerner's relation, see Figure \ref{fig:Boerner's relation}. The functor $\F_{\A_{\q}}$ factors through $\BBN_{\q}(\A)$. Given a diagram $D$ for an annular link $L$ such that $D$ is transverse to $\mu$ and the crossings are disjoint from $\mu$, we form the cube of resolutions $[[D]]$ in the usual manner and view the result as a chain complex over the quantized category $\BBN_{\q}(\A)$.
\begin{definition}\label{def:quantum annular chain complex}
If $D$ is a diagram for an annular link $L$ which is transverse to the seam, define the \emph{quantum annular Khovanov complex of $D$} to be 
\[CKh_{\A_\q}(D):= \F_{\A_\q}([[D]]).\]
\end{definition}
The chain complex $CKh_{\A_\q}(D)$ is an invariant of $L$ up to chain homotopy equivalence by \cite[Proposition 6.8]{BPW}.

Let $TL$ denote the additive closure of the formally graded Temperley-Lieb category (\cite[Appendix A.1]{BPW}). Its objects are formal direct sums of formally graded finite collections of points on a line, and morphisms are $\k$-linear combinations of planar tangles between the points, modulo planar isotopy and the local relation that a circle is set to $\q+\q^{-1}$. Composition is given by stacking planar tangles, see Figure \ref{fig:TL example} for an example.  There is a functor $S^1\times (-) : TL \to \BN_{\q}(\A)$, which sends a collection of $n$ points to $n$ essential circles in $\A$, each intersecting $\mu$ once, and sends a planar tangle $T$ to the cobordism $S^1\times T$.  The relations in $\BN_{\q}(\A)$ (see Figure \ref{fig:trace moves}) imply that a torus wrapping once around the annulus evaluates to $\q+\q^{-1}$, so that $S^1 \times (-)$ is well-defined.

\begin{figure}[H]
\centering
\includegraphics{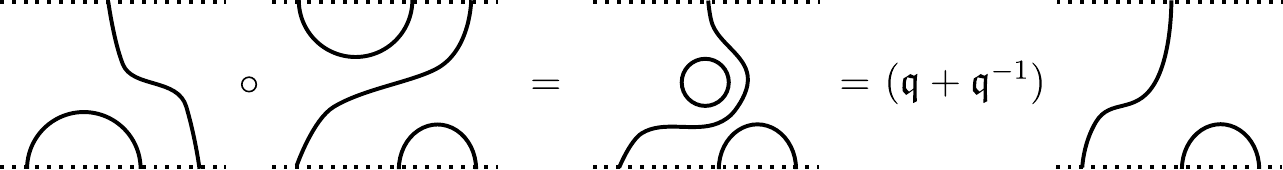}
\caption{Composition and relations in $TL$}\label{fig:TL example}
\end{figure}

Let $\text{gRep}(\U)$ denote the category of graded representations of $\U$. We follow the conventions established in \cite[Appendix A.1]{BPW} concerning $\U$; also see Section \ref{sec:the K map} of this paper. There is another functor $\F_{TL} : TL \to \text{gRep}(\U)$, defined as follows. Let $V_1 = \langle v_{-1}, v_1 \rangle $ be the fundamental representation of $\U$ and $V_1^* = \langle v_{-1}^* , v_1^* \rangle$ its dual. Let $V$ be free over $\k$ with basis $\{v_+, v_-\}$. Consider two $\k$-linear isomorphisms $\alpha: V_1 \to V$ and $\beta: V_1^*\to V$ defined by 
\begin{align*}
& \alpha : v_1 \mapsto v_+  && \beta: v_1^* \mapsto v_- \\
& \alpha: v_{-1} \mapsto v_- && \beta: v_{-1}^* \mapsto q^{-1}v_+
\end{align*}
These equip $V$ with two actions of $\U$, which are detailed in \cite[Appendix A.1]{BPW}. Note that $\beta^{-1}\circ \alpha : V_1 \to V_1^*$ is not $\U$-linear, even though $V_1$ and $V_1^*$ are isomorphic as $\U$-modules. 

The functor $\F_{TL}$ assigns $V^{\otimes n}$ to a collection of $n$ points. Since $\beta^{-1}\alpha$ is not $\U$-linear, there is an ambiguity in specifying the $\U$-module structure on $V^{\otimes n}$. The convention is that the $m$-th point is assigned $V_1$ if $m$ is odd, and $V_1^*$ if $m$ is even, so that the module assigned to $n$ points is given the $\U$-action according to the identification
\[
V^{\otimes n} \cong V_1 \otimes V_1^* \otimes V_1\otimes \cdots 
\] 

To define the value of $\F_{TL}$ on any planar tangle, it suffices to specify its value on caps and cups.  For a cap $\cap$, $\F_{TL}$ assigns the evaluation map $ev : V\otimes V  \to \k$, defined by
\begin{align*}
& v_+ \otimes v_+  \mapsto 0 && v_+ \otimes v_-  \mapsto q \\
& v_- \otimes v_-  \mapsto 0 && v_- \otimes v_+ \mapsto 1 
\end{align*}
On a cup $\cup$, $\F_{TL}$ assigns the coevaluation $coev: \k \to V\otimes V$, defined by
\[
1 \mapsto v_+ \otimes v_- + q^{-1} v_- \otimes v_+
\]
The evaluation map is always identified with either $V_1 \otimes V_1^* \to \k$ or $V_1^*\otimes V_1 \to \k$, and the coevaluation is identified with either $\k \to V_1 \otimes V_1^*$ or $\k \to V_1^* \otimes V_1$. With these identifications, the cap and cup are assigned $\U$-linear maps by $\F_{TL}$.

We have now explained the functors $\F_{TL} : TL\to \text{gRep}(\U)$ and $S^1\times (-): TL\to \BN_\q(\A)$. To compare $\F_{TL}$ with the composition $\F_{\A_\q}\circ S^1\times (-)$ in the statement of Theorem \ref{thm:BPW}, the value of $\F_{\A_\q}$ on $n$ essential circles intersecting the seam once needs to be given a $\U$-module structure. Recall that the Chen-Khovanov functor assigns the $\k$-algebra $A^n$ to the planar tangle $\I^n$ consisting of $n$ vertical strands. There is distinguished $\k$-linear isomorphism 
\begin{equation}\label{eq:value of QATQFT on essential circles}
qHH_0(A^n, A^n) \cong V^{\otimes n} 
\end{equation}
which we now describe. Recall that the inclusion $A^n_0 \hookrightarrow A_n$ induces an isomorphism on $qHH_0$, and that $qHH_0(A^n, A^n_0)$ has a distinguished $\k$-basis $\{x_1,\ldots, x_{2^n}\}$ corresponding to cup diagrams. Chen-Khovanov in \cite[Section 6]{CK} assign to each $x_i$ an element $p_i\in V^{\otimes n}$ such that the collection $\{p_i\}$ forms a basis of $V^{\otimes n}$. The isomorphism \eqref{eq:value of QATQFT on essential circles} is obtained by composing $qHH_0(A^n, A^n) \cong qHH_0(A^n, A^n_0)$ with the assignment $x_i\mapsto p_i$. 

\begin{theorem}\emph{(\cite[Theorem 6.3]{BPW})}\label{thm:BPW} There is a commuting diagram
\[
\begin{tikzcd}[column sep=tiny]
TL \arrow[rr, "S^1\times (-)"] \arrow[rd, "\F_{TL}"'] & & \BBN_{\q}(\A) \arrow[ld, "\F_{\A_{\q}}"] \\
 & \emph{gRep}(\U) & 
\end{tikzcd}
\]
with the horizontal functor an equivalence of categories. 
\end{theorem}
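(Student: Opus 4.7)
The plan is to separately establish (i) commutativity of the diagram and (ii) that the horizontal functor $S^1\times(-)$ is an equivalence. For (i), since $TL$ is monoidally generated by cups, caps, and identity strands, and both $\F_{TL}$ and $\F_{\A_\q}\circ(S^1\times-)$ respect the monoidal structure, it suffices to check agreement on objects and on these generating morphisms. On $n$ points, the horizontal functor yields a configuration $\Cs$ of $n$ essential circles each meeting $\mu$ once, so $\Cs^{\rm cut}=\I^n$ and $\F_{\A_\q}(\Cs)=qHH_0(A^n,A^n)$. The Chen-Khovanov identification $x_i\mapsto p_i$ recalled in the setup provides an isomorphism with $V^{\otimes n}$; one then checks that this intertwines the $\U$-action induced from $qHH_0$ with the prescribed $V_1\otimes V_1^*\otimes V_1\otimes\cdots$ structure used by $\F_{TL}$, using the explicit formulas for the primitive idempotents $x_i$, the vectors $p_i$, and the quantum commutation relation defining $qHH_0$.

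For commutativity on morphisms, I would compute $\F_{\A_\q}(S^1\times\cup)$ and $\F_{\A_\q}(S^1\times\cap)$ by running the bimodule maps assigned by $\F_{CK}$ to the cut-open cobordisms through $qHH_0$, and then translating via the identification with $V^{\otimes n}$. The expected outputs are the coevaluation $coev$ and evaluation $ev$ respectively; this is local (one cup or cap at a time) and reduces to a finite check once the identifications on objects are fixed.

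For the equivalence in (ii), essential surjectivity would follow from two local simplifications: delooping any trivial circles (the Bar-Natan isomorphism identifying a trivial circle with $\varnothing\{1\}\oplus\varnothing\{-1\}$ is supported away from $\mu$ and hence remains valid in $\BBN_\q(\A)$), and using isotopy together with trace moves to arrange each essential circle to meet $\mu$ exactly once. I expect full faithfulness to be the main obstacle. Bijectivity of the map $TL(n,m)\to\BBN_\q(\A)(S^1\times n,S^1\times m)$ requires a structural description of cobordisms between essential-circle configurations in $\A\times I$: I would take a Morse decomposition transverse to the membrane $\mu\times I$, argue that the critical data can be packaged as a planar tangle $T$ with $S^1\times T$ recovering the given cobordism modulo Bar-Natan and trace relations, and then verify that the only additional relation thereby forced --- a torus wrapping once around $\A$ evaluates to $\q+\q^{-1}$ --- matches exactly the Temperley-Lieb circle relation, so that no further quotient is needed.
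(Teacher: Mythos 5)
The paper does not prove this theorem; it is quoted as \cite[Theorem 6.3]{BPW} and used as a black box, so there is no in-paper argument to compare against and your proposal must be judged on its own.

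Your reduction of commutativity to cups, caps, and identity strands via the monoidal generation of $TL$ is the natural strategy, and the object-level identification via the Chen--Khovanov basis $x_i\leftrightarrow p_i$ is exactly what the paper's setup already fixes. One conceptual slip: there is no intrinsic $\U$-action on $qHH_0(A^n,A^n)$ waiting to be ``intertwined'' --- the $\U$-structure on the target of $\F_{\A_\q}$ is \emph{defined} by transporting the $V_1\otimes V_1^*\otimes V_1\otimes\cdots$ structure across the isomorphism \eqref{eq:value of QATQFT on essential circles}. The actual content of commutativity on morphisms is that the images under $qHH_0\circ\F_{CK}$ of cut-open caps and cups become $\U$-linear after this transport and agree with $ev$ and $coev$; you isolate this correctly, but it is the whole theorem on objects and generating $1$-morphisms, not a formality.

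The genuine gap is in the equivalence claim, specifically both halves of bijectivity on morphism spaces. For fullness, you assert that a Morse decomposition of a cobordism between unions of essential circles ``can be packaged as a planar tangle $T$ with $S^1\times T$ recovering the given cobordism.'' But such cobordisms pass through intermediate slices containing trivial circles (a self-saddle on an essential circle necessarily produces one essential and one trivial component), may carry closed surface components, dots, and genus, and may wrap the annulus in complicated ways. Reducing modulo Bar-Natan, Boerner, and trace relations to a $\k$-linear combination of $S^1\times T$'s is the main geometric work and your sketch does not address how it is done; in particular Boerner's relation is needed on essential slices and neck-cutting along trivial intermediate circles must be controlled. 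For faithfulness, ``verify that the only additional relation forced is the torus evaluating to $\q+\q^{-1}$'' is not an argument: a priori there could be further relations, and ruling them out requires either a rank count on both sides or (more cleanly, and compatible with your own structure) first proving commutativity, recalling that $\F_{TL}$ is faithful over $\Zq$, and deducing that $S^1\times(-)$ is forced to be faithful. As written, your plan proves commutativity and faithfulness simultaneously while each leans on the other, and the fullness step is a substantial unproved geometric claim.
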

This theorem will play an important role in determining the values of the differential on generators in the next section.

\subsection{Fixing Generators}\label{sec:fixing generators}

In the construction of  Khovanov homotopy types in \cite{LS}, \cite{SSS} it is important to have a fixed set of generators for each configuration. Due to the definition of $\F_{\A_\q}$, the situation is more complicated in quantum annular homology. In this subsection, we explain how to fix generators for a general configuration. 

We will say a configuration $\Cs$ is \textit{standard} if every component intersects the seam in at most one point.   Every configuration $\Cs$ is isotopic in $\A$ to a standard configuration, denoted $\Cs^\c$, which is unique up to planar isotopy of the cut-open planar tangle. Next we explain in detail how Theorem \ref{thm:BPW} gives a canonical choice of generators for $\F_{\A_{\q}}(\Cs)$ when $\Cs$ is standard.

For a configuration $\Cs$, we will write $\Cs_E$ to denote the essential circles in $\Cs$, and $\Cs_T$ to denote the trivial circles. Figure \ref{fig:a configuration and its related things} illustrates these conventions. 
 
\begin{figure}[H]
\centering
    \begin{subfigure}[b]{.4\textwidth}
    \begin{center}
    \includegraphics{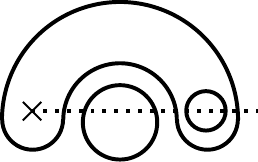}
    \caption{A configuration $\Cs$}
    \end{center}
    \end{subfigure}
    \begin{subfigure}[b]{.4\textwidth}
    \begin{center}
    \includegraphics{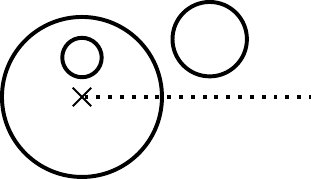}
    \caption{Its standard form $\Cs^\c$}
    \end{center}
    \end{subfigure}\\
    \begin{subfigure}[b]{.4\textwidth}
    \begin{center}
    \includegraphics{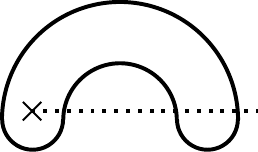}
    \caption{$\Cs_E$}
    \end{center}
    \end{subfigure}
    \begin{subfigure}[b]{.4\textwidth}
    \begin{center}
    \includegraphics{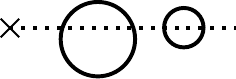}
    \caption{$\Cs_T$}
    \end{center}
    \end{subfigure}
\caption{}\label{fig:a configuration and its related things}
\end{figure}

For a cobordism $S\subset \A \times I$, let $\b{S}$ denote its reflection in the $I$ coordinate. For a configuration $\Cs$, we will often suppress the notation $\F_{\A_{\q}}(\Cs)$ when it is clear from context; that is, $x\in \Cs$ means $x\in \F_{\A_{\q}}(\Cs)$. Likewise, for a cobordism $S:\Cs\to \Cs'$, we will often write $S(x)$ to mean $\F_{\A_\q}(S)(x)$, where $\F_{\A_\q}(S): \F_{\A_{\q}}(\Cs) \to \F_{\A_{\q}}(\Cs')$ is the induced map. For cobordisms $\Cs \xrightarrow{S_1} \Cs'$ and $\Cs' \xrightarrow{S_2} \Cs''$, we will write $S_2S_1$ to denote their composition.

Suppose $C\subset \A$ is a trivial circle, and set $n=\vert C \cap \mu \vert$. 
The circle $C$ bounds an embedded disk $D\subset \A$, and we may push the interior of $D$ down in the $I$ coordinate to obtain a cobordism $\Sigma\subset \A\times I$ from the empty set to $C$ which intersects the membrane in exactly $n/2$ arcs. We refer to $\Sigma$ as the \textit{cup cobordism on $C$}. Similarly, we may pull $D$ up in the $I$ coordinate to obtain the \textit{cap cobordism on C}, which is simply $\b{\Sigma}$.

Let $W$ denote the $\k$-module assigned by $\F_{\A_{\q}}$ to a trivial circle $C$ which is disjoint from the seam. The module $W = \langle w_-, w_+\rangle$ is free of rank $2$. The standard generator $w_+$ (resp. $w_-$) is the image of $1\in \k$ under the undotted (resp. dotted) cup cobordism on $C$. Therefore, we will often identify $w_\pm$ with these cup cobordisms. Diagrammatically, we will signify that a trivial circle $C$ in $\Cs$ is labelled by $w_-$ by drawing a dot on $C$, as in Figure \ref{fig:diagrammatic representation of generators}. 

Suppose $\Cs$ is a standard configuration with $e$ essential circles and $t$ trivial circles. Order the trivial circles in some way, and order the essential circles from the innermost to the outermost. The exact ordering of the trivial circles is irrelevant, but it is important to order the essential circles in this way in light of Theorem $\ref{thm:BPW}$ and the asymmetry of the evaluation and coevaluation maps. We have that
\begin{equation} \label{trivial essential product eq}
\F_{\A_{\q}}(\Cs)  = V^{\otimes e} \otimes W^{\otimes t},
\end{equation}
where the tensor products above are understood to be over $\k$, and the identification of the value of $\F_{\A_\q}$ on $e$ standard essential circles with $V^{\otimes e}$ is the isomorphism from \eqref{eq:value of QATQFT on essential circles}. The modules $V$ and $W$ are each bigraded, carrying a quantum grading $\qdeg$ and an annular grading $\adeg$. The degrees of generators are as in $\eqref{qdeg:simul1}$ and $\eqref{adeg:simul2}$. Algebraically, we will write a standard generator as
\[
v_{a_1} \otimes \cdots \otimes v_{a_e} \otimes w_{b_1} \otimes \cdots \otimes w_{b_t}
\]
where each $a_i, b_j \in \{-, +\}$, the $v_{a_i}$ label the essential circles, and the $w_{b_j}$ label the trivial circles. We will often shorten the notation to $v_\Is \otimes w_\Js$, where $\Is$ is a sequence of $\pm$ labelling the essential circles and $\Js$ is a sequence of $\pm$ labelling the trivial ones. 
Note also that each standard generator $x = v_\Is \otimes w_\Js \in \Cs$ of a standard configuration $\Cs$ is the image of $v_\Is$ under the cobordism 
\[
\Sigma_\Js : \Cs_E \to \Cs
\]
which is the identity on $\Cs_E$ and a cup cobordism on each trivial circle, with some cups possibly carrying dots as specified by the labels $\Js$. Diagrammatically, we will use the same convention as in Figure \ref{fig:diagrammatic representation of generators}.

The following lemma concerns general (not necessarily standard) configurations; the argument is similar to the proof of \cite[Lemma 6.4]{BPW}.

\begin{lemma}\label{lem:inverse of an isotopy}
Let $\Cs, \Cs' \subset \A$ be two isotopic configurations. Let $\phi$ be an isotopy from $\Cs$ to $\Cs'$, and denote by $S:\Cs\to \Cs'$ the cylindrical cobordism in $\A\times I$ formed by $\phi$. Then $S$ is an isomorphism in $\BN_{\q}(\A)$, with $S^{-1} = \q^k \b{S}$ for some $k\in \Z$.
\end{lemma}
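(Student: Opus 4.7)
\emph{Plan.} The strategy is to produce integers $a,b\in\Z$ with
\[
\b{S}\circ S = \q^a\,\id_\Cs, \qquad S\circ\b{S} = \q^b\,\id_{\Cs'}
\]
in $\BN_\q(\A)$, and then to show $a=b$, whence $S^{-1}:=\q^{-a}\b{S}$ will be a two-sided inverse of $S$ of the desired form. This splits the problem into a topological step (isotope the relevant composites to identity cylinders) and an algebraic step (pin down the scalar).

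For the topological step, realize $S$ as the mapping cylinder $\{(\phi_t(x),t):x\in\Cs,\,t\in I\}\subset\A\times I$ of $\phi$, so that $\b{S}$ is, up to reversing the $I$-coordinate, the mapping cylinder of the reverse isotopy $\phi^{-1}$. Then $\b{S}\circ S$ is the mapping cylinder of the concatenated loop $\phi\cdot\phi^{-1}$ based at $\Cs$ in the space of configurations of $\A$; this loop is null-homotopic rel endpoints by the standard ``go and come back'' contraction, and any such null-homotopy produces an ambient isotopy rel boundary in $\A\times I$ from $\b{S}\circ S$ to the trivial product cylinder $\Cs\times I=\id_\Cs$. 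The same argument yields an isotopy from $S\circ\b{S}$ to $\id_{\Cs'}$. By the discussion of trace moves following Figure \ref{fig:trace moves} (see also \cite[Proposition 6.2]{BPW}), any two cobordisms isotopic in $\A\times I$ agree in $\BN_\q(\A)$ up to a power of $\q$, which gives the two displayed identities.

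To match the exponents, observe that $\q^{-a}\b{S}$ is a left inverse of $S$ and $\q^{-b}\b{S}$ is a right inverse; in any category left and right inverses coincide (via $g_L=g_L\circ S\circ g_R=g_R$), so $(\q^{-a}-\q^{-b})\b{S}=0$ as a morphism. Applying the quantum annular TQFT $\F_{\A_\q}$ yields $(\q^{-a}-\q^{-b})\F_{\A_\q}(\b{S})=0$; but $\F_{\A_\q}(\b{S})\circ \F_{\A_\q}(S)=\q^a\,\id$ is nonzero, so $\F_{\A_\q}(\b{S})$ is a nonzero $\k$-linear map between free $\k$-modules, and since $\k=\Z[\q,\q^{-1}]$ is an integral domain this forces $\q^{-a}=\q^{-b}$, i.e.\ $a=b$. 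The main obstacle is precisely this matching of exponents; a more hands-on alternative would be to track signed membrane-crossings through a generic null-homotopy of $\phi\cdot\phi^{-1}$, but pushing the scalar identity through the TQFT is cleaner and bypasses the combinatorics.
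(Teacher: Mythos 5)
Your proof is correct, and it takes a genuinely different route from the paper's. The paper decomposes the isotopy $\phi$ into a finite sequence of local $P^{\pm 1}, N^{\pm 1}$ moves, sets $k = n - p$ (the signed count of such moves), and deduces both $\q^k\b{S}S = \id_\Cs$ and $\q^k S\b{S} = \id_{\Cs'}$ directly from the trace relations applied move-by-move; the equality of the two exponents is built in, since the same count governs both composites. You instead work globally: you produce the two scalars $\q^a$, $\q^b$ by appealing to the ``go and come back'' null-homotopy of $\phi\cdot\phi^{-1}$ together with the general-position statement (\cite[Proposition 6.2]{BPW}) that isotopic cobordisms agree up to a power of $\q$, and then you add a separate step to force $a=b$: left and right inverses coincide, so $(\q^{-a}-\q^{-b})\b{S}=0$, and pushing through $\F_{\A_\q}$ (which lands in free, hence torsion-free, $\k$-modules) kills the torsion ambiguity. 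That last step is the clever part; it is needed precisely because your global argument does not carry any combinatorial bookkeeping of the exponent, whereas the paper's local decomposition computes $k$ explicitly (and symmetrically) so no matching step is required. The trade-off is that your approach is shorter on combinatorics but longer on abstraction, and it does not produce an explicit formula for $k$ in terms of the isotopy; the paper's approach is more hands-on and yields the formula $k=n-p$, which is arguably useful elsewhere. One small point worth tightening in a final write-up: ``any such null-homotopy produces an ambient isotopy rel boundary'' is slightly glib --- you should say that the truncation contraction $g_s(t)=s\cdot\min(2t,2-2t)$ gives a one-parameter family of embedded cylinders in $\A\times I$ rel endpoints, and then invoke isotopy extension to make it ambient; this is what actually feeds into \cite[Proposition 6.2]{BPW}.
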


\begin{proof}

Isotopic cobordisms are equal in $\BN_{\q}(\A)$ if the isotopy between them fixes the membrane. We may therefore assume that the isotopy $\phi$ is a sequence of the local moves in Figure $\ref{fig:PN moves}$, denoted $P, P^{-1}, N,$ and $N^{-1}$. 

\begin{figure}[H]
\centering
\includegraphics{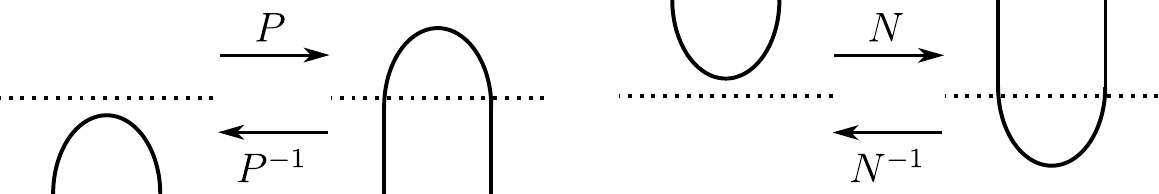}
\caption{}\label{fig:PN moves}
\end{figure}

Let  $p$ denote the number of moves of type $P$ or $P^{-1}$, let $n$ denote the number of moves of type $N$ or $N^{-1}$, and set $k=n-p$. It follows from the relations in Figure \ref{fig:trace moves} that 
\[
\q^k\b{S}S = \id_{\Cs},\ \q^k S\b{S} = \id_{\Cs'}.
\]
\end{proof}

\begin{lemma}\label{lem:generators under self-isotopy}
Let $\Cs$ be a standard configuration. Let $\phi$ be an isotopy from $\Cs$ to itself, with corresponding cobordism $S: \Cs \to \Cs$. For any standard generator $x\in \Cs$, $\q^k S(x) = x$ for some $k\in \Z$.
\end{lemma}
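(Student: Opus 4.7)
The plan is to express $x$ as the image of an essential-only generator and then exploit the principle, stated just after Figure \ref{fig:trace moves}, that isotopic cobordisms in $\A\times I$ agree up to a power of $\q$ in $\BN_\q(\A)$. Writing $x = v_\Is \otimes w_\Js = \F_{\A_\q}(\Sigma_\Js)(v_\Is)$, where $\Sigma_\Js: \Cs_E\to \Cs$ is the cobordism from the paragraph just before the lemma (identity on essential circles, possibly-dotted cups on the trivial circles), functoriality gives
\[
S(x) \;=\; \F_{\A_\q}(S\circ \Sigma_\Js)(v_\Is).
\]
Thus it suffices to prove the morphism identity $S\circ \Sigma_\Js = \q^{-k}\, \Sigma_\Js$ in $\BN_\q(\A)$ for some $k\in\Z$, for then applying $\F_{\A_\q}$ and evaluating at $v_\Is$ gives $\q^k S(x) = x$.

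To establish this identity I would mimic the argument of Lemma \ref{lem:inverse of an isotopy}. Both $S\circ \Sigma_\Js$ and $\Sigma_\Js$ are cobordisms from $\Cs_E$ to $\Cs$ in $\A\times I$ with the same boundary and the same underlying surface topology (a disjoint union of cylinders over essential circles and disks over trivial ones). By general position, the self-isotopy $\phi$ can be decomposed into a finite sequence of membrane-fixing local isotopies and trace moves of types $P^{\pm 1}$ and $N^{\pm 1}$. The membrane-fixing pieces yield equality of morphisms in $\BN_\q(\A)$, while each trace move scales the morphism by $\q^{\pm 1}$ or $\q^{\pm 2}$ according to Figure \ref{fig:trace moves}. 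Accumulating these scalar contributions yields $S\circ \Sigma_\Js = \q^{-k}\, \Sigma_\Js$, with the integer $k$ depending only on $\Js$ and on the isotopy class of $\phi$ (but not on $\Is$).

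The main subtlety I expect is that the loop $\phi$ need not be null-homotopic in the space of configurations isotopic to $\Cs$; for instance it can slide an essential component once around the core of $\A$, producing winding of $S$ that prevents $S\circ \Sigma_\Js$ and $\Sigma_\Js$ from being isotopic rel boundary as embedded surfaces in $\A\times I$. The trace relations of Figure \ref{fig:trace moves} are designed precisely to trivialize such windings in $\BN_\q(\A)$ up to powers of $\q$: each passage of a strand through the membrane contributes the scalar $\q^{\pm 1}$ or $\q^{\pm 2}$, and the total exponent $k$ is a bookkeeping of these passages. The technical heart of the proof is therefore a careful general-position argument, promoting the loop $\phi$ to such a finite sequence of elementary moves and verifying that the accumulated scalar is well-defined.
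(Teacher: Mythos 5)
Your opening move coincides exactly with the paper's: write $x = \F_{\A_\q}(\Sigma_\Js)(v_\Is)$, so that the lemma reduces to showing $\q^k\,S\Sigma_\Js = \Sigma_\Js$ in $\BN_\q(\A)$ for some $k\in\Z$. From that point on, though, you pivot to a different tactic — decomposing the $1$-dimensional isotopy $\phi$ of $\Cs$ into $P^{\pm1},N^{\pm1}$ and membrane-fixing pieces — whereas the paper reasons directly on the $2$-dimensional cobordism $S\Sigma_\Js$: each of its components is either an annulus over an essential circle or a possibly-dotted disk over a trivial circle, and each such component is isotoped rel boundary to the corresponding product annulus or cup, accruing a power of $\q$ along the way.

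There are two concrete problems with your version. First, the decomposition of $\phi$ only sees arcs of the configuration crossing the seam, which is what the $P$ and $N$ relations control (contributing $\q^{\pm1}$). The $\q^{\pm2}$ factors come from a dot passing through the membrane, and those dots live on the cup cobordisms in $\Sigma_\Js$, not on the trace of $\phi$. So a decomposition of $\phi$ alone cannot produce the dependence of $k$ on $\Js$, which you correctly claim and which Remark \ref{non uniform} says is genuinely present; your accounting would need to also track how the isotopy of the disk components moves the dots, i.e., you would end up reproducing the paper's component-wise argument rather than a self-contained analysis of $\phi$.

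Second, your ``main subtlety'' is based on a misreading of the role of the trace relations. You worry that a non-null-homotopic $\phi$ might make $S\Sigma_\Js$ and $\Sigma_\Js$ fail to be isotopic rel boundary, and assert that the trace relations are designed to trivialize such windings anyway. But the trace relations only compare cobordisms that \emph{are} isotopic rel boundary and differ by moving a piece through the membrane; they say nothing about genuinely non-isotopic cobordisms. The way the issue is actually dispatched (implicitly, in the paper) is that in the solid torus $\A\times I$ the component disks and annuli of $S\Sigma_\Js$ are always isotopic rel boundary to the corresponding cups and product annuli of $\Sigma_\Js$ — irreducibility of $\A\times I$ for the disks, and incompressibility for the core-parallel annuli — so the worry does not arise. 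Stating that fact, rather than trying to route around it with trace relations, is what makes the argument close.
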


\begin{proof}
As discussed earlier, a standard generator $x= v_\Is \otimes w_\Js \in \Cs$ is the image of $v_\Is$ under a cobordism
\[
\Sigma_\Js : \Cs_E \to \Cs
\]
which is the identity on $\Cs_E$ and a cup cobordism on all circles in $\Cs_T$, with some cups possibly carrying dots as specified by $\Js$. Every component of the cobordism $S \Sigma_\Js$ is either an undotted annulus between essential circles or a possibly dotted disk with trivial boundary. Each disk can be isotoped to a cup cobordism on its trivial boundary circle at the cost of multiplying by a power of $\q$.  Then, at the cost of introducing further powers of $\q$, the remaining annuli may be isotoped to the identity cobordism on $\Cs_E$ while fixing each cup cobordism. 
This yields $\q^k S\Sigma_\Js = \Sigma_\Js$ for some $k\in \Z$.
\end{proof}

\begin{lemma}\label{lem:generators differ by power of q}
Let $\Cs$ be a configuration and let $\phi_1$, $\phi_2$ two isotopies from $\Cs^\c$ to $\Cs$. Denote the corresponding cobordisms by $S_1, S_2: \Cs^\c \to \Cs$. If $x\in \Cs^\c$ is a standard generator, then $S_1(x) = \q^k S_2(x)$ for some $k\in \Z$. 
\end{lemma}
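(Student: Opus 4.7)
The plan is to reduce this statement to Lemma \ref{lem:generators under self-isotopy} by producing a self-isotopy of $\Cs^\c$ from the pair $(\phi_1,\phi_2)$. Concretely, I would consider the concatenated isotopy $\phi_1^{-1}\cdot \phi_2$, which goes from $\Cs^\c$ to $\Cs$ via $\phi_2$ and then back to $\Cs^\c$ via $\phi_1^{-1}$. The cylindrical cobordism realizing this is $\b{S_1}\circ S_2:\Cs^\c\to \Cs^\c$. By Lemma \ref{lem:generators under self-isotopy} applied to this self-isotopy, we get $\q^j(\b{S_1}\circ S_2)(x)=x$ for some $j\in\Z$.

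Next I would invoke Lemma \ref{lem:inverse of an isotopy} to convert $\b{S_1}$ into $S_1^{-1}$: the lemma says $S_1^{-1}=\q^m \b{S_1}$ for some integer $m$. Applying $S_1$ to the equation from the previous step gives
\[
\q^j S_2(x) \;=\; \q^j S_1\bigl((\b{S_1}\circ S_2)(x)\bigr) \;=\; \q^{j-m} S_1\bigl((S_1^{-1}\circ S_2)(x)\bigr) \;=\; \q^{j-m}S_2(x),
\]
which isn't quite what I want. Better: apply $S_1$ to both sides of $\q^j(\b{S_1}\circ S_2)(x)=x$, using $S_1\b{S_1}=\q^{-m}\id_\Cs$ from Lemma \ref{lem:inverse of an isotopy}, to obtain $\q^{j-m} S_2(x) = S_1(x)$, i.e.\ $S_1(x)=\q^{j-m}S_2(x)$. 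Setting $k:=j-m$ gives the claim.

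The main subtlety, and the step I expect to need the most care, is verifying that $\b{S_1}\circ S_2$ really is the cylindrical cobordism associated to a self-isotopy of $\Cs^\c$ so that Lemma \ref{lem:generators under self-isotopy} applies directly. This amounts to checking that concatenating the two isotopies $\phi_2$ and $\phi_1^{-1}$ produces a cobordism isotopic (rel $\Cs^\c \times \{0,1\}$) in $\A\times I$ to the stacking of the two individual mapping cylinders, with the stacking compatible with the membrane structure so that no hidden powers of $\q$ are introduced beyond those already accounted for by Lemma \ref{lem:inverse of an isotopy}. Once this is in place, the argument is a purely formal manipulation of the two lemmas already proved.
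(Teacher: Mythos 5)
Your proof is correct and takes essentially the same approach as the paper: both form the self-isotopy cobordism $\b{S_1}\circ S_2$, apply Lemma \ref{lem:generators under self-isotopy} to it, and then use the invertibility of $\b{S_1}$ (via Lemma \ref{lem:inverse of an isotopy}) to cancel it, differing only in whether one composes with $S_1$ on the left or cancels $\b{S_1}$ directly. The "subtlety" you flag is handled in the paper simply by noting that $\b{S_1}S_2$ is the cobordism traced out by the concatenated isotopy $\phi_1^{-1}\phi_2$, which is the definition of composition in $\BN_\q(\A)$.
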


\begin{proof}
Consider the cobordism $\overline{S_1}S_2: \Cs^\c\to \Cs^\c$, which is formed by the isotopy $\phi_1^{-1} \phi$. By Lemma \ref{lem:generators under self-isotopy}, we have $\q^m \overline{S_1}S_2 (x) = x$ for some $m\in \Z$. By Lemma \ref{lem:inverse of an isotopy}, we know $\q^\l \b{S_1} S_1 = \id_{\Cs}$ for some $\l\in \Z$. Then 
\[
 \q^\l \b{S_1} S_1 (x) = x = \q^m \b{S_1}S_2(x),
\]
and we obtain
\[
S_1(x) = \q^{m-\l} S_2(x).
\]
\end{proof}

\begin{remark} \label{non uniform} Given $S_1, S_2$ as in lemma \ref{lem:generators differ by power of q}, in general the power $k$ of $\q$ depends on the generator $x\in \Cs^\c$.
\end{remark}

So far the discussion concerned only generators of standard configurations. Next we consider generators for arbitrary configurations. 

\begin{definition} \label{generators of a configuration} {\rm
Fix a configuration $\Cs$ and an isotopy from $\Cs^\c$ to $\Cs$. Let $S$ denote the resulting cobordism $\Cs^\c \to \Cs$. The {\em generators of $\F_{\A_{\q}}(\Cs)$ corresponding to the cobordism $S$}, are the images of the standard generators of $\Cs^\c$ under $S$. We will also write generators of $\Cs$ as $v_\Is \otimes w_\Js$, which is to be understood as the image of the corresponding standard generator of $\Cs^\c$. Note that this image $v_\Is\otimes w_\Js$ depends on the choice of isotopy $\Cs^\c \to \Cs$, which we suppress from the notation.}
\end{definition}
 By Lemma \ref{lem:generators differ by power of q}, these generators of $\Cs$ are well-defined up to multiplication by a (non-uniform, according to Remark \ref{non uniform}) power of $\q$. We assume throughout that there is a fixed isotopy $\Cs^\c \to \Cs$, which will often not be named. Likewise, an unnamed cobordism $\Cs\to \Cs^\c$ denotes the inverse of $\Cs^\c \to \Cs$.

As discussed earlier, a standard generator $x\in \Cs^\c$ is the image of the corresponding standard generator of $\Cs^\c_E$ under a cobordism $\Sigma_\Js : \Cs^\c_E \to \Cs^\c$, where $\Sigma_\Js$ is the identity on $\Cs^\c_E$ and a cup cobordism on each circle in $\Cs^\c_T$, with some cups possibly carrying dots. Up to a power of $\q$, the cobordism $S\Sigma_\Js$ represents a cobordism which traces out an isotopy $\Cs^\c_E \to \Cs_E$ and is a cup cobordism on each circle in $\Cs_T$. Then each standard generator of $\Cs$ can also be realized as the image of a cup cobordism on each trivial circle and an isotopy on the essential circles. 

 Note also that we do not make any assumptions about how these isotopies are picked for different configurations within a single cube of resolutions.

\subsection{Computation of the Saddle Maps}\label{sec:saddle maps}

In this subsection we compute saddle maps in quantum annular homology using the relations in $\BBN_\q(\A)$ and Theorem \ref{thm:BPW}. These results will be used in the formulation of the quantum annular Burnside functor in Section \ref{Quantum Annular Burnside section}.

We start with several examples; the general case is treated in Proposition 
\ref{prop:LBM1}. Saddle maps for various types of configurations (where intersections with the seam are minimal) are summarized in Figure \ref{fig:quantum annular formula table}. In the first two examples the calculation relies on the Boerner relation and relations satisfied by cobordisms in the Bar-Natan category. Specifically, one uses the neck-cutting relation and delooping, cf. \cite[Proposition 5.3]{BPW}. (Note that delooping makes sense only for trivial, and not for essential circles in the annulus.)

To analyze our saddle maps, we will use the language of \textit{surgery arcs}, as in \cite[Section 2]{LS}. For a configuration $\Cs$, a surgery arc is an  interval embedded in $\A$ whose endpoints lie on $\Cs$ and whose interior is disjoint from $\Cs$. In the construction of quantum annular homology, link diagrams are assumed transverse to $\mu$, and all crossings are away from $\mu$. In light of this, we will assume that surgery arcs are disjoint from the seam. For a configuration $\Cs$ with a surgery arc, let $s(\Cs)$ denote the configuration obtained by surgery on the arc. There is a saddle cobordism $\Cs\to s(\Cs)$, which is well-defined in $\BN_{\q}(\A)$. In terms of the cube of resolutions of a link diagram, a surgery arc may be placed at a $0$-smoothing to indicate that there will be a saddle cobordism at that smoothing.  

\begin{example}\label{ex1}
For the saddle
\begin{center}
\includegraphics{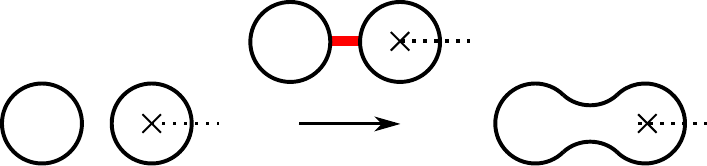}
\end{center}
between standard configurations, we have the following formulas

\begin{center}
\includegraphics{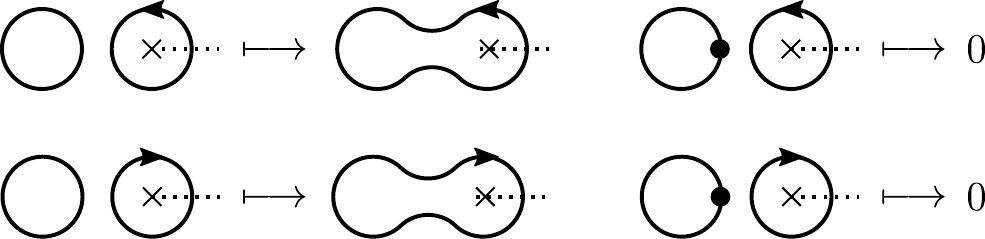}
\end{center}
Algebraically, this is written as 
\begin{align*}
& v_+ \otimes w_+ \mapsto v_+ && v_+ \otimes w_- \mapsto 0  \\
& v_- \otimes w_+ \mapsto v_- && v_- \otimes w_- \mapsto 0
\end{align*}
These can be deduced from Boerner's relation (Figure \ref{fig:Boerner's relation}) and the fact that the two standard generators of a trivial circle are picked out by an undotted cup and a once dotted cup. 
\end{example}

\begin{example}\label{ex2}
For the saddle 

\begin{center}
\includegraphics{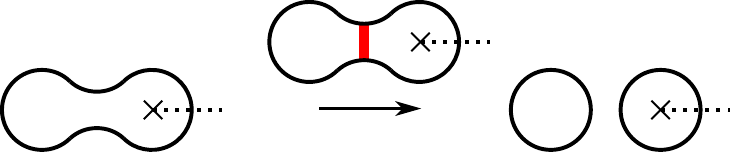}
\end{center}
we have the formulas

\begin{center}
\includegraphics{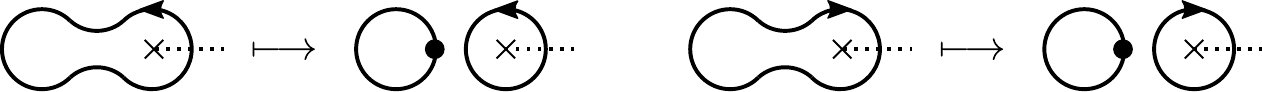}
\end{center}
which, algebraically, can be written as 
\begin{align*}
& v_+ \mapsto v_+ \otimes w_- && v_- \mapsto v_- \otimes w_-
\end{align*}
This can be deduced by cutting the neck along the trivial circle which splits off as a result of the saddle, and then applying Boerner's relation. 
\end{example}

The next two examples are also discussed in \cite[Section 6.4]{BPW}.
\begin{example}\label{ex3} 
Let $S$ denote the following saddle 
\begin{center}
\includegraphics{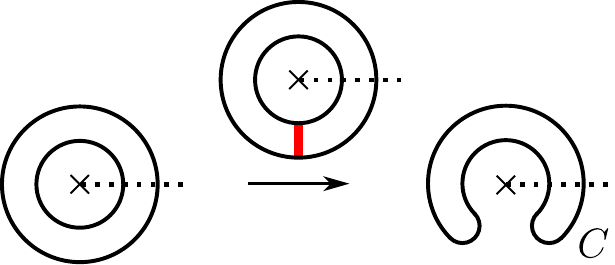}
\end{center}
Let $C$ denote the trivial circle on the right-hand side above. 
Since $C$ is not standard, we need to pick an isotopy to specify its generators. For the sake of calculation, we pick the following isotopy

\begin{center}
\includegraphics{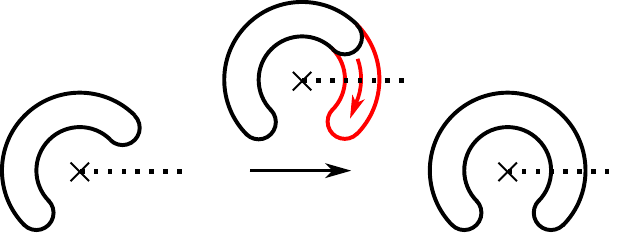}
\end{center}
which specifies generators, represented diagrammatically, as 
\begin{center}
\includegraphics{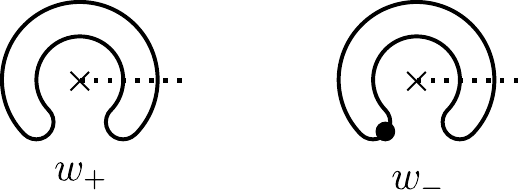}
\end{center}
These generators are the images of $1\in \k$ under the undotted and dotted cup cobordisms on $C$, respectively. Since the cup cobordism on $C$ intersects the membrane, the placement of the dot is relevant, and the diagram shows where the dot is placed. 

Now, let $\Sigma$ denote the undotted cap cobordism on $C$, and let $\Sigma'$ denote the dotted cap cobordism on $C$, with the dot placed as in the generator $w_-$. Using the relations in $\BN_{\q}(\A)$, we obtain
\begin{align*}
&\Sigma(w_+) = 0 && \Sigma(w_-) = \q^{-1} \\
& \Sigma'(w_+) = \q^{-1} && \Sigma'(w_-) = 0
\end{align*}
Composing with the saddle $S$, observe that $\Sigma S = S^1\times \cap$ in $\BBN_\q(\A)$, and that $\Sigma'S = 0$ by Boerner's relation. We are now in a position to write down formulas for $S$. For example, we may write 
\begin{equation}\label{eq1}
 S(v_+ \otimes v_-) = \alpha w_+ + \beta w_-
\end{equation}
for some $\alpha, \beta \in \k$. Applying $\Sigma$ to the above equality, we obtain 
\[
\Sigma S(v_+ \otimes v_-) = \q^{-1} \beta .
\]
Theorem $\ref{thm:BPW}$ tells us $\Sigma S(v_+ \otimes v_-) = ev(v_+ \otimes v_-) = \q$, so that $\beta = \q^2$. By applying $\Sigma'$ to both sides of $\ref{eq1}$, we obtain $\alpha = 0$. A similar argument for the remaining generators yields the full table of formulas for $S$:
\begin{figure}[H]
\centering
\includegraphics{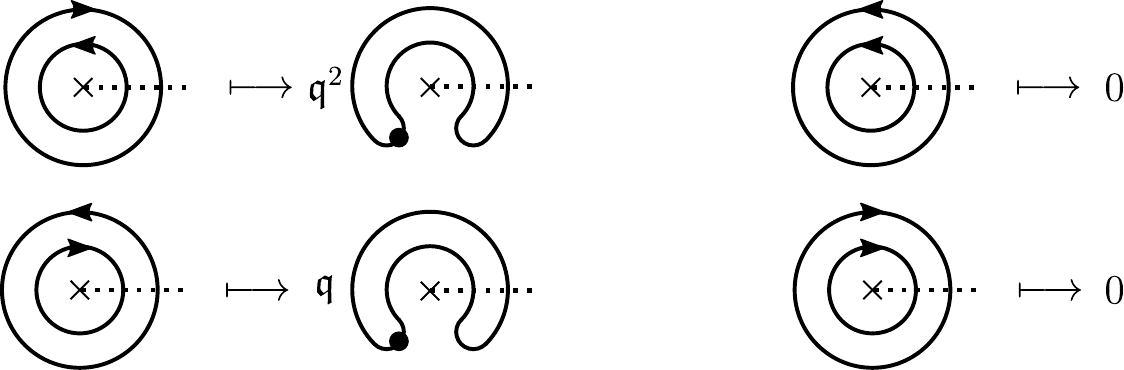}
\end{figure}
\noindent
Equivalently, 
\begin{align*}
& S(v_+\otimes v_-) = \q^2 w_- && S(v_+\otimes v_+) = 0 \\
& S(v_-\otimes v_+) = \q w_- && S(v_- \otimes v_-) = 0
\end{align*}
\end{example}

\begin{example}\label{ex4}
Let $S$ denote the following saddle. 
\begin{center}
\includegraphics{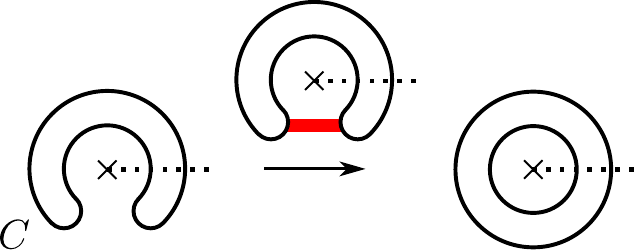}
\end{center}
Pick generators $w_+$ and $w_-$ for the left-hand trivial circle $C$ as in Example \ref{ex3}. 
Let $\Sigma$ and $\Sigma'$ be the undotted and dotted cups on $C$, so that $w_+ = \Sigma(1)$ and $w_- = \Sigma'(1)$. Observe that $S \Sigma' = 0$ by Boerner's relation, so 
\[
S(w_-) = 0
\]
Finally, note that $S \Sigma = S^1 \times \cup$. By Theorem $\ref{thm:BPW}$, we obtain 
\[
S(w_+) = v_+ \otimes v_- + \q^{-1} v_- \otimes v_+.
\]
Diagrammatically, the formulas for $S$ are 

\begin{figure}[H]
\centering
\includegraphics{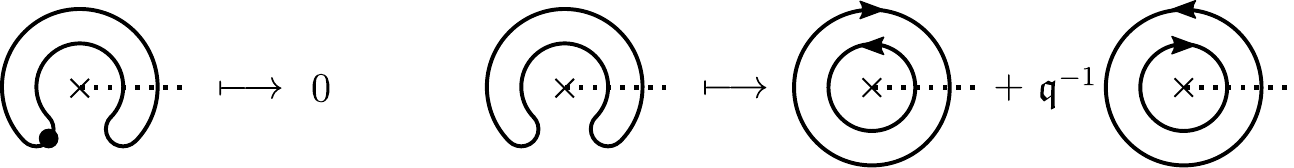}
\end{figure}
\end{example}

Saddle maps for various topological types of configurations, including the result of calculations in examples \ref{ex1} - \ref{ex4}, are summarized in Figure \ref{fig:quantum annular formula table}. The next example illustrates a calculation of the saddle map in a case of higher multiplicity of intersections between the configuration and the seam. 

\begin{figure}
\centering
\includegraphics[width=13cm]{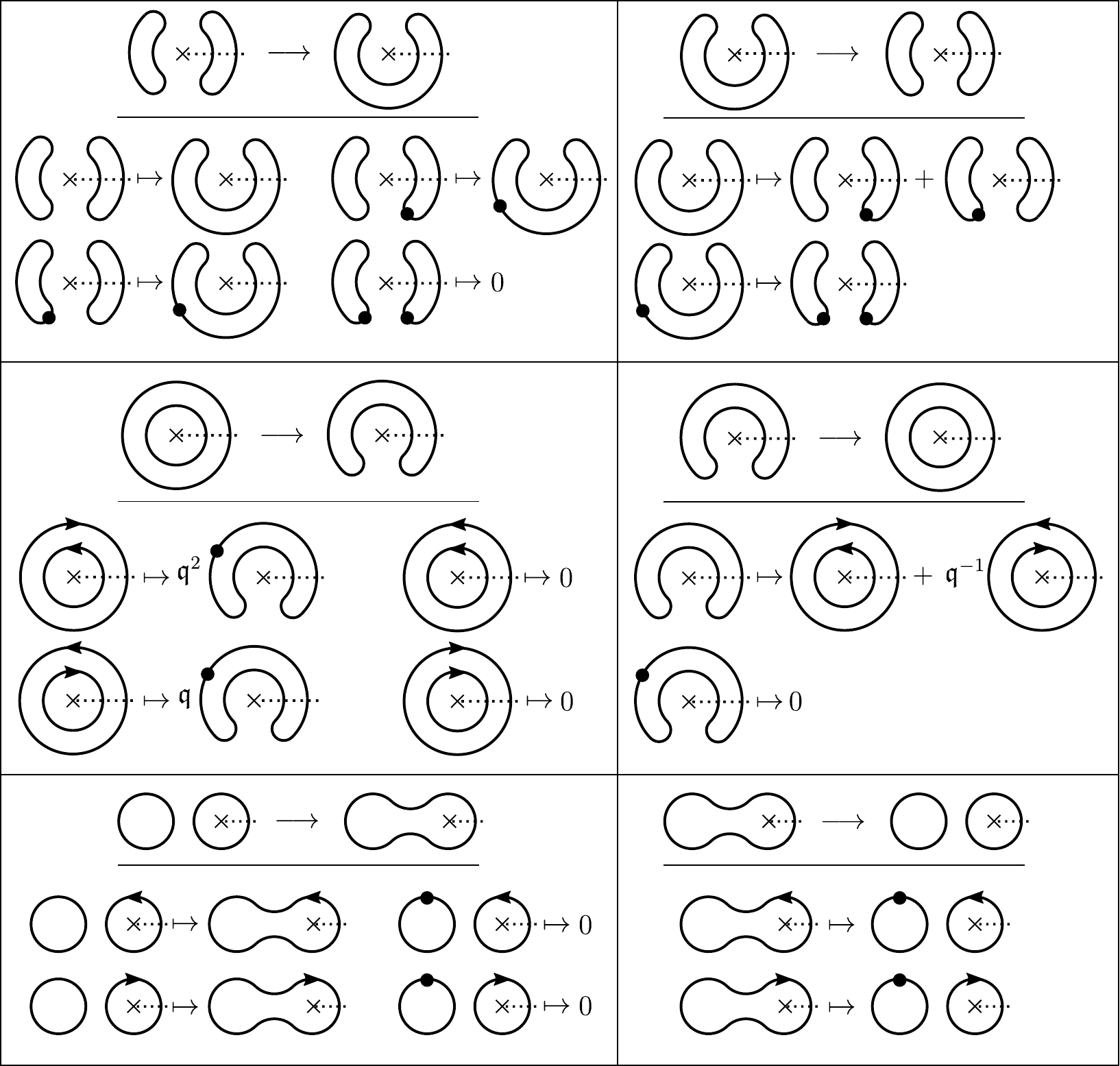}
\caption{Surgery formulas in quantum annular Khovanov homology for curves with minimal intersections with the seam.}\label{fig:quantum annular formula table}
\end{figure}

\begin{example}\label{ex5}
Here is a slightly more involved version of Example \ref{ex3}. Consider the configurations $\Cs_1$ and $\Cs_2$, and the saddle $S: \Cs_1 \to \Cs_2$ as shown in Figure \ref{fig:wacky configuration}. 

\begin{figure}[H]
\centering
\includegraphics[scale=.85]{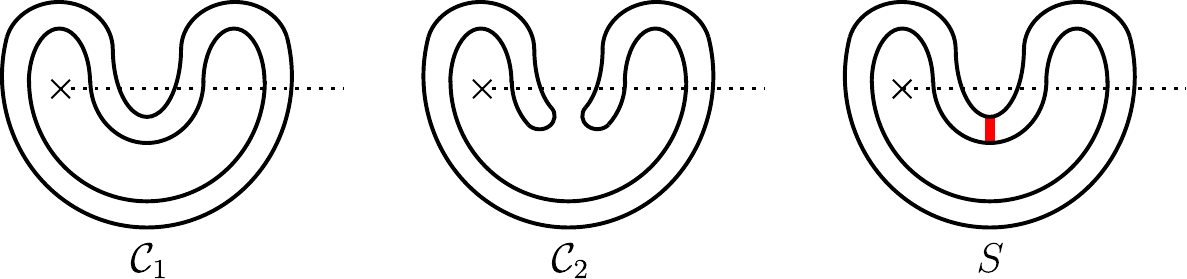}
\caption{}\label{fig:wacky configuration}
\end{figure}
\noindent
Fix generators for $\Cs_1$ and $\Cs_2$ using the isotopies $S_1$ and $S_2$ depicted in Figure \ref{fig:isotopy1 for ex5} and Figure \ref{fig:isotopy2 for ex5}. 

\begin{figure}[H]
\centering
\includegraphics[scale=.85]{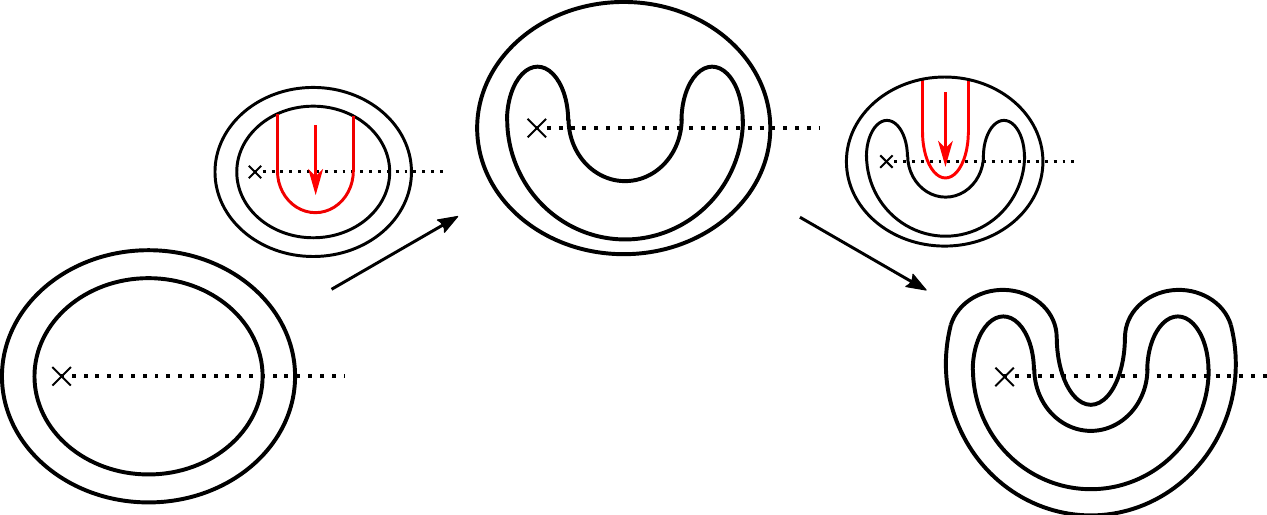}
\caption{An isotopy $S_1: \Cs_1^\c \to \Cs_1$}\label{fig:isotopy1 for ex5}
\end{figure}

\begin{figure}[H]
\centering
\includegraphics[scale=.9]{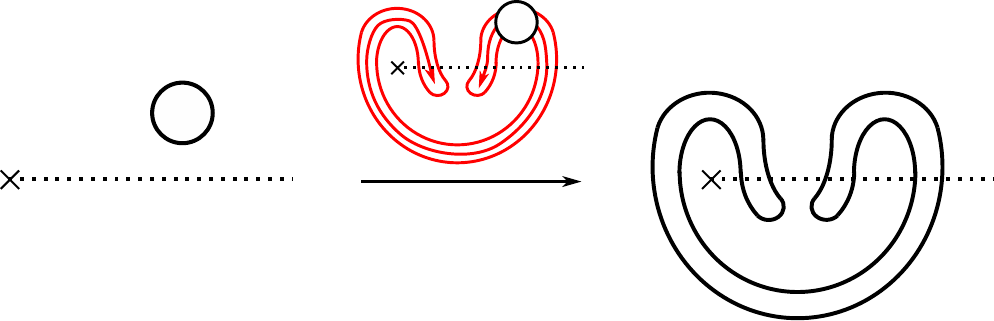}
\caption{An isotopy $S_2: \Cs_2^\c \to \Cs_2$}\label{fig:isotopy2 for ex5}
\end{figure}
Since generators of $\Cs_2$ are the images of the standard generators of $\Cs_2^\c$ under the isomorphism $S_2: \Cs_2^\c \to \Cs_2$, it suffices to write down formulas for the composition 
\begin{equation}\label{eq:composition in ex5}
\Cs^\c_1 \rar{S_1} \Cs_1 \rar{S} \Cs_2 \rar{S_2^{-1}} \Cs^\c_2
\end{equation}
Note that $S_2^{-1} = \q^3\b{S_2}$ (see Lemma \ref{lem:inverse of an isotopy}). Let $\Phi$ denote the composition \eqref{eq:composition in ex5}.

Let $\Sigma$ and $\Sigma'$ be the undotted and dotted cap cobordisms, respectively, on the trivial circle $\Cs_2^\c$.  Note that $\Sigma'\Phi = 0$ by Boerner's relation. Applying a trace move from Figure \ref{fig:trace moves} to the part of the cobordism depicted in \eqref{fig:cob through membrane}, we see that 
\[
\Sigma\b{S_2}SS_1
\]
is equal to $\q^{-1}(S^1\times \cap)$, so that 
\[
\Sigma\Phi = \q^2 (S^1\times \cap)
\]
Arguing as in Example \ref{ex3}, we obtain
\begin{align*}
& S(v_+\otimes v_-) =  \q^3w_- && S(v_+\otimes v_+) = 0 \\
& S(v_-\otimes v_+) = \q^2 w_- && S(v_- \otimes v_-) = 0
\end{align*}
\begin{equation}\label{fig:cob through membrane}
\vcenter{\hbox{
\includegraphics[width=3.5cm]{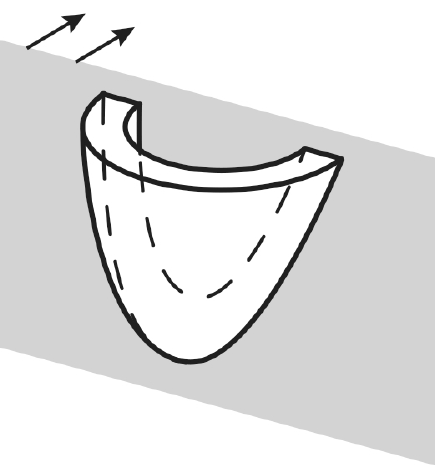}
}}
\end{equation}

\begin{remark}
Here is a slightly different way to finish the computation in Example \ref{ex5}, which will be used in Proposition $\ref{prop:LBM1}$. In the morphism $\Phi : \Cs_1^\c \to \Cs_2^\c$, we may cut the neck along a small push-off of the trivial circle $\Cs_2^\c$ to write $\Phi$ as a sum of two dotted cobordisms. One of the summands is $0$ by Boerner's relation, and the other is isotopic to a disjoint union of
\[
S^1 \times \cap
\]
and a dotted cup cobordism on $\Cs_2^\c$, which is denoted $\b{\Sigma'}$ using the notation of Example \ref{ex5}. Then, using the trace relations in $\BN_{\q}(\A)$, we see that 
\[
\Phi = \q^2 (S^1 \times \cap) \sqcup \b{\Sigma'},
\]
and the formulas in Example \ref{ex5} follow. 
\end{remark}

\end{example}

Example \ref{ex5} shows that there is considerable complexity in computing the saddle map when curves have multiple intersections with the seam.
The next proposition extends  Examples $\ref{ex3}$ - \ref{ex5} to the case of arbitrary configurations. It will be important for the analysis in Section \ref{sec:a ladybug configuration}. Recall the numbering of circles discussed in the paragraph preceding (\ref{trivial essential product eq}).

\begin{proposition}\label{prop:LBM1}
Let $\Cs$ be a configuration with a surgery arc $A$. Let $S: \Cs \to s(\Cs)$ denote the saddle. 
\begin{enumerate}[label= \emph{(\arabic*)}]
\item Suppose both endpoints of $A$ are on a trivial circle $C$, and that surgery along $A$ splits $C$ into two essential circles. Assume $C$ is first in the ordering on trivial circles of $\Cs$, and it splits into the $i$-th and $(i+1)$-th essential circles in $s(\Cs)$. Let $x= v_{\Is'} \otimes v_{\Is''} \otimes w_+ \otimes w_\Js$ be a generator of $\Cs$ in which $C$ is undotted, where $\Is'$ labels the first $i-1$ essential circles. Then 
\[
S(x) = \q^a v_{\Is'} \otimes v_+ \otimes v_-\otimes v_{\Is''}\otimes w_\Js+ \q^{a-1} v_{\Is'} \otimes v_- \otimes v_+\otimes v_{\Is''}\otimes w_\Js
\]
for some $a\in \Z$.

\item Suppose the endpoints of $A$ are on the $i$-th and $(i+1)$-th essential circles of $\Cs$. Consider the generators 
\begin{align*}
y_1 &= v_{\Is'} \otimes v_+ \otimes v_-\otimes v_{\Is''} \otimes w_\Js\\
y_2 &= v_{\Is'} \otimes v_- \otimes v_+ \otimes v_{\Is''} \otimes w_\Js
\end{align*}
of $\Cs$, where $\Is'$ labels the first $i-1$ essential circles. 
Then
\begin{align*}
S(y_1) &= \q^{b+1} v_{\Is'} \otimes v_{\Is''} \otimes w_- \otimes w_\Js \\
S(y_2) &= \q^b v_{\Is'} \otimes v_{\Is''} \otimes w_- \otimes w_\Js
\end{align*}
for some $b\in \Z$. 

\end{enumerate}

\end{proposition}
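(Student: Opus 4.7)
The plan is to reduce each case to a local computation on the circles directly involved in the surgery, following the templates of Examples~\ref{ex3}--\ref{ex5}, and then invoke Theorem~\ref{thm:BPW} to evaluate the result.

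First, I would fix isotopies $S_1 : \Cs^\c \to \Cs$ and $S_2 : s(\Cs)^\c \to s(\Cs)$ specifying the generators on either side, and consider the composition $\Phi := S_2^{-1} \circ S \circ S_1 : \Cs^\c \to s(\Cs)^\c$. By Lemma~\ref{lem:inverse of an isotopy}, $S_2^{-1}$ equals a power of $\q$ times $\b{S_2}$; those factors will be absorbed into the exponents $a$ and $b$. Formulas for $S$ on generators of $\Cs$ then translate into formulas for $\Phi$ on standard generators of $\Cs^\c$, which lie in configurations to which the surgery formulas of Figure~\ref{fig:quantum annular formula table} apply locally.

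For part~(1), I would extract the coefficients of $\Phi$ via the neck-cutting strategy of the Remark following Example~\ref{ex5}. Cutting the neck in $\Phi$ along a small pushoff of the trivial circle of $\Cs^\c$ expresses $\Phi$ as a sum of two pieces, one of which carries a dot on an essential circle and vanishes by Boerner's relation. The remaining piece, after applying trace moves from Figure~\ref{fig:trace moves}, factors as a disjoint union of $S^1 \times \cup$ on the circles directly involved in the surgery with the identity/cup cobordisms on bystander circles (those labeled by $v_{\Is'}, v_{\Is''}$, and $w_\Js$). By Theorem~\ref{thm:BPW}, $\F_{\A_\q}(S^1 \times \cup)$ is the coevaluation sending $1 \mapsto v_+ \otimes v_- + \q^{-1} v_- \otimes v_+$, which produces the required relative factor of $\q^{-1}$ between the two output terms. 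The assumption that $C$ is first in the ordering ensures no sign/ordering corrections arise from permuting tensor factors, so all remaining scalar contributions fold into a single $\q^a$.

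For part~(2), I would instead extract coefficients of $\Phi$ by composition with cap cobordisms on the trivial circle of $s(\Cs)^\c$. The dotted cap kills $\Phi(y_j)$ by Boerner's relation, confirming that only $w_-$ appears on the trivial circle in the output. The undotted cap composed with the local saddle portion of $\Phi$ is, up to a power of $\q$ from trace moves, equal to $S^1 \times \cap$ tensored with identity/cap cobordisms on bystanders. Theorem~\ref{thm:BPW} identifies $\F_{\A_\q}(S^1 \times \cap)$ with the evaluation map, under which $v_+ \otimes v_- \mapsto \q$ and $v_- \otimes v_+ \mapsto 1$; this produces the ratio $\q$ between $S(y_1)$ and $S(y_2)$, with a common exponent $\q^b$ accounting for the ambient isotopies.

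The main obstacle is justifying that the powers of $\q$ arising from the bystander part of the cobordism and from the isotopies $S_1, S_2$ contribute the \emph{same} factor to both output terms in part~(1) (respectively to both $y_1, y_2$ in part~(2)). Remark~\ref{non uniform} warns that isotopy-induced powers may depend on the generator; however, the terms being compared here differ only in the local labels $v_\pm$ on the two essential circles directly involved in the surgery, while the bystander part of the cobordism and the ambient isotopies act identically on them. Verifying this carefully—essentially that the relative power $\q^{-1}$ in part~(1) and $\q$ in part~(2) is dictated purely by the local $S^1 \times \cup$ or $S^1 \times \cap$ factor and not affected by the global topology of $\Cs$—is the key step. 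Once established, the proposition follows directly.
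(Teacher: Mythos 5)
Your strategy is essentially the paper's: realize the generator via a cobordism from a standard configuration, compose with the saddle and an isotopy back to standard form, bring the result to a disjoint union of a local $S^1\times(\cup$ or $\cap)$ factor with bystander cups and annuli, and evaluate via Theorem~\ref{thm:BPW}. The only technical divergences are that the paper realizes $x$ (resp. $y_1, y_2$) as the image of an element of $\F_{\A_\q}(\Cs_E^\c)$ rather than of $\Cs^\c$ — so in part~(1) the undotted cup on $C$ followed by the split is already $S^1\times\cup$ and no neck-cutting is required — and that the paper uses neck-cutting in part~(2) where you propose cap-composition, but the Remark after Example~\ref{ex5} already notes these two techniques are interchangeable. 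The concern you flag at the end is resolved exactly as you sketch: Remark~\ref{non uniform} concerns the choice of isotopy that \emph{defines} generators, not the value of a fixed morphism on a fixed generator; once $\Psi$ (or $\Phi$) is isotoped to disjoint-union form it is a single morphism scaled by one global power of $\q$, and $\F_{\A_\q}$ applies the bystander pieces identically to the two output terms, so the relative factor $\q^{\mp 1}$ comes solely from the coevaluation/evaluation formulas.
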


\begin{proof} 
For both $(1)$ and $(2)$, it is enough to show that the result holds after applying $s(\Cs) \to s(\Cs)^\c$. \par
(1). The generator $x$ is the image of $v_{\Is'}\otimes v_{\Is''}$ under the composition $\Cs_E^\c \xrightarrow{\Sigma_\Js} \Cs^\c \to \Cs$, where $\Sigma_\Js$ is a cup cobordism on trivial circles in $\Cs^\c$, with the cups dotted according to $\Js$. Note that the cup on $C$ is undotted. Let $\Psi$ denote the composition
\[
\Cs^\c_E \xrightarrow{\Sigma_\Js} \Cs^\c \to \Cs \xrightarrow{S} s(\Cs) \to s(\Cs)^\c
\]
The cobordism $\Psi$ isotopic to a disjoint union of 
\[
(*)\ S^1 \times \, 
\begin{tikzpicture}[baseline={([yshift=-.7ex]current bounding box.center)},x=.9em,y=1.2em]

\draw[thick] (0,0)--(0,1)
(2,0)--(2,1)
(3,1) -- (3,.5) to[out=-90,in=-90] (4,.5) -- (4,1)
(5,0)--(5,1)
(7,0)--(7,1);
\node at (1,.5){$\,\cdots$};
\node at (6,.5){$\,\cdots$};

\end{tikzpicture}
\]
and cup cobordisms on each trivial circle in $s(\Cs)^\c$, with dots placed according to $\Js$. By Theorem $\ref{thm:BPW}$, the cobordism $(*)$ induces the map
\[
v_{\Is'} \otimes v_{\Is''} \mapsto v_{\Is'}\otimes v_+\otimes v_-\otimes v_{\Is''} + \q^{-1} v_{\Is'} \otimes v_- \otimes v_+ \otimes v_{\Is''}
\]
and the result follows.\par
(2). The generators $y_1$ and $y_2$ are the images of $v_{\Is'} \otimes v_+ \otimes v_-\otimes v_{\Is''}$ and $v_{\Is'} \otimes v_- \otimes v_+ \otimes v_{\Is''}$, respectively, under 
\[
\Cs^\c_E \xrightarrow{\Sigma_\Js} \Cs^\c \to \Cs
\]
where $\Sigma_\Js$ is a cup cobordism on trivial circles with dots placed according to $\Js$. Let $\Phi$ denote the composition 
\[
\Cs^\c_E \xrightarrow{\Sigma_\Js} \Cs^\c \to \Cs \xrightarrow{S} s(\Cs) \to s(\Cs)^\c
\]

Let $C\in s(\Cs)$ denote the trivial circle obtained by surgery along $A$, let $C'\in s(\Cs)^\c$ denote the corresponding circle. In the cobordism $\Phi$, we may cut the neck along $C'$ to write $\Phi$ as a sum of two cobordisms. One of the summands is $0$ by Boerner's relation, and the other is isotopic to the disjoint union of 
\[
(**)\ S^1 \times \,
\begin{tikzpicture}[baseline={([yshift=-.7ex]current bounding box.center)},x=.9em,y=1.2em]

\draw[thick] (0,0)--(0,1)
(2,0)--(2,1)
(3,0) -- (3,.5) to[out=90,in=90] (4,.5) -- (4,0)
(5,0)--(5,1)
(7,0)--(7,1);
\node at (1,.5){$\,\cdots$};
\node at (6,.5){$\,\cdots$};

\end{tikzpicture}
\]
and cup cobordisms on each trivial circle. Observe also that the cup cobordism on $C'$ is dotted. Finally, Theorem \ref{thm:BPW} says that the cobordism $(**)$ induces the map
\begin{align*}
v_{\Is'} \otimes v_+ \otimes v_-\otimes v_{\Is''} & \mapsto \q v_{\Is'} \otimes v_{\Is''}\\
v_{\Is'} \otimes v_- \otimes v_+ \otimes v_{\Is''} & \mapsto v_{\Is'} \otimes v_{\Is''},
\end{align*}
and the result follows.
\end{proof}

We end this subsection with a discussion about recovering classical annular homology. Consider the map $\k \to \Z$ which is the identity on $\Z \subset \k$ and sends $\q$ to $1$. It induces a functor $(-) \otimes_{\k} \Z : \Mod(\k) \to \Mod(\Z)$. Thus one can consider the composition 
\[
\BBN_{\q}(\A) \xrightarrow{\F_{\A_{\q}}} \Mod(\k) \to \Mod(\Z)
\]
which we denote $\F_{\A_{\q}} \otimes_{\k} \Z$. Tensoring with $\Z$ forgets the action of $\q$, in the sense that isotopic cobordisms induce equal maps under $\F_{\A_{\q}} \otimes_{\k} \Z$ even when the isotopy is not required to fix the seam. Let $\Cs$ be a configuration with $e$ essential and $t$ trivial circles. By Lemma \ref{lem:generators differ by power of q}, there is a canonical isomorphism
\[
\F_{\A_{\q}}(\Cs) \otimes_{\k} \Z \cong V_\Z^{\otimes e} \otimes_\Z W_\Z^{\otimes t} 
\]
obtained by picking an isotopy $\Cs^\c \to \Cs$.  It is implicit in \cite{BPW} that $\F_{\A_{\q}} \otimes_{\k} \Z$ is the classical annular functor $\F_\A$; indeed this is straightforward to check using the relations in $\BBN_\q(\A)$ and Theorem \ref{thm:BPW} as in the proof of Proposition \ref{prop:LBM1}. 

\begin{lemma}\label{lem:saddles}
Let $\Cs$ be a configuration with a single surgery arc, and let $S: \Cs \to s(\Cs)$ denote the saddle. Let $x\in \Cs$ be a generator. Then 
\[
S(x) = \sum \varepsilon_y y
\]
where the sum is over generators of $s(\Cs)$ and each $\varepsilon_y$ is either $0$ or a power of $\q$. Moreover, $\varepsilon_y \neq 0$ if and only if $y$ appears in $\F_{\A}(S)(x)$, where $\F_\A$ is the classical (unquantized) annular TQFT. 
\end{lemma}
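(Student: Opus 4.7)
The plan is to reduce the calculation of $\F_{\A_\q}(S)(x)$ to the case of a standard configuration with the surgery arc in simple position, and then to invoke the comparison with the classical annular TQFT to pin down the vanishing/nonvanishing pattern. First I would fix isotopies $\Cs^\c \to \Cs$ and $s(\Cs)^\c \to s(\Cs)$ with corresponding cylindrical cobordisms $S_1$ and $S_2$; by definition the generators of $\Cs$ and of $s(\Cs)$ are the images under these isomorphisms of the standard generators of $\Cs^\c$ and $s(\Cs)^\c$. Writing $x = S_1(\til{x})$ for a standard generator $\til{x}$, the map $S(x)$ is determined by the composition $S_2^{-1}\c S \c S_1 : \Cs^\c \to s(\Cs)^\c$.

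Next I would analyze this composition according to the topological type of the saddle (merges and splits between essential and/or trivial circles). After choosing convenient isotopies and then applying neck-cutting along any newly created trivial circle together with Boerner's relation, the cobordism $S_2^{-1} S S_1$ becomes, up to an overall power of $\q$ arising from trace moves (cf. Lemma \ref{lem:inverse of an isotopy} and Lemma \ref{lem:generators under self-isotopy}), a disjoint union of a canonical model saddle between standard configurations, cup/cap cobordisms on trivial circles, and identity cylinders on the remaining essential circles. This is precisely the strategy carried out in Examples \ref{ex1}--\ref{ex5} and Proposition \ref{prop:LBM1}. Reading off the explicit formulas tabulated in Figure \ref{fig:quantum annular formula table} then shows that each coefficient appearing in $S(x)$ is either $0$ or a power of $\q$, as required.

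For the ``moreover'' clause I would apply the base change functor $(-)\otimes_\k \Z$ from the end of Section \ref{sec:saddle maps}, which sends $\q \mapsto 1$ and identifies $\F_{\A_\q}\otimes_\k \Z$ with the classical annular TQFT $\F_\A$. Under this specialization each power of $\q$ becomes $1$, so the expansion $S(x) = \sum_y \varepsilon_y y$ descends to $\F_\A(S)(x) = \sum_y \b{\varepsilon_y}\,\b{y}$ with each $\b{\varepsilon_y}\in \{0,1\}$. Consequently the set of generators $y$ with $\varepsilon_y \neq 0$ coincides with the set of generators appearing in $\F_\A(S)(x)$.

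The main technical obstacle will be the saddle-type case analysis in the second paragraph, especially when the surgery arc sits inside a configuration whose curves cross the seam many times: one must verify that after neck-cutting the remaining pieces really are cup/cap cobordisms and trace cylinders, so that their cumulative contribution is a single power of $\q$ rather than a more general $\k$-linear combination. This is the same bookkeeping already performed in Example \ref{ex5} and Proposition \ref{prop:LBM1}, and the same neck-cutting-plus-Boerner strategy adapts uniformly to the remaining saddle topologies.
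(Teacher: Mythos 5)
Your proposal is correct and follows essentially the same route as the paper: reduce to standard configurations via isotopy cylinders, run a case analysis over the six saddle topologies using neck-cutting, Boerner's relation, and trace moves (which is precisely the content of Examples \ref{ex1}--\ref{ex5} and Proposition \ref{prop:LBM1}), and deduce the ``moreover'' clause by specializing $\q\mapsto 1$ to recover $\F_\A$. You simply spell out in more detail what the paper's proof compresses into ``the other four \ldots using similar arguments'' and ``the discussion preceding the lemma.''
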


\begin{proof}
There are six types of saddles to check, corresponding to merges and splits between various combinations of essential and trivial circles as in Figure \ref{fig:annular formulas}. The first part of the lemma was verified for two of these types of saddles in Proposition \ref{prop:LBM1}.
It is straightforward to verify the lemma for the other four using similar arguments. The second part follows from the discussion preceding the lemma.
\end{proof}

\subsection{Ladybug Configurations}\label{sec:a ladybug configuration}

In this subsection, we analyze the \textit{ladybug configuration} (\cite[Definition 5.6]{LS}, see also \cite[Figure 5.1]{LS}). Examining ladybug configurations is crucial in the construction of  Khovanov homotopy types in various contexts. We start with a discussion of ladybug configurations in classical annular homology. We will then examine a particular type of ladybug configuration in quantum annular homology, and we will indicate how the analysis differs from that in classical annular homology. 

We recall the notion of a ladybug configuration. A circle $C\subset \A$ with two surgery arcs forms a ladybug configuration if the endpoints of the two arcs alternate around $C$. We will say a configuration $\Cs$ with surgery arcs has a ladybug configuration if a circle $C$ in $\Cs$ and two of the surgery arcs forms a ladybug configuration.  

First, consider ladybug configurations in classical annular homology. Let $C\subset \A$ be a circle carrying two surgery arcs $A_1$ and $A_2$ which form a ladybug configuration. Figure \ref{fig:ladybug in classical annular homology} illustrates the three possibilities in the annulus. 

\begin{figure}[H]
\centering
    \begin{subfigure}[b]{.3\textwidth}
    \begin{center}
 \includegraphics{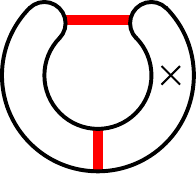}
\caption{}\label{fig:ladybug configuration ex1}
\end{center}
    \end{subfigure}
    \begin{subfigure}[b]{.3\textwidth}
    \begin{center}
 \includegraphics{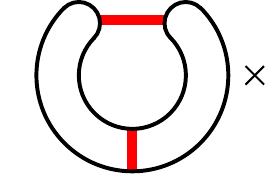}
    \caption{}\label{fig:ladybug configuration ex2}
    \end{center}
    \end{subfigure}
    \begin{subfigure}[b]{.3\textwidth}
    \begin{center}
    \includegraphics{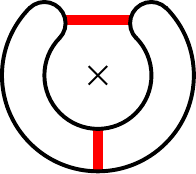}
    \caption{}\label{fig:ladybug configuration ex3}
    \end{center}
    \end{subfigure}
    \caption{Three ladybug configurations in the annulus}\label{fig:ladybug in classical annular homology}
\end{figure}

For $i=1,2$, let $\Cs_i$ denote the configuration obtained by performing surgery along $A_i$, and let $d_i : \F_\A(C) \to \F_\A(\Cs_i)$ denote the maps assigned to the saddles in classical annular Khovanov homology. Let $\Cs'$ denote the final configuration, obtained by performing surgery on $C$ along both $A_1$ and $A_2$. When $C$ is essential, as in Figure \ref{fig:ladybug configuration ex1}, composing two saddle maps yields $0$ (see the formulas in Figure \ref{fig:annular formulas}). Now consider the cases where $C$ is trivial, as in Figure \ref{fig:ladybug configuration ex2} and Figure \ref{fig:ladybug configuration ex3}. The dotted generator $w_-$ is sent to $0$ by the composition of two saddle maps. On the other hand, the two summands appearing in each of $d_1(w_+)$ and $d_2(w_+)$ are mapped to the same element in the final configuration $\Cs'$.  The case of Figure \ref{fig:ladybug configuration ex3} is illustrated in Figure \ref{fig:classical ladybug generators}.

\begin{figure}[H]
\begin{center}
\includegraphics{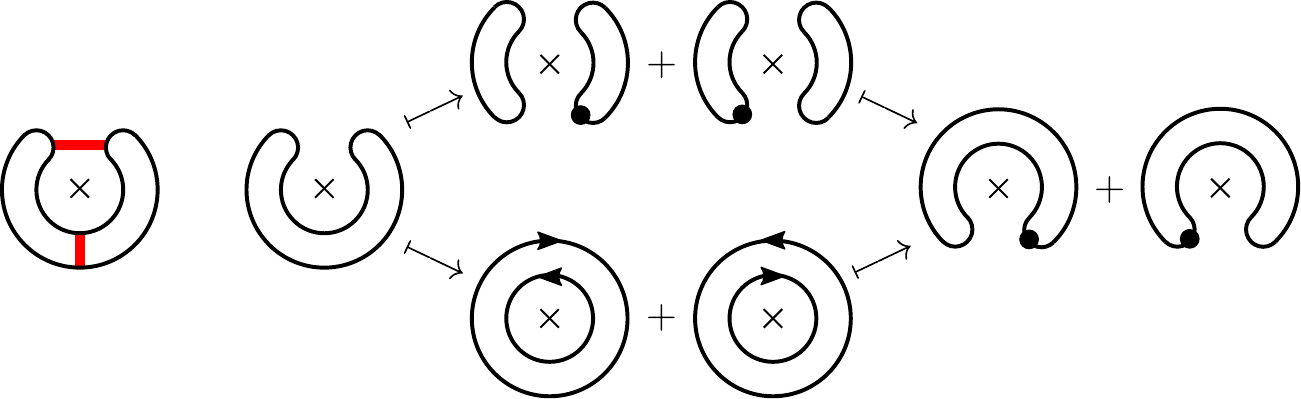}
\end{center}\caption{}\label{fig:classical ladybug generators}
\end{figure}

When constructing stable homotopy refinements in this framework, it is important to have a bijection between these intermediate generators which appear in $d_1(w_+)$ and $d_2(w_+)$, such that these bijections are coherent, in an appropriate sense, within higher dimensional cubes (cf. Section \ref{sec:Burnside categories and functors} below). The bijections are the \textit{ladybug matchings} defined in \cite[Section 5.4]{LS}.

In the case of Figure \ref{fig:ladybug configuration ex2}, the annulus and seam play no role, and the ladybug matching from \cite{LS} can be used without significant alteration for both classical and quantum annular homology.  The remainder of this subsection examines the ladybug configuration of the type in Figure \ref{fig:ladybug configuration ex3} in quantum annular homology. 

Let $\Cs$ be a configuration with two surgery arcs $A_T$ and $A_E$, both having endpoints on a trivial circle $C$ in $\Cs$. Suppose also that surgery along $A_T$ splits $C$ into two trivial circles and surgery along $A_E$ splits $C$ into two essential circles, cf. Figure \ref{fig:LBM ex}. Let $s_T(\Cs)$ and $s_E(\Cs)$ denote the configurations obtained by surgery along $A_T$ and $A_E$ respectively. Let $\Cs'$ denote the final configuration, obtained by surgery along both arcs. Let $C'\in \Cs'$ denote the circle obtained by both surgeries. We have the commutative square.
\begin{equation}\label{eq:square of resolutions for ladybug configuration}
\begin{tikzcd}[row sep=small]
& s_T(\Cs) \arrow[dr, "m_T"] & \\
\Cs \arrow[ur, "\Delta_T"] \arrow[dr, "\Delta_E"'] & & \Cs' \\
& s_E(\Cs) \arrow[ur, "m_E"'] &
\end{tikzcd}
\end{equation}
Figure $\ref{fig:LBM ex}$ exhibits the specific instance of this set-up corresponding directly to Figure \ref{fig:ladybug configuration ex3}, but there are many such cases depending on how $C$ intersects the seam.  As usual, we will not distinguish between cobordisms and their induced maps. 
\begin{figure}[H]
\centering
\includegraphics{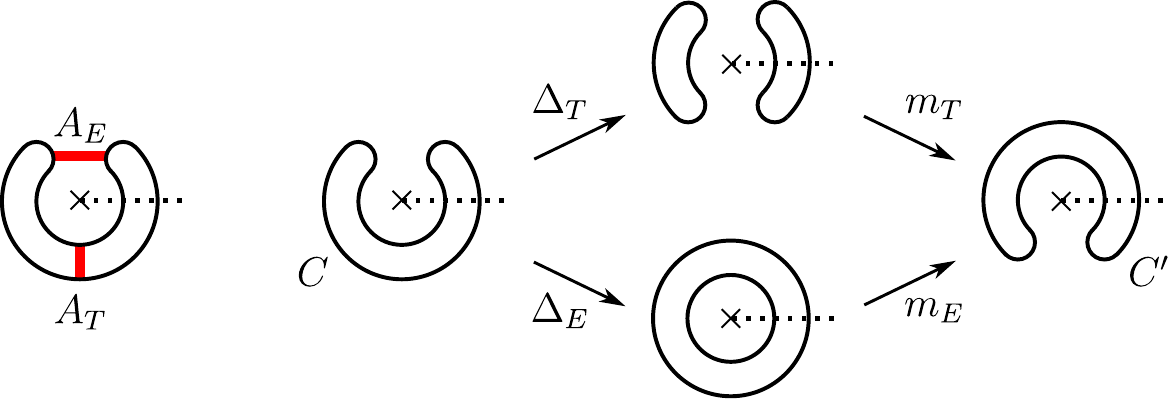}
\caption{}\label{fig:LBM ex}
\end{figure}
We assume that $C$ occurs first in the ordering on trivial circles in $\Cs$. Surgery along $A_T$ splits $C$ into two trivial circles in $s_T(\Cs)$, which we assume are the first two trivial circles in $s_T(\Cs)$. Finally, we order these first two circles as follows. Orient the arc $A_T$ such that it points from the outer essential circle in $s_E(\Cs)$ to the inner one. Declare that the first circle $C_1$ is to the left of $A_T$ and the second $C_2$ is to the right of $A_T$. Our ordering convention is illustrated in Figure \ref{fig:ordering}. 

\begin{figure}
\includegraphics{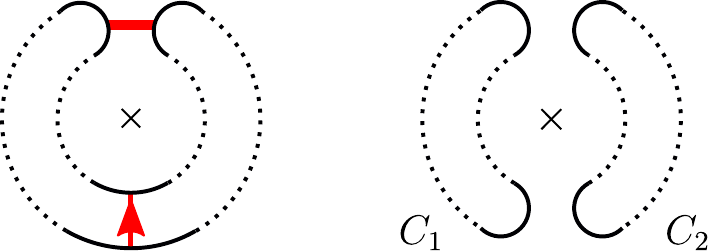}
\caption[scale=.85]{Ordering convention on $C_1$ and $C_2$}\label{fig:ordering}
\end{figure}

Let $x=  v_\Is \otimes w_+ \otimes w_\Js \in \Cs$ be a generator in which $C$ is undotted. By Lemma \ref{lem:saddles}, we obtain 
\[
\Delta_T(x) = \q^k v_\Is \otimes w_-\otimes w_+ \otimes w_\Js + \q^\l v_\Is \otimes w_+ \otimes w_- \otimes w_\Js
\]
for some $k,\l \in \Z$. 

The following corollary implies that, in fact, in the quantum annular setting, there is no need for a ladybug matching for this ladybug configuration 
because intermediate generators are mapped to different elements in the final configuration.

\begin{corollary}\label{cor:1+q^2 splitting}
With the above notation, 
\[
m_E(\Delta_E(x)) = \q^m v_\Is \otimes w_-\otimes w_\Js + \q^{m+2} v_\Is \otimes w_-\otimes w_\Js,
\]
for some $m\in \Z$. Moreover, one of $m_T(\q^k v_\Is \otimes w_-\otimes w_+ \otimes w_\Js)$ or $m_T(\q^\l v_\Is \otimes w_+ \otimes w_- \otimes w_\Js)$ is equal to $\q^m v_\Is \otimes w_-\otimes w_\Js$, and the other is equal to $\q^{m+2} v_\Is \otimes w_-\otimes w_\Js$. 
\end{corollary}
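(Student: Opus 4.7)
The plan is to compute $m_E\Delta_E(x)$ directly using Proposition \ref{prop:LBM1}, then transfer the result to $m_T\Delta_T(x)$ by using the fact that the two compositions represent equal morphisms in $\BN_\q(\A)$, not merely equal up to a power of $\q$.

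For the first assertion, since $C$ is the first trivial circle in $\Cs$ and surgery along $A_E$ splits $C$ into two essentials occupying adjacent positions $i, i+1$, Proposition \ref{prop:LBM1}(1), with the decomposition $\Is = \Is'\Is''$, gives
\[
\Delta_E(x) = \q^a\,v_{\Is'}\otimes v_+\otimes v_-\otimes v_{\Is''}\otimes w_\Js + \q^{a-1}\,v_{\Is'}\otimes v_-\otimes v_+\otimes v_{\Is''}\otimes w_\Js
\]
for some $a\in\Z$. The saddle $m_E$ then merges these two adjacent essentials into the trivial circle $C'\in\Cs'$, which is first in the ordering by the ordering conventions preceding the corollary. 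Applying Proposition \ref{prop:LBM1}(2) to each summand (with the same $b$ in both cases) yields coefficients $\q^{b+1}$ and $\q^b$, and hence
\[
m_E\Delta_E(x) = \q^{a+b+1}\,v_\Is\otimes w_-\otimes w_\Js + \q^{a+b-1}\,v_\Is\otimes w_-\otimes w_\Js,
\]
which proves the first assertion with $m = a+b-1$.

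For the second assertion, observe that $m_E\Delta_E$ and $m_T\Delta_T$ represent the same cobordism $\Cs\to\Cs'$: each is the product $\Cs\times I$ with two $1$-handles attached in the disjoint local disks containing $A_T$ and $A_E$. Swapping the order in which the handles are attached in time is realized by an isotopy supported in a neighborhood of the two handles, which is disjoint from the membrane $\mu\times I$ because surgery arcs are assumed disjoint from the seam. Hence $m_E\Delta_E = m_T\Delta_T$ in $\BN_\q(\A)$ with no compensating power of $\q$. Applying this identity to $x$ and combining with the first assertion yields
\[
m_T(\q^k\,v_\Is\otimes w_-\otimes w_+\otimes w_\Js) + m_T(\q^\l\,v_\Is\otimes w_+\otimes w_-\otimes w_\Js) = \q^m\,v_\Is\otimes w_-\otimes w_\Js + \q^{m+2}\,v_\Is\otimes w_-\otimes w_\Js.
\]
By Lemma \ref{lem:saddles}, each term on the left is a power of $\q$ times $v_\Is\otimes w_-\otimes w_\Js$ or zero, and is nonzero precisely when the corresponding classical trivial-trivial merge is nonzero; since $w_-\otimes w_+\mapsto w_-$ and $w_+\otimes w_-\mapsto w_-$ classically, both terms are nonzero, say equal to $\q^{a_1}$ and $\q^{a_2}$ times the generator. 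The relation $\q^{a_1}+\q^{a_2} = \q^m+\q^{m+2}$ in $\Zq$, together with the $\Z$-linear independence of distinct monomials in $\q$, forces $\{a_1,a_2\}=\{m,m+2\}$, which is the second assertion.

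The step that requires care is the exact equality $m_E\Delta_E = m_T\Delta_T$ in $\BN_\q(\A)$, as opposed to equality up to a power of $\q$; this precise equality is what allows us to read off the quantum coefficients of the two summands of $m_T\Delta_T(x)$ from the more accessible computation of $m_E\Delta_E(x)$, and it is what ultimately explains why the ladybug matching used in \cite{LS} is not needed in this quantum setting.
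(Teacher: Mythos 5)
Your proof is correct and follows essentially the same route as the paper: the first assertion is derived by composing the two parts of Proposition \ref{prop:LBM1}, and the second by invoking commutativity of the square \eqref{eq:square of resolutions for ladybug configuration}, which the paper states tersely and you spell out (exact commutativity because the surgery handles are attached away from the seam, nonvanishing of both $m_T$ terms via Lemma \ref{lem:saddles}, and $\Z$-linear independence of distinct monomials in $\Zq$ to match coefficients).
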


\begin{proof}
In the case when the whole configuration ${\mathcal C}$ consists of just the curve $C$ intersecting the seam in two points as in Figure \ref{fig:LBM ex}, the first statement can be easily checked using the formulas in Figure \ref{fig:quantum annular formula table}. (Also see figure \ref{fig:LBM ex formulas} below.) In full generality it  follows from Proposition \ref{prop:LBM1}. The second statement follows from the commutativity of the square \eqref{eq:square of resolutions for ladybug configuration}.
\end{proof}

\begin{remark} Note that generators for each configuration depend, up to a power of $\q$, on a choice of a cobordism: see Definition
\ref{generators of a configuration} and discussion following it. The powers $k, \ell$, and $m$ above are determined by the cobordisms chosen for the different configurations.
\end{remark}

It will be important for Section \ref{sec:taking the quotient} to know which of $m_T(\q^k v_\Is \otimes w_-\otimes w_+ \otimes w_\Js)$ or $m_T(\q^\l v_\Is \otimes w_+ \otimes w_- \otimes w_\Js)$ is equal to $\q^m v_\Is \otimes w_-\otimes w_\Js$. This is addressed in the following proposition. 

\begin{proposition}\label{prop:LBM2}
With the above notation, 
\begin{align*}
m_T(\q^k v_\Is \otimes w_-\otimes w_+ \otimes w_\Js) &=\q^m v_\Is \otimes w_-\otimes w_\Js\\
m_T(\q^\l v_\Is \otimes w_+ \otimes w_- \otimes w_\Js) &= \q^{m+2} v_\Is \otimes w_-\otimes w_\Js
\end{align*}
\end{proposition}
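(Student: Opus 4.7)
\emph{Plan.} The plan is to reduce to a minimal case and then compute directly. As a first step, I would observe that it suffices to determine the ratio of $m_T$ evaluated on the two intermediate generators $v_\Is \otimes w_- \otimes w_+ \otimes w_\Js$ and $v_\Is \otimes w_+ \otimes w_- \otimes w_\Js$. By Corollary \ref{cor:1+q^2 splitting}, this ratio is either $\q^2$ or $\q^{-2}$; the content of the proposition is to select the correct sign. A reduction along the lines of the proof of Proposition \ref{prop:LBM1} shows that the extra essential and trivial circles in $\Cs$ contribute identical factors of $\q$ to both computations (they pass through the relevant cobordisms in the same way regardless of which of $C_1$ or $C_2$ carries the dot), so the question reduces to the minimal case in which $\Cs$ consists of the single trivial circle $C$ intersecting the seam twice, as pictured in Figure \ref{fig:LBM ex}.

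In the minimal case, I would follow the composition $s_T(\Cs)^\c \to s_T(\Cs) \xrightarrow{m_T} \Cs' \to \Cs'^\c$ on each intermediate generator, writing each generator as the image of a standard generator under a dotted cup cobordism on either $C_1$ or $C_2$. After cutting the neck along a push-off of the merged circle $C'$ in $\Cs'^\c$ and applying Boerner's relation to eliminate the summand in which the essential core component carries a dot, the remaining cobordism is isotopic to a disjoint union of $S^1\times\cap$ and a dotted cup on $C'$, up to a power of $\q$ coming from trace moves as in Figure \ref{fig:trace moves}. The key point is that the dotted cup on $C_1$ sits on the opposite side of the membrane from the dotted cup on $C_2$: transporting one to standard position versus the other requires the dot to traverse the membrane exactly twice more, producing a relative factor of $\q^2$ of a definite sign determined by the coorientation of $\mu$.

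The main obstacle will be pinning down the sign of this relative exponent and matching it with the ordering convention on $C_1$ and $C_2$. Recall that $A_T$ is oriented from the outer to the inner essential circle in $s_E(\Cs)$, that $C_1$ sits to the left of $A_T$, and that the membrane $\mu\times I$ inherits a coorientation from the core circle of $\A$; these three pieces of data must be tracked simultaneously through the cobordism computation. I expect this bookkeeping, carried out on an explicit picture of the minimal configuration, to be the bulk of the work. Once the sign is settled in the minimal case, the stated formulas in the general case follow immediately from the reduction in the first step together with Corollary \ref{cor:1+q^2 splitting}.
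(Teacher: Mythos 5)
Your approach is in the right spirit but contains a genuine gap at precisely the point that carries the content of the proposition. You correctly reduce the problem to determining the ratio $m_T(\cdot)$ on the two intermediate generators, correctly identify that the discrepancy comes from the dot being transported across the membrane, and correctly identify the three pieces of data (orientation of $A_T$, position of $C_1$ relative to $A_T$, coorientation of $\mu$) that must be tracked. But you then defer the actual sign computation, saying ``I expect this bookkeeping\dots to be the bulk of the work.'' That bookkeeping \emph{is} the proposition: Corollary \ref{cor:1+q^2 splitting} already tells you the ratio is $\q^{\pm 2}$, so everything in your proposal up to that point only recapitulates what was known, and the proposition's whole content is the determination of the sign, which you leave undone.

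Your reduction-to-minimal-case step is also a departure from the paper's proof and is itself not fully justified. The paper avoids any such reduction by a cleaner device: it writes $\Delta_T\Sigma' = S_1 + S_2$ after neck-cutting, where $S_1$ and $S_2$ are the \emph{same} cobordism except for the position of a single dot (on $C_1$ versus $C_2$). Because $m_TS_1$ and $m_TS_2$ differ only by dragging one dot along the merged circle $C'$, the ratio $\q^d$ with $m_TS_2 = \q^d m_TS_1$ is given directly by the signed intersection of that dot path with the membrane, which the paper computes to equal the signed intersection of the seam with one of the essential circles in $s_E(\Cs)$ --- and the ordering convention of Figure \ref{fig:ordering} forces this to be counter-clockwise, i.e.\ $d = 2$. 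This comparison is local to the dot and is insensitive to the rest of $\Cs$, so no reduction to a minimal configuration is needed. Your claim that the extra circles ``contribute identical factors of $\q$ to both computations'' is plausible but asserted rather than shown, and it is a more global statement than what the paper actually needs. Finally, a small imprecision: when the dot crosses the membrane once, the trace move contributes a factor of $\q^{\pm 2}$ (see the last relation in Figure \ref{fig:trace moves}); your phrase ``traverse the membrane exactly twice more'' conflates the number of crossings with the exponent of $\q$.
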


\begin{proof}
The generator $x =  v_\Is \otimes w_+ \otimes w_\Js$ is the image of $v_\Is$ under $\Cs^\c_E \xrightarrow{\Sigma_\Js} \Cs^\c \xrightarrow{S} \Cs$.  Here $\Sigma_\Js$ is the usual identity cobordism on the essential part together with an undotted cup corresponding to $w_+$ on $C$ and various other cups dotted according to $\Js$, while $S$ is some chosen cobordism from the standard $\Cs^\c$ to $\Cs$. Isotope the resulting disk in $S\Sigma_\Js$ bounding $C$ to a cup cobordism on $C$ to obtain a new cobordism $\Sigma' : \Cs^\c_E \to \Cs$, so that $S\Sigma_\Js =\q^c \Sigma'$ for some $c\in \Z$. 

Since $C$ is trivial, it bounds a disk $D$ in the annulus. Note that $A_T$ lies inside $D$, so we may push it into the cup cobordism on $C$ in $\Sigma'$ to obtain an arc $A$. We may also pull $A_T$ onto the saddle $\Delta_T$ to obtain another arc $A'$. Performing neck-cutting on the circle $A\cup A'$ on $S\Sigma'$ yields 
\[
\Delta_T \Sigma' = S_1 + S_2
\]
where $S_1$ and $S_2$ are labelled such that $S_1$ is dotted on $C_1$ and $S_2$ is dotted on $C_2$. Figure \ref{fig:LBM saddle} illustrates the local picture near $A_T$; the surgery arc $A_T$ is in red, and the arcs $A$, $A'$ are in blue. 
\begin{figure}[H]
\centering
\includegraphics{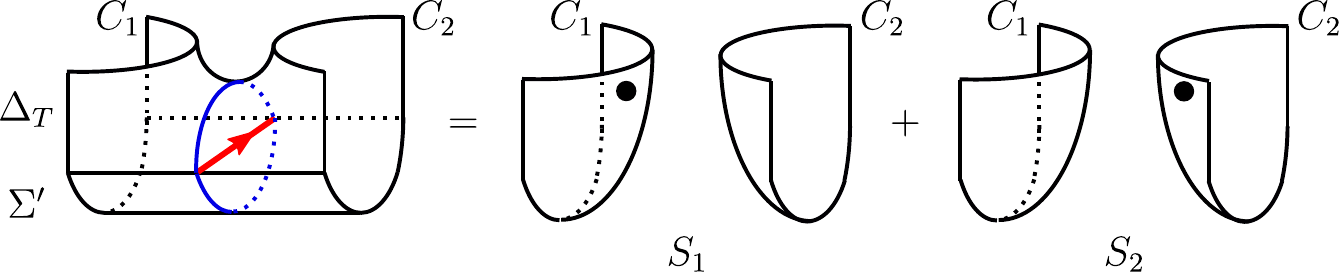}
\caption{}\label{fig:LBM saddle}
\end{figure}
From this we obtain 
\[
\Delta_T(x) = \Delta_TS\Sigma_\Js(v_\Is) = \q^c \Delta_T\Sigma'(v_\Is) = \q^c S_1(v_\Is) + \q^c S_2(v_\Is),
\]
and it follows that 
\begin{align*}
\q^cS_1(v_\Is) &= \q^k v_\Is \otimes w_-\otimes w_+ \otimes w_\Js\\
\q^cS_2(v_\Is) &= \q^\l v_\Is \otimes w_+ \otimes w_- \otimes w_\Js. 
\end{align*}

The relation
\begin{center}
\includegraphics{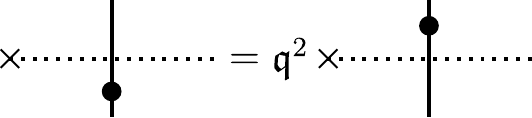}
\end{center}
implies that $\q^d m_TS_1 = m_TS_2$ for some $d\in \Z$. To compute $\q^d$, we need to move the dot on $S_2$ along $C'$ until it is in the same position as the dot on $S_1$, and count (with sign) the number of times the dot intersects the membrane during this process. This is the same as the signed intersection between the seam and one of the essential circles in $s_E(\Cs)$ obtained by surgery on $C$. The situation is depicted below in \eqref{fig:moving dot}; we need to move the dot on the right diagram along the circle, without intersecting the surgery arc, to the other side of the arc. 
\begin{equation}\label{fig:moving dot}
\vcenter{\hbox{
\includegraphics{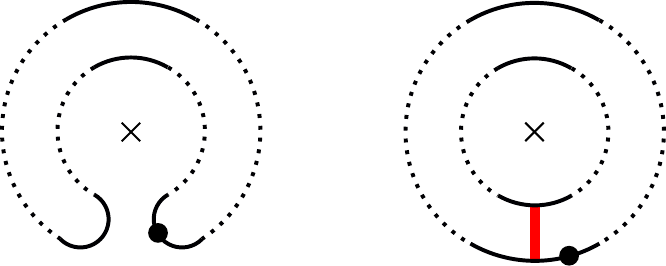}
}}
\end{equation}
Our convention of ordering $C_1$ and $C_2$ (see Figure \ref{fig:ordering}) guarantees that the dot is moved counter-clockwise along an essential circle in $s_E(\Cs)$, so that $m_TS_2= \q^2 m_TS_1$. Finally, this implies 
\[
\q^2 m_T(\q^k v_\Is \otimes w_-\otimes w_+ \otimes w_\Js) = m_T(\q^\l v_\Is \otimes w_+ \otimes w_- \otimes w_\Js)
\]
and the result follows. 
\end{proof}

There is always either a ``right'' or ``left'' choice which is made at the very beginning of defining the ladybug matching (see \cite[Section 5.4]{LS}). Consider the classical annular differential for the ladybug configuration 
of Figure \ref{fig:classical ladybug generators}. The ladybug matching made with the left choice identifies the circles in the middle smoothings as shown in Figure \ref{fig:ladybug matching with left pair on circles}. Then the ladybug matching pairs up the intermediate generators appearing in $\Delta_T(w_+)$ and $\Delta_E(w_+)$ as shown in Figure \ref{fig:ladybug matching with left pair on generators}. 
\begin{figure}[H]
\begin{center}
    \begin{subfigure}[b]{0.4\textwidth}
    \[\includegraphics{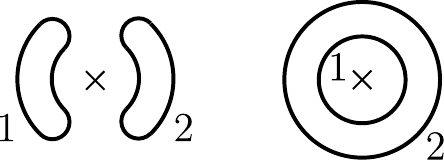}\]
    \caption{}\label{fig:ladybug matching with left pair on circles}
    \end{subfigure}
    \begin{subfigure}[b]{0.4\textwidth}
    \[\includegraphics{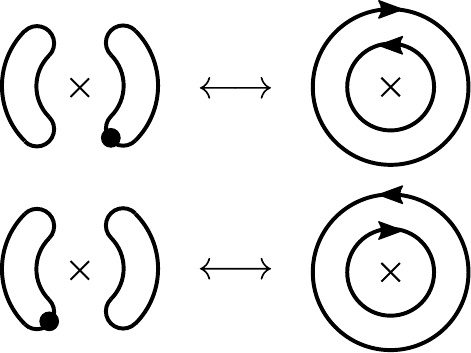}\]
    \caption{}\label{fig:ladybug matching with left pair on generators}
    \end{subfigure}
    \caption{}\label{fig:ladybug matching with left pair}
\end{center}
\end{figure}

Now consider the quantum annular surgery formulas for the same configuration (Figure \ref{fig:LBM ex}) which are detailed in Figure \ref{fig:LBM ex formulas}.
\begin{figure}[H]
\centering
\includegraphics{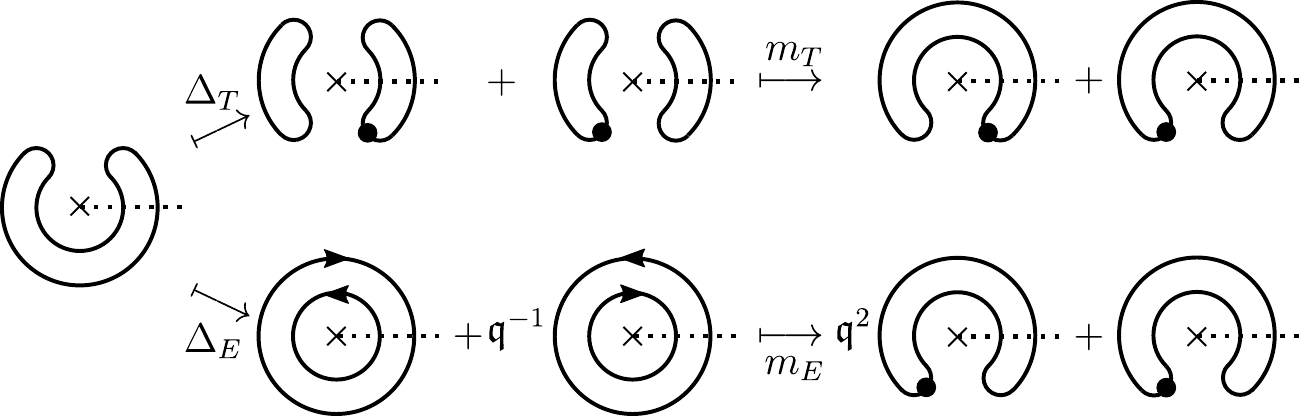}
\caption{Note that the term with coefficient ${\q}^2$ on bottom right matches the term directly above it, because dragging the dot across the seam amounts to multiplication by ${\q}^2$ }\label{fig:LBM ex formulas}
\end{figure}
We see that the intermediate generators get paired up as in \eqref{fig:1+q^2 splitting makes left choice}. 
\begin{equation}\label{fig:1+q^2 splitting makes left choice}
\vcenter{\hbox{
\includegraphics{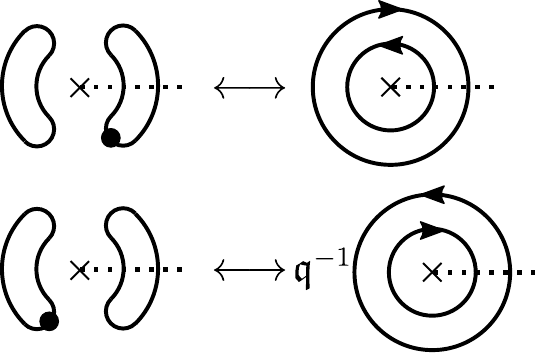}
}}
\end{equation}
Algebraically, the matching is 
\begin{align*}
&w_+\otimes w_- \longleftrightarrow  v_+\otimes v_-\\
&w_-\otimes w_+ \longleftrightarrow \q^{-1} v_-\otimes v_+
\end{align*}
where the ordering on trivial circles follows the convention illustrated in Figure \ref{fig:ordering}. After forgetting powers of $\q$, the matching in \eqref{fig:1+q^2 splitting makes left choice} is consistent with the matching in Figure \ref{fig:ladybug matching with left pair}. Our remaining goal in this section is to show that this holds in general. 

We will use the notation and conventions established in this section. By Proposition \ref{prop:LBM1} (1), we can write 
\[
\Delta_E(x) = \q^a v_{\Is'} \otimes v_+ \otimes v_-\otimes v_{\Is''}\otimes w_\Js+ \q^{a-1} v_{\Is'} \otimes v_- \otimes v_+\otimes v_{\Is''}\otimes w_\Js
\]
for some $a\in \Z$. Proposition \ref{prop:LBM1} (2) and Corollary \ref{cor:1+q^2 splitting} imply that in the quantum setting, the pairing on intermediate generators is forced to be 
\begin{equation}\label{eq:1+q^2 matching}
\begin{split}
&\q^\l v_\Is \otimes w_+ \otimes w_- \otimes w_\Js \longleftrightarrow \q^a v_{\Is'} \otimes v_+ \otimes v_-\otimes v_{\Is''}\otimes w_\Js   \\
&\q^k v_\Is \otimes w_- \otimes w_+ \otimes w_\Js  \longleftrightarrow \q^{a-1} v_{\Is'} \otimes v_- \otimes v_+\otimes v_{\Is''}\otimes w_\Js. 
 \end{split}
\end{equation}

\begin{corollary}\label{cor:1+q^2 splitting makes left choice}
After forgetting powers of $\q$, the matching in \eqref{eq:1+q^2 matching} is the same as the ladybug matching made with the left pair. 
\end{corollary}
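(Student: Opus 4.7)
The strategy is to reduce the claim to the local case already analyzed in the discussion immediately preceding the corollary, namely the configuration of Figure \ref{fig:LBM ex} in which $C$ together with its two surgery arcs sits in the annulus with no other curves present. For that local case, the explicit surgery formulas of Figure \ref{fig:LBM ex formulas} yield the pairing drawn in \eqref{fig:1+q^2 splitting makes left choice}, and the authors have already observed that after setting $\q=1$ this agrees with the left-pair ladybug matching shown in Figure \ref{fig:ladybug matching with left pair}. The task is then to show that the general case is controlled by this local model.

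To do this, I would argue that the matching \eqref{eq:1+q^2 matching} is determined entirely by the local configuration near $C$, with the ancillary factors $v_{\Is'}$, $v_{\Is''}$, and $w_\Js$ serving as passive spectators throughout. Proposition \ref{prop:LBM1}(1) labels the two summands of $\Delta_E(x)$ by whether the inner or outer essential circle obtained from surgery along $A_E$ carries the $v_+$ label, using the inner-to-outer ordering convention. Proposition \ref{prop:LBM1}(2) together with Proposition \ref{prop:LBM2} plays the analogous role on the other branch of the square \eqref{eq:square of resolutions for ladybug configuration}, identifying which of the intermediate generators $v_\Is \otimes w_- \otimes w_+ \otimes w_\Js$ and $v_\Is \otimes w_+ \otimes w_- \otimes w_\Js$ is sent by $m_T$ to the same image in $\Cs'$ as each summand of $\Delta_E(x)$ is sent under $m_E$. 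Each of these statements depends only on the arrangement of $C$, $A_T$, $A_E$, and the seam near $C$, so stripping off $v_{\Is'}$, $v_{\Is''}$, and $w_\Js$ reduces the pairing \eqref{eq:1+q^2 matching} to the local pairing \eqref{fig:1+q^2 splitting makes left choice}. Setting $\q = 1$ then yields the claimed agreement with the left-pair ladybug matching.

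The main obstacle is bookkeeping: one must verify that the ordering convention on $C_1$ and $C_2$ from Figure \ref{fig:ordering}, together with the identification of $v_+$ with counter-clockwise orientation on essential circles, is consistent with the \emph{left} (rather than right) pair choice of \cite[Section 5.4]{LS}. This is a direct comparison of two independently made conventions on the explicit pictures in Figures \ref{fig:ladybug matching with left pair} and \eqref{fig:1+q^2 splitting makes left choice}, after which the general statement is forced by the reduction above.
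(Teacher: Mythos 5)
Your proposal is correct, and the overall strategy matches the paper's. Both arguments hinge on the same two ingredients: (i) Propositions \ref{prop:LBM1} and \ref{prop:LBM2} pin down the form of the pairing \eqref{eq:1+q^2 matching} independently of the ancillary circles and of how $C$ meets the seam (so the pattern, after discarding powers of $\q$, is always $w_+\otimes w_- \leftrightarrow v_+\otimes v_-$, $w_-\otimes w_+ \leftrightarrow v_-\otimes v_+$ with respect to the ordering convention of Figure \ref{fig:ordering}); and (ii) a direct inspection shows this matches the left-pair ladybug matching. The difference is organizational: you factor the argument through the already-verified model configuration of Figure \ref{fig:LBM ex} and then invoke the spectator argument, while the paper's proof skips the intermediate appeal to the model case and instead draws out, in two local figures, exactly what the left-pair choice does on circles and then on generators, comparing that directly against \eqref{eq:1+q^2 matching}. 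The paper's version is slightly more self-contained; yours reuses the pre-corollary computation. One small caution: your final paragraph describes the convention check as an ``obstacle,'' but that check is precisely what the discussion before the corollary (and the paper's proof) already carry out, so it is not a residual gap—it would be cleaner to phrase it as ``this comparison was made in the local picture, and the propositions show it is configuration-independent.''
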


\begin{proof}
Looking at the surgery arc $A_T$, the left choice makes the following identification on circles in $s_E(\Cs)$ and $s_T(\Cs)$.
\begin{equation}
\vcenter{\hbox{
\includegraphics{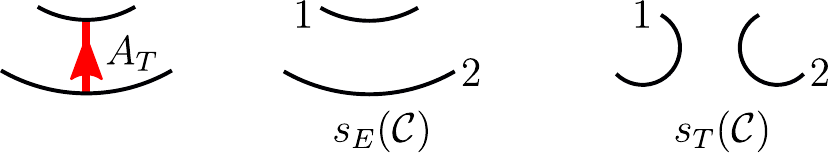}
}}
\end{equation}
Therefore the ladybug matching makes the following identification on generators
\begin{equation}
\vcenter{\hbox{
\includegraphics[scale=.85]{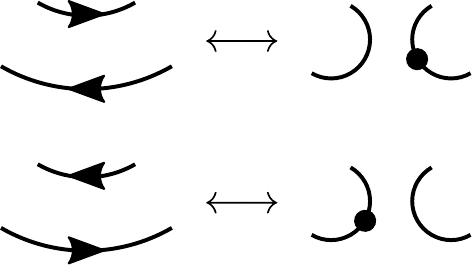}
}}
\end{equation}
Comparing with  our ordering convention on the circles in $s_T(\Cs)$  in  Figure \ref{fig:ordering} demonstrates that this is consistent with \eqref{eq:1+q^2 matching}.
\end{proof}

\section{Burnside Categories and Functors}
Following the general strategy of \cite{LLS}, the first step towards lifting a Khovanov homology theory to a spectrum is to build a Burnside functor from the cube category $\2^n$ \cite[Section 2.1]{LLS} to the Burnside category $\B$ \cite[Section 4.1]{LLS} which encodes the information underlying the chain complex in a higher categorical manner, beyond the data of chain groups and differentials.  In this section we review the general framework of such categories and functors along with their equivariant versions, before turning to the specific case of quantum annular Khovanov homology in Section \ref{Quantum Annular Burnside section}.
In particular, a strategy for constructing natural isomorphisms of Burnside functors, outlined in Section \ref{sec:a strategy for constructing natural isomorphisms}, will be used in follow-up sections.

\subsection{The Cube Category}

We first recall the \textit{cube category} $\2^n$ from \cite[Section 2.1]{LLS}. The objects of $\2^n$ are the elements of $\{0,1\}^n$, thought of as vertices of the $n$-dimensional cube $I^n$. There is a natural partial order on $\{0,1\}^n$: for vertices $u=(u_1,\ldots, u_n)$ and  $v=(v_1, \ldots, v_n)$ in  $\{0,1\}^n$, write $u\geq v$ if each $u_i \geq v_i$. The set of morphisms $\Hom_{\2^n}(u,v)$ is defined to be empty unless $u\geq v$, in which case $\Hom_{\2^n}(u,v)$ consists of a single element, denoted $\varphi_{u,v}$. Therefore, for $u\geq w$, we have $\varphi_{u,w} = \varphi_{v,w} \circ \varphi_{u,v}$ for any $v$ such that $u\geq v \geq w$. We note that the edges in $\2^n$ point in the opposite direction of those in the cube of resolutions of a link diagram. 

For a vertex $u=(u_1,\ldots, u_n)$, define $\lr{u} := \sum_i u_i$. Write $u\geq_k v$ if $ u \geq v$ and $\lr{u} - \lr{v} = k$. In particular, $u \geq_1 v$ means there is an edge from $u$ to $v$ in the cube $I^n$. 

\subsection{Burnside Categories and Functors}\label{sec:Burnside categories and functors} 

This section discusses the Burnside category, $\B$, and Burnside functors, following \cite[Section 4.1]{LLS}. 

For sets $X$ and $Y$, a \textit{correspondence} from $X$ to $Y$ is a triple $(A,s,t)$ where $A$ is a set and $s:A\to X$, $t: A\to Y$ are functions, called the \textit{source map} and \textit{target map}, respectively. We will often denote a correspondence by $X \lar{s} A \rar{t} Y$ or simply $X \ot A \to Y$. Given correspondences $X \lar{s_A} A \rar{t_A} Y$ and $Y \lar{s_B} B \rar{t_B} Z$, their composition $(B, s_B, t_B) \circ (A, s_A, t_A)$ is the correspondence $(C, s, t)$ from $X$ to $Z$ obtained as the fiber product
\[
C = B\times_Y A = \{ (b,a) \in B\times A \mid s_B(b) = t_A(a) \}
\]
with the source and target maps 
\[
s(b,a) = s_A(a) \hskip3em t(b,a) = t_B(b).
\]
The composition can be summarized by the fiber product diagram
\[
\begin{tikzcd}
& & C \arrow[dl] \arrow[dr] \arrow[ddrr, bend left= 40, "t"] \arrow[ddll, bend right=40, "s"'] &  &\\
& A \arrow[dl, "s_A"'] \arrow[dr, "t_A"] & & B \arrow[dl, "s_B"'] \arrow[dr, "t_B"] &  \\
X & & Y & & Z
\end{tikzcd}
\]

A \textit{morphism} from a correspondence $X \lar{s_A} A \rar{t_A} Y$ to a correspondence $X \lar{s_B} B \rar{t_B} Y$ is a bijection $f:A\to B$ which commutes with the source and target maps. That is, $f$ is a bijection fitting into the commutative diagram 
\[
\begin{tikzcd}[row sep=small]
 & A\arrow[dd, "f"] \arrow[dl ,"s_A"'] \arrow[dr, "t_A"] & \\
 X & & Y\\
 & B \arrow[ul, "s_B"] \arrow[ur, "t_B"'] & 
\end{tikzcd}
\]

The collection of sets, correspondences between them, and morphisms of correspondences forms a \textit{bicategory} in the language of \cite{BPW} or, equivalently, a \textit{weak $2$-category} in the language of \cite{LLS}. The objects are sets, $1$-morphisms are correspondences, and $2$-morphisms are morphisms of correspondences. We will use the terms bicategory and weak $2$-category interchangebly. A quick reference for the notion of bicategories is \cite[Appendix A.4]{BPW}.

The identity $1$-morphism of a set $X$ is the identity correspondence 
\[X \lar{\id_X} X \rar{\id_X} X.
\]
Given correspondences $X \lar{s_A} A \rar{t_A} Y$ and $Z \lar{s_B} B \rar{t_B} X$, the compositions $(A, s_A, t_A) \circ (X, \id_X, \id_X)$ and $(X, \id_X, \id_X) \circ (B, s_B, t_B)$ are not equal to $(A, s_A, t_A)$ and $(B, s_B, t_B)$, but there are natural $2$-morphisms 
\begin{align*}
(A, s_A, t_A) \circ (X, \id_X, \id_X) \cong (A, s_A, t_A) \\
(X, \id_X, \id_X) \circ (B, s_B, t_B) \cong (B, s_B, t_B)
\end{align*}
Similarly, composition of correspondences is not strictly associative, but is associative up to natural isomorphism.  This is the sense in which we have only a weak 2-category, as opposed to a strict 2-category.

The \textit{Burnside category}, denoted $\B$, is the sub-bicategory of the above consisting of finite sets and finite correspondences. Even though $\B$ is a bicategory, it will always be referred to as a category. 

The construction of Khovanov homotopy types in \cite{LLS}, \cite{SSS} utilizes functors $F: \2^n \to \B$, which we explain here. First, make $\2^n$ into a (strict) $2$-category by introducing only identity $2$-morphisms. There is a notion of a \textit{lax 2-functor} between $2$-categories, and also of a \textit{strictly unitary lax 2-functor}. The complete definitions, consisting of a slew of data and various natural morphisms, can be found in \cite[Definition 4.2]{LLS} and \cite[Defintion 4.3]{LLS}. We will only be interested in the notion of a \textit{Burnside functor}, which is a strictly unitary lax 2-functor $F: \2^n \to \B$. Lemma \ref{lem:hexagon relation} specifies the data needed to define a Burnside functor uniquely up to natural isomorphism (see Section \ref{sec:natural transformations of Burnside functors} for the definition of natural isomorphisms).

\begin{lemma}\emph{(\cite[Lemma 4.5]{LLS}, \cite[Proposition 4.3]{LLS2})}\label{lem:hexagon relation}
Consider the following data: 

\begin{itemize}
\item A finite set $F(u)$ for each vertex $u\in \2^n$.

\item A finite correspondence $F(\varphi_{u,v})$ from $F(u)$ to $F(v)$ for each pair of vertices $u,v\in \2^n$ with $u\geq_1 v$.

\item A $2$-morphism
\[
F_{u,v,v',w} : F(\varphi_{v,w}) \circ F(\varphi_{u,v}) \to F(\varphi_{v',w}) \circ F(\varphi_{u,v'})
\]
for each $2$-dimensional face of $\2^n$ with vertices $u,v,v',w$ satisfying $u\geq_1 v, v' \geq_1 w$.

\end{itemize}

Suppose also that the above data satisfies the following conditions:

\begin{enumerate}[label= \emph{(\arabic*)}]

\item $F_{u,v,v',w}^{-1} = F_{u,v',v,w}$

\item For every $3$-dimensional sub-cube of $\2^n$ as in Figure \ref{fig:3D cube}, the hexagon of Figure \ref{fig:hexagon} commutes.

\begin{figure}[H]
\centering
    \begin{subfigure}[b]{0.3\textwidth}
    \[
    \begin{tikzcd}[row sep = small, column sep = small]
  &v'   \ar{rr} \ar{dd}  & &  w  \ar{dd}  \\
    u \ar[crossing over]{rr} \ar{dd} \ar{ur} & & v \ar{ur} \\
      & w'  \ar{rr}  & &  z \\
    v'' \ar{rr} \ar{ur} && w'' \ar{ur} \ar[from=uu,crossing over]
\end{tikzcd}
    \]
    \caption{3D cube}\label{fig:3D cube}
    \end{subfigure}\hskip5em
    \begin{subfigure}[b]{0.5\textwidth}
    \[
    \begin{tikzcd}[row sep = small, column sep = small, outer sep = 2pt]
     & \c\ar[rr, "F_{v',w,w',z} \times \emph{\id}"] &  & \c \ar[ddr, "\emph{\id} \times F_{u,v',v'',w'}"] &  \\
     &  &  &  &  \\
    \c \ar[uur, "\emph{\id} \times F_{u,v,v',w}"] \ar[ddr, "F_{v,w,w'',z} \times \emph{\id}"'] &  &  &  & \c \\
     &  &  &  &  \\
     & \c \ar[rr, "\emph{\id} \times F_{u,v,v'',w''}"'] &  & \c \ar[uur, "F_{v'',w'',w',z}\times \emph{\id}"']&  
    \end{tikzcd}
    \]
    \caption{The hexagon relation}\label{fig:hexagon}
    \end{subfigure}
    \caption{}\label{fig:hexagon relation}
\end{figure}
\end{enumerate}
\noindent
Then the data can be extended to a strictly unitary lax $2$-functor $F:\2^n \to \B$, which is unique up to natural isomorphism. 
\end{lemma}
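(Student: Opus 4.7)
The plan is to build the extension via compositions along maximal chains in the cube, using the given square 2-isomorphisms to identify different choices of chain. For $u\geq v$ in $\2^n$ with $\lr{u}-\lr{v}=k$, a maximal chain $c: u=u_0>_1 u_1 >_1 \cdots >_1 u_k=v$ produces a composite correspondence $F_c(\varphi_{u,v}) := F(\varphi_{u_{k-1},u_k})\circ \cdots \circ F(\varphi_{u_0,u_1})$. Picking a canonical chain $c_{u,v}$ for each pair (say, the lexicographically smallest), I would define $F(\varphi_{u,v}):=F_{c_{u,v}}(\varphi_{u,v})$, with the convention that the empty chain yields the identity correspondence (enforcing strict unitality). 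The compositors $F_{u,v,w}: F(\varphi_{v,w})\circ F(\varphi_{u,v})\to F(\varphi_{u,w})$ will then be canonical 2-isomorphisms between different chain-based compositions, which still need to be constructed.

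The main step is to show that, for any two chains $c, c'$ from $u$ to $v$, there is a canonical 2-isomorphism $\Phi_{c,c'}: F_c(\varphi_{u,v})\to F_{c'}(\varphi_{u,v})$, independent of intermediate choices. Identifying the set of maximal chains with $S_k$ by recording the order in which the $k$ differing coordinates are flipped, adjacent transpositions correspond to ``square flips'' — modifications of $c$ at two consecutive positions, i.e., replacements of one diagonal path across a $2$-face of $\2^n$ by the other — and each such flip induces a 2-isomorphism by applying the given $F_{u_{i-1},u_i,u_i',u_{i+1}}$ at the flipped square and identities elsewhere. To check that arbitrary compositions of flips give a well-defined $\Phi_{c,c'}$ I would verify the Coxeter relations for $S_k$: the relation $s_i^2=1$ is exactly condition (1); the commuting-transposition relation $s_i s_j=s_j s_i$ for $\lr{i-j}>1$ follows from strict functoriality of composition in $\B$ on disjoint portions of the chain; and the braid relation $s_i s_{i+1} s_i = s_{i+1} s_i s_{i+1}$ corresponds exactly to the hexagon relation (2), applied to the $3$-dimensional sub-cube whose six maximal chains from top to bottom are precisely the six vertices of the hexagon in Figure \ref{fig:hexagon}. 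I expect this braid/hexagon identification to be the main technical obstacle; the other Coxeter relations are essentially tautological.

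With canonical $\Phi_{c,c'}$'s in hand, define $F_{u,v,w}$ to be $\Phi_{c_{u,v}\ast c_{v,w},\, c_{u,w}}$, where $\ast$ denotes concatenation. The lax 2-functor associativity axiom for $u\geq v\geq w\geq z$ reduces, via the transitivity $\Phi_{c_1,c_3}=\Phi_{c_2,c_3}\circ \Phi_{c_1,c_2}$ of the canonical isomorphisms, to the equality of two concatenated chain compositions, which is immediate from the construction. Strict unitality is built in by the empty-chain convention. For uniqueness up to natural isomorphism, any other strictly unitary lax $2$-functor $F'$ extending the given $1$-skeleton data differs from $F$ only through different choices of chains and compositor extensions; the component-wise canonical isomorphisms $F(\varphi_{u,v})=F_{c_{u,v}}(\varphi_{u,v})\xrightarrow{\Phi} F_{c'_{u,v}}(\varphi_{u,v})=F'(\varphi_{u,v})$ assemble into a natural isomorphism $F\cong F'$, with the naturality squares for compositors commuting once again by the coherence of the $\Phi$'s.
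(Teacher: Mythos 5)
The paper states this result by citation and gives no proof of its own, so the comparison is against the cited sources [LLS, Lemma~4.5] and [LLS2, Proposition~4.3]. Your outline reproduces the argument used there: take $F(\varphi_{u,v})$ to be the composite along a chosen maximal chain, identify maximal chains from $u$ to $v$ with $S_k$ via the order of coordinate flips, and show that the square $2$-isomorphisms assemble into well-defined chain-comparison $2$-isomorphisms $\Phi_{c,c'}$ by checking the Coxeter presentation, with condition~(1) giving $s_i^2=1$ and condition~(2) giving the braid relation via the hexagon (whose six vertices are exactly the six maximal chains of a $3$-cube). Your definitions of the compositors as $\Phi$'s between concatenated and canonical chains, and the reduction of associativity and uniqueness to transitivity of the $\Phi$'s, are likewise the intended argument.

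Two small points are glossed over but are not genuine gaps. First, $\B$ is a weak $2$-category, so the chain composite $F_c(\varphi_{u,v})$ depends on a choice of bracketing; the chain-comparison isomorphisms must be threaded through the associators of $\B$, which Mac Lane coherence makes routine but not literally free. Second, the commuting relation $s_is_j=s_js_i$ for $|i-j|>1$ is not really "strict functoriality of composition" (which $\B$ does not have) but the interchange law for $2$-morphisms applied to disjoint $2$-faces, again up to associators. Both are handled by bookkeeping in the cited references; your identification of the hexagon with the braid relation, which you correctly flag as the main technical point, is the essential content.
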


The hexagon of Figure \ref{fig:hexagon} comes from two ways traversing the faces of the 3-dimensional cube, starting from the correspondence $F(\varphi_{w,z}) \circ F(\varphi_{v,w}) \circ F(\varphi_{u,v})$ and ending at $F(\varphi_{w',z}) \circ F(\varphi_{v'',w'})\circ F(\varphi_{u,v''})$. The top half of the hexagon comes from traversing the faces as in Figure \ref{fig:going around the cube1}, and the bottom half comes from traversing the faces as in Figure \ref{fig:going around the cube2}.  Lemma \ref{lem:hexagon relation} states that the hexagon relation is enough to guarantee that the functor is coherent on all higher dimensional cubes as well.

\begin{figure}[H]
    \begin{subfigure}[b]{0.4\textwidth}
 \[
    \begin{tikzcd}[row sep = small, column sep = small]
  &v'\ar[dddl, Rightarrow, "3"]   \ar{rr} \ar{dd}  & &  w \ar[ddll,bend left= 20, Rightarrow, "2"]  \ar{dd}  \\
    u \ar[crossing over]{rr} \ar{dd} \ar{ur} & & v \ar[ul, Rightarrow, "1"'] \ar{ur} \\
      & w'  \ar{rr}  & &  z \\
    v'' \ar{rr} \ar{ur} && w'' \ar{ur} \ar[from=uu,crossing over]
\end{tikzcd}
\]
\caption{}\label{fig:going around the cube1}
    \end{subfigure}
    \begin{subfigure}[b]{0.4\textwidth}
  \[
    \begin{tikzcd}[row sep = small, column sep = small]
  &v'   \ar{rr} \ar{dd}  & &    w  \ar{dd} \ar[dddl, Rightarrow, "1"'] \\
    u \ar[crossing over]{rr} \ar{dd} \ar{ur} & & v \ar{ur}  \\
      & w'  \ar{rr}  & &  z \ar[from=ll] \\
    v'' \ar{rr} \ar{ur} \ar[from=uurr, crossing over, bend right=20, Rightarrow, start anchor=south west, "2"'] && w'' \ar{ur} \ar[from=uu,crossing over] \ar[from=uuur,crossing over, Rightarrow] \ar[ul, Rightarrow, "3"]
\end{tikzcd}
    \]
    \caption{}\label{fig:going around the cube2}
    \end{subfigure}
    \caption{}\label{fig:where hexagon comes from}
\end{figure}

\begin{remark}\label{rem:equivalent hexagon relation}
We end this subsection with some comments about verifying the hexagon relation which will be useful in proving Theorem \ref{thm:the quantum annular Burnside functor}. Suppose we have $2$-morphisms $F_{u,v,v',w}$ for each square face $u\geq_1 v,v' \geq_1 w$ of $\2^n$, which satisfy $F_{u,v,v',w}^{-1} = F_{u,v',v,w}$ as in condition (1) of Lemma \ref{lem:hexagon relation}. Then verifying the hexagon relation is equivalent to the following. Start at the correspondence $F(\varphi_{w,z}) \circ F(\varphi_{v,w}) \circ F(\varphi_{u,v})$ and traverse the six faces of the cube using the $2$-morphisms; i.e., first move across the three faces as shown in Figure \ref{fig:going around the cube1}, and then move across the remaining three faces as in Figure \ref{fig:going around the cube2}, except in the reverse order. Composing these six $2$-morphisms yields a $2$-morphism
\[
\Phi_{u,v,w,z}: F(\varphi_{w,z}) \circ F(\varphi_{v,w}) \circ F(\varphi_{u,v}) \to F(\varphi_{w,z}) \circ F(\varphi_{v,w}) \circ F(\varphi_{u,v}).
\]
Verifying commutativity of the hexagon of Lemma \ref{lem:hexagon relation} is equivalent to verifying that $\Phi_{u,v,w,z}$ is the identity.  Moreover, for each 3-dimensional sub-cube of $\2^n$, it suffices to verify $\Phi_{u,v,w,z}=\id$ for just one tuple of vertices $u\geq_1 v \geq_1 w \geq_1 z$ within the sub-cube.

Furthermore, such verifications are immediate under certain circumstances which we now describe.  Let $A$ denote the correspondence $F(\varphi_{w,z}) \circ F(\varphi_{v,w}) \circ F(\varphi_{u,v})$, and let $s: A\to F(u)$, $t: A\to F(z)$ denote the source and target maps respectively. Suppose that for every $x\in F(u)$ and $y\in F(z)$, $s^{-1}(x) \cap t^{-1}(y)$ is either empty or has one element. Let $a\in A$ and let $a' = \Phi_{u,v,w,z}(a)$. Since $\Phi_{u,v,w,z}(a)$ is a $2$-morphism, we have $s(a') = s(a)$ and $t(a') = t(a)$. Then $a'=a$, so the hexagon relation is satisfied for this $3$-dimensional sub-cube. In this situation, we will say that this $3$-dimensional cube is \textit{simple}\label{def:simple correspondence}.
\end{remark}

\subsection{The Equivariant Burnside Category}\label{sec:the equivariant Burnside category}

Let $G$ be a finite group. The \textit{$G$-equivariant Burnside category}, denoted $\B_G$, is an equivariant analogue of $\B$. Objects of $\B_G$ are finite free $G$-sets. A $1$-morphism from $X$ to $Y$ is a triple $(A, s, t)$ where $A$ is another finite free $G$-set, and $s : A\to X$, $t : A\to Y$ are $G$-equivariant maps. We will call such a triple $(A,s,t$) an \textit{equivariant correspondence}. Given equivariant correspondences  $X\lar{s_A} A \rar{t_A} Y$ and $Y \lar{s_B} B \rar{t_B} Z$, the composition $(B, s_B, t_B) \circ (A, s_A, t_A)$ is the same as in $\B$. The $G$-action on $B\times_Y A$ is inherited from the diagonal $G$-action on $B\times A$; that is, $g(b,a) = (gb,ga)$. The additional requirement on $2$-morphisms between correspondences is that they be $G$-equivariant. 

The equivariant Burnside category is discussed in \cite[Section 3.3]{SSS} in the case $G= \Z_2$. Lemma 3.2 in \cite{SSS} (our Lemma \ref{lem:hexagon relation}) gives sufficient conditions for defining a functor $F:\2^n \to \B_{\Z_2}$, and the same conditions clearly work for general $G$. The modification to the data of Lemma \ref{lem:hexagon relation} is that all sets should be finite free $G$-sets and all set maps should be equivariant. Note that if $G=\{1\}$, then $\B_{\{1\}} = \B$, so everything stated about $\B_G$ in the following sections holds just as well for $\B$. 

We will later be interested in the \textit{quotient functor} $(-)/G : \B_G\to \B$, which simply takes the quotient of all sets and set maps. Explicitly, the quotient functor sends a $G$-set $X$ to the set of orbits $X/G = X/(x\sim gx)$. For $G$-sets $X$ and $Y$ and an equivariant map $f:X\to Y$, there is an induced map $f/G : X/G \to Y/G$, given by $(f/G)([x]) = [f(x)]$. The quotient functor sends an equivariant correspondence $X\lar{s} A \rar{t} Y$ to the correspondence $X/G \lar{s/G} A/G \rar{t/G} Y/G$. Likewise, a $2$-morphism $f:A\to B$ is assigned $f/G: A/G \to B/G$. 

\subsection{Totalizations of Burnside Functors}\label{sec:totalizations of Burnside functors}
Associated to any Burnside functor $\2^n \to \B_G$ is a chain complex, called the \textit{totalization} (see \cite[Definition 5.1]{LLS2}, \cite[Section 3.6]{SSS}). 
For a set $X$, let $\Ab(X)$ denote the free abelian group generated by $X$. Let $X \lar{s} A \rar{t} Y$ be an equivariant correspondence. Define a map $\Ab(A) : \Ab(X) \to \Ab(Y)$ by 
\begin{equation}\label{eq:totalization}
\Ab(A)(x) = \sum_{a\in s^{-1}(x)} t(a)
\end{equation}
Let $F:\2^n \to \B_G$ be a Burnside functor. For $u\geq v$, let $A_{u,v}$ denote the correspondence assigned by $F$ to the morphism $\varphi_{u,v} : u \to v$. The complex $\Tot(F)$ is defined by 
\[
\Tot(F) : = \bigoplus_{ u \in \2^n} \Ab (F(u))
\]
with the term $\Ab(F(u))$ in homological degree $\lr{u}$. The differential
\[
\d : \bigoplus_{\lr{u} = i } \Ab(F(u)) \to \bigoplus_{\lr{v}=i+1} \Ab(F(v))
\]
is given on summands by maps $\d_{u,v} : \Ab(F(u)) \to \Ab(F(v))$, for $\lr{u} = i$, $\lr{v} = i+1$, defined as 
\[
\d_{u,v}(x) = (-1)^{s_{u,v}} \Ab(A_{u,v}).
\]
The sign assignment $s_{u,v}$ ensures that $\d^2=0$ (see \cite[Section 2.7]{BN2}, also \cite[Definition 4.5]{LS} for a discussion of $s_{u,v}$). 

If $X$ is a $G$-set, then $\Ab(X)$ is naturally a $\Z[G]$-module. Moreover, If $X \lar{s} A \rar{t} Y$ is an equivariant correspondence, then the map $\Ab(A)$ is $\Z[G]$-linear. Thus if $F$ is an equivariant Burnside functor taking values in $\B_G$, then $\Tot(F)$ is a complex of $\Z[G]$-modules.

\subsection{Natural Transformations of Burnside Functors}\label{sec:natural transformations of Burnside functors}

There is a canonical identification $\2^{n+1} = \2 \times \2^n$. A natural transformation $\eta: F_1\to F_0$ of Burnside functors $F_1, F_0: \2^n \to \B_G$ is a functor $\eta:\2^{n+1} \to \B_G$ such that the restriction of $\eta$ to $\{i\} \times \2^n$ is equal to $F_i$. 
A \textit{natural isomorphism} from $F_1$ to $F_0$ is a natural transformation $\eta: F_1\to F_0$ such that $\eta(e_u) : F_1(u) \to F_0(u)$ is an isomorphism in $\B_G$ for each vertex $u$.

In the context of natural transformations, we will think of $\2^{n+1} = \2 \times \2^n$ as two``horizontal"  copies of $\2^n$ with vertical edges connecting them, pointing downwards. The top copy of $\2^n$ corresponds to $\{1\} \times \2^n \subset \2 \times \2^n$, and likewise the bottom copy corresponds to $\{0\}\times \2^n \subset \2\times \2^n$. Recall that for $u\geq v$, $\varphi_{u,v}$ denotes the unique element in $\Hom_{\2^n}(u,v)$. We distinguish two types of morphisms in $\2 \times \2^n$. First, for each $u\in \2^n$, there is a morphism 
\[
(\varphi_{1,0}, \id_u): (1,u) \to (0,u).
\]
We denote this edge by $\edge_u$, and think of it a vertical arrow
\[
\begin{tikzcd}
(1,u)\ar[d, "\edge_u"']\\
(0,u)
\end{tikzcd}
\]
The second type consists of morphisms in $\2\times \2^n$ of the form \[
(\id_i,\varphi_{u,v}): (i,u) \to (i,v)
\]
where $i \in \{0,1\} $, and $u,v\in \2^n$ with $u\geq v$.
We will denote these morphisms by $\varphi^i_{u,v}$, and think of them as living in the ``horizontal'' cube $\{i\} \times \2^n$. 

With these conventions, the diagram
\begin{equation}\label{eq:horizontal maps in nat transf}
\begin{tikzcd}[column sep = tiny, row sep = small]
  & (i,v')   \ar[rr, "\varphi^i_{v',w}"]  & &   (i,w)    \\
    (i,u) \ar[rr, "\varphi^i_{u,v}"']  \ar[ur, "\varphi^i_{u,v'}"] & & (i,v) \ar[ur, "\varphi^i_{v,w}"']
\end{tikzcd}
\end{equation}
lives in the horizontal cube $\{i\}\times \2^n$, and the diagram 
\begin{equation}\label{eq:vertical maps in nat transf}
\begin{tikzcd}
(1,u) \ar[d, "\edge_u"'] \ar[r, "\varphi^1_{u,v}"] & (1,v) \ar[d, "\edge_v"] \\
(0,u) \ar[r, "\varphi^0_{u,v}"'] & (0,v)
\end{tikzcd}
\end{equation}
is between the horizontal cubes. In the context of natural transformations, we will often not label some or all of the edges, with the understanding that the above conventions $\eqref{eq:horizontal maps in nat transf}$ and $\eqref{eq:vertical maps in nat transf}$ are followed. To define a natural transformation $\eta:F_1\to F_0$, one needs to specify a correspondence $\eta(\edge_u)$ for each vertical edge $\edge_u$, 
a $2$-morphism $\eta_{u,v} : \eta(e_v) \circ \eta(\varphi^1_{u,v}) \to \eta(\varphi^0_{u,v}) \circ \eta(e_u)$ for each vertical face as in \eqref{eq:vertical maps in nat transf}, and verify that the hexagon of Lemma \ref{lem:hexagon relation} commutes. 

Given a natural transformation $\eta: F_1\to F_0$, there is an induced map $\Tot(\eta) : \Tot(F_1) \to \Tot(F_0)$. The map $\Tot(\eta)$ is defined on each summand by  $\Ab(\eta(\edge_u)): \Ab(F_1(u)) \to \Ab(F_0(u))$.

\subsection{A Strategy for Constructing Natural Isomorphisms}\label{sec:a strategy for constructing natural isomorphisms}

We will have several occasions to show that two Burnside functors are isomorphic (Propositions \ref{prop:changing generators}, \ref{prop:isotoping the link diagram}, and \ref{prop:quotient of QABF}).  The general strategy is the same in all these cases, so we outline it here. 

Note that a correspondence $X \lar{s} A \rar{t} Y$, thought of as a morphism in $\B_G$, is an isomorphism if and only if $s$ and $t$ are bijective. In particular, given an equivariant bijection $t: X\to Y$, the correspondence $X \lar{\id} X \rar{t} Y$ is an isomorphism in $\B_G$, with inverse $Y \lar{t} X \rar{\id} X$. 

Suppose we are given two functors $F_1, F_0 : \2^n \to \B_G$ and equivariant bijections $\psi_{u}: F_1(u) \to F_0(u)$ for each vertex $u \in \2^n$. For $u\geq_1 v$, let
\[
F_i(u) \lar{s^i_{u,v}} A_{u,v}^i \rar{t^i_{u,v}} F_i(v)
\] be the correspondence assigned to the edge $\varphi_{u,v}:u\to v$ by $F_i$, for $i=0,1$. Suppose also that for each $u\geq_1 v$, the following conditions hold. 

\smallskip

\begin{adjustwidth}{12pt}{5pt}
\begin{enumerate}[label= (NI \arabic*)]

\item\label{nat iso condition1} $A^i_{u,v} \subset F_i(u) \times F_i(v)$, and the $G$-action on $A^i_{u,v}$ is inherited from the diagonal $G$-action on $F_i(u)\times F_i(v)$ (i.e., $g(x,y) = (g x, g y)$ for $g\in G$, $(x,y) \in F_i(u) \times F_i(v)$). 

\smallskip

\item\label{nat iso condition2} The map $F_1(u) \times F_1(v) \rar{\psi_u\times\psi_v} F_0(u) \times F_0(v)$ restricts to an bijection $A^1_{u,v} \to A^0_{u,v}$, denoted $\psi_{u,v}$. 

\smallskip

\item\label{nat iso condition3} The source and target maps, $s^i_{u,v}$ and $t^i_{u,v}$, are restrictions of the projections $F_i(u) \twoheadleftarrow F_i(u)\times F_i(v) \twoheadrightarrow F_i(v)$.
\end{enumerate}
\end{adjustwidth}

\smallskip
In this situation, we have a systematic method for building a natural isomorphism $\eta : F_1 \to F_0$ using Lemma \ref{lem:hexagon relation} as follows.  Define $\eta$ on objects by
\[
\eta(i,u) := F_i(u)
\]
for $i\in \{0,1\}$ and $u\in \{0,1\}^n$. We then define $\eta$ on each vertical edge $\edge_u : (1,u) \to (0,u)$ by
\[
\eta(\edge_u) = \left( F_1(u) \lar{\id} F_1(u) \rar{\psi_u} F_0(u) \right),
\]
That is, the underlying set of the correspondence $\eta(\edge_u)$ is simply $F_1(u)$, the source map is the identity, and the target map is the given equivariant bijection $\psi_{u}$.

We have now specified $\eta$ on objects and edges. It remains to define the $2$-morphisms for each square face of $\2^{n+1}$. Since $\eta$ must restrict to $F_i$ on $\{i\}\times \2^n$, we need only to specify a $2$-morphism
\[
\eta_{u,v}: F_1(v) \times_{F_1(v)} A^1_{u,v} \to A^0_{u,v} \times_{F_0(u)} F_1(u).
\]
corresponding to the vertical square faces \eqref{eq:vertical maps in nat transf} of $\2\times\2^n$.

The situation is illustrated in Figure \ref{fig:Defining eta_uv}.  Note that every element of $F_1(v) \times_{F_1(v)} A^1_{u,v}$ is of the form $(y,x,y)$, where $(x,y) \in A^1_{u,v} \subset F_1(u) \times F_1(v)$. Likewise, an element of $A^0_{u,v} \times_{F_0(u)} F_1(u)$ is of the form $(a,b, \psi_u^{-1}(a))$, where $(a,b) \in A^0_{u,v} \subset F_0(u) \times F_0(v)$. Condition \ref{nat iso condition1} ensures that the bijections 
\begin{align*}
F_1(v) \times_{F_1(v)} A^1_{u,v} &\to A^1_{u,v}  &&\hskip.5em A^0_{u,v} \to A^0_{u,v} \times_{F_0(u)} F_1(u)\\
(y,x,y) & \mapsto (x,y)  &&(a,b) \mapsto (a,b, \psi_u^{-1}(a))
\end{align*}
are equivariant. Then the composition
\begin{equation}\label{eq:bootstrapping 2-morphism}
F_1(v) \times_{F_1(v)} A^1_{u,v} \to A^1_{u,v} \rar{\psi_{u,v}} A^0_{u,v} \to A^0_{u,v} \times_{F_0(u)} F_1(u)
\end{equation}
is given by $(y,x,y) \mapsto (\psi_u(x), \psi_v(y), x)$, and condition \ref{nat iso condition2} guarantees that it is also an equivariant bijection. Moreover, condition \ref{nat iso condition3} ensures that this composition commutes with the source and target maps. Therefore, we may define the $2$-morphism $\eta_{u,v}$ to be the composition 
$\eqref{eq:bootstrapping 2-morphism}$.

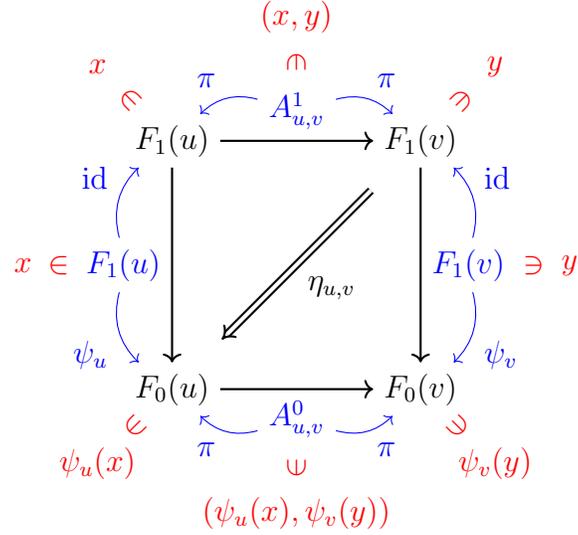
\begin{figure}
\[
\begin{tikzpicture}[x=4em,y=4em]
 \node(F1u) at (-1,1) {$F_1(u)$};
 \node(F0u) at (-1,-1) {$F_0(u)$};
 \node(F1v) at (1,1) {$F_1(v)$};
 \node(F0v) at (1,-1) {$F_0(v)$};
 
 \draw[thick,->] (F1u) to node[midway,above,blue](A1){$A^1_{u,v}$} (F1v);
   \draw[->,blue] (A1) to[bend right] node[midway,above left]{$\pi$} (F1u);
   \draw[->,blue] (A1) to[bend left] node[midway,above right]{$\pi$} (F1v);

 \draw[thick,->] (F1u) to node[midway,left,blue](F1uspan){$F_1(u)$} (F0u);
   \draw[->,blue] (F1uspan) to[bend left] node[midway,above left]{$\id$} (F1u);
   \draw[->,blue] (F1uspan) to[bend right] node[midway,below left]{$\psi_u$} (F0u);
   
 \draw[thick,->] (F0u) to node[midway,below,blue](A0){$A^0_{u,v}$} (F0v);
   \draw[->,blue] (A0) to[bend left] node[midway,below left]{$\pi$} (F0u);
   \draw[->,blue] (A0) to[bend right] node[midway,below right]{$\pi$} (F0v);

 \draw[thick,->] (F1v) to node[midway,right,blue](F1vspan){$F_1(v)$} (F0v);
   \draw[->,blue] (F1vspan) to[bend right] node[midway,above right]{$\id$} (F1v);
   \draw[->,blue] (F1vspan) to[bend left] node[midway,below right]{$\psi_v$} (F0v);
 
 \foreach \set/\elmnt/\coordx/\coordy/\inni in
      { A1/{(x,y)}/0/2/\in , F1v/y/1.6/1.6/\ni , F1vspan/y/2.2/0/\ni , F0v/{\psi_v(y)}/1.6/-1.6/\ni , A0/{(\psi_u(x),\psi_v(y))}/0/-2/\in , F0u/{\psi_u(x)}/-1.6/-1.6/\in , F1uspan/x/-2.2/0/\in , F1u/x/-1.6/1.6/\in }
 {
   \node[red] (\set el) at (\coordx,\coordy) {$\elmnt$};
   \path (\set el) -- (\set) node[sloped,midway,red] {$\inni$};
 }
 
 \draw[thick,-implies,double equal sign distance] (.6,.6) to node[midway,below right]{$\eta_{u,v}$} (-.6,-.6);
\end{tikzpicture}
\]
\caption{We draw the square diagram required for building our natural isomorphism $\eta:F_1\rightarrow F_0$.  Vertices and edges are indicated in black; the correspondences for each edge are indicated in blue, together with their source and target maps.  Specific elements are indicated in red, showing how the 2-morphism $\eta_{u,v}$ (indicated by the double arrow) should be defined such that the entire diagram is consistent.}
\label{fig:Defining eta_uv}
\end{figure}

To extend $\eta$ to a natural transformation, one still needs to check the hexagon relation of Lemma \ref{lem:hexagon relation}. We need only to verify commutativity of the hexagon coming from a three dimensional cube of the form

\[
\adjustbox{scale=.72}{
\begin{tikzcd}[column sep = 6]
  &(1,v')   \ar{rr} \ar{dd}  & &   (1,w)  \ar{dd}  \\
    (1,u) \ar[crossing over]{rr} \ar{dd} \ar{ur} & & (1,v) \ar{ur} \\
      & (0,v')  \ar{rr}  & &  (0,w) \\
    (0,u) \ar{rr} \ar{ur} && (0,v) \ar{ur} \ar[from=uu,crossing over]
\end{tikzcd}
}
\]
Let 
\[
\phi^i_{u,v,v',w} : A^i_{v,w} \times_{F_i(v)} A^i_{u,v} \to A^i_{v',w} \times_{F_i(v')} A^i_{u,v'}
\]
be the $2$-morphism assigned by the functor $F_i$ corresponding to the horizontal square face 
\[
\begin{tikzcd}[column sep = 6]
  & (i,v')   \ar[rr, "\varphi^i_{v',w}"]  & &   (i,w)    \\
    (i,u) \ar[rr, "\varphi^i_{u,v}"']  \ar[ur, "\varphi^i_{u,v'}"] & & (i,v)  \ar[ur, "\varphi^i_{v,w}"'] 
\end{tikzcd}
\]
of $\2\times \2^n$. In this situation, checking that the hexagon of Lemma \ref{lem:hexagon relation} commutes comes down to verifying commutativity of the diagram in \eqref{eq:simplified hexagon relation} below. 

\begin{equation}\label{eq:simplified hexagon relation}
\begin{tikzcd}
A^1_{v,w} \times_{F_1(v)} A^1_{u,v} \arrow[d,"\psi_{v,w} \times \psi_{u,v}"'] \arrow[r, "\phi^1_{u,v,v',w}"] & A^1_{v',w} \times_{F_1(v')} A^1_{u,v'}   \arrow[d, "\psi_{v',w} \times \psi_{u,v'}"]\\
A^0_{v,w} \times_{F_0(v)} A^0_{u,v}  \arrow[r, "\phi^0_{u,v,v',w} "] & A^0_{v',w} \times_{F_0(v')} A^0_{u,v'}
\end{tikzcd}
\end{equation}
If the diagram \eqref{eq:simplified hexagon relation} commutes, then $\eta$ extends to a natural transformation $\eta : F_1 \to F_0$. Moreover, since each $\eta(\edge_u) : F_1(u) \to F_0(u)$ is an isomorphism in $\B_G$, the natural transformation $\eta$ is a natural isomorphism of Burnside functors.

\section{The Quantum Annular Burnside Functor}\label{Quantum Annular Burnside section}

In this section, we construct the \emph{quantum annular Burnside functor} corresponding to an annular link diagram $D$.  Before giving the outline of the section, we emphasize one small but important caveat. In the quantum annular theory over the base ring $\k=\Z[\q, \q^{-1}]$, every configuration is assigned a module which has infinite rank over $\Z$, with generators of the form $\q^k x$ for $\k\in \Z$. 
In our set-up, this would correspond to assigning an infinite set to each vertex in the cube of resolutions. This would require considering spaces of infinitely many boxes in Section \ref{sec:from Burnside functors to stable homotopy types}, and also of CW-complexes with a $\Z$-action. Although we believe that such a version of the theory could be worked out, in the present paper we stay in the context of finite cyclic group actions. This is motivated in part by the fact that a substantial part of equivariant homotopy theory is formulated for compact group actions. To this end, we make the following modification to the quantum annular complex.

For $r> 0$, set $\k_r: = \k/(\q^r-1)$.  Let $\F^r_{\A_{\q}}$ denote the composition
\begin{equation}\label{eq:modified quantum annular TQFT}
\BBN_{\q}(\A) \rar{\F_{\A_{\q}}} \Mod(\k) \rar{(-)\otimes_\k \k_r} \Mod(\k_r).
\end{equation}
We can define a modified quantum annular homology by applying $\F^r_{\A_{\q}}$ to each vertex in the cube of resolutions of an annular link diagram $D$. The result is the same as applying $(-)\otimes_\k \k_r$ to the quantum annular chain complex $CKh_{\A_{\q}}(D)$. Every vertex is assigned a free $\k_r$-module, and the formulas in Section \ref{sec:the quantum annular TQFT} remain true, modulo the relation $\q^r=1$. 

With this modification in place, we proceed as follows.  Given an annular link diagram $D$ with $n$ crossings, we will define the quantum annular Burnside functor $F_\q : \2^n \to \B_G$, where 
\[
G = \langle \q \mid \q^r=1\rangle
\]
is the finite cyclic group of order $r$ with distinguished generator $\q$. Note that there is a natural ring isomorphism $\Z[G] \cong \k_r$, so that it is possible to compare the cellular cohomology of the stable homotopy type, which is a $\Z[G]$-module, with the modified quantum annular homology, which is a $\k_r$-module. The dependence on $r$ will be omitted from the group $G$ and the Burnside functor $F_\q$ in order to simplify the notation.  

Because we want the totalization $\Tot(F_{\q})$ to recover the quantum annular complex $$CKh_{\A_{\q}}(D) \otimes_\k \k_r,$$ we already know what $F_{\q}$ should assign to vertices and edges of $\2^n$. The subtlety here is that generators are defined only up to a power of $\q$, and the formulas for the differential non-trivially depend on the configuration, so the full extent of the analysis in Sections \ref{sec:fixing generators}, \ref{sec:saddle maps} is used here. Once $F_\q$ is determined on vertices and edges, it remains to assign specific bijections to the (identity) 2-morphisms in $\2^n$ and check the hexagon relation of Lemma \ref{lem:hexagon relation}.  This will be done in Section \ref{sec:the quantum annular Burnside functor for a link diagram} for the case $r>2$; this restriction is to guarantee that $\q^2\neq 1$, allowing the use of Corollary \ref{cor:1+q^2 splitting} to simplify the analysis. We will show in Proposition \ref{prop:changing generators} that the Burnside functor $F_\q$ is independent of the choice of generators for each vertex, and in Section \ref{sec:isotoping the link diagram} we will show that planar isotopies of the link diagram induce natural isomorphisms of the corresponding Burnside functors. Finally in Section \ref{sec:case r=1,2} we will address the cases $r=1,2$.

\subsection{The Quantum Annular Burnside Functor for a Link Diagram}\label{sec:the quantum annular Burnside functor for a link diagram}
Let $D$ be a diagram for an annular link with $n$ crossings, which are assumed to be disjoint from the seam. For each $u\in \{0,1\}^n$, let $D_u$ denote the smoothing of $D$ corresponding to $u$. Fix $r>2$, and set $G= \langle \q \mid \q^r=1\rangle$. We will specify the data of the \textit{quantum annular Burnside functor} $F_{\q}: \2^n \to \B_G$. 

For each vertex $u\in \{0,1\}^n$, pick a set of generators $\Gamma(u)$ of $D_u$, following Section \ref{sec:fixing generators}. Define $F_{\q}$ on vertices by 
\begin{equation}\label{eq:F_q on vertices}
F_{\q}(u) = G\times \Gamma(u).
\end{equation}
The $G$-action on $F_{\q}(u)$ is on the first factor: $\q^k \cdot (\q^\l, x) = (\q^{k+\l},x)$. 

We will write an element of $F_{\q}(u)$ as $\q^kx$ instead of $(\q^k, x)$. 

For $u\geq_1 v$, let $d_{v,u}$ denote the map assigned to the edge $v\to u$ by the modified quantum annular functor $\F_{\A_\q}^r$. Recall from Lemma \ref{lem:saddles} that for each $x\in \Gamma(v)$, 
\[
d_{v,u}(x) = \sum_{y\in \Gamma(u)} \varepsilon_y y
\]
where each $\varepsilon_y$ is either $0$ or $\q^k$ for some $k\in \Z$. We will say that $\q^k y$ \textit{appears in} $d_{v,u}(\q^\l x)$ if in the equation 
\[
d_{v,u}(\q^\l x) = \sum_{y\in \Gamma(u)} \varepsilon_y y,
 \]
the coefficient $\varepsilon_y$ is equal to $\q^k$. For $u\geq_1 v$, define the correspondence $A_{u,v} \subset F_{\q}(u) \times F_{\q}(v)$ from $F_{\q}(u)$ to $F_{\q}(v)$ by 
\begin{equation}\label{eq:F_q on edges}
 A_{u,v} = \{ (\q^k y, \q^\l x) \in F_{\q}(u) \times F_{\q}(v) \mid \q^k y \text{ appears in } d_{v,u}(\q^\l x)\}
\end{equation}
The source and target maps of $A_{u,v}$ are  the projections to $F_{\q}(u)$ and $F_{\q}(v)$, respectively. Note that $A_{u,v}$ is a sub $G$-set of $F_{\q}(u)\times F_{\q}(v)$ since $d_{v,u}(\q x) = \q d_{v,u}(x)$. 

We will show that the above data extends to a Burnside functor $F_q : \2^n \to \B_G$. The following lemma will be useful in our analysis of the hexagon relation. 

\begin{lemma}\label{lem:3d cube with ladybug configuration}
Let $\Cs$ be a configuration with three surgery arcs $A_1$, $A_2$, and $A_3$. Let $\Cs'$ denote the curves in $\Cs$ containing the endpoints of the surgery arcs. Assume there is a circle $C$ in $\Cs'$ such that $C\cup A_1 \cup A_2$ forms a ladybug configuration. Then one of the following holds. 
\begin{enumerate}[label= \emph{(\arabic*)}]
\item The diagram $\Cs'\cup A_1 \cup A_2 \cup A_3$ is trivial in the annulus; i.e. $\Cs'$ and the three surgery arcs lie in a disk in $\A$. 
\item The composition of three edge maps is $0$. 
\item The $3$-dimensional cube is simple (see the discussion in Remark \ref{rem:equivalent hexagon relation}). 
\item $C\cup A_1 \cup A_2$ is trivial in the annulus and disjoint from $A_3$. 
\end{enumerate}
\end{lemma}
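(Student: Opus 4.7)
The plan is to perform a case analysis on the three topological types of the ladybug configuration $C \cup A_1 \cup A_2$ in the annulus, as depicted in Figure \ref{fig:ladybug in classical annular homology}, with the unifying principle that whenever a ladybug surgery passes through an intermediate configuration containing essential circles, the two contributions to the target generator differ by a factor of $\q^2$, which corresponds to distinct elements of $G$ whenever $r > 2$.

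First I would dispatch the cases where either $C$ is essential or $C$ is trivial but ladybug surgery splits it into two essential circles. In the latter case, Corollary \ref{cor:1+q^2 splitting} provides the $\q^m$ versus $\q^{m+2}$ splitting directly. In the former case, a direct computation using the formulas of Figure \ref{fig:quantum annular formula table}---first splitting $C$ into two essentials along $A_1$, then merging them back to a trivial along $A_2$---yields contributions differing by $\q^2$ on the same final generator. In either case, the two-edge composition within the 3D cube has the property that each source-target pair admits at most one preimage; since the third edge $A_3$ acts on each generator by multiplication by a (possibly zero) power of $\q$ (Lemma \ref{lem:saddles}), this simplicity persists in the three-fold composition, so the cube is simple in the sense of Remark \ref{rem:equivalent hexagon relation} and conclusion (3) holds.

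The remaining case is when $C$ is trivial and both ladybug surgery outputs are trivial, so that $C \cup A_1 \cup A_2$ bounds a disk $D \subset \A$. Here I would do a subsidiary case analysis on the position of $A_3$. If $A_3$ is disjoint from $C \cup A_1 \cup A_2$, we are in case (4). If $A_3$ sits inside $D$, or lies outside $D$ but together with an arc of $C$ bounds a disk in $\A\smallsetminus D$, then $\Cs' \cup A_1 \cup A_2 \cup A_3$ is contained in an enlarged disk in $\A$, giving case (1). If instead $A_3$ has an endpoint on $C$ and wraps essentially around the puncture of $\A$, then surgery on $A_3$ creates essential circles in the intermediate configurations; combining $A_3$ with one of $A_1$ or $A_2$ exposes an essential-circle splitting to which the $\q^2$-distinctness argument of the previous paragraph applies, so the cube is again simple and (3) holds. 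The main obstacle I anticipate is verifying simplicity in this last subcase: it requires extending the bookkeeping of Proposition \ref{prop:LBM1} beyond the two-edge analysis, tracking $\q$-powers through a composition in which the essential wrapping of $A_3$ and the trivial-trivial splitting of the ladybug interact. I expect this to amount to a routine generalization of the neck-cutting and Boerner-relation arguments already in place, rather than a fundamentally new idea.
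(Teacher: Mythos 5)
Your overall strategy of casing on the topological type of the ladybug $C\cup A_1\cup A_2$ and then on the position of $A_3$ matches the paper's, but there are two genuine errors in the execution. First, when $C$ is essential, you claim that $A_1$ ``first split[s] $C$ into two essentials.'' This is topologically impossible: surgery along an arc with both endpoints on an essential circle produces two circles whose winding numbers sum to $\pm 1$, so exactly one of the pieces is essential and the other is trivial. The correct mechanism for essential $C$ is that the split places a dot on the intermediate trivial circle, and the subsequent merge of a dotted trivial circle with an essential one vanishes by Boerner's relation, so conclusion (2) holds. (Because all non-zero coefficients are monomials $\q^k$ with no signs, a vanishing composite forces an \emph{empty} correspondence, so (3) does follow vacuously from (2) --- but not via the $\q^2$-distinctness you invoke, and that distinction matters when this lemma is used to verify the hexagon relation in Theorem~\ref{thm:the quantum annular Burnside functor}. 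You appear to have swapped the two cases of your first sentence: the direct Figure~\ref{fig:quantum annular formula table} computation you describe --- split a trivial $C$ into two essentials, then merge back --- is precisely the \emph{latter} case, where Corollary~\ref{cor:1+q^2 splitting} applies.)

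Second, the subcase you flag as the ``main obstacle'' --- $C\cup A_1\cup A_2$ trivial, $A_3$ meeting $C$, the combined diagram non-trivial --- requires no extension of the $\q$-power bookkeeping of Proposition~\ref{prop:LBM1}. After the two ladybug surgeries, every surviving term carries a dot on the resulting circle $C'$. The $A_3$-surgery then either splits $C'$ into two essential circles (when both endpoints of $A_3$ lie on $C$), or merges $C'$ with a second circle $\overline{C}$ (when exactly one endpoint of $A_3$ lies on $C$); in the latter case, non-triviality of $\Cs'\cup A_1\cup A_2\cup A_3$ forces $\overline{C}$ to be essential. Either way Boerner's relation gives the vanishing (2) directly, with no comparison of $\q$-powers. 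The $\q^2$-distinctness input from Corollary~\ref{cor:1+q^2 splitting} is genuinely needed only when $C\cup A_1\cup A_2$ is itself non-trivial in $\A$ --- i.e.\ the ladybug splits a trivial $C$ into two essential circles --- and even there one must still check whether $A_3$ introduces a Boerner vanishing (giving (2)) before concluding simplicity (giving (3)), as the paper does.
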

\begin{proof}
If $C$ is essential in the annulus, then  (2) follows from the neck-cutting and Boerner's relations (see Figures \ref{fig:BN relations}, \ref{fig:Boerner's relation}). We may therefore assume that $C$ is trivial. Let $C'$ denote the (necessarily trivial) circle obtained by performing surgery along both $A_1$ and $A_2$. Note that the result of composing the two saddle maps corresponding to $A_1$ and $A_2$ will send any generator of $\Cs$ in which $C$ is undotted to a sum of elements in which $C'$ is dotted, and will send any generator which is dotted on $C$ to $0$. It therefore suffices to consider the effect of surgery along $A_3$ on a dotted $C'$. 

First, assume that $C \cup A_1 \cup A_2$ is trivial but $\Cs'\cup A_1 \cup A_2 \cup A_3$ is not. There are several cases to consider.  If neither endpoint of $A_3$ is on $C$, then (4) holds.  If precisely one endpoint of $A_3$ is on $C$, then the other endpoint must be on another circle $\overline{C}$, as in Figure \ref{fig:trivial ladybug with three arcs 1}. In this situation, Boerner's relation implies that (2) holds. Finally, suppose both endpoints of $A_3$ are on $C$, as in Figure \ref{fig:trivial ladybug with three arcs 2}. Then surgery along $A_3$ must split $C'$ into two essential circles.  In this situation, (2) holds again, since a dotted trivial circle splitting into two essential circles is sent to $0$.
\begin{figure}[H]
\centering
    \begin{subfigure}[b]{.45\textwidth}
    \begin{center}
    \includegraphics{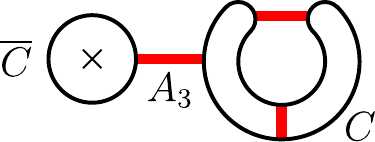}
    \end{center}
    \caption{One endpoint of $A_3$ is on $C$}\label{fig:trivial ladybug with three arcs 1}
    \end{subfigure}
    \begin{subfigure}[b]{.45\textwidth}
    \begin{center}
    \includegraphics{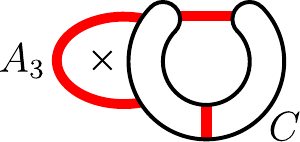}
    \end{center}
    \caption{Both endpoints of $A_3$ are on $C$}\label{fig:trivial ladybug with three arcs 2}
    \end{subfigure}
\caption{The two cases where $C\cup A_1 \cup A_2$ is trivial. Note that these are only schematic depictions, since the interaction of the curves with the seam could be complicated.}\label{fig:trivial ladybug with three arcs}
\end{figure}

Now suppose that the diagram $C\cup A_1 \cup A_2$ is non-trivial.  Then we are in the situation of Section \ref{sec:a ladybug configuration} (see Figure \ref{fig:LBM ex} for example) where Corollary \ref{cor:1+q^2 splitting} shows we have a sum of terms $y+\q^2 y$ in which $C'$ is dotted.  If surgery along $A_3$ either splits off a trivial circle from $C'$ or merges $C'$ with another trivial circle, then (3) holds since the two terms retain distinct powers of $\q$ (see Figure \ref{fig:quantum annular formula table}). If surgery along $A_3$ splits $C'$ into two essential circles or merges $C'$ with an essential circle, then (2) holds as above. 

\end{proof}

\begin{theorem}\label{thm:the quantum annular Burnside functor}
There is a functor $F_{\q}: \2^n \to \B_G$ which extends the data \eqref{eq:F_q on vertices} and \eqref{eq:F_q on edges}.
\end{theorem}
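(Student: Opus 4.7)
The plan is to apply Lemma~\ref{lem:hexagon relation}, which says that the data on vertices and edges given in \eqref{eq:F_q on vertices} and \eqref{eq:F_q on edges} extends to a strictly unitary lax 2-functor provided one specifies 2-morphisms $F_{u,v,v',w}$ on every 2-face, verifies the involutivity $F_{u,v,v',w}^{-1} = F_{u,v',v,w}$, and establishes the hexagon relation on every 3-dimensional sub-cube. So the proof splits into two main steps: building the 2-morphisms and verifying the hexagon relation.

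For a 2-face with vertices $u \geq_1 v, v' \geq_1 w$, both compositions $F_{\q}(\varphi_{v,w}) \circ F_{\q}(\varphi_{u,v})$ and $F_{\q}(\varphi_{v',w}) \circ F_{\q}(\varphi_{u,v'})$ are finite $G$-correspondences from $F_{\q}(u)$ to $F_{\q}(w)$. Because the modified quantum annular complex satisfies $d^2 = 0$, both correspondences push forward to equal maps on abelianizations. I would define $F_{u,v,v',w}$ by matching pairs with equal source and target. In most 2-faces, the source-target pair determines an element uniquely, placing us in the ``simple'' case of Remark~\ref{rem:equivalent hexagon relation}, and the matching is automatic. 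The potentially problematic 2-faces are ladybug faces in the sense of Section~\ref{sec:a ladybug configuration}. If the shared ladybug circle is essential, the double composition vanishes by Boerner's relation and there is nothing to match. If the shared circle is trivial and bounds a disc disjoint from $\mu$, the situation is identical to the classical one, and we adopt the left-pair matching of \cite{LS, SSS}. If the shared trivial circle intersects the seam, Corollary~\ref{cor:1+q^2 splitting} shows that the two intermediate contributions to a given target circle carry powers $\q^m$ and $\q^{m+2}$; the assumption $r > 2$ ensures $\q^2 \neq 1$ in $G$, so these remain distinct in $F_{\q}(w)$ and the source-target matching is again forced. Proposition~\ref{prop:LBM2} describes this forced matching explicitly, and Corollary~\ref{cor:1+q^2 splitting makes left choice} confirms that after setting $\q = 1$ it agrees with the left-pair choice. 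The involutivity condition $F_{u,v,v',w}^{-1} = F_{u,v',v,w}$ is then immediate from the symmetric description.

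For the hexagon relation I would follow Remark~\ref{rem:equivalent hexagon relation} and, for each 3-cube, compose the six surrounding 2-morphisms to obtain an automorphism $\Phi_{u,v,w,z}$ of the triple composition $F_{\q}(\varphi_{w,z}) \circ F_{\q}(\varphi_{v,w}) \circ F_{\q}(\varphi_{u,v})$, reducing the task to checking $\Phi_{u,v,w,z} = \id$. Because $\Phi_{u,v,w,z}$ preserves source and target, it is automatically the identity when the triple correspondence is simple, so only 3-cubes containing a ladybug face require attention. Lemma~\ref{lem:3d cube with ladybug configuration} partitions these into four cases: (1) everything lies in a disc in $\A$, (2) the triple composition is zero, (3) the cube is simple, or (4) the ladybug decouples from the third surgery arc. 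Cases (2) and (3) are immediate. In case (4), the 3-cube factors as a product of the ladybug 2-face with a 1-cube in $\B_G$, and the hexagon follows from the already-verified commutativity on the 2-face. In case (1), every cobordism can be isotoped off the membrane, so every trace move contributes a trivial power of $\q$ and the verification reduces to the classical argument of \cite{LS, SSS}, which applies to our matching thanks to Corollary~\ref{cor:1+q^2 splitting makes left choice}.

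The main obstacle I foresee is ensuring compatibility with the classical left-pair matching in case (1) of Lemma~\ref{lem:3d cube with ladybug configuration}; Corollary~\ref{cor:1+q^2 splitting makes left choice} is the precise bridge. A secondary subtlety is that the individual powers $k, \ell, m$ appearing in Corollary~\ref{cor:1+q^2 splitting} depend on the chosen isotopies $\Cs^\c \to \Cs$ used to fix generators (cf.\ Remark following Definition~\ref{generators of a configuration}). However, the matching defined by source-target identification is invariant under such choices, because re-choosing an isotopy at a single vertex rescales every generator at that vertex by a (generator-dependent) power of $\q$, which cancels on both sides of a square and so leaves the bijection on correspondences unchanged.
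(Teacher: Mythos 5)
Your overall architecture is the same as the paper's: invoke Lemma~\ref{lem:hexagon relation}, let source--target matching determine the $2$-morphism on non-ladybug faces and on ladybug faces where the $\q$-powers separate, fall back on the left-pair matching where they don't, and reduce the hexagon verification to the four cases of Lemma~\ref{lem:3d cube with ladybug configuration}. The invariance observation in your last paragraph is correct and corresponds to Proposition~\ref{prop:changing generators}.

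There is, however, a genuine gap in your trichotomy for the ladybug $2$-faces. You split the trivial-$C$ case according to whether $C$ meets the seam $\mu$, and claim that whenever $C$ is trivial and meets $\mu$, Corollary~\ref{cor:1+q^2 splitting} forces the matching. But Corollary~\ref{cor:1+q^2 splitting} has a structural hypothesis: one of the two ladybug surgeries must split $C$ into \emph{essential} circles. That hypothesis is not implied by ``$C$ intersects $\mu$.'' A trivial circle $C$ crossing the seam still bounds a disk $D$ in $\A$; at most one ladybug arc lies inside $D$, and the other lies in the pair of pants $\A\setminus D^\circ$, where it may or may not encircle the puncture. If it does not, \emph{both} surgeries produce trivial circles even though $C$ crosses $\mu$. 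This is exactly the paper's case~(b), not case~(c). In that situation $C\cup A_1\cup A_2$ lies in a disk of $\A$, the whole local cobordism can be isotoped off the membrane, and the two intermediate terms land on the final generator with the \emph{same} power of $\q$ — so the source--target matching is not forced and you must explicitly impose the classical left-pair matching. The correct dividing line (the one the paper uses in the proof and in Lemma~\ref{lem:3d cube with ladybug configuration}) is whether surgery along one of the two ladybug arcs produces essential circles, equivalently whether $C\cup A_1\cup A_2$ is non-trivial in $\A$; ``$C\cap\mu\neq\varnothing$'' is strictly weaker and does not suffice. Once the trichotomy is fixed to be (a) $C$ essential, (b) $C$ trivial and both surgeries give trivials (use left-pair regardless of $C\cap\mu$), (c) $C$ trivial and one surgery gives essentials (Corollary~\ref{cor:1+q^2 splitting} forces the matching), the rest of your argument goes through.
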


\begin{proof}
Following Lemma \ref{lem:hexagon relation}, it remains to define the $2$-morphisms 
\[
\phi_{u,v,v',w} : A_{v,w} \times_{F_{\q}(v)} A_{u,v} \to A_{v',w} \times_{F_{\q}(v')} A_{u,v'}
\]
for each square face of $\2^n$ with vertices $u \geq_1 v,v'\geq_1 w$, and to verify the hexagon relation.

For all cases except the ladybug configuration, the $2$-morphism $\phi_{u,v,v',w}$ is uniquely determined by the property that it commutes with the source and target maps. Therefore we need to consider only the ladybug configurations. Assume a circle $C$ in $D_w$ carries two surgery arcs as in the ladybug configuration. We distinguish three cases, as in Figure \ref{fig:ladybug in classical annular homology}.

\begin{enumerate}[label= (\alph*)]
\item $C$ is essential.
\item $C$ is trivial, and surgery along both arcs results in trivial circles.
\item $C$ is trivial, surgery along one arc produces two trivial circles, and surgery along the other arc produces two essential circles.
\end{enumerate}
For (a), the composition of two edge maps is $0$. Therefore
\[
A_{v,w} \times_{F_{\q}(v)} A_{u,v} = \varnothing = A_{v',w} \times_{F_{\q}(v')} A_{u,v'},
\]
and there is no $2$-morphism to specify. For (b), we rely on the ladybug matching made with the \textit{left pair} (see \cite[Section 5.4]{LS}). Finally, for (c), note that generators dotted on $C$ are sent to $0$ by the composition of two edges. For generators undotted on $C$, Corollary $\ref{cor:1+q^2 splitting}$ implies that $\phi_{u,v,v',w}$ is uniquely determined by the property that it commutes with the source and target maps. 

It remains to verify the hexagon relation. Let $\Cs$ denote a configuration with three surgery arcs. We may assume that two of the three surgery arcs form a ladybug configuration, since otherwise the $3$-dimensional cube is simple. Then the analysis consists of the four cases in Lemma \ref{lem:3d cube with ladybug configuration}. In case (1), the verification reduces to classical Khovanov homology (see, for example, \cite[Proposition 6.1]{LLS2}). For case (2), the composition of three correspondences coming from any three edge maps is empty, so there is nothing to check. Similarly, In case (3), the hexagon relation follows from the discussion in Remark \ref{rem:equivalent hexagon relation}. Finally, case (4) is straightforward to check by hand since the disjoint arc $A_3$ cannot interfere with the classical Khovanov ladybug matching used on $C\cup A_1 \cup A_2$.
\end{proof}

\begin{proposition}\label{prop:changing generators}
Up to natural isomorphism, $F_{\q}$ is independent of the choices of generators $\Gamma(u)$.
\end{proposition}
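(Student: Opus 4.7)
The plan is to apply the general strategy of Section \ref{sec:a strategy for constructing natural isomorphisms} to build an explicit natural isomorphism $\eta : F_\q^1 \to F_\q^0$ between the Burnside functors arising from two choices of generators $\Gamma_1, \Gamma_0$. For each vertex $u$, Definition \ref{generators of a configuration} indexes the generators of $F_\q^i(u)$ by the set of diagrammatic labels $\lambda$ of $D_u^\c$ (i.e., orientations on essential and dot-decorations on trivial circles), so $\Gamma_1(u)=\{x_\lambda^u\}$ and $\Gamma_0(u)=\{y_\lambda^u\}$ share a common index set. By Lemma \ref{lem:generators differ by power of q}, each pair satisfies
\[
y_\lambda^u = \q^{k_\lambda(u)}\, x_\lambda^u
\]
as elements of $\F_{\A_\q}^r(D_u)$, for integers $k_\lambda(u)$ that may depend on both $\lambda$ and $u$ (cf.\ Remark \ref{non uniform}). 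I would then define the candidate bijection by
\[
\psi_u : F_\q^1(u) \to F_\q^0(u), \quad \psi_u(\q^\ell x_\lambda^u) = \q^{\ell - k_\lambda(u)}\, y_\lambda^u,
\]
which is manifestly $G$-equivariant.

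Next I would verify conditions \ref{nat iso condition1}, \ref{nat iso condition2}, \ref{nat iso condition3}. The first and third follow at once from the definition \eqref{eq:F_q on edges} of the edge correspondences. For \ref{nat iso condition2}, the key observation is that $d_{v,u}$ is a $\k_r$-linear map independent of the chosen generators: if
\[
d_{v,u}(x_\mu^v) = \sum_\lambda \varepsilon^1_{\lambda,\mu}\, x_\lambda^u,
\]
with each $\varepsilon^1_{\lambda,\mu} \in \{0\} \cup G$ by Lemma \ref{lem:saddles}, then rewriting in the $\Gamma_0$ basis yields $\varepsilon^0_{\lambda,\mu} = \q^{k_\mu(v) - k_\lambda(u)}\,\varepsilon^1_{\lambda,\mu}$. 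A direct comparison with \eqref{eq:F_q on edges} then shows that $\psi_u \times \psi_v$ restricts to an equivariant bijection $A^1_{u,v} \to A^0_{u,v}$.

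Finally I would check the hexagon relation \eqref{eq:simplified hexagon relation} by cases, paralleling the analysis in the proof of Theorem \ref{thm:the quantum annular Burnside functor}. For non-ladybug squares and for ladybug configurations of type (c), the $2$-morphisms $\phi^i_{u,v,v',w}$ are uniquely determined by commutativity with the source and target maps (using Corollary \ref{cor:1+q^2 splitting} in case (c)); both compositions in \eqref{eq:simplified hexagon relation} enjoy this property, so they must coincide. In ladybug case (a) both sides of \eqref{eq:simplified hexagon relation} are empty. The only case requiring real attention is ladybug (b), where $\phi^i_{u,v,v',w}$ is the classical ladybug matching made with the left pair; but this matching is purely combinatorial, depending only on diagrammatic labels and the configuration of surgery arcs, and $\psi_u$ preserves the diagrammatic label of each generator, so it intertwines the two matchings. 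The main obstacle to watch throughout is the non-uniformity of $k_\lambda(u)$ in $\lambda$; however, Lemma \ref{lem:saddles} guarantees that each coefficient $\varepsilon^i_{\lambda,\mu}$ is a single power of $\q$ rather than a $\Z$-linear combination, which keeps the bookkeeping in \ref{nat iso condition2} clean. Combining these steps, the recipe of Section \ref{sec:a strategy for constructing natural isomorphisms} assembles $\{\psi_u\}$ into the desired natural isomorphism $\eta : F_\q^1 \to F_\q^0$.
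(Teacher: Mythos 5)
Your proof is correct and follows essentially the same route as the paper: both use the Section~\ref{sec:a strategy for constructing natural isomorphisms} strategy with $\psi_u$ built from Lemma~\ref{lem:generators differ by power of q}, with \ref{nat iso condition1} and \ref{nat iso condition3} automatic, \ref{nat iso condition2} coming from the basis-independence of $d_{v,u}$, and the hexagon check reducing to the observation that multiplying generators by powers of $\q$ does not disturb the ladybug matchings. Your version is somewhat more explicit than the paper's (spelling out the coefficient identity $\varepsilon^0_{\lambda,\mu}=\q^{k_\mu(v)-k_\lambda(u)}\varepsilon^1_{\lambda,\mu}$ and walking through the ladybug cases (a)--(c) separately), but the content is the same.
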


\begin{proof}
For each $u\in \{0,1\}^n$, let $\Gamma(u)$, $\Gamma'(u)$ be two sets of generators of $D_u$, obtained by picking different isotopies from the standard configuration $D_u^\c$ to $D_u$. Let $F_{\q}, F_{\q}' :\2^n\to \B_G$ denote the corresponding functors, and let $A_{u,v}$, $A'_{u,v}$ denote the correspondences assigned to edges by $F_{\q}$ and $F_{\q}'$ respectively. We will use the strategy of Section \ref{sec:a strategy for constructing natural isomorphisms} to build a natural isomorphism $F_\q \to F_\q'$. 

There is a clear bijection $\Gamma(u) \to \Gamma'(u)$, denoted $x\mapsto x'$. Lemma \ref{lem:generators differ by power of q} says that for any $x\in \Gamma(u)$, there exists $m_x\in \Z$ such that $x=\q^{m_x} x'$. Consider the equivariant bijection $\psi_{u}: F_{\q}(u) \to F_{\q}'(u)$ defined by $\psi_u(\q^k x) =\q^{k+ m_x}x'$. Observe that conditions \ref{nat iso condition1} and \ref{nat iso condition3} in Section \ref{sec:a strategy for constructing natural isomorphisms} hold by definition of $F_\q$ and $F_\q '$. Condition \ref{nat iso condition2} holds since $x= \q^{m_x} x'$. To complete the proof, observe that the diagram \eqref{eq:simplified hexagon relation} commutes, since the vertical maps act on generators by multiplication by powers of $\q$, and thus do not interfere with the ladybug matchings. 
\end{proof}

\subsection{Isotoping the Link Diagram} \label{sec:isotoping the link diagram}
In this section we show that a planar isotopy of link diagrams induces a natural isomorphism between the corresponding quantum annular Burnside functors.  Elementary isotopies away from the seam are trivial, but isotopies which involve intersections with the seam need to be handled more carefully.  These results will also be used to show Reidemeister invariance for the stable homotopy type in Section \ref{sec:defining the homotopy type}.

\begin{proposition}\label{prop:isotoping the link diagram}
Let $D$ be a link diagram with $n$ crossings, and let $D'$ be a link diagram obtained from $D$ by one of the following moves.
\begin{enumerate}[label= \emph{(\arabic*)}]
\item Moving an arc (as in the $P^{\pm{1}}$ and $N^{\pm{1}}$ moves of Figure $\ref{fig:PN moves}$) across the seam.
\item Moving a crossing across the seam (see Figure $\eqref{fig:isotoping crossing}$)
\end{enumerate}
Let $F_{\q}$ and $F_{\q}'$ be Burnside functors for $D$ and $D'$ respectively. Then $F_{\q}$ is naturally isomorphic to $F_{\q}'$. 
\end{proposition}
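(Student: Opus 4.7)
The plan is to apply the strategy of Section \ref{sec:a strategy for constructing natural isomorphisms}. For each vertex $u\in\{0,1\}^n$, the smoothings $D_u$ and $D_u'$ are isotopic in $\A$ via the restriction of the given move to $D_u$. I would choose such an isotopy and let $S_u:D_u\to D_u'$ be the resulting cylindrical cobordism in $\A\times I$, which is an isomorphism in $\BN_\q(\A)$ by Lemma \ref{lem:inverse of an isotopy}. Then define the equivariant bijection $\psi_u: F_\q(u)\to F_\q'(u)$ on each generator $x$ of $D_u$ by sending $x$ to the element $\q^{m_x} x'\in F_\q'(u)$ uniquely determined by $S_u(x)=\q^{m_x} x'$, which exists by Lemma \ref{lem:generators differ by power of q}.

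Conditions \ref{nat iso condition1} and \ref{nat iso condition3} hold immediately from the definitions \eqref{eq:F_q on vertices} and \eqref{eq:F_q on edges} of the Burnside functor. The key content is condition \ref{nat iso condition2}, which unravels to the statement that for each edge $u\geq_1 v$ of the cube the square
\[
\begin{tikzcd}
D_v \arrow[r,"d_{v,u}"] \arrow[d,"S_v"'] & D_u \arrow[d,"S_u"] \\
D_v' \arrow[r,"d_{v,u}'"'] & D_u'
\end{tikzcd}
\]
commutes in $\BN_\q(\A)$ on the nose, not merely up to a power of $\q$. For move (1), the isotopy $S_u$ is supported in a neighborhood of a single arc disjoint from the crossing at which $d_{v,u}$ takes place, so the compositions $S_u\circ d_{v,u}$ and $d_{v,u}'\circ S_v$ are related by an ambient isotopy of $\A\times I$ fixing the membrane $\mu\times I$, and are therefore equal in $\BN_\q(\A)$.

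Move (2) is the main obstacle, as here the isotopy $S_u$ and the saddle $d_{v,u}$ both interact with a disk neighborhood of the crossing being moved across the seam. I would verify the commutativity by localizing to this disk: cutting the annulus open along the seam, one decomposes the isotopy as a sequence of arc moves across the seam combined with a planar isotopy, and then uses the trace relations of Figure \ref{fig:trace moves} together with the explicit saddle formulas of Section \ref{sec:saddle maps} to check in each of the finitely many local configurations that the powers of $\q$ accumulated along the top--right path match those accumulated along the left--bottom path, yielding the required equality $S_u\circ d_{v,u}=d_{v,u}'\circ S_v$ in $\BN_\q(\A)$.

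With condition \ref{nat iso condition2} in hand, the hexagon relation \eqref{eq:simplified hexagon relation} is automatic: each $\psi_u$ acts by multiplication by a power of $\q$ followed by the relabeling $x\mapsto x'$, and the ladybug matchings built into $F_\q$ and $F_\q'$ in Section \ref{sec:the quantum annular Burnside functor for a link diagram} are defined via the left-pair convention, which by Corollary \ref{cor:1+q^2 splitting makes left choice} is invariant under multiplication by powers of $\q$. Hence $\psi_u\times\psi_v$ intertwines the $2$-morphisms assigned to every square face, and the strategy of Section \ref{sec:a strategy for constructing natural isomorphisms} assembles the data into the desired natural isomorphism $F_\q\to F_\q'$.
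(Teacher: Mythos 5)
Your treatment of move (1) matches the paper's, but your claim for move (2) has a genuine gap. You assert that a careful local check will yield the on-the-nose equality $S_u \circ d_{v,u} = d'_{v,u} \circ S_v$ in $\BN_\q(\A)$, but this is false precisely when the edge $u \geq_1 v$ changes the smoothing of the crossing being moved across the seam. In that case the saddle $d_{v,u}$ occurs near the seam in $D$ but away from the seam in $D'$ (or vice versa), and the two compositions are related by an isotopy that passes a saddle through the membrane. By the trace relations of Figure \ref{fig:trace moves}, this costs a factor of $\q$: writing $R_u$ for your $S_u$, the paper shows $d'_{v,u}\circ R_v = \q\cdot R_u\circ d_{v,u}$ when $u_1 = 1$, $v_1 = 0$. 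Your $\psi_u$, which is defined simply as $x \mapsto R_u(x)$, does not absorb this $\q$, so condition \ref{nat iso condition2} fails for such edges.

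The fix, as in the paper's proof, is to build the discrepancy into $\psi_u$: keep $\psi_u(\q^k x) = \q^k R_u(x)$ when the moved crossing is at its $0$-smoothing in $u$, but set $\psi_u(\q^k x) = \q^{k+1} R_u(x)$ when it is at its $1$-smoothing. Then the extra $\q$ in $\psi_u$ exactly compensates the extra $\q$ in $d'_{v,u}\circ R_v$ for the edges that resolve that crossing, while for all other edges the smoothing of that crossing is unchanged and one has $d'_{v,u}\circ R_v = R_u\circ d_{v,u}$ as in move (1). The rest of your argument (conditions \ref{nat iso condition1} and \ref{nat iso condition3} holding by construction, and the hexagon relation holding because the $\psi_u$ act by powers of $\q$ and hence commute with the ladybug matchings) goes through once $\psi_u$ is adjusted in this way.
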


\begin{proof}
We will again follow the strategy of Section \ref{sec:a strategy for constructing natural isomorphisms}. By Proposition \ref{prop:changing generators}, we are free to choose generators for each configuration $D_u$ using any isotopy $D_u^\c \to D_u$, and likewise for $D_u'$. For each $u\in\2^n$, let $S_u: D_u^\c \to D_u$ be the cobordism formed by a fixed choice of isotopy from $D_u^\c$ to $D_u$. There is also an obvious isotopy $R_u : D_u\to D'_u$, corresponding to the moves in the statement of the proposition. Since $D_u^\c = D_u'^\c$, we can choose generators for $D_u'$ using the cobordism $R_uS_u$. For $u,v\in \2^n$ with $u\geq_1 v$, let $d_{v,u}$ and $d'_{v,u}$ denote the maps assigned to the edge $v\to u$ in $[[D]]$ and $[[D']]$, respectively. 
 
Suppose we are in the situation (1). We have equivariant bijections $\psi_u: F_{\q}(u) \to F_{\q}'(u)$ for each $u$, given by $\psi_v(\q^k x) = \q^k R_u(x)$ (as usual, we do not distinguish between a cobordism and its induced map). Conditions \ref{nat iso condition1} and \ref{nat iso condition3} are satisfied by definition. For each $u,v\in \2^n$ with $u\geq_1 v$, we have
\[
R_u d_{v,u} = d'_{v,u} R_v.
\]
It follows that, for $\q^k y \in F_\q(u)$ and $\q^\l x\in F_\q(v)$, $\q^k y \text{ appears in } d_{v,u}(\q^\l x)$ if and only if $\q^k R_u(y)$ appears in $d'_{v,u}(\q^k R_v(x))$. Therefore condition \ref{nat iso condition2} is satisfied as well. Observe that the diagram \eqref{eq:simplified hexagon relation} commutes since we do not interfere with any potential ladybug matchings.

For case $(2)$, the maps $\psi_{u}$ need to be modified slightly in order to satisfy \ref{nat iso condition2}. We illustrate one case in detail. Suppose $D$ and $D'$ are as shown in \eqref{fig:isotoping crossing}. Assume also that the crossing shown is first in the ordering of crossings. 
\begin{equation}\label{fig:isotoping crossing} 
\vcenter{\hbox{
\includegraphics{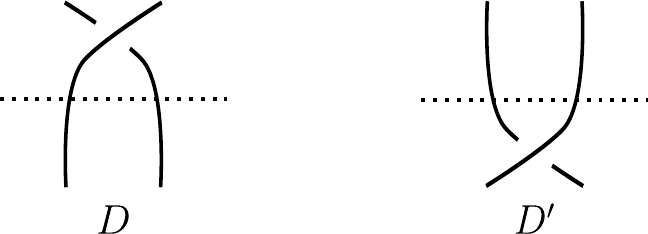}
}}
\end{equation}
Let $u\in \2^n$. If $u_1=0$, define $\psi_u : F_\q(u) \to F_\q'(u)$ as in case (1). If $u_1=1$, then define $\psi_u$ by 
\[
\psi_u(\q^k x) = \q^{k+1} R_u(x)
\]

We will now verify that condition \ref{nat iso condition2} holds for this choice of equivariant bijections $\{\psi_u\}_{u\in \2^n}$. Let $u,v\in \2^n$ with $u\geq_1 v$. If $u_1=v_1$, then the edge maps $d_{v,u}$ and $d'_{v,u}$ are induced by changing the smoothing at a crossing away from the one shown in \eqref{fig:isotoping crossing}. As in case (1) above, we have 
\[
R_u d_{v,u} = d'_{v,u} R_v,
\]
so condition \ref{nat iso condition2} holds. Suppose now that $u_1=1$ and $v_1=0$. Then 
\begin{equation}\label{eq:moving saddle across membrane}
d'_{v,u} R_v = \q R_u d_{v,u},
\end{equation}
where the factor of $\q$ comes from moving a saddle across the membrane (see Figure \ref{fig:trace moves}).  The situation is depicted in the (noncommutative!) diagram \eqref{fig:moving saddle across membrane}.

\begin{equation}\label{fig:moving saddle across membrane} 
\vcenter{\hbox{
\includegraphics{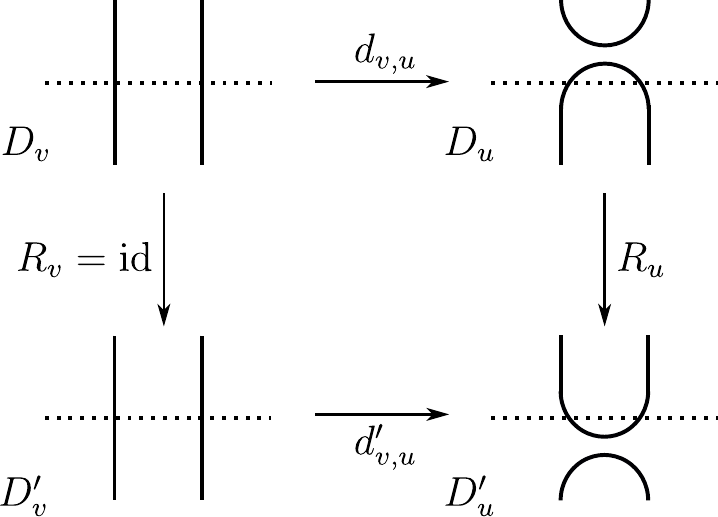}
}}
\end{equation}

Let $\q^k y\in F_\q(u)$ and $\q^\l x\in F_\q(v)$. It follows from \eqref{eq:moving saddle across membrane} that $\q^k y$ appears in $d_{v,u}(\q^\l x)$ if and only if $\q^{k+1} R_u(y)$ appears in $d'_{v,u}(\q^\l R_v(x))$. Therefore condition \ref{nat iso condition2} is satisfied. Again, the hexagon relation is satisfied because the $2$-morphisms do not interfere with the ladybug matching. 

\end{proof}

\subsection{The Cases $r=1,2$ and the Classical Annular Homotopy Type}\label{sec:case r=1,2}
Our proof of Theorem \ref{thm:the quantum annular Burnside functor} relies on $r>2$.  In this section we address the cases $r=1,2$ in that order.

Let $D$ be a diagram for an annular link with $n$ crossings. When $r=1$, the modified quantum annular chain complex 
\[
CKh_{\A_\q}(D) \otimes_\k \k/(\q-1)
\]
is just the classical annular chain complex $CKh_\A(D)$ (see the discussion preceding Lemma \ref{lem:saddles}).  
We sketch how to define the annular Burnside functor, denoted $F_1$, for the classical annular Khovanov chain complex below. An alternative construction, using 
Hochschild homology of Chen-Khovanov spectra for tangles, was recently introduced in \cite{LLS3}.

Let $F_{Kh} : \2^n \to \B$ denote the usual Khovanov Burnside functor (see \cite[Example 4.21]{LLS}, also \cite[Section 6]{LLS2}) where the ladybug matching is made with the \textit{left pair}. For $u\geq_1 v$, let
\[
d^\A_{v,u} : \F_{\A}(D_v) \to \F_\A(D_u)
\]
denote the classical annular differential, and let $d^{Kh}_{v,u}$ denote the usual Khovanov differential. We have that $d^{Kh}_{v,u} = d^\A_{v,u} + d'_{v,u}$ (see the discussion in Remark \ref{rem:getting annular from regular}). 

Recall from Section \ref{sec:classical annular Khovanov homology} that annular Khovanov generators may be taken to be the usual Khovanov generators (where the annular link diagram is considered as a planar diagram under the inclusion $\A\subset \R^2$), so set $F_1(u) = F_{Kh}(u)$ for each $u\in \2^n$. For $u\geq_1 v$, let $A^{Kh}_{u,v}$ denote the correspondence assigned by $F_{Kh}$ to the edge $\varphi_{u,v} :u\to v$. Define the correspondence $A^{\A}_{u,v}$ from $F_1(u)$ to $F_1(v)$ by
\[
A^{\A}_{u,v} := \{ (y,x) \in F_1(u) \times F_1(v) \mid y \text{ appears in } d^\A_{v,u}(x)\},
\]
with the obvious source and target maps, and set $F_1(\varphi_{u,v}) = A^{\A}_{u,v}$. Note that $A^{\A}_{u,v} \subset A^{Kh}_{u,v}$. 

Let $\phi^{Kh}_{u,v,v',w}$ be the $2$-morphism assigned by $F_{Kh}$ to the square face with vertices $u \geq_1 v,v' \geq_1 w$. One can check that $\phi^{Kh}_{u,v,v',w}$ restricts to
\[
\phi^\A_{u,v,v',w}: A^\A_{v,w} \times_{F_1(v)} A^\A_{u,v} \to A^\A_{v',w} \times_{F_1(v')} A^\A_{u,v'} .
\]
Taking $\phi^\A_{u,v,v',w}$ to be the $2$-morphisms assigned to square faces by $F_1$, the conditions of Lemma \ref{lem:hexagon relation} are satisfied as a consequence of the construction of $F_{Kh}$.

When $r=2$, Lemma \ref{lem:3d cube with ladybug configuration} and the ensuing analysis in case (c) of the proof of Theorem \ref{thm:the quantum annular Burnside functor} do not hold, since $\q^2=1$. Instead, we rely on the ladybug matching made with the left pair to define the $2$-morphism in case (c) of Theorem \ref{thm:the quantum annular Burnside functor}. 
Let us verify the hexagon relation, using the formulation in Remark \ref{rem:equivalent hexagon relation}. Start with an element $x=(\q^i a, \q^j b , \q^k c, \q^\l d)$ in the correspondence obtained as the composition of correspondences for three consecutive edge maps. Going around the six faces of the cube, the $2$-morphisms send $x$ to an element $x' = (\q^i a, \q^{j'} b' , \q^{k'} c', \q^\l d)$ in the same correspondence. It follows from classical annular case, where powers of $\q$ are disregarded, that labels on the circles in the generators $b$ and $c$ match the labels on the circles in the generators $b'$ and $c'$, respectively. Then $b=b'$, and since both $\q^j b$ and $\q^{j'} b' = \q^{j'} b$ appear in the image of $\q^i a$ under a saddle map, Lemma \ref{lem:saddles} implies that $\q^j =\q^{j'}$. Likewise, $ \q^k c= \q^{k'} c'$, and we conclude that the hexagon relation is satisfied. 

\section{From Burnside Functors to Stable Homotopy Types}\label{sec:from Burnside functors to stable homotopy types}

This section describes a general framework for obtaining a spectrum from a Burnside functor. This general construction is then applied to the case of the quantum annular Burnside functor, establishing the main result of the paper, Theorem \ref{equivariant thm}.
In more detail in Section \ref{sec:box maps} we recall box maps and their required properties for the non-equivariant case as in \cite[Section 5]{LLS}.  Then in Section \ref{sec:equivariant box maps} we discuss $G$-equivariant box maps via a slight generalization of the ideas established in \cite{SSS}, ensuring that the required properties are still satisfied. Sections \ref{sec:Burnside to refinements} and \ref{sec:refinements to realizations} describe how to use  box maps to pass from an equivariant Burnside functor $F$ to a $G$-CW complex realizing $F$, following \cite[Section 4]{SSS}, by taking the homotopy colimit (see \cite{Vogt}) of an appropriate diagram.  Finally in Section \ref{sec:defining the homotopy type} we apply this theory to the quantum annular Burnside functor of Section \ref{sec:the quantum annular Burnside functor for a link diagram} to define the  equivariant spectrum $\X_{\A_\q}^r(L)$ and check that it is well-defined, proving Theorem \ref{equivariant thm}.

We emphasize some differences and similarities between this paper and others appearing in the literature. There is no group action on the links considered in this paper, so our box maps and homotopy coherent refinements are different from those in \cite{BPS, Musyt, SZ}. Functors $\2^n \to \B_{\Z/2\Z}$ are considered in \cite{SSS}, and there the authors introduce actions of $\Z/2\Z$ and $\Z/2 \Z \times \Z/ 2 \Z$, which act internally on each box. We are  interested in an external $G$-action which permutes the boxes, so our work is different in this respect.

\subsection{Box Maps}\label{sec:box maps}
We begin by reviewing a key part of the non-equivariant case allowing us to set some notation following \cite[Section 5.1]{LLS}.

A $k$-dimensional box is $\prod_{i=1}^k [a_i, b_i] \subset \R^k$. For two $k$-dimensional boxes $B$ and $B'$, there is a canonical homeomorphism $B\rar{\sim} B'$, obtained by scaling and translating the ambient space $\R^k$. Fix an identification $S^k = [0,1]^k/\d([0,1]^k)$, so that for any $k$-dimensional box $B$, $B/\d B$ is canonically identified with $S^k$.

Suppose we have a correspondence $X\xleftarrow{s} A \xrightarrow{t} Y$. 
Pick disjoint $k$-dimensional boxes $\{B_x\}_{x\in X}$. Following \cite{LLS}, let 
\[
E(\{B_x\}, s)
\]
denote the space of all collections $\{B_a\}_{a\in A}$ of disjoint $k$-dimensional boxes such that $B_a\subset B_{s(a)}$. A point $e=\{B_a\} \in E(\{B_x\},s)$ determines a map 
\begin{equation}\label{eq:box map}
\Phi(e,A): \bigvee_{x\in X} S^k_x \to \bigvee_{y\in Y} S^k_y
\end{equation}
defined as follows. On each wedge summand, $\Phi(e,A)$ is the composition
\begin{equation}\label{eq:box map on each summand}
S^k_x = B_x/\d B_x \to B_x/ ( B_x\setminus \bigcup_{a\in s^{-1}(x)} B_a^{\text{int}} ) = \bigvee_{a\in s^{-1}(x)} B_a/\d B_a \xrightarrow{t} \bigvee_{y\in Y}S^k_y
\end{equation}
where the first map is a quotient and the last map sends the sphere $B_a/\d B_a$  to $B_{t(a)}/ \d B_{t(a)} = S^k_{t(a)}$ via the canonical homeomorphism $B_a\cong B_{t(a)}$. A map of this form is said to \emph{refine the correspondence } $X \xleftarrow{s} A \xrightarrow{t} Y$. 

Suppose we have correspondences $X \lar{s_A} A \rar{t_A} Y$ and $Y\lar{s_B} B \rar{t_B} Z$ with boxes $\{B_x\}_{x\in X}$ and $\{B_y\}_{y\in Y}$. Given $e\in E(\{B_x\}_{x\in X},s_A)$ and $e'\in E(\{B_y\}_{y\in Y},s_B)$, we can consider the preimage of the boxes in $e'$ under the map $\Phi(e,A)$. An important point in the proof of existence and uniqueness of spatial refinements is that $\Phi(e,A)^{-1}(e')$ is a collection of little boxes in $\{B_x\}_{x\in X}$ labelled by the composition $C:=B\times_Y A$; that is,
\[
\Phi(e,A)^{-1}(e') \in E(\{B_x\}_{x\in X}, s_C)
\]
where $s_C: C \to X$ is the source map of the composition. See Figure \ref{fig:pulling back boxes} for an explanation of this.

\begin{figure}
\centering
\includegraphics{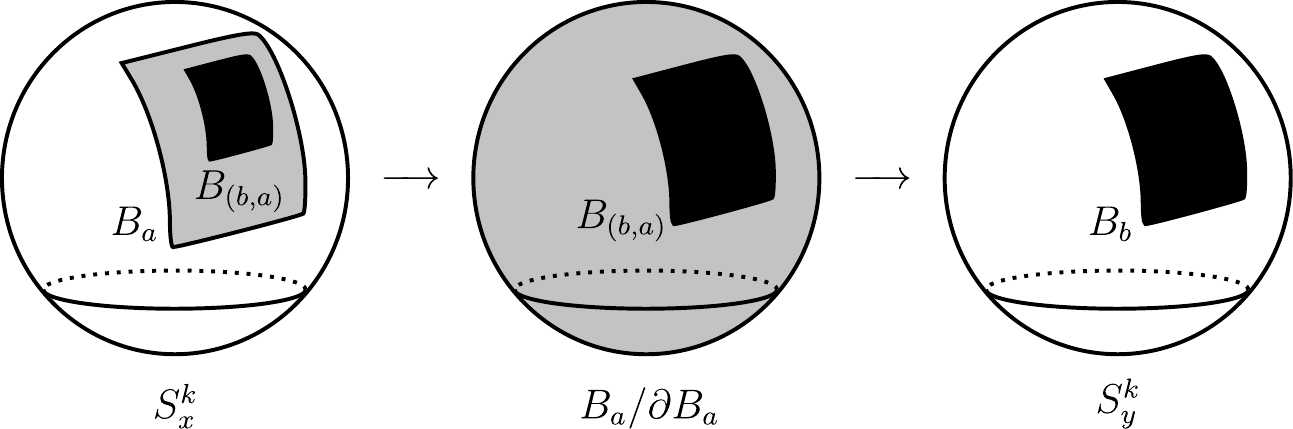}
\caption{
Fix two collections of subboxes $e\in E(\{B_x\}_{x\in X}, s_A)$ $e'\in E(\{B_y\}_{y\in Y}, s_B)$ and an element $(b,a)$ in the composition correspondence $C=B\times_Y A$.  Then by definition $e'$ gives us a box $B_b\subset S^k_y$ (in black on the right) where $y=s_B(b)=t_A(a)$ and $e$ gives us a box $B_a\subset S^k_x$ where $x=s_A(a)$ (in gray on the left).  The maps in the figure are those appearing in \eqref{eq:box map on each summand}, composing to give $\Phi(e,A)$.  If we pull back the box $B_b$ under this map (restricted to $S^k_x$), we obtain a smaller subbox $B_{(b,a)}\subset B_a \subset S^k_x$ for any such pair $(b,a)\in C$.  Taking the collection of such boxes and preimages together, we see that $\Phi(e,A)^{-1}(e') \in E(\{B_x\}_{x\in X}, s_C)$.
}
\label{fig:pulling back boxes}
\end{figure}

\subsection{Equivariant Box Maps}\label{sec:equivariant box maps}
Throughout this section, we will continue to use $G$ to denote a finite cyclic group, but all of the statements below generalize to more general finite groups acting freely on finite sets.

Suppose a correspondence $X \xleftarrow{s} A \xrightarrow{t} Y$ consists of $G$-sets and equivariant maps. Then in particular the spaces $\bigvee_{x\in X} S^k_x$ and $\bigvee_{y\in Y} S^k_y$ inherit a $G$-action via the canonical homeomorphism $B_x\cong B_{gx}$. In terms of the wedge summands, this $G$-action permutes the copies of $S^k$.  However,  to obtain a $G$-equivariant refinement $\bigvee_{x\in X} S^k_x \rightarrow \bigvee_{y\in Y} S^k_y$, a second condition needs to be imposed on the sub-boxes $\{B_a\} \in E(\{B_x\},s)$.

Let 
\[
E_G(\{B_x\},s)
\]
denote the subset of $E(\{B_x\},s)$ consisting of the little boxes $\{B_a\}$ satisfying the following property: for each $g\in G$, $x\in X$, and $a\in s^{-1}(x)$, the canonical homeomorphism $B_x \to B_{gx}$ restricts to a homeomorphism $B_a \to B_{ga}$. With this condition in place, the restricted homeomorphism $B_a \to B_{ga}$ is also the canonical one.  In particular, if we identify all boxes $B_{gx}$ canonically with $[0,1]^k$, then the subboxes $B_{ga}\subset B_{gx}$ for various $g$all have the same image in $[0,1]^k$.

\begin{lemma}\label{lem:box map is equivariant}
Let $X\lar{s} A \rar{t} B$ be an equivariant correspondence. If $e\in E_G(\{B_x\},s)$, then the induced box map $\Phi(e,A)$ is $G$-equivariant.  
\end{lemma}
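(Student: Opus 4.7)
The plan is to verify $G$-equivariance by direct pointwise computation, relying on the compatibility of the collection $\{B_a\}$ built into the definition of $E_G(\{B_x\},s)$. Fix $g\in G$, $x\in X$, and a point $p\in S^k_x = B_x/\partial B_x$. I want to show that $\Phi(e,A)(g\cdot p)$ and $g\cdot \Phi(e,A)(p)$ agree as points of $\bigvee_{y\in Y} S^k_y$. The key preliminary observation is that, because $s$ is $G$-equivariant, multiplication by $g$ restricts to a bijection $s^{-1}(x)\to s^{-1}(gx)$, $a\mapsto ga$; moreover, by the defining condition of $E_G$, the canonical homeomorphism $B_x\to B_{gx}$ carries each $B_a$ onto $B_{ga}$ via the canonical homeomorphism $B_a\to B_{ga}$. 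In particular it carries $B_x\setminus\bigcup_{a\in s^{-1}(x)} B_a^{\mathrm{int}}$ onto $B_{gx}\setminus\bigcup_{a'\in s^{-1}(gx)} B_{a'}^{\mathrm{int}}$.

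Now I split into cases. If $p$ lies in the complement $B_x\setminus \bigcup_a B_a^{\mathrm{int}}$, then the definition of $\Phi(e,A)$ via \eqref{eq:box map on each summand} sends $p$ to the basepoint; by the observation above, $g\cdot p$ similarly lies in the complement in $B_{gx}$, so $\Phi(e,A)(g\cdot p)$ is also the basepoint, and the two sides agree trivially. Otherwise $p$ lies in a unique $B_a^{\mathrm{int}}$ with $a\in s^{-1}(x)$. Then $\Phi(e,A)(p)$ is the image of $p$ in $S^k_{t(a)}$ under the canonical homeomorphism $B_a\to B_{t(a)}$, and $g\cdot\Phi(e,A)(p)$ is then its image in $S^k_{t(ga)} = S^k_{g\cdot t(a)}$ under the canonical homeomorphism $B_{t(a)}\to B_{t(ga)}$ (using equivariance of $t$). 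On the other hand $g\cdot p\in B_{ga}^{\mathrm{int}}\subset B_{gx}$ by the $E_G$ condition, so $\Phi(e,A)(g\cdot p)$ is the image of $g\cdot p$ in $S^k_{t(ga)}$ under the canonical homeomorphism $B_{ga}\to B_{t(ga)}$.

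Both points in $S^k_{t(ga)}$ are thus obtained from $p$ by a composition of canonical (scale-and-translate) homeomorphisms among $k$-boxes. Since any two such compositions between a fixed pair of boxes coincide, the two points agree, and equivariance follows. The only real content is the bookkeeping of which canonical homeomorphism is applied in each route; I expect this to be entirely routine once the $E_G$ condition is invoked, so no serious obstacle arises. The same argument will apply verbatim in Section \ref{sec:Burnside to refinements} when equivariant box maps are assembled into homotopy coherent refinements.
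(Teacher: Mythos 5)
Your proof is correct and follows essentially the same approach as the paper's: the paper verifies equivariance by checking commutativity of the two-square diagram corresponding to the factorization in \eqref{eq:box map on each summand}, using the $E_G$ condition for the left square and the compatibility of canonical box homeomorphisms for the right, which is exactly what your pointwise case analysis unwinds. Your version is just the same argument written element-by-element rather than as a diagram chase.
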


\begin{proof}
Let $g\in G$ and $x\in X$. We need to verify commutativity of the following diagram
\[
\begin{tikzcd}[row sep = scriptsize]
S^k_x \arrow[r] \arrow[d, "g"'] & \displaystyle\bigvee_{a\in s^{-1}(x)} B_a/\d B_a \arrow[d, "{\cong}"] \arrow[r] & \displaystyle\bigvee_{y\in Y} S^k_y \arrow[d, "g"] \\
S^k_{gx} \arrow[r] & \displaystyle\bigvee_{a\in s^{-1}(x)} B_{ga}/\d B_{ga} \arrow[r] & \displaystyle\bigvee_{y\in Y} S^k_y
\end{tikzcd}
\]
where the horizontal maps are those of \eqref{eq:box map on each summand} and the vertical maps are induced by canonical homeomorphisms between boxes. The left square commutes since $e\in E_G(\{B_x\},s)$, and the right square commutes because all the maps there are induced by canonical homeomorphisms of boxes. 
\end{proof}

In the construction of
stable homotopy refinements, it is crucial that the space of little boxes is highly connected and that boxes pull back to boxes. The remainder of this subsection is devoted to verifying these properties for $E_G(\{B_x\},s)$. For the following statement, recall the notation for the quotient functor, introduced in the last paragraph of Section \ref{sec:the equivariant Burnside category}.

\begin{lemma}\label{lem:E_G vs E(X/G)}
Let $X \xleftarrow{s} A \xrightarrow{t} Y$ be a correspondence of $G$-sets and equivariant maps between them.  Then we have the following homeomorphism of spaces of sub-boxes:
\[
 E_G(\{B_x\}_{x\in X}, s) \cong E(\{B_{[x]}\}_{[x] \in X/G}, s/G).
\]
\end{lemma}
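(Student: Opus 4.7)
The plan is to construct an explicit bijection in both directions using the canonical identification of each box $B_x$ with the standard cube $[0,1]^k$, and then verify continuity is automatic since every construction is given by canonical homeomorphisms between boxes.

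First I would define a map
\[
\Psi : E_G(\{B_x\}_{x\in X},s) \longrightarrow E(\{B_{[x]}\}_{[x]\in X/G},s/G)
\]
as follows. Given $\{B_a\}_{a\in A}\in E_G(\{B_x\},s)$ and an orbit $[a]\in A/G$, pick any representative $a$ and set $B_{[a]}$ to be the image of $B_a\subset B_{s(a)}$ under the canonical homeomorphism $B_{s(a)}\to B_{[s(a)]}$. The equivariance hypothesis (that $B_x\to B_{gx}$ sends $B_a$ to $B_{ga}$) implies independence of the representative, so $B_{[a]}$ is well-defined. Disjointness of the collection $\{B_{[a]}\}$ follows because if $[a]\neq[a']$ share a source orbit $[x]$, one may pick representatives with $s(a)=s(a')=x$, and then $B_a,B_{a'}\subset B_x$ are disjoint (note that $a\neq a'$ since $G$ acts freely on $A$, using that the $G$-action on $X$ is free and $s$ is equivariant).

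In the other direction, define
\[
\Psi^{-1} : E(\{B_{[x]}\}_{[x]\in X/G}, s/G) \longrightarrow E_G(\{B_x\}_{x\in X}, s)
\]
by sending $\{B_{[a]}\}$ to the collection $\{B_a\}_{a\in A}$ where $B_a$ is the preimage of $B_{[a]}\subset B_{[s(a)]}$ under the canonical homeomorphism $B_{s(a)}\to B_{[s(a)]}$. By construction, the canonical homeomorphism $B_x\to B_{gx}$ carries $B_a$ onto $B_{ga}$ (both are canonically identified with the same subbox of $[0,1]^k$ via $B_{[s(a)]}$), so the output lies in $E_G$. Disjointness within each $B_x$ follows from disjointness in $B_{[x]}$, and disjointness across different $B_x$'s is automatic since the $B_x$ are pairwise disjoint.

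It is then routine to check that $\Psi$ and $\Psi^{-1}$ are mutually inverse: both compositions reduce, orbit by orbit, to the composition of the canonical homeomorphism $B_{s(a)}\to B_{[s(a)]}$ with its inverse. Continuity of both $\Psi$ and $\Psi^{-1}$ is immediate, as each is defined entirely in terms of the canonical (scaling-and-translation) homeomorphisms between $k$-dimensional boxes, which vary continuously in the position and size of the subbox. I do not anticipate a significant obstacle; the only subtlety is ensuring well-definedness of $\Psi$ (independence of the choice of orbit representative), and this is exactly the content of the $G$-equivariance condition defining $E_G$.
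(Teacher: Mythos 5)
Your argument is correct and follows essentially the same approach as the paper's: both proofs observe that the $G$-equivariance condition on $\{B_a\}$ means the entire collection is determined by its restriction to one representative box per $G$-orbit of $X$, and that this restriction is exactly the data of a point in $E(\{B_{[x]}\},s/G)$, transported along the canonical box homeomorphisms. Your write-up is more explicit than the paper's (which leaves the two-way correspondence and disjointness checks implicit), and the aside that $G$ acts freely on $A$ is unnecessary here — freeness of $A$ is built into the definition of $\B_G$, and in any case the disjointness you need follows just from $a\neq a'$ once the representatives are chosen with a common source.
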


\begin{proof}
For a fixed $x\in X$, a sub-collection of little boxes $\{B_a\}_{a\in s^{-1}(x)}$ in $B_x$ determines the little boxes $\{B_c\}_{c\in s^{-1}(gx)}$ in $B_{gx}$ for all $g\in G$.  Explicitly, for each $a\in s^{-1}(x)$ and $g\in G$, $B_{ga} \subset B_{gx}$ is  the image of $B_a$ under the canonical homeomorphism $B_x\to B_{gx}$, and since $s$ is a $G$-equivariant map, $c\in s^{-1}(gx)$ if and only if $c=ga$ for some $a\in s^{-1}(x)$.

Thus a collection of equivariant boxes in $E_G(\{B_x\},s)$ is equivalent to a (non-equivariant)  choice of boxes in each $G$-orbit of $X$ for each $G$-orbit of $A$, which is the meaning of $E(\{B_{[x]}\}_{[x] \in X/G}, s/G)$.
\end{proof}

\begin{corollary}\label{cor:E_G connected}
$E_G(\{B_x\},s)$ is $(k-2)$-connected.
\end{corollary}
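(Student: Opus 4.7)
The plan is to deduce this corollary directly from the preceding lemma by reducing to the non-equivariant connectivity statement. By Lemma \ref{lem:E_G vs E(X/G)}, there is a homeomorphism
\[
E_G(\{B_x\}_{x\in X}, s) \;\cong\; E(\{B_{[x]}\}_{[x] \in X/G}, s/G),
\]
so it suffices to establish that the right-hand side is $(k-2)$-connected. But this is a space of sub-boxes of the form studied by Lawson--Lipshitz--Sarkar in the non-equivariant setting, and the corresponding connectivity statement is exactly \cite[Lemma 5.2]{LLS} (or the analogous lemma they invoke): the space of labelled disjoint sub-boxes inside a fixed collection of $k$-dimensional ambient boxes is $(k-2)$-connected.

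Thus the proof is essentially a one-line invocation: apply Lemma \ref{lem:E_G vs E(X/G)} and then quote the non-equivariant connectivity result. The forward-looking point to check in writing this up is merely that the homeomorphism in Lemma \ref{lem:E_G vs E(X/G)} is genuinely a homeomorphism of topological spaces (not only a bijection of underlying sets), since connectivity is a topological statement; this follows because the canonical homeomorphisms between boxes $B_x \to B_{gx}$ used to transport sub-boxes are themselves continuous in their parameters, so choosing a sub-box in $B_{[x]}$ and spreading it equivariantly across the orbit defines a continuous inverse to the obvious forgetful map.

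The main obstacle, if any, is just being careful about the topology on these spaces of boxes — but since all boxes in a given $G$-orbit are determined by a single representative, the parameter space for $E_G$ is literally a product of parameter spaces indexed by orbits of $X$ and $A$, which is the same product describing $E(\{B_{[x]}\}, s/G)$. Consequently no further work beyond citing \cite{LLS} is needed, and the corollary follows.
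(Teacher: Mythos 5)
Your proof is correct and takes essentially the same route as the paper: reduce to the non-equivariant setting via Lemma \ref{lem:E_G vs E(X/G)} and cite the Lawson--Lipshitz--Sarkar connectivity result. The only small discrepancy is the citation — the paper invokes \cite[Lemma 5.18]{LLS} rather than Lemma 5.2 — but you hedged appropriately and identified the right statement; your added remark on why the identification is genuinely a homeomorphism, while not spelled out in the paper, is a reasonable check and does not change the argument.
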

\begin{proof}
This follows from Lemma \ref{lem:E_G vs E(X/G)} together with the connectivity for spaces of (non-equivariant) sub-boxes shown in \cite[Lemma 5.18]{LLS}.
\end{proof}

\begin{lemma}\label{lem:equivariant pullback}
Let $X\lar{s_A} A \rar{t_A} Y$ and $Y \lar{s_B} B \rar{t_B} Z$ be equivariant correspondences. Let $\{B_x\}_{x\in X}$ and $\{B_y\}_{y\in Y}$ be collections of $k$-dimensional boxes, and let $e\in E_G(\{B_x\}, s_A)$, $e'\in E_G(\{B_y\}, s_B)$. Then $\Phi(e,A)^{-1}(e') \in E_G(\{B_x\}_{x\in X}, s_C)$.
\end{lemma}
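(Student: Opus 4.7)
My plan is to show that the non-equivariant pullback property of box maps (already established in \cite{LLS}) combines with Lemma \ref{lem:box map is equivariant} in a straightforward way, with equivariance propagating through the canonical homeomorphisms between boxes. Concretely, set $C = B \times_Y A$ with source map $s_C(b,a) = s_A(a)$. From the non-equivariant pullback result, we already know $\Phi(e,A)^{-1}(e') \in E(\{B_x\}_{x \in X}, s_C)$; that is, the preimages $B_{(b,a)} \subset B_{s_A(a)}$ are honest sub-boxes labeled by elements of $C$. The only thing left to check is the equivariance condition: for each $g \in G$, $x \in X$, and $(b,a) \in s_C^{-1}(x)$, the canonical homeomorphism $B_x \to B_{gx}$ restricts to the canonical homeomorphism $B_{(b,a)} \to B_{(gb,ga)}$.

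To verify this, I would unwind the definition of $B_{(b,a)}$ (as illustrated in Figure \ref{fig:pulling back boxes}): setting $y = t_A(a) = s_B(b)$, the sub-box $B_{(b,a)} \subset B_a$ is the preimage of $B_b \subset B_y$ under the canonical homeomorphism $h_{a,y}: B_a \to B_y$. Similarly, $B_{(gb, ga)} \subset B_{ga}$ is the preimage of $B_{gb} \subset B_{gy}$ under the canonical $h_{ga, gy}: B_{ga} \to B_{gy}$. The canonical homeomorphisms between boxes form a commutative square
\[
\begin{tikzcd}
B_a \arrow[r, "h_{a,y}"] \arrow[d, "h_{a,ga}"'] & B_y \arrow[d, "h_{y,gy}"] \\
B_{ga} \arrow[r, "h_{ga,gy}"'] & B_{gy}
\end{tikzcd}
\]
since all four maps are determined by the same affine scaling and translation of $\R^k$.

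The hypothesis $e \in E_G(\{B_x\}, s_A)$ ensures that $h_{x,gx}$ restricts to $h_{a,ga}$ on $B_a$, and the hypothesis $e' \in E_G(\{B_y\}, s_B)$ ensures that $h_{y,gy}$ sends $B_b$ onto $B_{gb}$. Chasing an element of $B_{(b,a)}$ around the square, its image under $h_{a,y}$ lies in $B_b$, hence under $h_{y,gy}$ in $B_{gb}$; by commutativity this equals its image under $h_{ga,gy} \circ h_{a,ga}$, so $h_{a,ga}(B_{(b,a)}) \subset h_{ga,gy}^{-1}(B_{gb}) = B_{(gb,ga)}$. Running the argument for $g^{-1}$ gives the reverse inclusion, so $h_{x,gx}|_{B_{(b,a)}} : B_{(b,a)} \to B_{(gb,ga)}$ is a homeomorphism, and it is canonical because it is the restriction of a canonical map. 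This verifies the defining condition of $E_G(\{B_x\}, s_C)$. The main conceptual obstacle is essentially just bookkeeping: tracking that ``canonical'' is preserved by composition and restriction, which is automatic here since everything is affine.
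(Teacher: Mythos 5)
Your proof is correct and takes essentially the same approach as the paper: identify $B_{(b,a)}$ as the preimage of $B_b$ under the canonical homeomorphism $B_a\to B_y$ restricted from $\Phi(e,A)$, and then chase a commutative diagram to show that the canonical homeomorphism $B_x\to B_{gx}$ carries $B_{(b,a)}$ onto $B_{(gb,ga)}$. The only difference is in packaging: the paper shortens the diagram chase by directly invoking the $G$-equivariance of $\Phi(e,A)$ (Lemma~\ref{lem:box map is equivariant}) to write $B_{gb}=g\bigl(\Phi(e,A)(B_{(b,a)})\bigr)=\Phi(e,A)\bigl(gB_{(b,a)}\bigr)$, whereas you rederive the needed commutativity of canonical homeomorphisms inline; the underlying argument is the same.
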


\begin{proof}
Fix $g\in G$.  Let $B_{(b,a)} \in \Phi(e,A)^{-1}(e')$ and set $y=s_B(b)$ and $x=s_A(a)$ as in Figure \ref{fig:pulling back boxes}. We need to show that the canonical homeomorphism $B_x\to B_{gx}$ sends $B_{(b,a)}$ to $B_{(gb,ga)}$. By construction, $\Phi(e,A)(B_{(b,a)}) = B_b$. Since $e\in E_G(\{B_x\}_{x\in X},s_A)$, we also know that $g B_b = B_{gb}$. Recall from Lemma \ref{lem:box map is equivariant} that $\Phi(e,A)$ is $G$-equivariant. These facts yield
\[
\hskip1em B_{gb} = g \left(\Phi(e,A)(B_{(b,a)}) \right) = \Phi(e,A)\left( g B_{(b,a)}\right).
\]
Since $e'\in E_G(\{B_y\}_{y\in Y},s_B)$ and $B_{(b,a)} \subset B_a$, we have that $g B_{(b,a)} \subset g B_a = B_{ga}$. Then $g B_{(b,a)}$ is a box contained in $B_{ga}$, which is sent to $B_{gb}$ by $\Phi(e,A)$. It follows that $g B_{(b,a)} = B_{(gb,ga)}$, which completes the proof. 

\end{proof}

\subsection{From Burnside Functors to Spatial Refinements}
\label{sec:Burnside to refinements}
In this section we describe how to use box maps to transform a Burnside functor $F:\Cscr\rightarrow \B_G$ into a certain homotopy coherent diagram of spaces, called a spatial refinement of $F$, in an equivariant manner.

Let $\Top_*^G$ denote the category of based $G$-spaces. We refer the reader to  \cite[Section 4.2]{SSS} for the definition and discussion of homotopy coherent diagrams and homotopy colimits in the equivariant setting, 
 parallel to the non-equivariant treatment in \cite[Section 4.2]{LLS}. The following is an equivariant analogue of \cite[Definition 5.21]{LLS} and 
  \cite[Proposition 5.22]{LLS} (and extends  \cite[Definition 4.11]{SSS} and \cite[Proposition 4.12]{SSS} from $\Z/2\Z$ to general finite  groups $G$).

\begin{definition}\label{def:spatial refinement}
Fix a small category $\Cscr$ and a strictly unitary lax 2-functor $F:\Cscr \to \B_G$. A \emph{$k$-dimensional spatial refinement} of $F$ is a homotopy coherent diagram $\til{F}_k:\Cscr \to \emph{Top}^G_*$ satisfying
\begin{enumerate}[label= (\arabic*)]
\item\label{it:vertices are spheres} For any $u\in \Cscr$, $\til{F}_k(u) = \bigvee_{x\in F(u)} S^k$.\\
\item\label{it:edges are box maps} For any sequence $u_0 \xrightarrow{f_1} \cdots \xrightarrow{f_m} u_m$ of morphisms in $\Cscr$ and $t\in I^{m-1}$, the map
\[
\til{F}_k(f_m,\ldots, f_1)(t) : \bigvee_{x\in F(u_0)} S^k \to \bigvee_{y\in F(u_m)} S^k
\]
is a box map which refines the correspondence $F(f_m \circ \cdots \circ f_1)$. Note that, by assumption, the diagram lands in $\emph{Top}^G_*$, so that the map in $(2)$ is also $G$-equivariant.
\end{enumerate}
\end{definition}

\begin{proposition}\label{prop:equivariant spatial refinements}
Let $\Cscr$ be a small category in which every sequence of composable non-identity morphisms has length at most $n$, and let $F:\Cscr \to \B_G$ be a strictly unitary lax 2-functor.\\
\begin{enumerate}[label= \emph{(\arabic*)}]
\item If $k\geq n$, there is a $k$-dimensional spatial refinement of $F$.
\item If $k\geq n+1$, then any two $k$-dimensional spatial refinements of $F$ are equivariantly weakly equivalent (see \cite[Section 4.2]{SSS} for a detailed discussion of weak equivalences between spatial refinements).
\item If $\til{F}_k$ is a $k$-dimensional spatial refinement of $F$, then the (reduced)  suspension of each $\til{F}_k(u)$ and of each $\til{F}_k(f_m,\ldots, f_1)(t)$ gives a $(k+1)$-dimensional spatial refinement of $F$.
\end{enumerate}
\end{proposition}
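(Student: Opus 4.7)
The plan is to mimic the inductive construction of spatial refinements from the non-equivariant case in \cite[Proposition 5.22]{LLS}, replacing the box space $E(\{B_x\}, s)$ by its equivariant counterpart $E_G(\{B_x\}, s)$ throughout. Section \ref{sec:equivariant box maps} has already established the three properties required to run this argument equivariantly: box maps arising from elements of $E_G$ are automatically $G$-equivariant (Lemma \ref{lem:box map is equivariant}); the space $E_G(\{B_x\}, s)$ is $(k-2)$-connected (Corollary \ref{cor:E_G connected}); and the preimage of a collection of equivariant sub-boxes under an equivariant box map is again an element of $E_G$ (Lemma \ref{lem:equivariant pullback}). With these three facts in hand, the obstruction theory proceeds identically to the non-equivariant setting.

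For statement (1), I would construct $\til{F}_k$ by induction on the length $m$ of composable chains of non-identity morphisms in $\Cscr$. On objects, set $\til{F}_k(u) = \bigvee_{x \in F(u)} S^k$ with the permutation $G$-action inherited from the canonical identifications $S^k = B_x/\partial B_x$. Inductively, given a sequence $u_0 \xrightarrow{f_1} \cdots \xrightarrow{f_m} u_m$ whose values on shorter sub-sequences are already fixed, the boundary $\partial I^{m-1}$ carries a prescribed map into $E_G(\{B_x\}_{x \in F(u_0)}, s)$ built from the previously chosen data together with Lemma \ref{lem:equivariant pullback} (which ensures all pullbacks remain equivariant) and the $2$-morphisms of $F$. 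Since $\partial I^{m-1} \simeq S^{m-2}$ and $E_G$ is $(k-2)$-connected with $k \geq n \geq m$, this boundary map extends over $I^{m-1}$, producing the desired box map refinement at the new stage.

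For statement (2), uniqueness is proved by constructing an equivariantly homotopy coherent diagram on $\Cscr \times [1]$ which restricts to the two given refinements at the endpoints. The same obstruction analysis applies, but now requires filling in over $I^{m} \simeq D^{m}$ with boundary on $\partial I^{m} \simeq S^{m-1}$, so $(k-2) \geq m-1$ is needed, forcing the stronger hypothesis $k \geq n+1$. Statement (3) is immediate: suspension takes a $k$-dimensional box $B \subset \R^k$ to the $(k+1)$-dimensional box $B \times [0,1] \subset \R^{k+1}$, and both the $G$-action on wedge summands and the equivariant sub-box structure pass through this product unchanged, so the suspended diagram satisfies conditions \ref{it:vertices are spheres} and \ref{it:edges are box maps} at level $k+1$.

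The main obstacle, if any, is the bookkeeping associated with the parametrization of $I^{m-1}$ by nested sub-boxes and its compatibility with the $2$-morphisms of the lax functor $F$. However, this is purely formal and was already carried out in detail in \cite[Section 5]{LLS} and extended to the $\Z/2\Z$-equivariant setting in \cite[Section 4]{SSS}; no genuinely new difficulty arises in passing to a general finite cyclic group $G$, because the box spaces $E_G$ satisfy exactly the same connectivity and pullback formalism as in the non-equivariant case.
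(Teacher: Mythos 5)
Your proof is correct and follows essentially the same approach as the paper's, which simply cites \cite[Proposition 5.22]{LLS} and \cite[Proposition 4.12]{SSS} and notes that the modifications are exactly the ones you identify: replace $E$ by $E_G$, invoke Corollary \ref{cor:E_G connected} for connectivity and Lemma \ref{lem:equivariant pullback} for pullbacks, and observe that suspension respects the permutation action on wedge summands. You spell out the obstruction-theoretic induction in slightly more detail than the paper does (and additionally cite Lemma \ref{lem:box map is equivariant}), but the argument is the same.
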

\begin{proof}
This is completely parallel to the proofs of \cite[Proposition 5.22]{LLS} and \cite[Proposition 4.12]{SSS}. The modification is that the maps 
\[
e_{f_m,\ldots, f_1} : I^{m-1} \to E(\{B_x\}, s_{f_m\circ \cdots \circ f_1}) 
\]
should land in $E_G(\{B_x\}, s_{f_m\circ \cdots \circ f_1})$, which is still highly connected by Corollary \ref{cor:E_G connected}. We have also verified that equivariant boxes pull back to equivariant boxes in Lemma \ref{lem:equivariant pullback}.  Note that suspension respects the group action, which  permutes spheres.
\end{proof}

\subsection{From Spatial Refinements to Realizations}
\label{sec:refinements to realizations}

In this section we discuss how to pass from a $G$-equivariant homotopy coherent spatial refinement $\widetilde{F}_k$ to a realization $\LR{F_k}\in \Top_*^G$.  We also recall some of the cellular properties of such $\LR{F_k}$ and the induced maps between them.

Following \cite[Definition 5.1]{LLS}, we begin by defining a slight enlargement of the cube category $\2^n$, denoted $\2^n_+$. The objects of $\2^n_+$ are $\text{ob}(\2^n) \cup \{*\}$; that is, $\2^n_+$ has an extra object added. Set $\Hom_{\2^n_+}(u,v) = \Hom_{\2^n}(u,v)$ if $u,v\in \2^n$. Otherwise, for $u\in \2^n\setminus \{0\}$, set $\Hom_{\2^n_+}(u,*)$ to consist of a single morphism. Finally, set $\Hom_{\2^n_+}(0,*) = \Hom_{\2^n_+}(*,0) = \Hom_{\2^n_+}(*,u) = \varnothing$. 

Let $F:\2^n \to \B_G$ be a Burnside functor, and let $\til{F}_k : \2^n \to \Top_*^G$ be a $k$-dimensional spatial refinement of $F$.
Extend $\til{F}_k$ to a homotopy coherent diagram $\til{F}_k^+ : \2^n_+ \to \Top_*^G$ by setting $\til{F}_k^+(*)$ to be a single point. 
Following \cite[Definition 4.9]{SSS},
define the space
\begin{equation}\label{eq:realization}
\LR{F}_k := \hocolim\,  \til{F}_k^+,
\end{equation}
called a \textit{realization} of $F$. Since the homotopy coherent diagram $\til{F}_k$ takes values in $\Top_*^G$, the space $\LR{F}_k$ is again a based $G$-space.  

There is a cell structure on $\LR{F}_k$, called the \textit{coarse cell structure} in Section 4.4 of \cite{SSS}, with the cells of $\LR{F}_k$ in bijection with $\coprod_{u\in \2^n} F(u)$. This cell structure is described in \cite[Section 6]{LLS}. 

\begin{lemma}\label{lem:cell structure}
With the above notation,
\begin{enumerate}[label= \emph{(\arabic*)}]
\item The $G$-action on $\LR{F}_k$ is cellular, and the bijection 
\[
\{ \emph{Cells of }\LR{F}_k\} \longleftrightarrow \coprod_{u\in \2^n} F(u)
\]
is $G$-equivariant. 

\item The $G$-action on $\LR{F}_k$ is free away from the basepoint.

\item The weak equivalences of Proposition \ref{prop:equivariant spatial refinements} induce equivariant homotopy equivalences on realizations.  Thus $\LR{F_k}$ is well-defined and $\Sigma \LR{F}_k \simeq \LR{F}_{k+1}$.

\end{enumerate}
\end{lemma}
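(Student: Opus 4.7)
For part (1), the plan is to lift the non-equivariant coarse cell structure of \cite[Section 6]{LLS} to the equivariant setting. Recall that the cells of $\hocolim\, \til{F}_k^+$ arise from the simplicial nerve description of the homotopy colimit: each composable sequence $u_0\to\cdots\to u_m\to *$ together with a choice of element $x\in F(u_0)$ contributes a cell built from a product of a simplex of parameters and one of the wedge summands $S^k_x\subset \til{F}_k(u_0)$, and the LLS analysis collapses this collection to a cell for each pair $(u,x)$ with $x\in F(u)$. Since every structure map $\til{F}_k(f_m,\ldots,f_1)(t)$ is an equivariant box map (by the definition of spatial refinement together with Lemma~\ref{lem:box map is equivariant}) and the basepoint is $G$-fixed, the $G$-action on $\hocolim\, \til{F}_k^+$ permutes the wedge summands $S^k_x$ according to the action on $F(u)$ and carries the parameter simplices to themselves identically. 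This permutation sends cells to cells, so the action is cellular and the bijection with $\coprod_u F(u)$ is $G$-equivariant by construction.

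For part (2), observe that each $F(u)$ is a finite \emph{free} $G$-set by the definition of $\B_G$, so the equivariant bijection from (1) implies that $G$ acts freely on the set of non-basepoint cells. Since $\LR{F}_k$ is a $G$-CW complex whose characteristic maps are determined on interiors by the cellular bijection, each non-basepoint point lies in the interior of a unique cell, and the stabilizer of such a point is contained in the stabilizer of its cell, which is trivial. Hence the action is free away from the basepoint.

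For part (3), the plan is to invoke equivariant obstruction theory in the style of \cite[Section 4.2]{SSS}. A weak equivalence between two $k$-dimensional spatial refinements in the sense of Proposition~\ref{prop:equivariant spatial refinements} is, by construction, a homotopy coherent natural transformation whose components are equivariant homotopy equivalences of wedges of spheres (indeed, each component is an identification of the same wedge, compatible up to a coherent family of box-parametrized homotopies lying in $E_G$). Taking the extended diagrams over $\2^n_+$ and applying $\hocolim$ preserves such componentwise equivariant equivalences, giving a $G$-equivariant map $\LR{F}_k^{(1)}\to \LR{F}_k^{(2)}$ whose underlying (non-equivariant) map is a homotopy equivalence and whose fixed-point maps (on the non-basepoint part, which is free) are vacuously equivalences of empty spaces. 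By the equivariant Whitehead theorem applied to based free $G$-CW complexes (which (1) and (2) provide), this equivariant weak equivalence is an equivariant homotopy equivalence, so $\LR{F}_k$ is well-defined up to equivariant homotopy. The relation $\Sigma\LR{F}_k \simeq \LR{F}_{k+1}$ follows by combining this uniqueness with part (3) of Proposition~\ref{prop:equivariant spatial refinements}: the reduced suspension of a $k$-dimensional spatial refinement is a $(k+1)$-dimensional spatial refinement of the same functor $F$, and suspension commutes with the based homotopy colimit up to natural equivariant homotopy since the $G$-action is trivial on the suspension coordinate.

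The main obstacle is part (3), specifically verifying that the weak equivalences produced by Proposition~\ref{prop:equivariant spatial refinements} really do descend to \emph{equivariant} (not merely underlying) homotopy equivalences. The key input is that all the intermediate parameter choices live in $E_G$ rather than $E$, so the required homotopies are themselves equivariant; once this is in hand the equivariant Whitehead theorem for free $G$-CW complexes, guaranteed by (1) and (2), finishes the argument.
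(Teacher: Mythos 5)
Your proof is correct and follows essentially the same route as the paper: part (1) lifts the coarse cell structure of \cite[Section 6]{LLS} equivariantly, part (2) uses that each $F(u)$ is a finite free $G$-set, and part (3) applies the equivariant Whitehead theorem to the resulting based free $G$-CW complexes (the paper cites \cite[Section 4.5]{SSS} for the passage from equivariant weak equivalences of homotopy coherent diagrams to equivariant weak equivalences of homotopy colimits, which you argue directly). One minor wording point in part (3): the fixed-point sets of nontrivial subgroups are the basepoint rather than empty, but the Whitehead argument still goes through since all based homotopy groups then vanish on both sides.
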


\begin{proof}
Statement (1) follows from inspecting the cell decomposition in \cite[Proposition 6.1]{LLS} (see also the discussion in \cite[Section 4.4]{SSS}). Statement (2) follows from (1) and the fact that $G$ acts freely on the set $\coprod_{u\in \2^n} F(u)$. 

For (3), we first note that (1) and (2) imply our realizations are $G$-CW complexes (the action of $G$ is cellular and its fixed set is the basepoint, which is trivially a subcomplex).  Then as described in \cite[Section 4.5]{SSS}, equivariant weak equivalences of homotopy coherent diagrams induce equivariant weak equivalences on their homotopy colimits (which can be taken to be cellular), which induce equivariant homotopy equivalences for large enough $k$ by the equivariant Whitehead theorem.
\end{proof}

Now recall from Section \ref{sec:totalizations of Burnside functors} that the totalization $\Tot(F)$ of an equivariant Burnside functor $F :\2^n \to \B_G$ is a complex of $\Z[G]$-modules, and that any natural transformation $\eta : F_1 \to F_0$ between two such functors induces a chain map over $\Z[G]$, $\Tot(\eta): \Tot(F_1) \to \Tot(F_0)$.  Also recall \cite[Lemma 4.15]{SSS}, which in particular says that if $F_1, F_0 : \2^n \to \B$ are Burnside functors and $\eta :F_1\to F_0$ is a natural transformation, then there is an induced map $\eta : \LR{F_1}_k \to \LR{F_0}_k$ for any realization.  We record \cite[Proposition 4.16]{SSS} below relating these notions, where the notation $[k]$ denotes a homological shift by $k$ (that is, $C^i[k] := C^{i-k}$).
\begin{proposition}\emph{(\cite[Proposition 4.16]{SSS})}\label{prop:identifying the cellular chain complex} 
If $F : \2^n \to \B$, then its reduced shifted cellular chain complex $\til{C}^{cell}_*(\LR{F}_k)[-k]$ is isomorphic to $\emph{Tot}(F)$, with the cells mapping to the corresponding generators. If $\eta: F_1\to F_0$ is a natural transformation of Burnside functors, then the map $\LR{F_1}_k \to \LR{F_0}_k$ is cellular, and the induced cellular chain map agrees with $\emph{\Tot}(\eta)$. 
\end{proposition}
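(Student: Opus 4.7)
The plan is to construct an explicit isomorphism from the reduced shifted cellular chain complex of $\LR{F}_k$ to $\Tot(F)$ by matching cells with generators and cellular boundaries with the totalization differential. First I would recall the coarse cell structure on $\LR{F}_k$ from \cite[Proposition 6.1]{LLS}: each $x \in F(u)$ yields exactly one cell $e_x$ of dimension $k + \lr{u}$, and these cells are in bijection with $\coprod_{u \in \2^n} F(u)$. After the shift by $[-k]$, $e_x$ sits in homological degree $\lr{u}$, matching the grading of $\Tot(F)$. On the level of generators this gives $\til{C}^{cell}_{k+\lr{u}}(\LR{F}_k) \cong \Ab(F(u))$ tautologically.

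Next I would identify the cellular boundaries using the description of the coarse cells in \cite[Proposition 6.1]{LLS}. The attaching map of $e_x$ decomposes into contributions from codimension-one faces corresponding to edges $u \geq_1 v$ of $\2^n$; on each such face the attaching map factors through a box map $\Phi(e, F(\varphi_{u,v}))$ refining the correspondence $F(u) \lar{s} A_{u,v} \rar{t} F(v)$ in the chosen spatial refinement $\til{F}_k$. By the explicit formula in \eqref{eq:box map on each summand}, the restriction of such a box map to the summand $S^k_x$ is a degree-$\vert s^{-1}(x) \cap t^{-1}(y) \vert$ collapse onto $S^k_y$. Consequently the matrix coefficient of the cellular differential from $e_x$ to $e_y$ is $\pm \vert s^{-1}(x) \cap t^{-1}(y) \vert$, with sign $(-1)^{s_{u,v}}$ arising from the standard cubical sign assignment (cf.\ \cite[Section 2.7]{BN2}, \cite[Definition 4.5]{LS}); summing over $y$ recovers precisely the formula for $\Tot(F)$'s differential in \eqref{eq:totalization}.

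For the second claim, recall that a natural transformation $\eta: F_1 \to F_0$ is a Burnside functor $\eta: \2^{n+1} \to \B$ restricting to $F_1, F_0$ on $\{1\} \times \2^n$ and $\{0\} \times \2^n$. It admits a spatial refinement whose restrictions to the top and bottom copies of $\2^n$ agree (stably) with $\til{F_1}_k$ and $\til{F_0}_k$, and whose realization is the map $\LR{F_1}_k \to \LR{F_0}_k$ induced by $\eta$. By construction this map is cellular with respect to both coarse cell structures, and applying the same degree-counting argument to the vertical correspondences $\eta(\edge_u): F_1(u) \to F_0(u)$ shows that the induced matrix on cellular chains equals $\Ab(\eta(\edge_u))$, which is $\Tot(\eta)$ on the $u$-th summand by definition.

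The main obstacle is verifying that the cube boundary factors, the box-map degrees, and the sign conventions all fit together coherently across the iterated structure of the homotopy colimit — in other words, unpacking the attaching maps of the coarse cell structure into precisely the combinatorial data encoded by the correspondences $A_{u,v}$ and the sign assignment $s_{u,v}$. The core technical input is \cite[Proposition 6.1]{LLS}, which supplies both the coarse cell decomposition and the attaching maps in terms of box maps; once this is in hand, the identification with $\Tot(F)$ reduces to matching degrees of collapse maps on wedges of spheres with fiber counts of correspondences, which is essentially formal.
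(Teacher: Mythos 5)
The paper does not prove this proposition; it cites it directly from \cite[Proposition 4.16]{SSS}, which in turn rests on the coarse cell structure of \cite[Proposition 6.1]{LLS}. Your sketch reconstructs exactly that argument: the coarse cells of $\LR{F}_k$ are in bijection with $\coprod_u F(u)$ with cell dimension $k+\lr{u}$; the cellular boundary coefficient from $e_x$ to $e_y$ is read off from the degree of the component of a box map, which is $\lvert s^{-1}(x)\cap t^{-1}(y)\rvert$ by the formula \eqref{eq:box map on each summand}; this matches the coefficient in $\Ab(A_{u,v})(x)$ from \eqref{eq:totalization}; and the natural transformation case follows by applying the same reasoning to the vertical edges $\edge_u$ of $\2^{n+1}$. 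This is the same route as the reference, so your proposal is correct and consistent with the cited proof.

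One small point worth tightening if you were to write this out fully: the claim that the sign of the cellular boundary coefficient equals $(-1)^{s_{u,v}}$ on the nose depends on orientation conventions for the cells in the coarse CW structure, which need not match the chosen sign assignment $s_{u,v}$ term-by-term. What is actually true (and sufficient) is that the cellular sign pattern is itself a valid cube sign assignment, and any two sign assignments on $\2^n$ yield isomorphic totalizations via a diagonal change of basis by $\pm 1$; the isomorphism in the proposition therefore holds up to such a reindexing, which is implicit in the statement. Stating this explicitly would close the gap you flag as "coherence across the iterated structure of the homotopy colimit."
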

When $F$ takes values in $\B_G$, the above discussion shows that $\til{C}_{*}^{cell}(\LR{F}_k)[-k]$ is a $\Z[G]$-module, and the isomorphism
\[
\til{C}_{*}^{cell}(\LR{F}_k)[-k] \cong \Tot(F)
\]
of Proposition \ref{prop:identifying the cellular chain complex} is an isomorphism of complexes of $\Z[G]$-modules (see \cite[Proposition 4.23]{SSS}).  Likewise, a natural transformation $\eta$ between two equivariant Burnside functors induces an equivariant cellular map on their realizations that recovers $\Tot(\eta)$ on the chain complex level.
\begin{lemma}\emph{(\cite[Lemma 4.17]{SSS})}\label{lem:equivariant homotopy equivalence} Let $F_1, F_0: \2^n \to \B_G$ be equivariant Burnside functors, and let $\LR{F_1}_k$, $\LR{F_0}_k$ be $k$-dimensional spatial refinements. If $\eta: F_1\to F_0$ is a natural transformation such that the induced map $\emph{\Tot}(\eta) : \emph{\Tot}(F_1) \to \emph{\Tot}(F_0)$ is a chain homotopy equivalence, then the induced map $\eta :\LR{F_1}_k \to \LR{F_0}_k$ is an equivariant homotopy equivalence. 
\end{lemma}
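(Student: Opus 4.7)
The plan is to reduce the claim to the non-equivariant version of the same statement via the equivariant Whitehead theorem, exploiting the fact that $G$ acts freely on $\LR{F_i}_k$ away from the basepoint.

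First I would invoke the equivariant analogue of Proposition \ref{prop:identifying the cellular chain complex} to record that the induced map $\eta : \LR{F_1}_k \to \LR{F_0}_k$ is $G$-cellular, and that on reduced shifted cellular chain complexes it is literally $\Tot(\eta)$ as a map of $\Z[G]$-module chain complexes. In particular, if $\Tot(\eta)$ is a chain homotopy equivalence of $\Z[G]$-modules, then by forgetting the action it is still a chain homotopy equivalence of the underlying $\Z$-modules.

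Next I would apply the equivariant Whitehead theorem: a $G$-map between $G$-CW complexes is a $G$-homotopy equivalence if and only if, for every subgroup $H \leq G$, the induced map on $H$-fixed point sets $\eta^H$ is a (non-equivariant) weak homotopy equivalence. By Lemma \ref{lem:cell structure}, both $\LR{F_i}_k$ are $G$-CW complexes on which $G$ acts freely away from the basepoint. Hence for every nontrivial $H \leq G$ one has $\LR{F_i}_k^H = \{*\}$, so the condition on $\eta^H$ is automatic. It therefore suffices to verify that the underlying non-equivariant map $\eta$ is a weak homotopy equivalence.

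For this, I would use that, for $k$ in the stable range, each realization $\LR{F_i}_k$ has only a single $0$-cell (the basepoint) and all remaining cells in dimensions $\geq k \geq 2$, so both realizations are simply connected. By the classical Whitehead theorem, it then suffices to check that $\eta$ induces an isomorphism on integral homology; this is immediate from the fact, established in the first step, that the induced cellular chain map is $\Tot(\eta)$ (up to degree shift) and is a $\Z$-chain homotopy equivalence. Passing back through the equivariant Whitehead theorem then yields the desired $G$-homotopy equivalence.

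The main point requiring care is simply organizing the $\Z[G]$-module structure on cellular chains so that the identification of Proposition \ref{prop:identifying the cellular chain complex} is genuinely $G$-equivariant; after that, the argument is a purely formal combination of free $G$-action, the equivariant Whitehead theorem, and the standard non-equivariant reduction from chain homotopy equivalence to homotopy equivalence for simply connected CW complexes.
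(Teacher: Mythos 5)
Your proposal is correct and follows essentially the same approach as the paper's proof (which itself cites \cite[Lemma 4.17]{SSS}): use that the action is free away from the basepoint, that the realizations are simply connected for large $k$, and that $\Tot(\eta)$ induces an isomorphism on homology, then conclude via the equivariant Whitehead theorem. You merely spell out the fixed-point reduction (nontrivial $H$ gives just the basepoint, so only the underlying non-equivariant map needs checking), which the paper leaves implicit.
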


\begin{proof}
The argument is the same as in \cite[Lemma 4.17]{SSS}. The previous discussion shows that the induced map on spaces $\eta :\LR{F_1}_k \to \LR{F_0}_k$ is $G$-equivariant and cellular. We may take $k$ big enough, so that the realizations are simply connected. Note that the $G$-action on the realization is free away from the basepoint. Since $\Tot(\eta)$ is a $\Z[G]$-linear isomorphism on homology, the equivariant Whitehead theorem implies that $\eta :\LR{F_1}_k \to \LR{F_0}_k$ is an equivariant homotopy equivalence. 
\end{proof}

\begin{remark}
Although the discussion in \cite[Section 4.4]{SSS} actually makes use of spatial refinements out of the category $(\2_+)^n$ rather than $\2^n_+$, the cellular structures described on the realizations there induce equivalent cellular structures on our realizations via a simple quotient within each cell.  This is implicit in their reference to \cite[Section 6]{LLS} which builds the coarse structure for realizations using $\2^n_+$ as we are here; the alternative use of $(\2_+)^n$ in \cite{LLS} is denoted by $\2_\dagger^n$ there.
\end{remark}

\subsection{The Quantum Annular Spectrum $\X_{\A_\q}^r(L)$}\label{sec:defining the homotopy type}

Finally, we turn to defining our quantum annular spectrum and proving Theorem \ref{equivariant thm}.  Let $D$ be a diagram for an annular link with $n_-$ negative crossings, and fix $G=\Z/r\Z$ for some $r\geq 2$. Let $F_\q : \2^n\to \B_G$ be a quantum annular Burnside functor for $D$ as provided by Theorem \ref{thm:the quantum annular Burnside functor}.  Let $\til{F}_{\q,k}^+ : \2^n_+ \to \Top_*^G$ be a $k$-dimensional spatial refinement extended to the enlarged cube category $\2^n_+$, as described in Section \ref{sec:Burnside to refinements} and the beginning of Section \ref{sec:refinements to realizations}.  Let $\LR{F_\q}_k$ be the realization, as defined in \eqref{eq:realization}. 
\begin{definition}\label{def:quantum annular Khovanov spectrum}
Define the \emph{quantum annular Khovanov spectrum} $\X_{\A_\q}^r(D)$ to be the suspension spectrum of $\LR{F_\q}_k$, desuspended $k+n_-$ times; that is,
\[
\X^r_{\A_\q}(D):=\Sigma^{-k-n_-} \left(\Sigma^\infty \LR{F_\q}_k \right).
\]
\end{definition}
By dualizing the isomorphism in Proposition \ref{prop:identifying the cellular chain complex}, we obtain the following isomorphism of $\Z[G]$-modules:
\begin{equation}\label{eq:C(X)=KC}
C^*(\X_{\A_\q}^{r}(D)) \cong CKh_{\A_\q}^{*}(D)\otimes_\k \k_r.
\end{equation}
\begin{theorem}\label{thm:X(D) well-defined}
For a fixed annular link diagram $D$ and $r\geq 2$, the naive $G$-spectrum $\X_{\A_\q}^r(D)$ is well-defined; that is, different choices during the construction yield equivariantly stably homotopy equivalent spectra.
\end{theorem}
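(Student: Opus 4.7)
The construction of $\X_{\A_\q}^r(D)$ involves four independent choices: (i) for each vertex $u\in\2^n$, the set of generators $\Gamma(u)$ of $\F_{\A_\q}^r(D_u)$, which depends on a choice of isotopy $D_u^\c\to D_u$; (ii) the $2$-morphism data extending the vertex and edge data $(\ref{eq:F_q on vertices})$, $(\ref{eq:F_q on edges})$ to a strictly unitary lax $2$-functor $F_\q:\2^n\to\B_G$; (iii) the $k$-dimensional spatial refinement $\til F_{\q,k}$; and (iv) the integer $k$ itself. The plan is to dispatch each choice in turn, using the general machinery already assembled in Sections \ref{Quantum Annular Burnside section} and \ref{sec:from Burnside functors to stable homotopy types}, and concluding in each case with an equivariant (stable) homotopy equivalence of the resulting realizations.

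For the choice of generators, Proposition \ref{prop:changing generators} yields a natural isomorphism $F_\q\to F_\q'$ between the Burnside functors associated to different generating sets. For the choice of $2$-morphism data, Lemma \ref{lem:hexagon relation} asserts that any two extensions of the fixed vertex/edge data to a strictly unitary lax $2$-functor are naturally isomorphic. In both cases the resulting natural isomorphism $\eta$ has vertical correspondences whose source and target maps are equivariant bijections, so the induced map $\Tot(\eta)$ is a $\Z[G]$-linear chain isomorphism of totalizations, in particular an equivariant chain homotopy equivalence. By Lemma \ref{lem:equivariant homotopy equivalence}, the map $\eta:\LR{F_\q}_k\to\LR{F_\q'}_k$ is then an equivariant homotopy equivalence of based $G$-CW complexes, which becomes an equivariant stable equivalence of spectra after suspending and desuspending as in Definition \ref{def:quantum annular Khovanov spectrum}.

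For the choice of spatial refinement, Proposition \ref{prop:equivariant spatial refinements}(2) shows that for $k\geq n+1$ any two $k$-dimensional spatial refinements of the same functor are equivariantly weakly equivalent, and Lemma \ref{lem:cell structure}(3) upgrades this weak equivalence to an equivariant homotopy equivalence of the cellular realizations $\LR{F_\q}_k$. For the choice of $k$, Proposition \ref{prop:equivariant spatial refinements}(3) produces a $(k+1)$-dimensional spatial refinement by levelwise reduced suspension of a $k$-dimensional one, and by Lemma \ref{lem:cell structure}(3) this yields an equivariant homotopy equivalence $\Sigma\LR{F_\q}_k\simeq\LR{F_\q}_{k+1}$. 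The desuspension by $k+n_-$ built into Definition \ref{def:quantum annular Khovanov spectrum} is precisely calibrated to absorb this shift, so the resulting naive $G$-spectrum is independent of $k$ up to equivariant stable equivalence. One may then pass to a sufficiently large $k\geq n+1$ at the outset so that all four independence statements apply simultaneously.

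The main point requiring care is that the natural isomorphisms produced by Proposition \ref{prop:changing generators} and Lemma \ref{lem:hexagon relation} are genuinely equivariant and that the associated totalization maps are $\Z[G]$-linear chain homotopy equivalences; this is where the hypothesis that $G$ acts freely on $F_\q(u)=G\times\Gamma(u)$ enters (via Lemma \ref{lem:cell structure}), and it is what allows the equivariant Whitehead theorem in Lemma \ref{lem:equivariant homotopy equivalence} to promote the chain-level equivalence to an equivariant homotopy equivalence of realizations. No new computation is needed beyond checking, in each of the two natural isomorphisms above, that the edge correspondences $\eta(e_u)$ have equivariant bijective source and target maps so that $\Tot(\eta)$ is an isomorphism of $\Z[G]$-chain complexes; this is visible from the explicit construction of $\eta$ via the strategy of Section \ref{sec:a strategy for constructing natural isomorphisms}.
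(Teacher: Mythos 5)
Your proposal is correct and follows the same route as the paper's proof: Proposition \ref{prop:changing generators} together with Lemma \ref{lem:equivariant homotopy equivalence} handles the choice of generators, and Proposition \ref{prop:equivariant spatial refinements} with Lemma \ref{lem:cell structure} handles the choice of spatial refinement and of the dimension $k$. One small inaccuracy worth flagging: Lemma \ref{lem:hexagon relation} guarantees uniqueness of the lax $2$-functor only given the vertex, edge, \emph{and} square-face $2$-morphism data, not the vertex/edge data alone; the square-face $2$-morphisms (in particular the left ladybug matching) are fixed in the construction of Theorem \ref{thm:the quantum annular Burnside functor} rather than being a free choice, so your item (ii) concerns only the automatic extension to higher coherence data and should be phrased accordingly.
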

\begin{proof}
The construction of $\X^r_{\A_\q}(D)$ requires a choice of generators at each vertex of the cube to build $F_\q$, together with a choice of spatial refinement of $F_\q$.  Any two choices of generators give naturally isomorphic Burnside functors (see Proposition \ref{prop:changing generators}), which in turn yield equivariantly stably homotopy equivalent spectra by Lemma \ref{lem:equivariant homotopy equivalence}.  Meanwhile, any two spatial refinements yield homotopy equivalent realizations  by Lemma \ref{lem:cell structure} (so long as $k$ is large enough), and thus equivariantly stably homotopy equivalent spectra.
\end{proof}

Finally, we address the independence of choice of diagram with the following theorem.

\begin{theorem}\label{thm:invariance of htpy type}
Let $D$ and $D'$ be two annular link diagrams for the same annular link $L\subset \A\times I$.  Then $\X_{\A_\q}^r(D)$ is equivariantly stably homotopy equivalent to $\X_{\A_\q}^r(D')$, and as such we may use the notation $\X_{\A_\q}^r(L)$ to denote the quantum annular $G$-equivariant stable homotopy type of $L$.
\end{theorem}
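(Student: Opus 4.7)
The plan is to reduce to elementary moves between diagrams and handle each type separately, always producing a natural transformation of quantum annular Burnside functors whose totalization is a chain homotopy equivalence; Lemma \ref{lem:equivariant homotopy equivalence} will then upgrade each such transformation to an equivariant homotopy equivalence of realizations, and hence, after accounting for the $\Sigma^{-k-n_-}$ shifts, to a $G$-equivariant stable homotopy equivalence of spectra.

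First, any two diagrams $D$ and $D'$ for the same annular link are related by a finite sequence of (a) planar isotopies of the diagram in $\A$, and (b) the three Reidemeister moves performed in a neighborhood disjoint from the seam; by pre- and post-composing with isotopies of type (a), we may always assume the Reidemeister moves happen away from $\mu$. So it suffices to treat one move at a time. Planar isotopies that do not cross the seam do not alter $F_\q$, while isotopies that cross the seam (the $P^{\pm 1}$, $N^{\pm 1}$ moves of Figure \ref{fig:PN moves} and isotopies of a crossing across the seam) are handled precisely by Proposition \ref{prop:isotoping the link diagram}, which already produces a natural isomorphism of Burnside functors. Each such natural isomorphism induces an isomorphism on totalizations and hence, by Lemma \ref{lem:equivariant homotopy equivalence}, an equivariant homotopy equivalence on realizations. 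Since $n_-$ is unchanged by planar isotopies, the stable spectra agree after the common desuspension.

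For the three Reidemeister moves, the plan is to mimic the strategy used in \cite{LS, LLS, SSS}: write the cube of resolutions for the ``after'' diagram as a mapping cone (or iterated mapping cones) over the cube for the ``before'' diagram, localize the chain homotopy equivalence provided at the chain level by \cite[Proposition 6.8]{BPW} to an explicit deformation retract of the quantum annular complex onto its simpler summand, and lift that explicit description to a natural transformation $\eta$ between the corresponding quantum annular Burnside functors. Concretely, one uses the Gauss-elimination / cancellation moves on the cube dictated by the local simplification (a split-off trivial loop with a dot for R1, an acyclic $2\times 2$ face for R2, the R3 ``pentagon''), choosing canonical generators via Section \ref{sec:fixing generators} and reading the induced correspondences and $2$-morphisms off the local formulas in Figure \ref{fig:quantum annular formula table} together with Proposition \ref{prop:LBM1}. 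The hexagon condition of Lemma \ref{lem:hexagon relation} needs to be checked for $\eta$ on the new cube $\2^{n+1}$; the moves being local and disjoint from the seam means the verifications reduce to the simple, classical-Khovanov $3$-cubes considered in the proof of Theorem \ref{thm:the quantum annular Burnside functor} (case~(1) of Lemma \ref{lem:3d cube with ladybug configuration}) together with $3$-cubes where the local Reidemeister cancellation makes the composition vanish or forces uniqueness of the $2$-morphism. By construction $\Tot(\eta)$ is the chain homotopy equivalence of \cite{BPW} tensored with $\k_r$, and Lemma \ref{lem:equivariant homotopy equivalence} yields an equivariant homotopy equivalence $\LR{F_\q^D}_k \simeq_G \LR{F_\q^{D'}}_k$. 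The desuspension by $n_-$ exactly compensates the change in the number of negative crossings under each Reidemeister move, so the spectra are equivariantly stably homotopy equivalent.

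The main obstacle is the R3 move: one must exhibit an equivariant natural transformation between the two $3$-dimensional subcubes of resolutions (before and after the move) whose totalization realizes the BPW chain homotopy equivalence, and then check the hexagon relation on all $4$-dimensional subcubes obtained by adjoining the R3 square to a neighboring crossing. The ladybug analysis of Section \ref{sec:a ladybug configuration} is tailored to exactly this situation: when the R3 subcube is adjacent to a crossing whose resolution produces a circle with two surgery arcs, Corollary \ref{cor:1+q^2 splitting} and Proposition \ref{prop:LBM2} guarantee that the left-pair ladybug matching and the required $2$-morphism are simultaneously compatible, so the hexagons commute for the same reason they do in Theorem \ref{thm:the quantum annular Burnside functor}. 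Assembling these pieces over a composable sequence of moves from $D$ to $D'$ produces the required $G$-equivariant stable equivalence $\X_{\A_\q}^r(D)\simeq_G \X_{\A_\q}^r(D')$, and invariance allows us to write $\X_{\A_\q}^r(L)$.
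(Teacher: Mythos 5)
Your treatment of seam-crossing isotopies (Types III and IV) matches the paper exactly: Proposition \ref{prop:isotoping the link diagram} plus Lemma \ref{lem:equivariant homotopy equivalence}. The gap is in your handling of Reidemeister moves, and it is a real one rather than just a matter of level of detail.

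You propose to realize each Reidemeister move as a natural transformation $\eta$ of quantum annular Burnside functors and then invoke Lemma \ref{lem:equivariant homotopy equivalence}. But in the framework of Section \ref{sec:natural transformations of Burnside functors}, a natural transformation is by definition a functor $\2^{n+1}\to\B_G$ restricting to $F_1$ and $F_0$ on the two copies of $\2^n$; in particular the two Burnside functors must live over the same cube $\2^n$. Reidemeister moves R1 and R2 change the number of crossings, so the Burnside functors for $D$ and $D'$ are defined on cubes of different dimensions, and the machinery of natural transformations simply does not apply. Your phrase ``write the cube of resolutions for the `after' diagram as a mapping cone over the cube for the `before' diagram'' gestures at the right idea but is not carried out, and without that step the reduction to Lemma \ref{lem:equivariant homotopy equivalence} is not available. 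The subsequent worry about R3 and the hexagon check on $4$-cubes is therefore premature: it rests on a construction of $\eta$ that cannot exist in the stated form.

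The paper avoids this entirely. Its proof of Reidemeister invariance follows \cite[Section~6]{LS}: inside the larger cube one identifies upward- or downward-closed subcategories whose totalizations are acyclic; the inclusion of the complementary subface (or the projection onto the quotient) induces a map of realizations that is an isomorphism on homology and hence, by Whitehead, a stable equivalence. No natural transformation of Burnside functors is constructed at all. The only new point one must observe is that these face inclusions are tautologically $G$-equivariant (the $G$-action permutes cells within each vertex of the cube, and face inclusions respect this), so the argument carries over verbatim to the equivariant setting for $G$ finite and the equivariant Whitehead theorem applies. You should replace your Reidemeister paragraph with this subcomplex/quotient-complex argument, or alternatively first reduce (by explicit face inclusion/projection) to a situation where both functors live on the same cube before attempting to build a natural transformation.
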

\begin{proof}
The diagrams $D$ and $D'$ are connected by a series of moves corresponding either to the annular isotopies of Section \ref{sec:isotoping the link diagram} or Reidemeister moves.  Isotopies were shown to induce natural isomorphisms of Burnside functors (Proposition \ref{prop:isotoping the link diagram}), which therefore induce equivariant stable equivalences by Lemma \ref{lem:equivariant homotopy equivalence}.

With such planar equivalences available, we can assume that any Reidemeister move takes place in a disk disjoint from the seam $\mu$.  Such moves then induce homotopy equivalences in precisely the same fashion as they do for the classical Khovanov homotopy type \cite[Section 6]{LS}.  That is to say, any Reidemeister move corresponds to finding subfaces of the relevant cube corresponding to acyclic subcomplexes (or quotient complexes) in the totalization.  (These subfaces are referred to as upwards- and downwards-closed subcategories in the original treatment of \cite{LS}.)  The complements of these acyclic faces can then be included into the large cube; the inclusion induces a map on stable homotopy types that gives an isomorphism on homology, and therefore is a stable equivalence by Whitehead's theorem.  All of this continues to hold in the equivariant setting so long as the group $G$ is finite  - face inclusions induce equivariant maps by definition.
\end{proof}

As in \cite[Section 4.7]{LLS} and \cite[Section 3.9]{SSS}, we also have a splitting of the functor $F_\q$ into a coproduct over the two gradings $\qdeg$ and $\adeg$ (see \eqref{qdeg:simul1} and \eqref{adeg:simul2}).  Thus the spectrum also splits as a wedge sum
\[
\X^r_{\A_\q}(D) = \bigvee_{j,k} \X_{\A_\q}^{r;j,k}(D)
\]
where $j$ corresponds to $\qdeg$ and $k$ corresponds to $\adeg$.  As in \cite[Theorem 1.1]{LS} (see also \cite[Theorem 1]{LLS2}), Theorems \ref{thm:X(D) well-defined} and \ref{thm:invariance of htpy type} respect this splitting, as does Equation \eqref{eq:C(X)=KC} as indicated below:
\[
C^*(\X_{\A_\q}^{r;j,k}(D)) \cong CKh_{\A_\q}^{*,j,k}(D)\otimes_\k \k_r.
\]

We end this section with some remarks about the spectrum $\X_{\A_\q}^r(L)$. Although the construction above was aimed at building a naive $G$-spectrum, one could also construct a genuine $G$-spectrum in a similar manner by applying the functor $\Sigma_G^\infty$, rather than $\Sigma^\infty$, to the realization $\LR{F_\q}_k$.  This functor produces genuine $G$-spectra using smash products with all $G$-representation spheres, rather than using only spheres with trivial $G$-action as $\Sigma^\infty$ does.

We also note that, in general, $G$ acts on $\X_{\A_\q}^r(L)$ in a nontrivial way. Precisely, $\X_{\A_\q}^r(L)$ does not decompose into a wedge product which is simply permuted by $G$. This is already evident on homology due to the calculation in \cite[Proposition 6.9]{BPW} for the annular closure of $(2,n)$ torus links. For an appropriate $n$, homological degree $i$, and $q$-degree $j$, the quantum annular homology is of the form $Kh_{\A_\q}^{i,j}(T_{2,n}) = \k_r/(\q^2+1)$.

\section{Maps on spectra induced by annular link cobordisms} \label{cobordisms section}
In \cite[Section 3]{LS2} the authors show that an embedded cobordism $W\subset S^3\times[0,1]$ between two links $L_0\subset S^3\times\{0\}$ and $L_1\subset S^3\times \{1\}$ induces a map on spectra 
\[\varphi_W:\X(L_1)\rightarrow\X(L_0)\]
such that the induced map on cohomology
\[\varphi_W^*: H^*(\X(L_0))\rightarrow H^*(\X(L_1))\]
recovers the corresponding link cobordism maps $W_*$ in Khovanov homology as studied in \cite{Khovanov, Jac, BN2}. 
The map $\varphi_W$ is constructed by first decomposing $W$ into elementary cobordisms whose planar projections correspond to either Reidemeister moves or Morse moves (births/cups, saddles, or deaths/caps) and assigning maps to each elementary cobordism.  A generically embedded $W$ determines such a decomposition. It is conjectured in \cite{LS2} that isotopic cobordisms induce stably homotopic maps, but this conjecture has not yet been verified.

Now consider a cobordism $W\subset (\A\times I)\times[0,1]$ between two annular links $L_0\subset \A\times I \times\{0\}$ and $L_1\subset \A\times I \times \{1\}$ that is transverse to the $3$-dimensional membrane $\mu\times I \times [0,1]$.  In \cite{BPW} the authors show that there is an induced map 
\[
W_* : Kh_{\A_\q}(L_0) \to Kh_{\A_\q}(L_1)
\]
on the quantum annular homology, defined using the general theory of twisted horizontal traces and shadows established in \cite[Section 3]{BPW}, as well as the functoriality of Chen-Khovanov bimodules under tangle cobordisms (\cite[Proposition 6]{CK}).
The map induced by $W$ on the chain complex level can be determined by the sequence of maps given in \cite[Equation (7.2)]{BPW}.  In the Appendix we compute $W_*$ explicitly for certain elementary cobordisms; this computation is used in the proof of Theorem \ref{thm:cobordism maps on spectra} below.

We note that an isotopy of $W$ can alter $W_*$ by a sign change and a power of $\q$ by \cite[Theorem B]{BPW}; the sign ambiguity is inherited from the similar statement in the usual Khovanov homology (see \cite{Jac}, \cite{BN2}), while the power of $\q$ comes from the ability to isotope parts of $W$ through the membrane. If one instead demands that isotopies fix the membrane, then $W_*$ is well-defined up to a sign.

\begin{theorem}\label{thm:cobordism maps on spectra}
Fix $r\in\N$.  A generically embedded cobordism $W\subset \A\times I\times [0,1]$ between two annular links $L_0$ and $L_1$ induces a map
\[\varphi^r_W:\X^r_{\A_\q}(L_1)\rightarrow \X^r_{\A_\q}(L_0)\]
whose induced map on cohomology
\[(\varphi^r_W)^*:H^*(\X^r_{\A_\q}(L_0))\rightarrow H^*(\X^r_{\A_\q}(L_1))\]
equals the map $W_*$ on quantum annular Khovanov homology over the ring $\k_r$.
\end{theorem}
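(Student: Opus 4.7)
The plan is to follow the strategy of \cite{LS2} adapted to the annular quantum setting, decomposing $W$ into elementary cobordisms, assigning an equivariant spectrum map to each, composing, and then verifying agreement with $W_*$ at the chain level. By genericity of the embedding $W\subset \A\times I\times [0,1]$, after perturbation $W$ admits a decomposition into elementary cobordisms whose annular projections are one of the following: (a) planar isotopies of the diagram disjoint from the seam $\mu$, (b) isotopies moving a strand or a crossing across $\mu$ (the moves $P^{\pm 1}$, $N^{\pm 1}$ of Figure \ref{fig:PN moves} and the crossing-across-seam move of \eqref{fig:isotoping crossing}), (c) Reidemeister moves $R_1, R_2, R_3$, assumed by (b) to occur in a disk disjoint from $\mu$, and (d) elementary Morse moves (birth, saddle, death) occurring in a disk disjoint from both $\mu$ and all crossings.

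For elementary cobordisms of type (a)--(c), the induced spectrum map is the equivariant stable equivalence already constructed in the course of proving Theorems \ref{thm:X(D) well-defined} and \ref{thm:invariance of htpy type}. Specifically, isotopies of the diagram (including those crossing $\mu$) give rise to natural isomorphisms of quantum annular Burnside functors as in Proposition \ref{prop:isotoping the link diagram}, which by Lemma \ref{lem:equivariant homotopy equivalence} realize to equivariant equivalences; Reidemeister moves are handled by the standard arguments at the Burnside category level via acyclic upward/downward closed subcategories as in \cite[Section 6]{LS}, and these continue to respect the $G$-action.

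For the Morse moves of type (d), I will imitate the construction of \cite[Section 3]{LS2}. A birth introduces a trivial circle $C$ disjoint from the rest of the configuration and disjoint from $\mu$; the cube of resolutions for the new diagram is obtained by tensoring each vertex with the $W$-factor, and the cobordism map is realized by the Burnside natural transformation corresponding to the unit $\k_r\to W\otimes\k_r$, $1\mapsto w_+$, and its spectrum-level realization. A death is treated dually via the counit $\eta$, using the fact that generators with a $w_-$ factor survive. A saddle is realized by viewing it as a single edge in a new cube of resolutions obtained by attaching an extra crossing to the diagram (the ``one-extra-crossing'' trick of \cite{LS2}): the spectrum of the original diagram sits inside the enlarged one as a subcomplex at one resolution of the new crossing, and the saddle map on spectra is the composition of this inclusion with the collapse to the complementary subcomplex. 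Composing the spectrum-level maps for the successive elementary cobordisms that make up $W$ yields $\varphi^r_W$.

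The main obstacle is verifying that $(\varphi^r_W)^*$ agrees with $W_*$ on cohomology. The map $W_*$ of \cite{BPW} is defined abstractly via the twisted horizontal trace and functoriality of Chen-Khovanov bimodules, while our $\varphi^r_W$ is built from concrete subcube inclusions and collapses. The two must be compared elementary cobordism by elementary cobordism, and doing so requires a careful bookkeeping of the powers of $\q$ that enter both sides: on our side from the ambiguity in the choice of isotopies $\Cs^\c\to \Cs$ used to fix generators (Definition \ref{generators of a configuration}) and from the trace moves of Figure \ref{fig:trace moves} triggered by features of $W$ passing through the membrane, and on the $W_*$ side from the explicit formula at \cite[Equation (7.2)]{BPW}. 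The explicit calculations of $W_*$ on elementary cobordisms carried out in the Appendix supply the needed matching, showing that our chain maps coincide with $W_*$ over $\k_r$ up to the intrinsic $\pm\q^k$ ambiguity of $W_*$ noted above. Finally, since $(\varphi^r_W)^*$ on cohomology is computed via $\Tot$ of the underlying natural transformations by Proposition \ref{prop:identifying the cellular chain complex} and its $\Z[G]$-equivariant refinement, this elementary-by-elementary agreement assembles to the required equality $(\varphi^r_W)^*=W_*$.
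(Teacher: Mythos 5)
Your proposal follows the same overall strategy as the paper: decompose $W$ into elementary cobordisms (Reidemeister moves, Morse moves, and seam-crossing isotopies), use the natural isomorphisms of Proposition \ref{prop:isotoping the link diagram} and the Reidemeister arguments from \cite[Section 6]{LS} for the isotopy pieces, and the Burnside-level constructions (unit/counit/one-extra-crossing) for the Morse pieces, then compare against $W_*$ via the Appendix calculations. This is essentially the paper's argument.

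However, there is a gap in your final step. You assert that the elementary chain maps ``coincide with $W_*$ over $\k_r$ up to the intrinsic $\pm\q^k$ ambiguity of $W_*$,'' and then conclude directly that $(\varphi^r_W)^* = W_*$. These two claims are not compatible as written: agreement only up to a power of $\q$ does not give equality. What Lemma \ref{lem:elementary cobordisms} actually gives is \emph{exact} equality $W_\bullet = W_*$ for moves of Type I, II and III, while for Type IV (a crossing moved across the seam) one has $W_* = \q^m W_\bullet$ for a specific $m$ depending on the sign of the crossing. This is not the same as the ``$\pm\q^k$ ambiguity of $W_*$'' you invoke (that ambiguity concerns the dependence of $W_*$ on the isotopy class of $W$, not on a fixed decomposition). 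To obtain literal equality $(\varphi^r_W)^* = W_*$ in the theorem, one must correct the Type IV discrepancy by composing the constructed spectrum map with the appropriate iterate $\tau_\q^m$ of the $G=\Z/r\Z$-action on $\X^r_{\A_\q}(L_0)$ — this composition is part of the definition of $\varphi^r_W$, and it is exactly what makes the powers of $\q$ match rather than merely agree up to a unit. Your write-up omits this adjustment, so as stated the conclusion does not follow from the preceding sentence.
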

\begin{proof}
A generic annular cobordism determines a sequence of elementary cobordisms (called \textit{elementary string interactions} in \cite{BN2, GLW}), which are either Reidemeister moves or Morse moves.  When accounting for the presence of the membrane $\mu\times I \times [0,1]$, there are certain additional elementary isotopies of a link through the seam which must be considered: we have the $P$ and $N$ moves of Figure \ref{fig:PN moves}, as well as pushing  a crossing through the seam, as in \eqref{fig:isotoping crossing}.  Meanwhile, the genericity of $W$ here implies that all Reidemeister moves and Morse moves occur away from the seam.

For all elementary isotopies of the link, we already have stable homotopy equivalences via Theorem \ref{thm:invariance of htpy type}.  As in the case for $S^3$, we wish to use the inverses of these maps. 
It is clear that such inverses induce the maps $W_\bullet$ described in the Appendix, and so according to Lemma \ref{lem:elementary cobordisms}, any such map will recover its corresponding $W_*$ up to some power of $\q$.  We may thus compose  any such stable homotopy equivalence with some iterate of the group action on $\X_{\A_\q}^r(L_0)$ to define $\varphi_W^r$ which induces precisely  $W_*$.

Meanwhile, Morse moves induce natural transformations of Burnside functors in the same manner as they do in $S^3$: births induce correspondences which involve a $w_+$ label on the new (trivial, disjoint from the seam) circle; deaths induce correspondences which place a $w_-$ label on the dying (trivial, disjoint from the seam) circle; and saddles utilize the higher dimensional cube which would be built if the diagram had a crossing placed at the point of the saddle.  Carrying out these constructions equivariantly does not present any new issues, leading to constructions of maps $\varphi_W^r$ via Proposition \ref{prop:identifying the cellular chain complex} which again induce  the maps $W_\bullet$ of the Appendix.  For these moves (Type II in Lemma \ref{lem:elementary cobordisms}) we have $W_*=W_\bullet$, concluding the proof of the Theorem.
\end{proof}

\begin{remark}
We stress that Theorem \ref{thm:cobordism maps on spectra} assigns a map to cobordisms $W$ that come with a particular decomposition into a sequence of elementary cobordisms in the thickened annulus with membrane.
\end{remark}

\begin{remark}\label{rmk:isotopic cobs}
Let $W,W'$ be two isotopic annular link cobordisms with corresponding maps on spectra $\varphi^r_W,\varphi^r_{W'}$ via Theorem \ref{thm:cobordism maps on spectra}.  Let $\tau_\q$ denote the map on spectra determined by the action of the distinguished generator of the group $G=\Z/r\Z$.  Then it is reasonable to conjecture that there exists some $m\in\Z$ such that the maps $\varphi^r_W$ and $\tau_\q^m \circ \varphi_{W'}^r$ are stably homotopic.

This is based on the similar conjecture in \cite{LS2} for cobordisms in $S^3$.  Notice that the composition with the map $\tau_\q^m$ recovers the ambiguity in the power of $\q$ which is known to exist for the corresponding maps on the quantum annular homology.
\end{remark}

 Let $\mathbb{S}$ denote the sphere spectrum and define
\begin{equation}\label{eq:spectrum for empty link}
\X^r_{\A_\q}(\varnothing):=\bigvee_G \mathbb{S}
\end{equation}
where $G$ acts by permuting the  wedge summands as usual.  Then we have the following corollary for closed surfaces in $\A\times D^2$ formed by sweeping out a link in the $S^1$ direction.

\begin{corollary}\label{cor:cobordism map for sweep recovers Jones}
Let $L$ be a link in the 3-ball $B^3$, and  consider the surface $\h{W}=S^1\times L$ in $\A\times D^2\cong S^1\times B^3$. Let  $W$ denote a copy of $\h{W}$ perturbed to be generic, viewed as a cobordism from $\varnothing$ to itself.  
Then the map
\[\varphi_W^r:\X_{\A_\q}(\varnothing) \longrightarrow \X_{\A_\q}(\varnothing)\]
induces the map on quantum annular homology
\[(\varphi_W^r)^*:Kh_{\A_\q}(\varnothing)=\k_r \longrightarrow \k_r = Kh_{\A_\q}(\varnothing)\]
which is given by multiplication by the Jones polynomial of $L$, considered as an element of $\k_r$, up to a sign and a power of $\q$ (where the standard basis of the groups $\k_r\cong\bigoplus_G \Z$ is written as $\{1,\q,\ldots \q^{r-1}\}$).
\end{corollary}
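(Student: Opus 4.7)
The plan is to combine Theorem \ref{thm:cobordism maps on spectra} with property \ref{item:cob maps} of Section \ref{sec:Spectra for annular links} (i.e.\ \cite[Theorem B]{BPW}). Since $\X^r_{\A_\q}(\varnothing) = \bigvee_G \mathbb{S}$ has cohomology canonically identified with the free rank-one $\k_r$-module $\Z[G] \cong \k_r$, the induced map $(\varphi^r_W)^*$ is forced to be multiplication by the single element $\alpha := (\varphi^r_W)^*(1) \in \k_r$. By Theorem \ref{thm:cobordism maps on spectra}, $\alpha$ coincides with $W_*(1)$, where $W_*$ is the Beliakova--Putyra--Wehrli quantum annular cobordism map computed from the specific decomposition of $W$ into elementary pieces arising from the generic perturbation of $\h{W}$.

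Next, I would invoke the general TQFT identity that for a cobordism $W$ from $\varnothing$ to itself, $W_*(1)$ is precisely the TQFT evaluation $\F_{\A_\q}(W)$, viewed in $\k_r$. The computation thus reduces to evaluating $\F_{\A_\q}(\h{W})$ for the unperturbed product surface $\h{W} = S^1 \times L$, and then comparing $\h{W}_*$ to $W_*$. For the former, property \ref{item:cob maps} asserts that $Kh_{\A_\q}(\h{W})$ equals the graded Lefschetz trace of the endomorphism of $Kh(L)$ induced by $\h{W}^{\rm cut}$; since $\h{W}^{\rm cut}$ is the identity cobordism on $L$, the induced endomorphism is the identity of $Kh(L)$, whose graded Lefschetz trace is just the graded Euler characteristic of $Kh(L)$, namely the Jones polynomial of $L$.

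Finally, the transition from $\h{W}$ to the generic $W$ is realized by an isotopy in $\A \times I \times [0,1]$ which in general crosses the membrane $\mu \times I \times [0,1]$ several times. By \cite[Theorem B]{BPW} and the discussion following Theorem \ref{thm:cobordism maps on spectra} (see also Remark \ref{rmk:isotopic cobs}), isotopic annular cobordisms induce maps on $Kh_{\A_\q}$ that agree up to a sign (inherited from the sign ambiguity in classical Khovanov cobordism maps) and an overall power of $\q$ (produced by trace moves across the membrane). This accounts for the two ambiguities in the statement. The main technical point I anticipate is arranging a perturbation of $\h{W}$ whose generic decomposition into elementary cobordisms and seam-crossing isotopies is actually handled by Theorem \ref{thm:cobordism maps on spectra} and Lemma \ref{lem:elementary cobordisms} of the Appendix; this amounts to a standard transversality argument for surfaces in the presence of the membrane and should not pose a serious obstacle.
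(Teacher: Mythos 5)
Your proposal is essentially the paper's argument, just written out at greater length. Both proofs proceed by (i) citing Theorem \ref{thm:cobordism maps on spectra} to identify $(\varphi_W^r)^*$ with $W_*$, (ii) relating $W_*$ to $\h{W}_*$ up to a sign and a power of $\q$, and (iii) identifying $\h{W}_*$ with multiplication by the Jones polynomial via the graded-Lefschetz-trace statement for closed surfaces in $S^1\times\R^3$ (property \ref{item:cob maps}, i.e.\ \cite[Theorem D]{BPW}); your unpacking of $\h{W}^{\rm cut}=\text{id}_L$ giving the graded Euler characteristic is exactly what underlies \cite[Theorem D]{BPW} and matches the paper's closing remark that the more general Lefschetz statement applies.

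One small correction on citations: for step (ii) the paper invokes \cite[Proposition 6.8]{BPW}, which is a proved statement asserting that isotopic annular cobordisms induce chain maps agreeing up to a sign and a power of $\q$. You instead lean on Remark \ref{rmk:isotopic cobs}, which is a \emph{conjecture} about stable homotopy of the induced spectrum maps. You do not actually need that conjecture here --- the comparison $W_*=\pm\q^k\h{W}_*$ is purely a statement about maps on homology, already established in \cite{BPW}. Replace the reference to Remark \ref{rmk:isotopic cobs} with \cite[Proposition 6.8]{BPW} (or \cite[Theorem B]{BPW}, which you also cite) and the argument is clean.
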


\begin{remark}
In order to make sense of assigning a wedge of sphere spectra to the empty diagram $\varnothing$ in terms of Burnside functors, we assign to $\varnothing$ the functor $F_\q : \2^0=\{*\} \to \B_G$ defined by setting $F_\q(*) := G \times \{1\}$, where $1\in \F_{\A_\q}(\varnothing) \otimes_\k \k_r = \k_r$ is the chosen generator. The spatial refinement is then a wedge of $r$ spheres with no box maps, and in the homotopy colimit there is nothing to identify except the basepoint of $\bigvee_G S^k$ with the new basepoint in $\2^0_+$. Thus the final space is just a wedge of $r$ spheres with the natural action, desuspended $k$ times, and its reduced cohomology is isomorphic to $\k_r$ as a $\k_r$-module.
\end{remark}

\begin{proof}[Proof of Corollary \ref{cor:cobordism map for sweep recovers Jones}]
Theorem \ref{thm:cobordism maps on spectra} implies that $(\varphi_W^r)^* = W_*$, and by \cite[Proposition 6.8]{BPW}, we also have $W_* = \pm \q^k \h{W}_*$ for some $k\in \Z$. Finally, \cite[Theorem D]{BPW} states that $\h{W}_*$ is multiplication by the Jones polynomial of $L$.  In fact, the more general statement about Lefschetz traces in \cite[Theorem D]{BPW}  also applies here.
\end{proof}

\section{Taking the Quotient}\label{sec:taking the quotient}

Our goal in this section is to prove Theorem \ref{quotient theorem}, stating that the quotient $\X_{\A_\q}^r(D)/G$ of the quantum annular homotopy type is stably homotopy equivalent to the classical annular homotopy type $\X_\A(D)$.  This is accomplished in two stages.

First we show that the quotient of $F_q$ is naturally isomorphic to the classical annular Burnside functor $F_1$ defined in Section \ref{sec:case r=1,2}.  This will follow from Corollary \ref{cor:1+q^2 splitting makes left choice} which establishes  that the matching forced by powers of $\q$ in the quantum theory agrees with the ladybug matching made with the left pair in the classical theory.

Next we show that taking the quotient of a spatial refinement for $F_q$ yields a spatial refinement for $F_1$. The result will then follow from the property that homotopy colimits commute with taking quotients. 

Let $D$ be a diagram for an annular link with $n$ crossings. Let $F_1 : \2^n\to \B$ denote the classical annular Khovanov Burnside functor, where the ladybug matching is made with the left choice. Recall the quotient functor $(-)/G : \B_G \to \B$ from Section \ref{sec:the equivariant Burnside category}. We can compose $F_\q: \2^n \to \B_G$ with the quotient functor to obtain a Burnside functor $F_\q/G : \2^n \to \B$. We will also use $(-)/G: \Top_*^G \to \Top_*$ to denote the quotient functor on $G$-spaces. It will be clear from context which functor is used.

\begin{proposition}\label{prop:quotient of QABF}
The functors $F_{\q}/G :\2^n \to \B$ and $F_1: \2^n\to \B$ are naturally isomorphic. 
\end{proposition}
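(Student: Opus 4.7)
The plan is to apply the general strategy of Section \ref{sec:a strategy for constructing natural isomorphisms}, building a natural isomorphism $\eta : F_\q/G \to F_1$ vertex-by-vertex and then extending to edges and faces. For each $u \in \2^n$, the quotient $F_\q(u)/G = (G\times \Gamma(u))/G$ is canonically identified with the set of chosen generators $\Gamma(u)$. By the discussion at the end of Section \ref{sec:saddle maps}, $\Gamma(u)$ is in natural bijection with the standard Khovanov/annular generators $F_1(u)$ of the configuration $D_u$ (obtained by sending $\q \mapsto 1$). Define $\psi_u : F_\q(u)/G \to F_1(u)$ to be this composition.

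Next I would verify the three conditions \ref{nat iso condition1}--\ref{nat iso condition3}. Conditions \ref{nat iso condition1} and \ref{nat iso condition3} are immediate, since both correspondences sit inside products of vertex sets and the source/target maps are projections in each case. For \ref{nat iso condition2}, observe that an orbit in $A_{u,v}/G$ is represented by a pair $(\q^k y, x)$ with $\q^k y$ appearing in $d_{v,u}(x)$ in the quantum differential, and Lemma \ref{lem:saddles} states that this happens precisely when $y$ appears in the classical annular differential $\F_\A(S)(x)$. Thus the bijection $\psi_u \times \psi_v$ restricts to a bijection between $A_{u,v}/G$ and the classical correspondence $A^\A_{u,v}$ used to define $F_1$, as required.

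The main obstacle, and the only nontrivial step, is verifying commutativity of the diagram \eqref{eq:simplified hexagon relation} for every square face. For non-ladybug squares the 2-morphism is uniquely determined by compatibility with source and target maps (as in the proof of Theorem \ref{thm:the quantum annular Burnside functor}), so the diagram commutes automatically. Among the three ladybug types of Figure \ref{fig:ladybug in classical annular homology}, case (a) is trivial since the composition of two edge maps vanishes on both sides. In case (b) the quantum Burnside functor is defined using the left-pair ladybug matching (see the proof of Theorem \ref{thm:the quantum annular Burnside functor}), matching the convention used to define $F_1$ in Section \ref{sec:case r=1,2}. In case (c) the quantum 2-morphism is uniquely forced by the distinct powers of $\q$ appearing in the $1+\q^2$ splitting of Corollary \ref{cor:1+q^2 splitting}, whereas the classical 2-morphism again uses the left-pair matching; the key input is Corollary \ref{cor:1+q^2 splitting makes left choice}, which says precisely that these two matchings coincide after forgetting powers of $\q$. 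Hence \eqref{eq:simplified hexagon relation} commutes in all cases.

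Once the square-face 2-morphisms are defined and shown to commute with the quotient, the hexagon relation of Lemma \ref{lem:hexagon relation} for the natural transformation $\eta$ follows formally from the hexagon relations already established for $F_\q$ and $F_1$ (the latter being inherited from $F_{Kh}$). Since each $\psi_u$ is a bijection of finite sets, $\eta$ is a natural isomorphism, completing the argument.
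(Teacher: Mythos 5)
Your proposal is correct and follows essentially the same route as the paper: apply the strategy of Section \ref{sec:a strategy for constructing natural isomorphisms} with $\psi_u$ the canonical identification $F_\q(u)/G\cong\Gamma(u)=F_1(u)$, invoke Lemma \ref{lem:saddles} for condition \ref{nat iso condition2}, and reduce commutativity of \eqref{eq:simplified hexagon relation} to Corollary \ref{cor:1+q^2 splitting makes left choice} together with the shared left-pair convention. Your case-by-case breakdown of the ladybug configurations is slightly more explicit than the paper's one-line appeal, but the argument is the same.
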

\begin{proof}
We will use the strategy of Section \ref{sec:a strategy for constructing natural isomorphisms} to build a natural isomorphism $\eta: F_\q/G \to F_1$. For $u\in \2^n$, there is a natural identification 
\[
F_\q(u)/G = (G\times \Gamma(u))/G \cong \Gamma(u) = F_1(u).
\]
Let $\psi_u : F_\q(u)/G \to F_1(u)$ be the above bijection. For $u\geq_1 v$, let $A_{u,v}$ denote the correspondence assigned by $F_q$ to the edge $u\to v$, and let $A'_{u,v}$ denote the correspondence assigned by $F_1$. There is an injection 
\[
A_{u,v}/G \hookrightarrow F_\q(u)/G \times F_\q(v)/G
\]
given by $[\q^k y, \q^\l x] \mapsto ([x],[y])$. We will identify $A_{u,v}/G$ with its image in $F_\q(u)/G \times F_\q(v)/G$. By Lemma \ref{lem:saddles}, the map 
\[
\psi_u \times \psi_v :F_\q(u)/G \times F_\q(v)/G \to F_1(u) \times F_1(v)
\]
restricts to a bijection
\[
\psi_u \times \psi_v :A_{u,v}/G \to A'_{u,v}.
\]
Thus conditions \ref{nat iso condition1}, \ref{nat iso condition2}, and \ref{nat iso condition3} of Section \ref{sec:a strategy for constructing natural isomorphisms} are satisfied. It remains to verify that the diagram \eqref{eq:simplified hexagon relation} commutes. Recall that we have used the ladybug matching made with the left pair for both $F_\q$ and $F_1$. Then commutativity of \eqref{eq:simplified hexagon relation} follows from Corollary \ref{cor:1+q^2 splitting makes left choice}. 
\end{proof}

Note that any homotopy coherent diagram in $\Top^G_*$ can be composed with the quotient functor $(-)/G:\Top^G_*\rightarrow\Top_*$ to give a homotopy coherent diagram in $\Top_*$ as in \cite[Section 4.2]{SSS}.

\begin{proposition}\label{prop:quotient of homotopy coherent diagram}
Let $\til{F_{\q}} :\2^n \to \emph{\Top}_*^G$ be a $d$-dimensional spatial refinement of $F_{\q}$. Then the homotopy coherent diagram $\til{F_{\q}}/G$, obtained by applying $(-)/G$ to each $\til{F_{\q}}(v)$ and each $\til{F_{\q}}(f_m,\ldots, f_1)$, is a $d$-dimensional spatial refinement of $F_{\q}/G$. 
\end{proposition}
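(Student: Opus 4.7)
The plan is to verify the two conditions of Definition \ref{def:spatial refinement} directly for $\til{F_\q}/G$, using the key observation that both the wedge structure on objects and the box structure on morphisms behave well under the free $G$-action.

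First I would check condition \ref{it:vertices are spheres}. For each vertex $u\in \2^n$, we have $\til{F_\q}(u) = \bigvee_{x\in F_\q(u)} S^d$, and by construction the $G$-action permutes the wedge summands according to the free $G$-action on $F_\q(u)$ (see the discussion preceding Lemma \ref{lem:box map is equivariant}, where the action on $\bigvee_{x\in X}S^d_x$ is induced by the canonical homeomorphisms $B_x\to B_{gx}$). Since the $G$-action on $F_\q(u)$ is free, the quotient is
\[
\til{F_\q}(u)/G \;=\; \bigvee_{[x]\in F_\q(u)/G} S^d \;=\; \bigvee_{y\in (F_\q/G)(u)} S^d,
\]
as required.

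Next I would verify condition \ref{it:edges are box maps}. For a sequence of morphisms $u_0\to \cdots\to u_m$ in $\2^n$ and $t\in I^{m-1}$, the map
\[
\til{F_\q}(f_m,\ldots,f_1)(t):\bigvee_{x\in F_\q(u_0)} S^d \longrightarrow \bigvee_{y\in F_\q(u_m)} S^d
\]
is, by the construction in Proposition \ref{prop:equivariant spatial refinements}, the box map $\Phi(e,A)$ associated to some collection $e\in E_G(\{B_x\}, s)$ of equivariant sub-boxes refining the composed correspondence $A=F_\q(f_m\circ\cdots\circ f_1)$. By Lemma \ref{lem:E_G vs E(X/G)}, the data of such an equivariant box collection is equivalent to the data of a non-equivariant box collection in $E(\{B_{[x]}\}, s/G)$ refining the quotient correspondence $A/G$. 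Since the box map $\Phi(e,A)$ is $G$-equivariant (Lemma \ref{lem:box map is equivariant}) and the identification of wedge summands from step one is $G$-equivariant, the induced map on quotients
\[
\Phi(e,A)/G:\bigvee_{[x]\in F_\q(u_0)/G} S^d \longrightarrow \bigvee_{[y]\in F_\q(u_m)/G} S^d
\]
is exactly the box map $\Phi(\b{e},A/G)$ associated to the corresponding box collection $\b{e}\in E(\{B_{[x]}\}, s/G)$, and it refines the correspondence $(F_\q/G)(f_m\circ\cdots\circ f_1)$.

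Finally I would observe that the homotopy coherence data of $\til{F_\q}/G$ is automatic: the quotient functor $(-)/G:\Top^G_*\to\Top_*$ preserves products with $I^{m-1}$ (these cubes carry the trivial $G$-action) and sends continuous maps to continuous maps, so applying $(-)/G$ termwise to the coherence data for $\til{F_\q}$ yields coherence data for $\til{F_\q}/G$, and the required compatibilities on faces of the cubes $I^{m-1}$ follow from the corresponding compatibilities for $\til{F_\q}$. The main content of the argument, and the one step that is not a formality, is the translation between $E_G$ and $E$ via Lemma \ref{lem:E_G vs E(X/G)}; this is what guarantees that the quotient of an equivariant box map is itself a box map, so that the two conditions of Definition \ref{def:spatial refinement} pass cleanly to the quotient.
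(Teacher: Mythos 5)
Your proof is correct and follows essentially the same route as the paper: identify the quotient of the wedge at each vertex, then show that the quotient of an equivariant box map is a box map refining the quotient correspondence. The only cosmetic difference is that you invoke Lemma \ref{lem:E_G vs E(X/G)} explicitly to make the identification $E_G(\{B_x\},s)\cong E(\{B_{[x]}\},s/G)$, whereas the paper establishes the same fact inline by directly writing $e/G := \{B_a/G\}_{a\in A}\cong\{B_{[a]}\}_{[a]\in A/G}$ and noting that the group merely permutes equivalent boxes, so all boxes remain distinct after the quotient; using the lemma is arguably a touch cleaner since it packages the freeness argument once and for all.
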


\begin{proof}

On a vertex $u\in \2^n$, since $F_{\q}(u)=G\times F_1(u)$, it is again clear that the quotient
\[
(\til{F_{\q}}/G)(u) = \til{F_{\q}}(u)/G = \bigg(\bigvee_{\q^k x\in F_{\q}(u)} S^d \bigg)/G
\]
is canonically identified with 
\[
\til{F_{\q}/G}(u)=\bigvee_{x\in F_{\q}(u)/G} S^d.
\]

The key point is to recognize that, for any correspondence $A=F_{\q}(f)$ assigned to some morphism $f:u\rightarrow v$ in $\2^n$ (with source and target maps $s$ and $t$, respectively), the quotient of a box map refining $A$ is itself a box map which refines the quotient of $A$.  That is to say, given a choice of equivariant little boxes
\[
e = \{B_a\}_{a\in A} \in E_G(\{B_{\q^k x}\}_{\q^k x\in F_{\q}(u)}, s)
\]
which induces a map
\[
\bigg(\bigvee_{\q^k x\in F_{\q}(u)} S^d \bigg) \xrightarrow{\Phi(e,A)} \bigg(\bigvee_{\q^\l y\in F_{\q}(v)} S^d \bigg),
\]
the image of the boxes $e$ in the quotient gives a new collection of little boxes
\[
e/G := \{B_a/G\}_{a\in A} \cong \{B_{[a]}\}_{[a]\in A/G} \in E(\{B_x\}_{x\in F_{\q}(u)/G}, s/G)
\]
such that the following diagram commutes:
\[
\begin{tikzcd}[column sep= large]
\displaystyle\bigg(\bigvee_{\q^k x\in F_{\q}(u)} S^d \bigg)/G
\arrow[r, "{\Phi(e,A)/G}"]  
\arrow[d, "\cong"'] &
\displaystyle\bigg(\bigvee_{\q^\l y\in F_{\q}(v)} S^d \bigg)/G \arrow[d, "\cong"] \\
\displaystyle\bigvee_{x\in F_{\q}(u)/G} S^d
\arrow[r, "{\Phi(e/G, A/G)}"] & 
\displaystyle\bigvee_{y\in F_{\q}(v)/G} S^d
\end{tikzcd}.
\]

Note that the group is simply permuting equivalent boxes, and recall that all correspondences coming from edges of the cube are subsets of the products of their source and target.  Furthermore, all boxes remain distinct after taking the quotient.  Thus the quotient boxes $e/G$ fit into the commuting diagram as required, and the homotopy coherent diagram $\til{F}_\q/G$ can be identified with a spatial refinement of $F_\q/G$.
\end{proof}

Recall the enlarged cube category $\2^n_+$ from \ref{sec:refinements to realizations}. Any homotopy coherent diagram $D: \2^n \to \Top_*$ (resp. $D: \2^n \to \Top_*^G$) can be extended to $D^+ : \2^n_+ \to \Top_*$ (resp. $D^+ : \2^n_+ \to \Top_*^G$) by setting $D^+(x)$ to be a point for any $x\in \2^n_+ \setminus \2^n$. Take $k$-dimensional spatial refinements of $F_{\q}$, $F_1$, and $F_{\q}/G$, denoted $\til{F_{\q}}$, $\til{F_1}$, and $\til{F_{\q}/G}$ respectively (suppressing the subscript $k$). Extend each of them to diagrams $\til{F_{\q}}^+$, $\til{F_1}^+$, and $\til{F_{\q}/G}^+$ out of $\2^n_+$, and take the corresponding homotopy colimits $\lVert F_{\q}\rVert_k$, $\lVert F_1\rVert_k$, and $\lVert F_{\q}/G \rVert_k$. We also have the homotopy coherent diagram $\til{F_{\q}}/G$; its  two extensions $\til{F_{\q}}^+ /G$ and $(\til{F_{\q}}/G)^+$  are equal. 

\begin{corollary}

$ \lVert F_1 \rVert_k \simeq (\lVert F_{\q} \rVert_k) /G$.
\end{corollary}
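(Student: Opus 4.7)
The plan is to chain together the structural results already established. First, by Proposition \ref{prop:quotient of homotopy coherent diagram}, applying $(-)/G$ levelwise to the chosen spatial refinement $\til{F_\q}$ produces a $k$-dimensional spatial refinement of $F_\q/G$. Next, the basepoint extension to $\2^n_+$ commutes with the quotient: the added vertex is sent to a point, which is $G$-fixed, so $(\til{F_\q}^+)/G = (\til{F_\q}/G)^+$ as homotopy coherent diagrams $\2^n_+ \to \Top_*$.

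The key geometric step is that the homotopy colimit commutes with taking the quotient by $G$. Since the $G$-action on each space $\til{F_\q}(u)$ is free away from the basepoint, and the homotopy coherent structure maps are $G$-equivariant, the standard model for $\hocolim$ (e.g.\ the two-sided bar construction) is built by gluing simplices of the form $\Delta^m \times \til{F_\q}(u_0)$ along equivariant maps. Taking the quotient of this bar construction by the diagonal $G$-action commutes with the gluings, yielding
\[
\bigl(\lVert F_\q \rVert_k\bigr)/G \;=\; \bigl(\hocolim \til{F_\q}^+\bigr)/G \;\cong\; \hocolim \bigl(\til{F_\q}^+/G\bigr) \;=\; \lVert F_\q/G \rVert_k.
\]
(This is the point where one should be careful with basepoints; it suffices to note that all spaces $\til{F_\q}(u)$ are wedges of spheres with free $G$-action away from the basepoint, so the quotient is again a well-pointed CW complex and the two constructions agree on the nose.)

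Finally, Proposition \ref{prop:quotient of QABF} gives a natural isomorphism $F_\q/G \cong F_1$ of Burnside functors $\2^n \to \B$. The induced map on totalizations is a chain isomorphism, hence in particular a chain homotopy equivalence, so by the non-equivariant version of Lemma \ref{lem:equivariant homotopy equivalence} the two realizations $\lVert F_\q/G \rVert_k$ and $\lVert F_1 \rVert_k$ are homotopy equivalent (for $k$ sufficiently large, with lower dimensions handled by suspension via part (3) of Proposition \ref{prop:equivariant spatial refinements}). Combining with the display above yields the claimed equivalence $\lVert F_1\rVert_k \simeq (\lVert F_\q\rVert_k)/G$.

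The main potential obstacle is the commutation of $\hocolim$ with the quotient functor $(-)/G$. This requires using a point-set model of the homotopy colimit that is compatible with free group actions; the cleanest route is to invoke that $(-)/G$ is a left adjoint on the category of based free $G$-CW diagrams (and the basepoint is a $G$-fixed subcomplex), so it preserves the colimits used to build the bar construction, and to check that the cofibrancy hypotheses needed for $\hocolim$ to compute the correct homotopy type are preserved under taking the quotient. Once this is in place, the rest of the argument is a direct assembly of the cited propositions.
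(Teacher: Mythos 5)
Your argument is correct and follows essentially the same route as the paper: you invoke Proposition \ref{prop:quotient of homotopy coherent diagram} to recognize $\til{F_\q}/G$ as a spatial refinement of $F_\q/G$, observe that the basepoint extension commutes with the quotient, use the commutation of $\hocolim$ with $(-)/G$ (the paper cites this as property (ho-4) of \cite{SSS}, whereas you sketch the bar-construction justification), and then apply Proposition \ref{prop:quotient of QABF} together with Lemma \ref{lem:equivariant homotopy equivalence} to identify $\lVert F_\q/G\rVert_k$ with $\lVert F_1\rVert_k$. The paper chains the identifications in the opposite order (starting from $\lVert F_1\rVert_k$), but the ingredients and the logical content are identical.
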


\begin{proof}

By Proposition \ref{prop:quotient of QABF}, Proposition \ref{prop:equivariant spatial refinements}, and Lemma \ref{lem:equivariant homotopy equivalence}, there is a homotopy equivalence $\lVert F_1 \rVert_k \simeq \lVert F_{\q}/G \rVert_k $.  By Proposition \ref{prop:quotient of homotopy coherent diagram} and Proposition \ref{prop:equivariant spatial refinements}, there is also a  homotopy equivalence $\lVert F_{\q}/G \rVert_k \simeq \hocolim (\widetilde{F_{\q}}/G)^+$. Since $(\til{F_{\q}}/G)^+ = \til{F_{\q}}^+/G$, we obtain 
\[
\hocolim (\til{F_{\q}}/G)^+ 
=
\hocolim \left( \til{F_{\q}}^+ / G \right).
\]
Finally, homotopy colimits commute with the quotient functor $(-)/G$. This is clear from the definition of homotopy colimit, but is also stated explicitly as property (ho-4) in \cite[Section 4.2]{SSS}. Therefore
\[
\hocolim\left(\til{F_{\q}}^+/G\right) \cong \hocolim(\til{F_{\q}}) /G = \left(\lVert F_{\q} \rVert_k \right)/G.
\]
\end{proof}

\section{Towards lifting the $\U$ action}\label{sec:the K map}

This section concerns Conjecture \ref{Uq conjecture} on lifting the $\U$ action on quantum annular homology, constructed in \cite[Theorem D]{BPW}, to the level of spectra. We start by briefly summarizing the relevant background material; see \cite[Appendix A.1]{BPW} for more details.
Let $\U$ be the $\k$-algebra generated by $E,F,K$ and $K^{-1}$ subject to the relations 
\begin{equation}\label{eq:Uq relations}
\begin{split}
&KE = \q^2 EK \\
&KF = \q^{-2} FK 
\end{split}
\hskip2em
\begin{split} & KK^{-1} = 1 = K^{-1}K \\
& K-K^{-1} = (\q-\q^{-1})(EF - FE)
\end{split}
\end{equation}
Let $\Cs$ be a configuration consisting of $e$ essential circles and $t$ trivial circles, with corresponding standard configuration $\Cs^\c$.  Recall from Section \ref{sec:overview of quantum annular homology} that $\F_{\A_\q}(\Cs^\c) \cong V^{\otimes e} \otimes W^{\otimes t}$ carries an action of $\U$ via an identification 
\[
V^{\otimes e} \cong V_1 \otimes V_1^* \otimes V_1\otimes \cdots 
\]
where $V_1$ is the fundamental representation of $\U$, and $W$ is the trivial $2$-dimensional representation.  Fix an isotopy from $\Cs^\c$ to $\Cs$. Then $\F_{\A_\q}(\Cs)$ inherits a $\U$-action via the isomorphism $\F_{\A_\q}(\Cs^\c) \cong \F_{\A_\q}(\Cs)$.

The stable homotopy type in this paper is constructed for the modified quantum annular functor $\F_{\A_\q}^r$; see the discussion in the beginning of Section \ref{Quantum Annular Burnside section}. In what follows we will also denote by $\U$ the result of applying $(-)\otimes_\k \k_r$ to the algebra defined above. It has the same generators and relations, with the additional relation that $\q^r=1$. 

Let $F_1,F_0 : \2^n \to \B_G$ be Burnside functors. Recall from Proposition \ref{prop:identifying the cellular chain complex} (and the discussion following it) that a natural transformation $\eta : F_1\to F_0$ induces a cellular map $\LR{F_1}_k \to \LR{F_0}_k$ which agrees with the map $\Tot(\eta): \Tot(F_1) \to \Tot(F_0)$. 

Each of $E, F, K$ and $K^{-1}$ can be viewed as $\k$-linear endomorphisms of $\F_{\A_\q}^r(\Cs)$. It is natural to ask whether the generators $E, F, K^{\pm{1}}$ lift to natural endomorphisms of the quantum Burnside functor $F_\q$ constructed in Section \ref{Quantum Annular Burnside section}.  Let $J$ denote one of $E, F, K$ or $K^{-1}$. For a generator $x\in F_{\A_\q}(\Cs)$, 
\[
Jx = \sum_{y} \varepsilon_y y
\]
where the sum is over generators and each $\varepsilon_y$ is either $0$ or of the form $\pm \q^k$. Note that the appearance of negatively signed coefficients in odd Khovanov homology was dealt with by using signed correspondences (\cite[Section 3.2]{SSS}) and signed box maps (\cite[Section 4.1]{SSS}). 

Let $D$ be a diagram for an annular link with $n$ crossings, and fix a corresponding Burnside functor $F_\q:\2^n\to \B_G$ for $D$. For $u\in \2^n$, one can define the signed correspondence
\begin{equation}\label{eq:correspondences for J map}
J_u := \{(\q^k y, \q^\l x) \in F_q(u) \times F_\q(v) \mid \pm{\q^k} y \text{ appears in } J(\q^\l x) \} 
\end{equation}
from $F_\q(u)$ to $F_\q(u)$, with the obvious source and target maps. The sign map $\sigma: J_u \to \Z_2 = \{-1,1\}$ returns the sign of $\q^k y$.  Such a correspondence $J_u$ is equivariant since $J$ is $\k$-linear.
In the case when $J=K^{\pm 1}$, the signs are not needed and we have the following lifts.

\begin{proposition}\label{prop:K map}
Let $D$ be a diagram for an annular link with $n$ crossings, and let $F_\q:\2^n\to \B_G$ be a Burnside functor for $D$.  Then there is a natural isomorphism $\mathcal{K}^{\pm{1}}: F_\q \to F_\q$ which extends the correspondences $K_u^{\pm{1}}$ of \eqref{eq:correspondences for J map}.
\end{proposition}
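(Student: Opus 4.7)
The plan is to apply the construction of Section~\ref{sec:a strategy for constructing natural isomorphisms} with $F_1 = F_0 = F_\q$ and vertex maps $\psi_u = K^{\mp 1}\colon F_\q(u) \to F_\q(u)$, so that $\mathcal{K}^{\pm 1}(\edge_u)$ has underlying data $K_u^{\pm 1}$. The two key inputs I need are (a) that $K^{\pm 1}$ acts diagonally on each chosen generator of every $F_\q(u)$, and (b) that $K^{\pm 1}$ commutes with every saddle map $d_{v,u}$. Both follow from Theorem~\ref{thm:BPW} together with the description of the $\U$-module structures in Section~\ref{sec:overview of quantum annular homology}.

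For (a), a direct calculation gives $K v_\pm = \q^{\pm 1} v_\pm$ under both identifications $V \cong V_1$ and $V \cong V_1^*$, while $K$ acts trivially on $W$. Hence $K^{\pm 1}(v_\Is \otimes w_\Js) = \q^{\pm k(x)}(v_\Is \otimes w_\Js)$ with $k(x) := \sum_i \epsilon_i$ on every standard generator $x = v_\Is \otimes w_\Js$. For a general configuration, the generators (Definition~\ref{generators of a configuration}) differ from standard generators by a power of $\q$, so $K^{\pm 1}$ remains diagonal by the same scalar $\q^{\pm k(x)}$. In particular each $K_u^{\pm 1}$ is the ``graph of $K^{\pm 1}$'', and is an isomorphism in $\B_G$. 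For (b), saddles between standard essential configurations are $\U$-linear by Theorem~\ref{thm:BPW}; saddles involving trivial circles reduce via neck-cutting and Boerner's relation to unit/counit maps on $W$, which are $\U$-linear since $K$ acts trivially on $W$. Equivalently, whenever $\q^p y$ appears in $d_{v,u}(\q^q x)$ one has $k(y) = k(x)$.

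Given (a) and (b), I would verify conditions~\ref{nat iso condition1}--\ref{nat iso condition3} of Section~\ref{sec:a strategy for constructing natural isomorphisms}. Conditions~\ref{nat iso condition1} and~\ref{nat iso condition3} hold by the construction of $F_\q$; condition~\ref{nat iso condition2}, that $\psi_u \times \psi_v$ restricts to a bijection $A_{u,v}\to A_{u,v}$, is immediate from (b). The main technical obstacle is to verify that diagram~\eqref{eq:simplified hexagon relation} commutes for every $2$-face $u\geq_1 v, v'\geq_1 w$ of $\2^n$. Since $\psi_u$ multiplies a generator $\q^\l x$ by the scalar $\q^{\mp k(x)}$, depending only on the underlying generator $x$, the vertical maps in~\eqref{eq:simplified hexagon relation} are pure scalar shifts. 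These shifts commute past the horizontal $2$-morphisms $\phi^i_{u,v,v',w}$ because, by (b), all generators in the image of a single saddle share the same $K$-weight, so the scalar shifts cannot interfere with the ladybug matchings (which depend only on the underlying sets of generators). This yields the required natural transformations $\mathcal{K}^{\pm 1}$, and they are natural isomorphisms since each $K_u^{\pm 1}$ is an isomorphism in $\B_G$.
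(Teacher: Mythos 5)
Your proof is correct and follows the same strategy as the paper: use Section~\ref{sec:a strategy for constructing natural isomorphisms} with vertex bijections $\psi^\pm_u(\q^k x) = \q^{k\mp\adeg(x)}x$, check condition \ref{nat iso condition2} from the fact that the saddle maps preserve annular degree, and note that \eqref{eq:simplified hexagon relation} commutes because the $\psi_u$ are pure power-of-$\q$ shifts. The paper phrases $\psi^\pm_u$ directly in terms of $\adeg$ rather than identifying it as $K^{\mp 1}$ and appealing to $\U$-linearity of the differential, but since $K$ acts on a generator $x$ precisely by $\q^{\adeg(x)}$ these are the same map and the same argument.
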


\begin{proof}
We will use the strategy of Section \ref{sec:a strategy for constructing natural isomorphisms}.  For each $u\in\2^n$ we define the required equivariant bijection $\psi^\pm_u: F_\q(u)\to F_\q(u)$ by
\[\psi^\pm_u(\q^k x) = \q^{k \mp \adeg(x)} x\]
for all generators $x\in D_u$.  Now let $u\geq_1 v$.  The conditions \ref{nat iso condition1} and \ref{nat iso condition3} of Section \ref{sec:a strategy for constructing natural isomorphisms} have already been checked on correspondences assigned to edges $\varphi_{u,v}:u\to v$ by $F_\q$.  In order to check condition \ref{nat iso condition2}, we let $\q^k y \in F_\q(u)$, $\q^\l x \in F_\q(v)$ be elements such that  $\q^k y$  appears in $d_{v,u} (q^\l x)$. Since $d_{v,u}$ preserves annular degree, we have $\adeg(x) = \adeg(y)$, so  $\q^{k\mp \adeg(y)}y$ appears in $d_{v,u} ( \q^{\l -\mp \adeg(x)} x)$. This implies condition \ref{nat iso condition2}.

Thus we can build a natural transformation $\eta^\pm$ as in Section \ref{sec:a strategy for constructing natural isomorphisms}; the diagram \ref{eq:simplified hexagon relation} commutes, since $\psi_u$ simply multiplies generators by powers of $\q$. Finally, note that  $K^{\pm{1}}x = \q^{\pm \adeg(x)} x$, so 
\[
K^{\pm}_u = \{(\q^k x, \q^{k\mp \adeg(x)} x) \mid \q^k x \in F_\q(u)\}
\]
is naturally identified with $\eta^\pm(\edge_u)$ via $(\q^k x, \q^{k\mp\adeg(x)} x)\mapsto \q^k x$.
\end{proof}

We note that when $J=E$ or $J=F$, this overall strategy does not produce such a lift.  
Consider the saddle $S$ from Example \ref{ex4}, thought of as the cube of resolutions for a link diagram with one crossing. Let $u=1$ and $v = 0$ denote the vertices of the cube $\2$, and let $d : \F_{\A_\q}^r(D_v) \to \F_{\A_\q}^r(D_u)$ denote the differential. Let $A$ denote the correspondence $F_\q(\varphi_{u,v})$ from $F_\q(u)$ to $F_\q(v)$ assigned by the Burnside functor $F_\q$. 
\begin{center}
\includegraphics{QAH45.pdf}
\end{center}
The surgery formulas are 
\begin{align*}
d(w_-) = 0 && d(w_+) = v_+\otimes v_- + \q^{-1} v_-\otimes v_+,
\end{align*}
and actions of $E$ and $F$ are given by
\begin{align*}
& Ew_+ = 0 && E (v_+ \otimes v_-) = - v_+\otimes v_+ && E (v_- \otimes v_+) = \q v_+ \otimes v_+ \\
& Fw_+ = 0 && F (v_+ \otimes v_-) = v_-\otimes v_+ &&  F (v_- \otimes v_+) = -\q v_- \otimes v_-
\end{align*}
The correspondence $A\times_{F_\q(u)} J_u$ is empty, whereas the correspondence $J_v\times_{F_\q(v)} A$ is non-empty, containing two oppositely signed elements.

\begin{remark}
It may be possible to overcome these difficulties using a
suitably refined $G$-equivariant Burnside category. 
In the event that one has natural transformations lifting each of the $E,F$ and $K$ maps, one might also ask for some notion of a lift of the relations \eqref{eq:Uq relations}, perhaps in terms of the cones on the corresponding maps of spectra.  The authors hope to continue investigating these topics in the future.
\end{remark}

\appendix

\section{Elementary Cobordisms }

Here we compare two ways of constructing a map on quantum annular chain complexes for certain elementary annular link cobordisms $W$.  On the one hand, $W$ induces a chain map $W_*$ as defined in \cite[Equation (7.2)]{BPW}. On the other hand, for each type of elementary annular link cobordism $W$, we can define a second map $W_\bullet$ tailored towards the maps on spectra corresponding to  our constructions in this paper (mainly those in Sections \ref{sec:saddle maps} and \ref{sec:isotoping the link diagram}).  Our goal will be to show that these two maps $W_*,W_\bullet$ differ at most by some power of $\q$ in all cases.

We begin by describing the general construction of the map $W_*$.  Let $W\subset \A\times I \times [0,1]$ be a corbordism between annular links $L$ and $L'$ which intersect the membrane in $k$ and $\l$ points respectively, and let $T$, $T'$ denote the tangles obtained by cutting $L$ and $L'$ along the membrane.  Then $W$ intersects the $3$-dimensional membrane $\mu\times I \times [0,1]$ in a $(k,\l)$-tangle $P$.
As in \cite[Section 7.1]{BPW}, we represent $W$ by a tangle cobordism $\til{W}: PT \to T' P$:
\begin{equation}\label{eq:W as diagonal cob NEW}
\begin{tikzcd}
k \arrow[r, "T"] \arrow[d, "P"'] & k \arrow[d, "P"] \arrow[dl, Rightarrow, "\til{W}"']\\
\l \arrow[r, "T'"'] & \l
\end{tikzcd}
\end{equation}
The chain map $W_* : CKh_{\A_\q}(L) \to CKh_{\A_\q}(L')$ is then given in each homological grading and quantum grading by the formula (7.2) in \cite{BPW}.

To describe the map $W_\bullet$, we distinguish four types of elementary cobordisms.
\begin{enumerate}[label=\Roman*.]
\item Reidemeister moves away from the seam.
\item Morse moves away from the seam.
\item Moving an arc across the seam as in the $P^{\pm{1}}$ and $N^{\pm{1}}$ moves of Figure \ref{fig:PN moves}.
\item Moving a crossing through the seam as in Figure \ref{fig:moving crossings NEW}.
\end{enumerate}

For Type I moves, we define $W_\bullet$ as in \cite{Khovanov} and \cite{BN2}; that is, a Reidemeister move is assigned its chain homotopy equivalence.

For the remaining types of moves, let $D$ and $D'$ denote the diagrams for $L$ and $L'$ differing locally as indicated by the elementary cobordism $W$, and let $n$ be the number of crossings. For each $u\in \{0,1\}^n$, $W$ induces an annular cobordism $R_u : D_u \to D'_u$ in $\A\times[0,1]$.  The quantum annular TQFT $\F_{\A_\q}$ assigns a map to this cobordism; after tensoring with $\k_r$, we write this as
\[\F_{\A_\q}^r(R_u) : \F_{\A_\q}^r(D_u) \rightarrow \F_{\A_\q}^r(D_u').
\]
If we pick out generators for $\F_{\A_\q}^r(D_u)$ and $\F_{\A_\q}^r(D_u')$ via cobordisms from standard configurations as in Section \ref{sec:fixing generators}, then the image of these generators under this map can be computed by composing cobordisms.  We will use shifted copies of this map to define the components $W_{\bullet u} : \F^r_{\A_\q}(D_u) \to \F^r_{\A_\q}(D'_u)$ of $W_\bullet$ on each smoothing individually.  Note that this is precisely how the natural isomorphisms of the quantum annular Burnside functors are determined in Section \ref{sec:isotoping the link diagram} (although there we omitted the functor $\F_{\A_\q}^r$ from the notation).

If $W$ is of Type II or Type III, we define $W_{\bullet u}$ on each smoothing to be $\F_{\A_\q}^r(R_u)$.  Notice that for Type II moves, this is equivalent to defining $W_\bullet$ as in \cite{Khovanov} and \cite{BN2} where Morse moves are assigned either the unit, saddle map, or counit on each smoothing, corresponding to $0$-handle, $1$-handle, and $2$-handle attachments respectively.

Finally, if $W$ is of Type IV, then there are four cases to consider depending on the type of crossing and the direction of movement across the seam.
\begin{figure}
\includegraphics{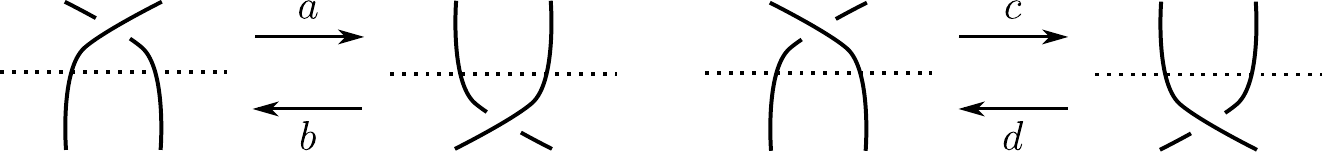}
\caption{}\label{fig:moving crossings NEW}
\end{figure}
In all of these cases, we will define \begin{equation}\label{eq:Wbullet NEW}    
W_{\bullet u} := \q^a \F^r_{\A_\q}(R_u),
\end{equation}
for some power $a\in \Z$ which is determined by the resolution of the crossing near the seam.  If the smoothing corresponding to $u$ resolves this crossing into two parallel lines each intersecting $\mu$ once, we set $a:=0$ in the formula \eqref{eq:Wbullet NEW} for $W_{\bullet u}$, so that $W_{\bullet u}=\F_{\A_\q}^r(R_u)$ once again.  Note that $R_u$ is just the identity cobordism in this case, so $W_{\bullet u}$ is the identity map.  Otherwise, we set $a:=1$ for the moves $(a)$ and $(d)$, and $a:=-1$ for moves $(b)$ and $(c)$.

Note that in all cases, $W_\bullet$ is a chain map; for Type I and II moves this follows from the definitions, while for Type III and IV moves this follows from the trace relations in Figure \ref{fig:trace moves}.   Note also that, for Type III and IV moves, the maps $W_\bullet$ defined here are precisely the totalizations of the natural isomorphisms built in Proposition \ref{prop:isotoping the link diagram}.

\begin{lemma}\label{lem:elementary cobordisms}
Let $L$ and $L'$ be annular links and let $W: L \to L'$ be an elementary cobordism. 
\begin{enumerate}[label=\emph{(\arabic*)}]
\item If $W$ is of Type I, II, or III, then $W_* = W_\bullet$. 
\item If $W$ is Type IV, then $W_* = \q^m W_\bullet$ for some $m$ which depends only on the sign of the crossing involved in the move. 
\end{enumerate}
\end{lemma}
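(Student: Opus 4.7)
The plan is to compare $W_*$ and $W_\bullet$ smoothing-by-smoothing on each vertex $u$ of the cube of resolutions, using the BPW formula (7.2) for $W_*$ on one hand and the explicit cobordism-theoretic definition of $W_{\bullet u}$ on the other. For Type I, the tangle cobordism $\til{W}:PT\to T'P$ of \eqref{eq:W as diagonal cob NEW} has $P$ the identity and $\til{W}$ supported in a disk disjoint from $\mu$; the Chen-Khovanov bimodule morphism it induces is the usual Khovanov Reidemeister equivalence tensored with the identity on the rest, and quantum Hochschild homology collapses $W_*$ to this local map, which is precisely $W_\bullet$ by definition.

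Type II is analogous: $\til{W}$ is a local unit, saddle, or counit cobordism, $P$ is the identity, and the induced bimodule morphism is the standard local TQFT map tensored with the identity. Applying $qHH$ then yields exactly $\F_{\A_\q}^r(R_u) = W_{\bullet u}$ on each smoothing, matching the formulas of Figure \ref{fig:quantum annular formula table}. For Type III, the cobordism $R_u$ is cylindrical, and its interaction with the membrane rescales generators by a power of $\q$ via the trace moves of Figure \ref{fig:trace moves}. I would verify that the same rescaling emerges from the BPW formula by directly computing the effect of the nontrivial tangle $P$ on the Chen-Khovanov bimodule, and identifying the scalars produced by the twisted Hochschild complex with those induced by $R_u$; since the local bimodule morphism in each case is an isomorphism up to an overall scalar, the comparison reduces to bookkeeping of powers of $\q$.

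Type IV is the principal case and the source of the residual factor $\q^m$. For each smoothing $u$ of the crossing being moved across $\mu$, the cobordism $R_u$ either resolves into two parallel strands through $\mu$ (the trivial smoothing), in which case both $W_{\bullet u}$ and the corresponding component of $W_*$ act as the identity, or resolves into a turnback configuration, in which case $R_u$ carries the turnback across $\mu$ and contributes $\q^{\pm 1}$ via a trace move, exactly recovered by $\F_{\A_\q}^r(R_u)$. The BPW formula in the turnback smoothing picks up an additional scalar reflecting the passage of the crossing itself through the membrane, whose value depends both on the smoothing and on whether $P$ is an over- or undercrossing. The prefactor $\q^a$ built into the definition \eqref{eq:Wbullet NEW} is chosen precisely to absorb the smoothing-dependent part of this extra scalar, leaving a uniform power $\q^m$ determined only by the sign of the crossing.

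The main obstacle will be carrying out the Type IV comparison in detail: one must compute the Chen-Khovanov bimodule morphism associated with pushing a single over- or undercrossing through the membrane, verify that the scalar produced by the BPW $qHH$ formula on each smoothing factorizes as $\q^{\pm a}\cdot \q^m$ with $a$ exactly matching the value prescribed in \eqref{eq:Wbullet NEW}, and confirm that the residual exponent $m$ is independent of the choice of smoothing as well as of the $P/N$ data of the neighboring arcs, depending only on the sign of the crossing involved in the move.
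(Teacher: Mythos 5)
Your overall strategy --- comparing $W_*$ and $W_\bullet$ smoothing by smoothing via the formula \cite[(7.2)]{BPW} on one side and the explicit cobordism-theoretic definition of $W_{\bullet u}$ on the other --- matches the paper's approach. For Types I, II, and III your reasoning is consistent with the paper's, which simply observes that whenever the intersection tangle $P = W\cap(\mu\times I\times[0,1])$ has no crossings, $C(P)$ has a single term so the summation over $i'$ in \cite[(7.2)]{BPW} collapses and one checks $W_* = W_\bullet$ directly; your plan to check this by hand amounts to the same thing.

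The gap is in Type IV, which you yourself flag as ``the main obstacle.'' You assert that on the trivial smoothing ``both $W_{\bullet u}$ and the corresponding component of $W_*$ act as the identity,'' but this is not quite right: if both were the identity, comparing with $W_{\bullet u}=\id$ (since $a=0$ there) would force $m=0$, contradicting the statement of the lemma. What you are missing is the structural observation that drives the paper's Type IV argument. Writing $T = T''P$ and $T'=PT''$, the tangle cobordism $\til{W}:PT\to T'P$ is literally the identity cobordism, and the induced map $\til{W}_*$ is the identity on the common summand $C^{i'}(P)\otimes C^{i-i'}(T'')\otimes C^{i'}(P)$ and zero elsewhere; hence no summation over $i'$ survives. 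The residual power of $\q$ then comes not from some diffuse ``scalar reflecting the passage of the crossing'' but specifically from the third map $\theta$ in \cite[(7.2)]{BPW}, which permutes tensor factors and multiplies $x\otimes y\otimes\alpha$ by a power of $\q$ determined by the internal grading of $x$. Since $x$ lives in $\F_{CK}(P_0)\{m\}$ or $\F_{CK}(P_1)\{m+1\}$ depending on the resolution of the crossing near the seam, $\theta$ contributes $\q^m$ on one resolution and $\q^{m+1}$ on the other; the prefactor $\q^a$ in \eqref{eq:Wbullet NEW} absorbs exactly the $\{0,1\}$-part of this discrepancy, leaving the uniform factor $\q^m$ with $m$ equal to the Chen--Khovanov grading shift, which depends only on the sign of the crossing. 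Without identifying $\theta$ as the source and computing its effect against the grading shift, the claim that the residue is independent of the smoothing remains unverified in your proposal.
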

\begin{proof}
Throughout the proof, we will use $C(-)$ to denote the Chen-Khovanov complex of bimodules, which is denoted by $C_{CK}(-)$ in \cite[Section 5.5]{BPW}.

The first thing to notice is that, in any case where the intersection tangle $P=W\cap \mu\times I \times [0,1]$ has no crossings, the formula \cite[(7.2)]{BPW} for $W_*$ simplifies drastically.  There is no summation over indices $i'$ since $C(P)$ has only one term.  In all such cases (which include Types I, II, and III here), one shows that $W_*$ is equal to $W_\bullet$ by direct comparison.

Finally, for elementary cobordisms $W$ of Type IV, we focus on the case (a) from Figure \ref{fig:moving crossings NEW}.  Observe that the tangle $P$ in this case has a single crossing, that $T$ and $T'$ can be written as $T =  T''P$ and $T' = PT'' $, and that $\til{W}: PT \to T'P$ is the identity cobordism.  All of this is illustrated in Figure \ref{fig:tangles NEW}.

\begin{figure}
\centering
\includegraphics{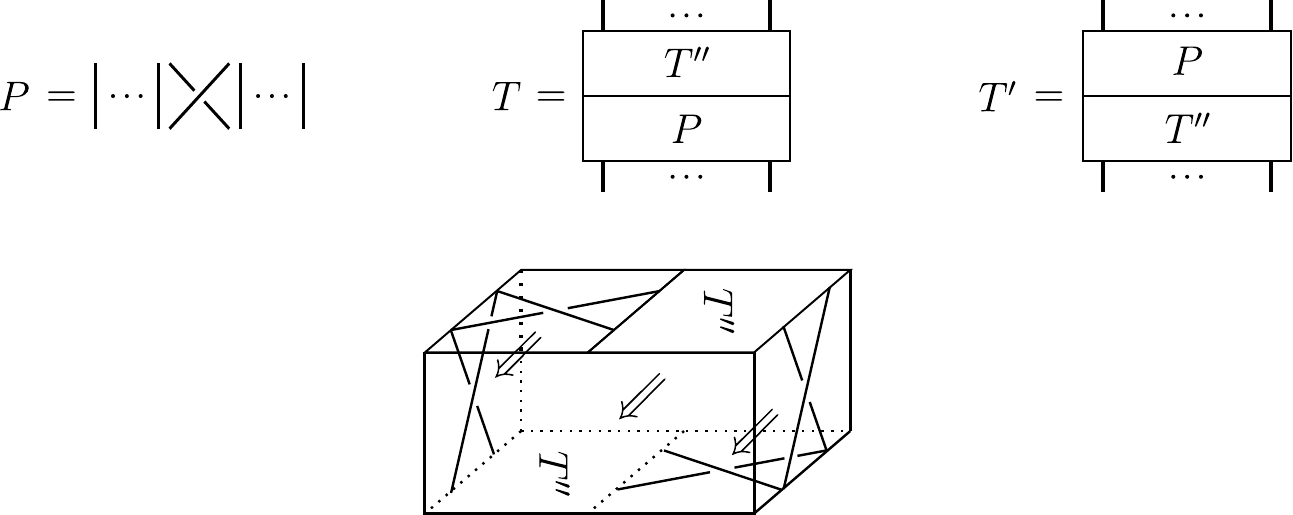}
\caption{}\label{fig:tangles NEW}
\end{figure}

Using $\otimes$ to denote the tensor product over the relevant Chen-Khovanov arc algebra, the map $\til{W}_*: C^i(T) \otimes C^{i'}(P) \to C^{i'}(P) \otimes C^{i}(T')$ is the identity on the summand $C^{i'}(P) \otimes C^{i-i'}(T'') \otimes C^{i'}(P)$ which appears in both $C^i(T) \otimes C^{i'}(P)$ and $ C^{i'}(P) \otimes C^{i}(T')$, and $\til{W}_*$ is $0$ on the other summands.  In particular, there is no need for summing over various $i'$ in the formula \cite[(7.2)]{BPW} for $W_*$, and the only possible difference between $W_*$ and $W_\bullet$ acting on any given generator is in the use of the third map $\theta$ in \cite[(7.2)]{BPW} which permutes the tensor factors and multiplies generators $x\otimes y \otimes \alpha$ by a power of $\q$ according to the grading of $x$ as defined in \cite[Section 5.5]{BPW}.

To analyze this potential difference, let $P_0,P_1$ denote the 0- and 1-resolutions of $P$.  When constructing $W_\bullet$ on each resolution, we view $x$ as living in either $\F_{CK}(P_0)$ or $\F_{CK}(P_1)$. If $x\in\F_{CK}(P_1)$, there is an extra factor of $\q^1$ in  our map (recall that we are considering case (a) amongst the Type IV moves; see the paragraph following \eqref{eq:Wbullet NEW}).  However, in the definition of $W_*$, we view $x$ as living in 
\[\F_{CK}(P) = \big[ \F_{CK}(P_0)\{m\} \rightarrow \F_{CK}(P_1)\{m+1\} \big],\]
where the grading shift $m$ depends on the sign of the crossing.  And so the map $\theta$ multiplies generators by an extra overall factor of $\q^m$ when defining $W_*$ as compared to how it would act when defining $W_\bullet$, as desired.

The proof for case $(c)$ of Figure \ref{fig:moving crossings NEW} is similar. For the cases $(b)$ and $(d)$, note that the map $W_\bullet$ is precisely the inverse of the map defined in $(a)$ and $(c)$, respectively. Moreover, the cobordisms $W$ of moves $(b)$ and $(d)$ are inverses (in the category $Links_\q(\A)$, see \cite[Proposition 6.8]{BPW}) to the cobordisms of $(a)$ and $(c)$, respectively. This completes the proof.

\end{proof}

\end{document}